\documentclass[sn-mathphys]{sn-jnl}

\usepackage{blkarray}
\usepackage{subfigure}
\usepackage{bm}

\theoremstyle{thmstyleone}%
\newtheorem{theorem}{Theorem}
\newtheorem{proposition}{Proposition}%
%

\makeatletter
\renewcommand{\maketag@@@}[1]{\hbox{\m@th\normalsize\normalfont#1}}%
\makeatother


\begin{document}

\title[Article Title]{Multiple-Periods Locally-Facet-Based MIP Formulations for the Unit Commitment Problem}

\author*[1,2]{\fnm{Linfeng} \sur{Yang}}\email{ylf@gxu.edu.cn}

\author[3]{\fnm{Shifei} \sur{Chen}}\email{sfchen2021@163.com}

\author[4]{\fnm{Zhaoyang} \sur{Dong}}\email{zydong@ieee.org}

\affil[1]{\orgdiv{School of Computer Electronics and Information}, \orgname{Guangxi University}, \orgaddress{\city{Nanning}, \postcode{530004}, \country{China}}}
\affil*[2]{\orgdiv{The Guangxi Key Laboratory of Multimedia Communication and Network Technology}, \orgname{Guangxi University}, \orgaddress{\city{Nanning}, \postcode{530004}, \country{China}}}
\affil[3]{\orgdiv{School of Electrical Engineering}, \orgname{Guangxi University}, \orgaddress{\city{Nanning}, \postcode{530004}, \country{China}}}
\affil[4]{\orgdiv{School of Electrical and Electronics Engineering}, \orgname{Nanyang Technological University}, \orgaddress{\city{Singapore}, \postcode{639798}, \country{Singapore}}}

\maketitle

{\noindent{\textbf{Abstract}\\}
	The thermal unit commitment (UC) problem has historically been formulated as a mixed integer quadratic programming (MIQP), which is difficult to solve efficiently, especially for large-scale systems. The tighter characteristic reduces the search space, therefore, as a natural consequence, significantly reduces the computational burden. In literatures, many tightened formulations for a single unit with parts of constraints were reported without presenting explicitly how they were derived. In this paper, a systematic approach is developed to formulate tight formulations. The idea is to use more binary variables to represent the state of the unit so as to obtain the tightest upper bound of power generation limits and ramping constraints for a single unit. In this way, we propose a multi-period formulation based on sliding windows which may have different sizes for each unit in the system. Furthermore, a multi-period model taking historical status into consideration is obtained. Besides, sufficient and necessary conditions for the facets of single-unit constraints polytope are provided and redundant inequalities are eliminated. The proposed models and three other state-of-the-art models are tested on 73 instances with a scheduling time of 24 hours. The number of generators in the test systems ranges from 10 to 1080. The simulation results show that our proposed multi-period formulations are tighter than the other three state-of-the-art models when the window size of the multi-period formulation is greater than 2.}

\vspace{1ex}
{\noindent{\bf{Keywords} }
	Unit Commitment, High-dimensional, Tight, Compact, Locally Ideal, Polytope, Facet, Convex Hull}

\section{Introduction}\label{sec1}

The unit commitment (UC) has been receiving significant attention from both industry and academia. In general, the UC problem is formulated as a mixed integer nonlinear programming (MINLP) problem \cite{anjos2017unit} to determine the operational schedule of the generating units at each time period with varying loads under different operating constraints and environments.

Mixed-Integer Programming (MIP) problems can be handled with commercial solvers. The UC problem can be formulated as a mixed integer quadratic programming (MIQP) problem and directly solved by using such solvers \cite{yang2017novel}. We can also convert the quadratic objective function of the UC problem into a piecewise linear function by accurately approximating the quadratic production cost function with a set of piecewise blocks \cite{carrion2006computationally}. Then we obtain a mixed-integer linear programming (MILP) problem which can be solved by using MILP solvers. The numerical  results reported in \cite{frangioni2009computational} show that solving the approximate MILP problem is more competitive than simply solving the MIQP problem directly.

The quality of the MIP model, which mainly depends on the tightness and compactness of the model, seriously affects the performance of solver \cite{williams2013model}. The tightness of an MIP formulation is defined as the difference between the optimal values for the MIP problem and its continuous relaxation problem \cite{wolsey2020integer}. Tightening an MIP formulation, usually by adding cutting planes, can reduce the search space that the solver requires to explore in order to find the optimal integer solution \cite{wolsey2003strong}. The compactness of an MIP formulation refers to the quantity of data that must be processed when solving the problem. A more compact formulation can speed up the search for the optimal solution. How to build a high-quality MIP model has become a hot topic in recent years. Researchers have made a lot of efforts to construct better formulations for the UC problem in terms of tightness or compactness.

In order to describe the physical constraints for the unit more accurately, researchers have made a lot of effort, such as trying to express the unit state with different binary variables. \cite{garver1962power} first proposes using three binary variables to represent the commitment, startup and shutdown status of the unit. \cite{carrion2006computationally} and \cite{frangioni2009tighter} omit two sets of binary variables from the three-binary-variable formulation presented in \cite{garver1962power} and presents a more compact formulation only with commitment variable. \cite{yang2017novel} proposes a two-binary-variable MIQP formulation for the UC problem with considering compactness and tightness simultaneously.

Since the optimal solution of an MILP problem can always be found at the vertices of the convex hull for its feasible region, we can obtain the optimal solution of the MILP problem directly by solving the relaxation problem. We should note that describing the convex hull of an MILP problem’s feasible set needs an enormous number of inequalities. Finding the explicit convex hull representation for the feasible set of an entire UC problem could be difficult and maybe even unrealistic. \cite{padberg1998location} provides the definition of the locally ideal MILP formulation. Many existing UC literatures provide a locally ideal or locally tighter formulation for a subset of the constraints of the UC problem. \cite{rajan2005minimum} successfully describes the convex hull of a minimum up/down time polytope. \cite{gentile2017tight} provides a convex hull for the polytope subject to minimum up/down time and generation limits. \cite{damci2016polyhedral} provides a convex hull for a two-period ramping polytope and some facets of multi-period ramping polytope. \cite{frangioni2015new} describes the convex hull of the feasible solutions of single-unit satisfying minimum up/down time constraints, minimum and maximum power output, and ramping constraints (including start-up and shut-down limits).

The main works and innovations of this paper are:
\begin{enumerate}
	\item We borrow the variable definition method in \cite{frangioni2015new} to describe the state of unit so that we could easily construct stronger valid inequalities for constraints of a single-unit within any period. By this way, we extend the modeling method from \cite{yang2021two} to multi-period. A new and more direct method is proposed to construct strong valid inequalities (facets) for the feasible region of the UC problem satisfying minimum up/down time constraints, generation limits and ramping limits simultaneously. Actually, the proposed method can be extended to construct facets for other UC constraints, for instance, startup/shutdown cost constraints.
	\item We provide more complete theoretical results on facets of the multi-period polytope which is subject to a subset of constraints for the UC problem. The constraints obtained by the proposed method are all proved to be facets under certain conditions. We provide sufficient and necessary conditions for the facets of the multi-period polytope and eliminate redundant inequalities. In addition, we also provide the improved version of the formulations when considering the generation’s history, and discuss the tightness of the formulations. Besides, the convex hull of feasible solutions for unit commitment problem with partial constraints and some related theories are given.
	\item Based on sliding window, the method of building a tight multi-period formulation for a unit commitment problem in high dimensional space is provided. The window size of each unit in the system can be different and adjusted according to actual needs. Furthermore, a history-dependent multi-period formulation is proposed. In order to improve the efficiency of the model, we make a series of transformations to the model.
\end{enumerate}

The remainder of this paper is organized as follows. In Section \ref{sec2}, we recall three recent well-performed MIQP formulations of the UC problem. In Section \ref{sec3}, we extend the method of constructing inequalities in Section \ref{sec2} to multi-period and present the procedure for constructing multi-period locally-facet constraints of single-unit based on sliding window. In Section \ref{sec4}, we improve the formulations in Section \ref{sec3} and propose some history-dependent multi-period formulations. In Section \ref{sec5}, we perform some computational experiments to demonstrate the behaviors of our models and three recent well-performed formulations on some data set used in other literatures. And we shed further light on the classes of instances for which our formulation outperforms. In Section \ref{sec6}, we draw the conclusions of our study and suggest future research.

\section{State-of-the-art UC formulations for two-period and three-period}\label{sec2}

In this section, we will review three state-of-the-art formulations of UC problem. $N$ and $T$ represent the total number of units in the system and scheduling time respectively. Most of the constraints for a single unit described in this article could be applied to every generator $i \in \{1,\cdots,N\}$ and every period $t \in \{1,\cdots,T\}$. If it is not the case mentioned above, we will specifically point out.

\subsection{State-of-the-art three-binary formulations for UC}\label{subsec1}

Let $P^i_t$ be the power output of unit $i$ in period $t$, and $u^i_t$ be the binary variable denoting the schedule of unit $i$ in period $t$. The objective function is usually defined as the minimization of power system operation costs, which mainly depends on the production cost and the startup cost.
\begin{equation}
	F_c=\sum\nolimits_{i=1}^{N}\sum\nolimits_{t=1}^{T}[f_i(P^i_t)+S^i_t]\label{eq:objective-function}
\end{equation}
where $f_i(P^i_t)=\alpha_iu^i_t+\beta_iP_t^i+\gamma_i(P_t^i )^2$ is the production cost ($\alpha_i,\beta_i,\gamma_i$ are the coefficients), and $S_t^i$ is the startup cost.

One of the most common system-wide constraints that link the schedules of different units in the system is power balance constraint. Let $P_{D,t}$ be the system load demand in period $t$.
\begin{equation}
	\sum\nolimits_{i=1}^{N}P^i_t-P_{D,t}=0\label{eq:power-balance}
\end{equation}
Another common system-wide constraints is system spinning reserve requirement. Let $\overline{P}^i$ be the maximum power output of unit $i$, and $R_t$ be the spinning reserve requirement in period $t$.
\begin{equation}
	\sum\nolimits_{i=1}^Nu^i_t\overline{P}^i\ge P_{D,t}+R_t\label{eq:spinning-reserve}
\end{equation}

Let $v^i_t$ be 1 if unit $i$ is started up in period $t$ (i.e., $u^i_t=1$ and $u^i_{t-1}=0$), and $w^i_t$ be 1 if unit $i$ is shut down in period $t$ (i.e., $u^i_t=0$ and $u^i_{t-1}=1$). \cite{garver1962power} uses three binary variables ($u^i_t$, $v^i_t$, $w^i_t$) to represent the unit status and introduces a logical constraint to relate the three binary variables.
\begin{equation}
	v^i_t-w^i_t=u^i_t-u^i_{t-1}\label{eq:logical}
\end{equation}

Let $C^i_{hot}$ and $C^i_{cold}$ be the hot and cold startup cost of unit $i$ respectively. Furthermore, let $\underline{T}^i_{on}$ and $\underline{T}^i_{off}$ be the minimum up and down time periods of unit $i$ respectively. $T_{cold}^i$ represents cold startup time of unit $i$. $T_0^i$ is the number of periods unit $i$ has been online (+) or offline (-) prior to the first period of the scheduling time span. We define the operator $[\cdot]^+$ as $\max(0,\cdot)$. \cite{jabr2012tight} and \cite{atakan2018state} express the startup cost $S^i_t$ as an MILP formulation:
\begin{align}
	&S^i_t\ge C^i_{hot}v^i_t\label{eq:hot-start}\\
	&S^i_t\ge C^i_{cold}[v^i_t-\sum\nolimits_{\pi=\max(1,t-\underline{T}^i_{off}-T^i_{cold})}^{t-1}w^i_{\pi}-m^i_t]\label{eq:cold-start}
\end{align}
where $m_t^i=1$ if $t-\underline{T}_{off}^i-T_{cold}^i\le 0$ and ${[-T_0^i ]}^+<\lvert t-\underline{T}_{off}^i-T_{cold}^i-1\rvert+1$; $m_t^i=0$ otherwise.

Rajan and Takriti use $v^i_t$ and $w^i_t$ to formulate the minimum up/down time constraints in \cite{rajan2005minimum}.
\begin{align}
	&\sum\nolimits_{\overline{\omega}=[t-\underline{T}^i_{on}]^++1}^tv^i_{\overline{\omega}}\le u^i_t, \qquad t \in [W^i+1,T]\label{eq:min-up-time}\\
	&\sum\nolimits_{\overline{\omega}=[t-\underline{T}^i_{off}]^++1}^tw^i_{\overline{\omega}}\le 1-u^i_t, \qquad t \in [L^i+1,T]\label{eq:min-down-time}
\end{align}
where $W^i=[\min(T,u^i_0(\underline{T}^i_{on}-T^i_0)))]^+$ and $L^i=[\min(T,(1-u^i_0)(\underline{T}^i_{off}+T^i_0))]^+$.
With the minimum up/down time constraints, taking the generator’s history into account, the initial status of the unit is subject to the following constraints \cite{yang2017novel}:
\begin{equation}
	u^i_t=u^i_0, \qquad t \in [0,W^i+L^i]\label{eq:initial-status}
\end{equation}

Let $\underline{P}^i$ be the minimum power output of unit $i$. The simplest generation limits are provided by \cite{carrion2006computationally}.
\begin{align}
	&u^i_t{\underline{P}}^i\le P^i_t\label{eq:min-output}\\
	&P^i_t\le u^i_t{\overline{P}}^i\label{eq:max-output}
\end{align}
If startup $(P^i_{start})$ and shutdown $(P^i_{shut})$ ramping limits are taken into account, the upper bounds can be tighten \cite{morales2013tight}. For unit $i \in \mathcal{J}^{>1}:=\{i\vert\underline{T}^i_{on}>1\}$:
\begin{equation}
	P^i_t\le u^i_t{\overline{P}}^i-v^i_t({\overline{P}}^i-P^i_{start})-w^i_{t+1}({\overline{P}}^i-P^i_{shut}), \qquad t \in [1,T-1]\label{eq:max-output-12}
\end{equation}
And for unit $i \in \mathcal{J}^1:=\{i\vert\underline{T}^i_{on}=1\}$, \cite{gentile2017tight} proposes tighter upper bound constraints:
\begin{align}
	&P^i_t\le u^i_t{\overline{P}}^i-v^i_t({\overline{P}}^i-P^i_{start})-w^i_{t+1}[P^i_{start}-P^i_{shut}]^+, \qquad t \in [1,T-1]\label{eq:max-output-21}\\
	&P^i_t\le u^i_t{\overline{P}}^i-w^i_{t+1}({\overline{P}}^i-P^i_{shut})-v^i_t[P^i_{shut}-P^i_{start}]^+, \qquad t \in [1,T-1]\label{eq:max-output-22}
\end{align}

Let $P^i_{up}$ and $P^i_{down}$ be the ramp-up and ramp-down limits for unit $i$ respectively. \cite{damci2016polyhedral} provides the ramping constraints which are proved to be facets of two-period ramping polytope.
\begin{align}
	&P^i_t-P^i_{t-1} \le u^i_t(P^i_{up}+\underline{P}^i)-u^i_{t-1}\underline{P}^i+v^i_t(P^i_{start}-P^i_{up}-\underline{P}^i)\label{eq:ramp-up-1}\\
	&P^i_{t-1}-P^i_t\le u^i_{t-1}(P^i_{down}+\underline{P}^i)-u^i_t\underline{P}^i+w^i_t(P^i_{shut}-P^i_{down}-\underline{P}^i)\label{eq:ramp-down-1}
\end{align}
For unit $i \in \mathcal{J}^{>1}\cap\mathcal{L}$, where $\mathcal{L}:=\{i\vert P_{up}^i>P_{shut}^i-\underline{P}^i\}, $ \cite{ostrowski2012tight} proposes a strengthened ramp-up inequality.
\begin{align}
	&P^i_t-P^i_{t-1} \le u^i_tP^i_{up}-w^i_t\underline{P}^i-w^i_{t+1}(P^i_{up}-P^i_{shut}+\underline{P}^i)+v^i_t(P^i_{start}-P^i_{up}),\notag\\
	&t \in [1,T-1]\label{eq:ramp-up-2}
\end{align}
A strengthened ramp-down inequality for unit $i \in \mathcal{J}^{>1}\cap\underline{\mathcal{L}}$, where $\underline{\mathcal{L}}:=\{i\vert P_{down}^i>P_{start}^i-\underline{P}^i\}$, is also proposed.
\begin{align}
	P^i_{t-1}-P^i_t \le &u^i_tP^i_{down}+w^i_tP^i_{shut}-v^i_{t-1}(P^i_{down}-P^i_{start}+\underline{P}^i)\notag\\
	&-v^i_t(P^i_{down}+\underline{P}^i), \qquad t \in [2,T]\label{eq:ramp-down-2}
\end{align}
For unit $i \in \mathcal{J}^{>1}\cap\underline{\mathcal{J}}^{>1}\cap\mathcal{L}$, bounded on three periods, the other ramping constraint is
\begin{align}
	P^i_{t+1}-P^i_{t-1}\le &2u^i_{t+1}P^i_{up}-w^i_t\underline{P}^i-w^i_{t+1}\underline{P}^i+v^i_t(P^i_{start}-P^i_{up})+\notag\\
	&v^i_{t+1}(P^i_{start}-2P^i_{up}), \qquad t \in [1,T-1]\label{eq:ramp-up-3}
\end{align}
where $\underline{\mathcal{J}}^{>1}:=\{i\vert\underline{T}^i_{off}>1\}$.

When compactness and tightness are taken into account simultaneously, the state-of-the-art two-period MIQP UC formulation \cite{damci2016polyhedral}, denoted as 2-period model (2P), is
\begin{align}
	&\min \quad(\ref{eq:objective-function})\notag\\
	&s.t.\left\{
	\begin{aligned}
		&(\ref{eq:power-balance})(\ref{eq:spinning-reserve})(\ref{eq:logical})(\ref{eq:hot-start})(\ref{eq:cold-start})(\ref{eq:min-up-time})(\ref{eq:min-down-time})\\
		&(\ref{eq:initial-status})(\ref{eq:min-output})(\ref{eq:max-output})(\ref{eq:ramp-up-1})(\ref{eq:ramp-down-1})\\
		&u^i_t,v^i_t,w^i_t\in \{0,1\};P^i_t,S^i_t\in R_+
	\end{aligned}
	\right.
\end{align}

The state-of-the-art UC formulation with tightest unit generation and ramping constraints within three periods \cite{gentile2017tight}\cite{damci2016polyhedral}\cite{morales2013tight}\cite{ostrowski2012tight}, denoted as 3-period model (3P), is
\begin{align}
	&\min \quad(\ref{eq:objective-function})\notag\\
	&s.t.\left\{
	\begin{aligned}
		&(\ref{eq:power-balance})(\ref{eq:spinning-reserve})(\ref{eq:logical})(\ref{eq:hot-start})(\ref{eq:cold-start})(\ref{eq:min-up-time})(\ref{eq:min-down-time})(\ref{eq:initial-status})(\ref{eq:min-output})\\
		&(\ref{eq:max-output})\text{ for } t=T;(\ref{eq:max-output-12})\text{ for } i \in \mathcal{J}^{>1};(\ref{eq:max-output-21})(\ref{eq:max-output-22})\text{ for } i \in \mathcal{J}^1\\
		&(\ref{eq:ramp-up-1}) \text{ for } i \in (\mathbb{N}-(\mathcal{J}^{>1}\cap\mathcal{L}))\cup((\mathcal{J}^{>1}\cap\mathcal{L})\land(t=T))\\
		&(\ref{eq:ramp-down-1}) \text{ for } i \in (\mathbb{N}-(\mathcal{J}^{>1}\cap\underline{\mathcal{L}})\cup((\mathcal{J}^{>1}\cap\underline{\mathcal{L}})\land(t=1))\\
		&(\ref{eq:ramp-up-2}) \text{ for } i \in \mathcal{J}^{>1}\cap\mathcal{L};(\ref{eq:ramp-down-2}) \text{ for } i \in \mathcal{J}^{>1}\cap\underline{\mathcal{L}}\\
		&(\ref{eq:ramp-up-3})\text{ for } i \in \mathcal{J}^{>1}\cap\underline{\mathcal{J}}^{>1}\cap\mathcal{L}\\
		&u^i_t,v^i_t,w^i_t\in \{0,1\};P^i_t,S^i_t\in R_+
	\end{aligned}
	\right.
\end{align}
where $\mathbb{N}:=\{1,\cdots,N\}$.

\subsection{Three-period locally ideal model}\label{subsec2}

In order to simplify the proof, \cite{yang2017novel} proposes to project $P_t^i$ onto $[0,1]$.
\begin{equation}
	\widetilde{P}^i_t=\frac{P^i_t-u^i_t\underline{P}^i}{\overline{P}^i-\underline{P}^i}
\end{equation}
where $\widetilde{P}^i_t \in [0,1]$. By this way, semi-continuous variable can be eliminated from the model. Similarly, we let $\widetilde{P}^i_{up}=\frac{P^i_{up}}{\overline{P}^i-\underline{P}^i}$, $\widetilde{P}^i_{down}=\frac{P^i_{down}}{\overline{P}^i-\underline{P}^i}$ , $\widetilde{P}^i_{start}=\frac{P^i_{start}-\underline{P}^i}{\overline{P}^i-\underline{P}^i}$, $\widetilde{P}^i_{shut}=\frac{P^i_{shut}-\underline{P}^i}{\overline{P}^i-\underline{P}^i}$. The production cost $f_i(P^i_t)$ can be transformed to $\widetilde{f}_i(\widetilde{P}^i_t)=\widetilde{\alpha}_iu^i_t+\widetilde{\beta}_i\widetilde{P}_t^i+\widetilde{\gamma}_i(\widetilde{P}_t^i )^2$, where $\widetilde{\alpha}_i=\alpha_i+\beta_i\underline{P}^i+\gamma_i(\underline{P}^i)^2$, $\widetilde{\beta}_i=(\overline{P}^i-\underline{P}^i)(\beta_i+2\gamma_i\underline{P}^i)$,  $\widetilde{\gamma}_i=\gamma_i(\overline{P}^i-\underline{P}^i)^2$ \cite{yang2017novel}.

The power balance constraint can be reformulated as \cite{yang2017novel}:
\begin{equation}
	\sum\nolimits_{i=1}^{N}[\widetilde{P}^i_t(\overline{P}^i-\underline{P}^i)+u^i_t\underline{P}^i]-P_{D,t}=0\label{eq:new-power-balance}
\end{equation}

\cite{yang2017novel} introduces $\widetilde{S}_t^i$ to represent the part of startup cost that exceeds $C_{hot}^i$, then (\ref{eq:hot-start}) can be discarded. And (\ref{eq:cold-start}) should be reformulated as follows \cite{yang2021two}.
\begin{equation}
	\widetilde{S}^i_t\ge (C^i_{cold}-C^i_{hot})[v^i_t-\sum\nolimits_{\pi=\max(1,t-\underline{T}^i_{off}-T^i_{cold})}^{t-1}w^i_{\pi}-m^i_t]\label{eq:start-cost}
\end{equation}
\cite{yang2021two} proves that (\ref{eq:start-cost}) is more compact than (\ref{eq:hot-start})-(\ref{eq:cold-start}). The objective function (\ref{eq:objective-function}) can be reformulated as
\begin{equation}
	F_c=\sum\nolimits_{i=1}^{N}\sum\nolimits_{t=1}^{T}[\widetilde{f}_i(\widetilde{P}^i_t)+C^i_{hot}v^i_t+\widetilde{S}^i_t]\label{eq:new-objective-function}
\end{equation}

For notational simplicity, we drop the superscript $i$ for the generator when considering the single-unit constraints in the following description of this paper. \cite{yang2021two} introduces eight binary variables $\tau_{i,t}^1\sim\tau_{i,t}^8$, illustrated in Table \ref{tab:state-variable}, to represent the commitment of a single unit within three periods.  According to the linear relations between $u_{t-1}^i$, $u_t^i$, $u_{t+1}^i$, $v_t^i$, $v_{t+1}^i$, $w_t^i$, $w_{t+1}^i$, $\tau_{i,t}^1\sim\tau_{i,t}^8$, the new variables except $\tau^3_{i,t}$ can be eliminated from the following constraints in this section. $\tau^3_{i,t}$ can be determined by the following inequalities:
\begin{align}
	&\tau^3_t\ge v_t+w_{t+1}-u_t, \qquad t \in [1,T-1]\label{eq:tao3-in1} \\
	&\tau^3_t\le w_{t+1}, \qquad t \in [1,T-1]\label{eq:tao3-in2}\\
	&\tau^3_t\le v_t, \qquad t \in [1,T-1]\label{eq:tao3-in3}
\end{align}
\begin{table}[t]
	\begin{center}
		\begin{minipage}{\textwidth}
			\caption{Illustration for state variables.}\label{tab:state-variable}
			\resizebox{\textwidth}{!}{
			\begin{tabular}{ccccccccccccccccc}
				\toprule
				$u_{t-1}$ & $u_t$ & $u_{t+1}$ & $\tau_t^1$ & $\tau_t^2$ & $\tau_t^3$ & $\tau_t^4$ & $\tau_t^5$ & $\tau_t^6$ & $\tau_t^7$ & $\tau_t^8$ & $\tau^{t-1}_{t-1,t-1}$ & $\tau^{t-1}_{t-1,t}$ & $\tau^{t-1}_{t-1,t+1}$ & $\tau^{t-1}_{t,t}$ & $\tau^{t-1}_{t,t+1}$ & $\tau^{t-1}_{t+1,t+1}$ \\
				\midrule
				0 & 0 & 0 & 1 & 0 & 0 & 0 & 0 & 0 & 0 & 0 & 0 & 0 & 0 & 0 & 0 & 0 \\
				0 & 0 & 1 & 0 & 1 & 0 & 0 & 0 & 0 & 0 & 0 & 0 & 0 & 0 & 0 & 0 & 1 \\
				0 & 1 & 0 & 0 & 0 & 1 & 0 & 0 & 0 & 0 & 0 & 0 & 0 & 0 & 1 & 0 & 0 \\
				0 & 1 & 1 & 0 & 0 & 0 & 1 & 0 & 0 & 0 & 0 & 0 & 0 & 0 & 0 & 1 & 0 \\
				1 & 0 & 0 & 0 & 0 & 0 & 0 & 1 & 0 & 0 & 0 & 1 & 0 & 0 & 0 & 0 & 0 \\
				1 & 0 & 1 & 0 & 0 & 0 & 0 & 0 & 1 & 0 & 0 & 1 & 0 & 0 & 0 & 0 & 1 \\
				1 & 1 & 0 & 0 & 0 & 0 & 0 & 0 & 0 & 1 & 0 & 0 & 1 & 0 & 0 & 0 & 0 \\
				1 & 1 & 1 & 0 & 0 & 0 & 0 & 0 & 0 & 0 & 1 & 0 & 0 & 1 & 0 & 0 & 0 \\
				\botrule
			\end{tabular}
		    }
		\end{minipage}
	\end{center}
\end{table}

With the introducing of new state variables, \cite{yang2021two} obtains a strong valid inequality for the unit generation limits.
\begin{align}
	\widetilde{P}_{t-1}\le& u_{t-1}-w_t(1-\widetilde{P}_{shut})-w_{t+1}(1-\widetilde{P}_{down}-\widetilde{P}_{shut})\notag\\&+\tau^3_t(1-\widetilde{P}_{down}-\widetilde{P}_{shut}), \qquad t \in [1,T-1]\label{eq:max-output-31}\\
	\widetilde{P}_t\le& u_t-v_t(1-\widetilde{P}_{start})-w_{t+1}(1-\widetilde{P}_{shut})\notag\\&+\tau^3_t[1-\max(\widetilde{P}_{start},\widetilde{P}_{shut})], \qquad t \in [1,T-1]\label{eq:max-output-32}\\
	\widetilde{P}_{t+1}\le& u_{t+1}-v_t(1-\widetilde{P}_{up}-\widetilde{P}_{start})-v_{t+1}(1-\widetilde{P}_{start})\notag\\&+\tau^3_t(1-\widetilde{P}_{up}-\widetilde{P}_{start}), \qquad t \in [1,T-1]\label{eq:max-output-33}
\end{align}
\cite{yang2021two} has proved that (\ref{eq:max-output-31})-(\ref{eq:max-output-33}) are facets of three-period ramping polytope on the assumption that $\widetilde{P}_{shut}+\widetilde{P}_{down}<1$ and  $\widetilde{P}_{start}+\widetilde{P}_{up}<1$. Similar to the upper bound limit for the unit generation, \cite{yang2021two} provides tighter ramping constraints with taking startup and shutdown ramping limits into account.
\begin{align}
	\widetilde{P}_t-\widetilde{P}_{t-1}\le& v_t(\widetilde{P}_{start}-\widetilde{P}_{up})+\tau^3_t([\widetilde{P}_{up}-\widetilde{P}_{shut}]^+-[\widetilde{P}_{start}-\widetilde{P}_{shut}]^+)\notag\\
	&+u_t\widetilde{P}_{up}-w_{t+1}[\widetilde{P}_{up}-\widetilde{P}_{shut}]^+, \qquad t \in [1,T-1]\label{eq:ramp-up-31}\\
	\widetilde{P}_{t+1}-\widetilde{P}_t\le& u_{t+1}\widetilde{P}_{up}+v_{t+1}(\widetilde{P}_{start}-\widetilde{P}_{up}), \qquad t \in [1,T-1]\label{eq:ramp-up-32}\\
	\widetilde{P}_{t+1}-\widetilde{P}_{t-1}\le& 2u_{t+1}\widetilde{P}_{up}+v_t(\widetilde{P}_{start}-\widetilde{P}_{up})+v_{t+1}(\widetilde{P}_{start}-2\widetilde{P}_{up})\notag\\
	&+\tau^3_t(\widetilde{P}_{up}-\widetilde{P}_{start}), \qquad t \in [1,T-1]\label{eq:ramp-up-33}\\
	\widetilde{P}_{t-1}-\widetilde{P}_t\le& u_{t-1}\widetilde{P}_{down}+w_t(\widetilde{P}_{shut}-\widetilde{P}_{down})\label{eq:ramp-down-31}\\
	\widetilde{P}_t-\widetilde{P}_{t+1}\le& w_{t+1}(\widetilde{P}_{shut}-\widetilde{P}_{down})+\tau^3_t([\widetilde{P}_{down}-\widetilde{P}_{start}]^+-[\widetilde{P}_{shut}-\widetilde{P}_{start}]^+)\notag\\
	&+u_t\widetilde{P}_{down}-v_t[\widetilde{P}_{down}-\widetilde{P}_{start}]^+, \qquad t \in [1,T-1]\label{eq:ramp-down-32}\\
	\widetilde{P}_{t-1}-\widetilde{P}_{t+1}\le& 2u_{t-1}\widetilde{P}_{down}+w_t(\widetilde{P}_{shut}-2\widetilde{P}_{down})+w_{t+1}(\widetilde{P}_{shut}-\widetilde{P}_{down})\notag\\
	&+\tau^3_t(\widetilde{P}_{down}-\widetilde{P}_{shut}), \qquad t \in [1,T-1]\label{eq:ramp-down-33}
\end{align}
In \cite{yang2021two}, (\ref{eq:ramp-up-31})-(\ref{eq:ramp-down-32}) are proved to be facets of three-period ramping polytope on the assumption that $\widetilde{P}_{shut}+\widetilde{P}_{down}<1$, $\widetilde{P}_{start}+\widetilde{P}_{up}<1$, $2\widetilde{P}_{down}<1$, $2\widetilde{P}_{up}<1$.

Then the tight and compact MIQP UC formulation within three periods \cite{yang2021two}, denoted as 3-period high dimensional model (3P-HD), is 
\begin{align}
	&\min \quad(\ref{eq:new-objective-function})\notag\\
	&s.t.\left\{
	\begin{aligned}
		&(\ref{eq:spinning-reserve})(\ref{eq:logical})(\ref{eq:min-up-time})(\ref{eq:min-down-time})(\ref{eq:initial-status})(\ref{eq:new-power-balance})(\ref{eq:start-cost})\\
		&(\ref{eq:tao3-in1})(\ref{eq:tao3-in2})(\ref{eq:tao3-in3})(\ref{eq:max-output-31})(\ref{eq:max-output-32})(\ref{eq:max-output-33})\\
		&(\ref{eq:ramp-up-31})(\ref{eq:ramp-up-33})(\ref{eq:ramp-down-32})(\ref{eq:ramp-down-33})\\
		&(\ref{eq:ramp-up-32}) \text{ for } t=T-1,(\ref{eq:ramp-down-31}) \text{ for } t=1\\
		&\tau^3_t=0 \text{ for } i \in \mathcal{J}^{>1}\\
		&u^i_t,v^i_t,w^i_t,{\tau^3_t}^i\in \{0,1\},\widetilde{P}^i_t\in [0,1],\widetilde{S}^i_t\in R_+
	\end{aligned}
	\right.
\end{align}

\section{Multi-period locally ideal model based on sliding window}\label{sec3}

Many advances have been made in the research on tightness of model in the last few decades. Some literatures give facets of feasible solutions of the UC problem under partial constraints. However, most of them only give expressions of facets, without presenting the construction method and process. Although some papers have given the construction methods, which are all in the case of two-period or three-period, they are difficult to be generalized to multi-period.

In this paper, we extend the method of constructing inequalities used in \cite{yang2021two} to multi-period. Then we obtain the facets of ramping polytope within any time periods, and the corresponding theoretical results.

We use $\textup{UB}(\cdot)$ to denote upper bound on ``$\cdot$", $\textup{LB}(\cdot)$ to denote lower bound on ``$\cdot$", $\textup{RHS}$ to represent right hand side, $\textup{LHS}$ to represent left hand side. Consider the following inequality for UC.
\begin{equation}
	\textup{LHS expression}\le \textup{RHS expression}\label{eq:inequality-expression}
\end{equation}

Traditionally, the LHS expression can be constructed from the physical significance of UC problem. For instance, according to the physical significance of generation limit, it is obvious that the ``LHS expression" can be set to be ``$\widetilde{P}^i_t$". Similarly, ``LHS expression" can be set to be ``$\widetilde{P}^i_t-\widetilde{P}^i_{t-1}$" for ramping constraints. Certainly, some ``LHS expression" without clear physical significance also can be used for constructing the tight UC constrains, for instance, equality (13) in \cite{pan2016polyhedral} where ``$P^i_{t-1}-P^i_t+P^i_{t+1}$" was used as ``LHS expression" of ramping constraints.

``RHS expression" of (\ref{eq:inequality-expression}) also can be constructed according to the physical significance of thermal unit. However, one should try to compress the upper bound on left hand side (LHS) of (\ref{eq:inequality-expression}). Other physics constraints can be considered in this procedure, and construct ``RHS expression" equaling to UB(LHS). Then stronger possible inequality would be obtained.

Based on the above method, we could improve the single-unit constraints, bounded in any multi-period, of every unit in the system next. We should note that in this section we talk about any M-period without taking history status into consideration.

\subsection{Sliding window}\label{subsec3}

As illustrated in Fig. \ref{fig:sliding-window}, the scheduling span (including $T$ periods in the planning horizon and one period prior to the planning horizon) for each unit $i$ can be divided into into $T-M^i+2$ M-periods that intersect each other, denoted as sliding window. $M^i$ represents the size of sliding window for unit $i$. Obviously, $M^i\in [2,T+1]$. We should note that the window size for each unit $i$ in the system can be different.
\begin{figure}[t]
	\centering
	\includegraphics[width=\textwidth,height=5cm]{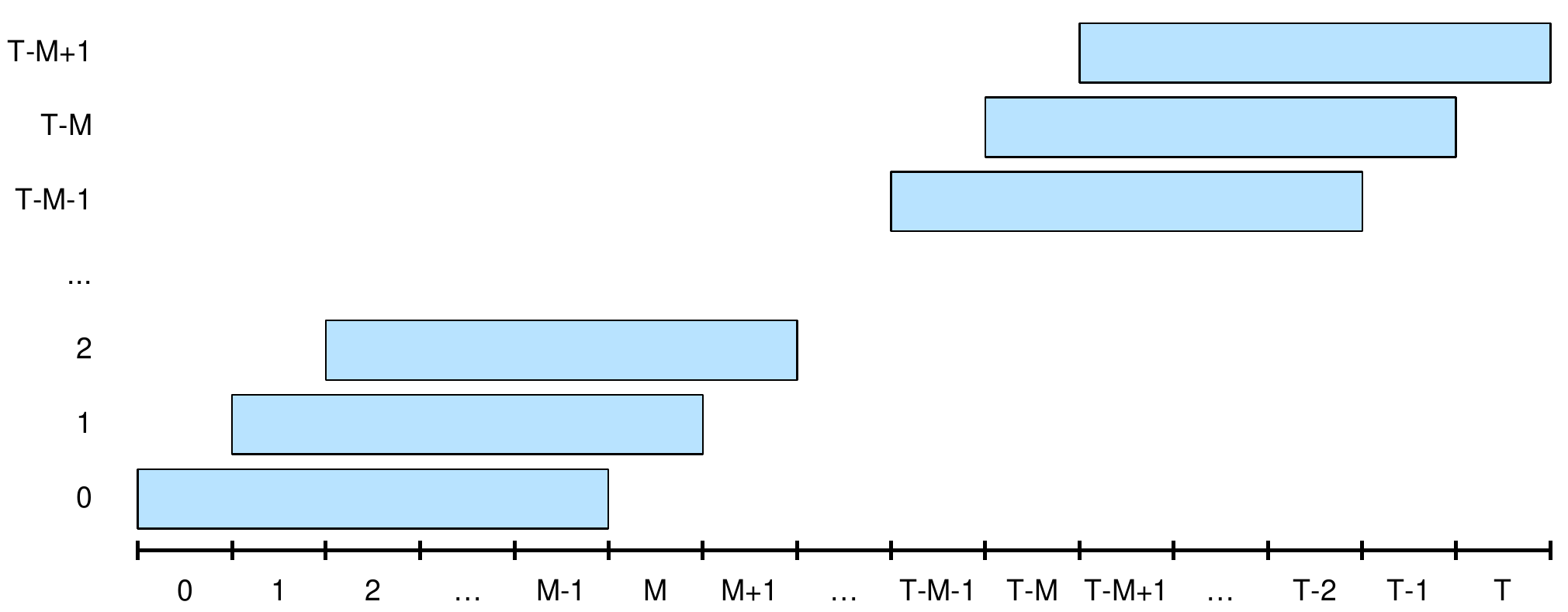}
	\caption{Sliding window for the scheduling span}
	\label{fig:sliding-window}
\end{figure}

By introducing new binary variables to represent feasible combinations of unit commitments in periods $\{t-1,t,t+1\}$, \cite{yang2021two} provides a method to construct facets of three-period polytope. Theoretically, this method can  be directly extended to multi-period, but the state variables that needs to be introduced are going to be $O(2^T)$ in this case.

In order to reduce the number of state variables, we draw on the definition of variables in \cite{frangioni2015new} to present our high-dimensional formulation for UC problem. We introduce new state variables $\tau_{h,k}^{i,m}$ to denote the schedule of unit $i$ in periods $[h,k]$ of M-period which ranks $m$. For any M-period $m$ , we let $\tau_{h,k}^{i,m}$ be 1 if the unit $i$ is turned on at time period $h$ (i.e., $u^i_{h-1}=0$ when $h>m$, $u^i_h=1$) and remains operational until the end of time period $k$ (i.e., $u^i_k=1$, $u^i_{k+1}=0$ when $k<m+M^i-1$), and 0 otherwise. It is worth noting that in \cite{frangioni2015new} and \cite{bacci2019new} the generator is offline in period $h-1$ and $k+1$. While in our formulations, the commitments of the generator in period $h-1$ and $k+1$ are uncertain when $h=m$ and $k=m+M-1$ respectively.

We let $A^i_m$ be the set of all feasible continuous operating intervals for the unit $i$ in the M-period $m$. We define $A^i_m$ as $A^i_m:=\{[h,k]\vert h=m,k\in [m,m+M^i-1],\text{ or }h\in[m+1,m+M^i-1],k\in [\min(h+\underline{T}^i_{on}-1,m+M^i-1),m+M^i-1]\}$. And let $\lvert A^i_m\rvert$ be the number of elements in the set $A^i_m$. For example, we let $m=t-1$, $M^i=3$, $T^i_{on}=1$. As illustrated in Table \ref{tab:state-variable}, \cite{anjos2017unit} introduces eight state variables ${\tau_t^1}^i\sim{\tau_t^8}^i$ to represent the unit state and thus construct some stronger inequalities for generation limits and ramping limits. Here we replace ${\tau_t^1}^i\sim{\tau_t^8}^i$ with $\tau^{i,m}_{h,k}$.

According to the definition of $\tau^m_{h,k}$, we can easily represent $u_t$, $v_t$, $w_t$ by using $\tau^m_{h,k}$.
\begin{align}
	&\sum\nolimits_{\{[h,k] \in A_m,t \in [h,k]\}}\tau^m_{h,k}=u_t, \qquad t \in [m,m+M-1] 
	\label{eq:m-taou} \\
	&\sum\nolimits_{\{[h,k] \in A_m,h=t\}}\tau^m_{h,k}=v_t, \qquad t \in [m+1,m+M-1]
	\label{eq:m-taov} \\
	&\sum\nolimits_{\{[h,k] \in A_m,k=t-1\}}\tau^m_{h,k}=w_t, \qquad t \in [m+1,m+M-1]
	\label{eq:m-taow}
\end{align}
\eqref{eq:logical} can be obtained by using constraints \eqref{eq:m-taou}-\eqref{eq:m-taow}.

For each M-period, period $t$ either falls into the continuous online interval or the continuous offline interval. When minimum-down time constraints are taken into consideration, according to the physical significance of $\tau_{h,k}^m$, we obtain the following inequalities \cite{knueven2018ramping}.
\begin{equation}
	\sum\nolimits_{\{[h,k] \in A_m,t \in [h,k+\underline{T}_{off}]\}}\tau^m_{h,k}\le 1, \qquad t \in [m,m+M-1] 
	\label{eq:inequality-taotao}
\end{equation}

Two adjacent M-period are not independent, but intersecting and interrelated. The value of $u_t,v_t,w_t$ in two adjacent M-period $m$ and $m+1$ should be consistent. According to (\ref{eq:m-taou})-(\ref{eq:m-taow}), we have the following equalities.
\begin{align}
	&\sum\nolimits_{\{[h,k] \in A_m,t \in [h,k]\}}\tau^m_{h,k}=\sum\nolimits_{\{[h,k] \in A_{m+1},t \in [h,k]\}}\tau^{m+1}_{h,k},\notag\\
	&\qquad m \in [0,T-M],t \in [m+1,m+M-1] 
	\label{eq:m-taou-m} \\
	&\sum\nolimits_{\{[h,k] \in A_m,h=t\}}\tau^m_{h,k}=\sum\nolimits_{\{[h,k] \in A_{m+1},h=t\}}\tau^{m+1}_{h,k},\notag\\
	&\qquad m \in [0,T-M],t \in [m+2,m+M-1]
	\label{eq:m-taov-m} \\
	&\sum\nolimits_{\{[h,k] \in A_m,k=t-1\}}\tau^m_{h,k}=\sum\nolimits_{\{[h,k] \in A_{m+1},k=t-1\}}\tau^{m+1}_{h,k},\notag\\
	&\qquad m \in [0,T-M],t \in [m+2,m+M-1]
	\label{eq:m-taow-m}
\end{align}
Any one of (\ref{eq:m-taov-m}) and (\ref{eq:m-taow-m}) can be derived from the other one and (\ref{eq:m-taou-m}). By this way, we could use the new binary variables $\tau^m_{h,k}$ to replace the state variable $u_t,v_t,w_t$ in the UC formulation.

We let $\prod$ denote the Cartesian product throughout this paper.
\begin{theorem}\label{binary-integer-0}
	For any M-period $m$ we define the polyhedron $\mathcal{B}^R_m=\{\prod_{[h,k]\in A_m}\tau^m_{h,k}\in[0,1]^{\lvert A_m\rvert}:(\ref{eq:inequality-taotao})\}$. Let $\mathcal{B}^I_m=\mathcal{B}^R_m\cap\{\prod_{[h,k]\in A_m}\tau^m_{h,k}\in\{0,1\}^{\lvert A_m\rvert}\}$. Then $\mathcal{B}^R_m=\textup{conv}(\mathcal{B}^I_m)$.
\end{theorem}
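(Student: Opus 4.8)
The plan is to show that the coefficient matrix of the system (\ref{eq:inequality-taotao}), together with the box constraints $0\le\tau^m_{h,k}\le 1$, is totally unimodular; integrality of $\mathcal{B}^R_m$ then follows from the classical theorem that a polyhedron $\{x:Ax\le b,\,0\le x\le \mathbf{1}\}$ with $A$ totally unimodular and $b$ integral has only integral vertices \cite{wolsey2020integer}. Since $\mathcal{B}^R_m\subseteq[0,1]^{\lvert A_m\rvert}$ is a bounded polyhedron, it is the convex hull of its vertices; once every vertex is shown to be integral, each vertex lies in $\{0,1\}^{\lvert A_m\rvert}\cap\mathcal{B}^R_m=\mathcal{B}^I_m$, and therefore $\mathcal{B}^R_m=\textup{conv}(\mathcal{B}^I_m)$, which is exactly the claim.

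To establish total unimodularity, I would form the $0/1$ matrix $A$ whose rows are indexed by the periods $t\in[m,m+M-1]$ and whose columns are indexed by the intervals $[h,k]\in A_m$, with entry equal to $1$ precisely when $t\in[h,k+\underline{T}_{off}]$. The key step is to verify that $A$ has the consecutive-ones property: fixing a column $[h,k]$, a period $t$ contributes a nonzero entry exactly when $t\in[h,k+\underline{T}_{off}]\cap[m,m+M-1]$, which, being the intersection of two integer intervals, is again a contiguous block of row indices. Hence every column of $A$ has its ones in consecutive positions, so $A$ is an interval matrix and therefore totally unimodular. Appending the $\pm I$ rows arising from the box constraints preserves total unimodularity, and the right-hand side is integral, so the integrality theorem applies.

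I expect the only genuine obstacle to be the bookkeeping in the consecutive-ones verification: one must check that truncating the segment $[h,k+\underline{T}_{off}]$ to the admissible range $[m,m+M-1]$ cannot break contiguity (it cannot, since a subinterval of an index interval is still an interval) and that each column is nonempty within the range. An equivalent and arguably more transparent route, which I would use as a sanity check, is combinatorial: associate to each $[h,k]\in A_m$ the extended segment $[h,k+\underline{T}_{off}]$; by the Helly property of intervals the constraints (\ref{eq:inequality-taotao}) coincide with the maximal-clique inequalities of the interval graph on these segments, and since interval graphs are perfect, the polytope cut out by clique inequalities and nonnegativity is the integral stable-set polytope. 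Either route yields $\mathcal{B}^R_m=\textup{conv}(\mathcal{B}^I_m)$.
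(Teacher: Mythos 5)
Your proposal is correct and follows essentially the same route as the paper: both form the period-by-interval incidence matrix of (\ref{eq:inequality-taotao}), observe that each column's support $[h,k+\underline{T}_{off}]\cap[m,m+M-1]$ is a contiguous block so the matrix is an interval matrix and hence totally unimodular, and then invoke the standard integrality theorem for $\{x:Ax\le\mathbf{1},\,0\le x\le\mathbf{1}\}$ to conclude $\mathcal{B}^R_m=\textup{conv}(\mathcal{B}^I_m)$. The interval-graph/perfection remark is a valid alternative reading but not needed.
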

\begin{proof}[\textbf{Proof}]
	$$
	\bm{C_h}=\begin{blockarray}{ccccc}
		\tau^m_{h,k} & \tau^m_{h,k+1} & \cdots & \tau^m_{h,m+M-1} & \\
		\begin{block}{(cccc)c}
			0 & 0 & \cdots & 0 & t=m \\
			\vdots & \vdots & \vdots & \vdots & \vdots \\
			0 & 0 & \cdots & 0 & t=h-1 \\
			1 & 1 & \cdots & 1 & t=h \\
			\vdots & \vdots & \vdots & \vdots & \vdots \\		
			1 & 1 & \cdots & 1 & t=k+\underline{T}_{off} \\
			0 & 1 & \cdots & 1 & t=k+\underline{T}_{off}+1 \\	
			\vdots & \ddots & \ddots & \vdots & \vdots \\
			0 & \cdots & 0 & 1 & t=m+M-1 \\	
		\end{block}
	\end{blockarray}
	$$
	We let $\bm{B_m}$ be the coefficient matrix of inequalities (\ref{eq:inequality-taotao}) for M-period $m$, and $\bm{B_m}=[\bm{C_m},\bm{C_{m+1}},\cdots,\bm{C_{m+M-1}}]$. For any column, if any time periods $t_1$, $t_2$ exist such that $t_1+1<t_2$, $t_1,t_2\in [m,m+M-1]$, $t_1\in [h,k+\underline{T}_{off}]$, $t_2\in [h,k+\underline{T}_{off}]$ (i.e. the elements of the row corresponding to $t=t_1$, $t=t_2$ in this column are equal to 1), then we have $t\in [h,k+\underline{T}_{off}]$ for all $t$ with $t_1<t<t_2$. Clearly, $\bm{B_m}$ is an interval matrix(Definition 2.2. of §$\uppercase\expandafter{\romannumeral3}.1.2$ in \cite{wolsey1988integer}). According to the Corollary 2.10 of §$\uppercase\expandafter{\romannumeral3}.1.2$ in \cite{wolsey1988integer}, $\bm{B_m}$ is totally unimodular. Let $\bm{d'}=\bm{0}$, $\bm{d}=\bm{1}$, $\bm{b'}=\bm{0}$, $\bm{b}=\bm{1}$, and then we know that $\mathcal{B}^R_m$ is an integral polyhedron according to Proposition 2.3 of §$\uppercase\expandafter{\romannumeral3}.1.2$ in \cite{wolsey1988integer}. Analogous theorem, using different proof technique, is proposed in \cite{knueven2018ramping}.
\end{proof}

\subsection{The multi-period locally-facet-based constraints}\label{subsec4}

In this subsection we will talk about the generation limits and ramping constraints in any M-period $m\in [0,T-M+1]$.

Facet is an important concept for the description of polyhedral and can be used to address the question of what a strong inequality means for a polyhedral. If $P$ is a polyhedron, $F=\{x\in P\vert\pi^Tx=\pi_0\}$ defines a face of the polyhedron $P$ if $\pi^Tx=\pi_0$ is a valid inequality of $P$. Furthermore, $F$ is a facet of $P$ if $F$ is a face of $P$ and $\textup{dim}(F)=\textup{dim}(P)-1$. Proposition 9.1 in \cite{wolsey2020integer} points out that, if $P$ is full-dimensional, a valid inequality $\pi^Tx=\pi_0$ is necessary in the description of $P$ if and only if it defines a facet of $P$.

However, this is unrealistic for directly constructing the facet in most cases, because an enormous number of affine independent points and state variables are needed to describe the facet of an MILP problem’s feasible set.

Since UC is an NP-hard problem, finding the explicit facet-based representation of the entire UC problem could be difficult and maybe even unrealistic. Alternatively, one may possibly resemble the facet-based formulation for a well-structured subset of the original MILP problem’s feasible set. We borrow the term ``locally" from \cite{padberg1998location}, and use ``locally" to refers that the facet-based formulation is sought for a specifically selected portion, but not the entire mathematical programming problem. Although including such a locally ideal MILP formulation into the problem does not guarantee the facet-based formulation of the entire UC problem, it could help tighten the lower bound and in turn reduce the search effort of the branch\&cut algorithm.

Now we will show the details of our method to construct the locally-facet-based expressions for unit generation limits and ramping constraints.

\begin{table}[t]
	\begin{center}
		\begin{minipage}{\textwidth}
			\caption{Illustration for upper bound of generation limits.}\label{tab:max-output}
			\begin{tabular*}{\textwidth}{ccc}
				\toprule
				case & $\tau^m_{h,k}=1$ & $\textup{UB}(\widetilde{P}_t)$ \\
				\midrule
				1  & $m<h\le t\le k<m+M-1$ & $\min[1,\widetilde{P}_{start}+(t-h)\widetilde{P}_{up},\widetilde{P}_{shut}+(k-t)\widetilde{P}_{down}]$\\
				2  & $m<h\le t\le k=m+M-1$ & $\min[1,\widetilde{P}_{start}+(t-h)\widetilde{P}_{up}]$\\
				3  & $m=h\le t\le k<m+M-1$ & $\min[1,\widetilde{P}_{shut}+(k-t)\widetilde{P}_{down}]$\\
				4  & $m=h\le t\le k=m+M-1$ & 1\\
				\botrule
			\end{tabular*}
		\end{minipage}
	\end{center}
\end{table}
First we consider the generation limits in $M$-period $m$. When the ramping limits are taken into consideration, we list $\textup{UB}(\widetilde{P}_t)$ in Table \ref{tab:max-output}. If period $t$ is not in any operating interval, Clearly the maximum power output in period $t$ is equal to zero. According to the Table \ref{tab:max-output}, The maximum power output in period $t$ varies with the continuous operating interval in which $t$ is located. Then we obtain the following upper bound limit for the power of unit:
\begin{align}
	\widetilde{P}_t\le &\sum\nolimits_{\{[h,k] \in A_m,m<h\le t\le k<m+M-1\}}\tau^m_{h,k}\times\notag\\
	&\min[1,\widetilde{P}_{start}+(t-h)\widetilde{P}_{up},\widetilde{P}_{shut}+(k-t)\widetilde{P}_{down}]\notag\\
	&+\sum\nolimits_{\{[h,k]\in A_m,m<h\le t\le k=m+M-1\}}\tau^m_{h,k}\min[1,\widetilde{P}_{start}+(t-h)\widetilde{P}_{up}]\notag\\
	&+\sum\nolimits_{\{[h,k]\in A_m,m=h\le t\le k<m+M-1\}}\tau^m_{h,k}\min[1,\widetilde{P}_{shut}+(k-t)\widetilde{P}_{down}]\notag\\
	&+\sum\nolimits_{\{[h,k]\in A_m,m=h\le t\le k=m+M-1\}}\tau^m_{h,k},\quad t\in[m,m+M-1]\label{eq:tight-max-output-0}
\end{align}
Analogous results are proposed in \cite{bacci2019new}. But our formulation takes all possible cases into account and is more general, and our inequalities are strictly tighter than (55)(56) in \cite{bacci2019new} when $[h,k] \in A_m$ exists such that $\widetilde{P}_{start}+(t-h)\widetilde{P}_{up}<\widetilde{P}_{shut}$ or $\widetilde{P}_{shut}+(k-t)\widetilde{P}_{down}<\widetilde{P}_{start}$.

When the startup/shutdown ramping limits and generation limits are taken into consideration, similar to the constructing of unit generation limits, we list $\textup{UB}(\widetilde{P}_t-\widetilde{P}_{t-a})$ in Table \ref{tab:ramp}. According to Table \ref{tab:ramp}, we can obtain the following ramp-up constraints:
\begin{align}
	\widetilde{P}_t-\widetilde{P}_{t-a}\le &\sum\nolimits_{\{[h,k]\in A_m,t-a<h\le t\le k=m+M-1\}}\tau^m_{h,k}\min[1,\widetilde{P}_{start}+(t-h)\widetilde{P}_{up}]\notag\\
	&+\sum\nolimits_{\{[h,k] \in A_m,t-a<h\le t\le k<m+M-1\}}\tau^m_{h,k}\times\notag\\
	&\min[1,\widetilde{P}_{start}+(t-h)\widetilde{P}_{up},\widetilde{P}_{shut}+(k-t)\widetilde{P}_{down}]\notag\\
	&+\sum\nolimits_{\{[h,k] \in A_m,h\le t-a<t\le k<m+M-1\}}\tau^m_{h,k}\times\notag\\
	&\min[1,a\widetilde{P}_{up},\widetilde{P}_{shut}+(k-t)\widetilde{P}_{down}]\notag\\
	&+\sum\nolimits_{\{[h,k] \in A_m,h\le t-a<t\le k=m+M-1\}}\tau^m_{h,k}\min(1,a\widetilde{P}_{up}),\notag\\
	&t\in[m+1,m+M-1],a\in[1,t-m]\label{eq:tight-ramp-up-0}
\end{align}
Similarly, we have ramp-down constraints as follows:
\begin{align}
	\widetilde{P}_{t-a}-\widetilde{P}_t\le &\sum\nolimits_{\{[h,k]\in A_m,m=h\le t-a\le k<t\}}\tau^m_{h,k}\min[1,\widetilde{P}_{shut}+(k-t+a)\widetilde{P}_{down}]\notag\\
	&+\sum\nolimits_{\{[h,k] \in A_m,m<h\le t-a\le k<t\}}\tau^m_{h,k}\times\notag\\
	&\min[1,\widetilde{P}_{start}+(t-a-h)\widetilde{P}_{up},\widetilde{P}_{shut}+(k-t+a)\widetilde{P}_{down}]\notag\\
	&+\sum\nolimits_{\{[h,k] \in A_m,m<h\le t-a<t\le k\}}\tau^m_{h,k}\times\notag\\
	&\min[1,\widetilde{P}_{start}+(t-a-h)\widetilde{P}_{up},a\widetilde{P}_{down}]\notag\\
	&+\sum\nolimits_{\{[h,k] \in A_m,m=h\le t-a<t\le k\}}\tau^m_{h,k}\min(1,a\widetilde{P}_{down}),\notag\\
	&t\in[m+1,m+M-1],a\in[1,t-m]\label{eq:tight-ramp-down-0}
\end{align}
Analogous results with $a=1$ are proposed in \cite{bacci2019new}. But our formulation takes generation limits into consideration and is more general with ramping constraints spanning more than two periods (i.e., $a>1$) being considered. (\ref{eq:tight-ramp-up-0}) is strictly tighter than (50) in \cite{bacci2019new} when $[h,k] \in A_m$ exists such that $\widetilde{P}_{shut}+(k-t)\widetilde{P}_{down}<\max(\widetilde{P}_{up},\widetilde{P}_{start})$. (\ref{eq:tight-ramp-down-0}) is strictly tighter than (51) in \cite{bacci2019new} when $[h,k]\in A_m$ exists such that $\widetilde{P}_{start}+(t-a-h)\widetilde{P}_{up}<\max(\widetilde{P}_{down},\widetilde{P}_{shut})$.
\begin{table}[t]
	\begin{center}
		\begin{minipage}{\textwidth}
			\caption{Illustration for upper bound of ramping limits}\label{tab:ramp}
			\resizebox{\textwidth}{!}{
			\begin{tabular}{ccc}
				\toprule
				case & $\tau^m_{h,k}=1$ & $\textup{UB}(\widetilde{P}_t-\widetilde{P}_{t-a})$ \\
				\midrule
				1 & $h\le t-a<t\le k<m+M-1$ & $\min[1,a\widetilde{P}_{up},\widetilde{P}_{shut}+(k-t)\widetilde{P}_{down}]$ \\
				2 & $h\le t-a<t\le k=m+M-1$ & $\min(1,a\widetilde{P}_{up})$ \\
				3 & $t-a<h\le t\le k<m+M-1$ & $\min[1,\widetilde{P}_{start}+(t-h)\widetilde{P}_{up},\widetilde{P}_{shut}+(k-t)\widetilde{P}_{down}]$ \\
				4 & $t-a<h\le t\le k=m+M-1$ & $\min[1,\widetilde{P}_{start}+(t-h)\widetilde{P}_{up}]$ \\
				\botrule
			\end{tabular}
		    }
	        \resizebox{\textwidth}{!}{
			\begin{tabular}{ccc}
				\toprule
				case & $\tau^m_{h,k}=1$ & $\textup{UB}(\widetilde{P}_{t-a}-\widetilde{P}_t)$ \\
				\midrule
				1 & $m<h\le t-a<t\le k$ & $\min[1,\widetilde{P}_{start}+(t-a-h)\widetilde{P}_{up},a\widetilde{P}_{down}]$\\
				2 & $m=h\le t-a<t\le k$ & $\min(1,a\widetilde{P}_{down})$\\
				3 & $m<h\le t-a\le k<t$ & $\min[1,\widetilde{P}_{start}+(t-a-h)\widetilde{P}_{up},\widetilde{P}_{shut}+(k-t+a)\widetilde{P}_{down}]$\\
				4 & $m=h\le t-a\le k<t$ & $\min[1,\widetilde{P}_{shut}+(k-t+a)\widetilde{P}_{down}]$\\
				\botrule
			\end{tabular}
		    }
		\end{minipage}
	\end{center}
\end{table}

We note that similar expressions had been proposed in \cite{bacci2019new}. Inspired by dynamic programming algorithm, \cite{bacci2019new} presents a new formulation for single-unit commitment problem and prove that it describes the convex hull of the solutions of the single-unit commitment problem. Then they transform the DP convex hull model to obtain several similar expressions for unit generation limits and ramping constraints which are formed by linear combination of some constraints of DP model.

However:
\begin{enumerate}
	\item Our procedure to construct the expressions is more directly and more simple than \cite{bacci2019new}. And some comprehensive and rigorous facet theory are proved in this paper.
	\item Expressions in \cite{bacci2019new} were obtained by relaxing the DP model, the theoretical tightness of resulted expressions have not been given. 
	\item As we mentioned above, our model is tighter than that in \cite{bacci2019new}, because we consider the constraints more comprehensively. For instance, we consider the upper bounds of power generations of the generator when constructing the ramping constraints.
	\item \cite{bacci2019new} constructs constraint inequalities on the solution space of the whole UC problem, while our model constructs tight constraint inequalites on multi-period single-unit constraint polytopes for the UC problem based on ``locally ideal" and then combines them together.
\end{enumerate}

Therefore, the model we construct in this paper satisfies all the typical constraints for thermal units: minimum up and down time, power generation limits, ramping (including ramp up, ramp down, start-up and shut-down) limits, which are concerned when constructing convex hull of polytope for the UC problem in most literatures \cite{damci2016polyhedral}\cite{frangioni2015new}\cite{pan2016polyhedral}.

In fact, by introducing new state variables $\varrho_{c,d}$ for unit $i$ which remains off in periods $[c,d]$, the method proposed above can be extended to multi-period constraint polytope with other single-unit constraints such as startup/shutdown cost. Lack of space, we won't give further treatment of them here.

\subsection{Discussion of tightness of proposed constraints}\label{subsec5}

We name the facet of polytope which considers a subset of the constraints for UC model (i.e., minimum up and down time, power generation limits, ramping limits) only in a few periods of time as ``locally-facet". We have derived some tight expressions for a subset of the constraints for unit commitment problem in the previous subsection. Here we discuss the tightness of these inequalities. 

We refer to the feasible set defined by constraints (\ref{eq:inequality-taotao})(\ref{eq:tight-max-output-0}) as $\mathcal{P}^R_m$, i.e. $\mathcal{P}^R_m=\{(\prod_{[h,k]\in A_m}\tau^m_{h,k},\widetilde{P}_m,\cdots,\widetilde{P}_{m+M-1})\in [0,1]^{\lvert A_m\rvert +M}:\eqref{eq:inequality-taotao}(\ref{eq:tight-max-output-0})\}$. Let $\mathcal{P}^I_m=\mathcal{P}^R_m\cap\{(\prod_{[h,k]\in A_m}\tau^m_{h,k},\widetilde{P}_m,\cdots,\widetilde{P}_{m+M-1})\in\{0,1\}^{\lvert A_m\rvert}\times[0,1]^M\}$.

\begin{proposition}\label{prp1}
	Inequality (\ref{eq:tight-max-output-0}) defines a facet of $\textup{conv}(\mathcal{P}^I_m)$.
\end{proposition}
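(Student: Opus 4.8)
The plan is to show, for each fixed period $t\in[m,m+M-1]$, that \eqref{eq:tight-max-output-0} cuts out a facet of $\textup{conv}(\mathcal{P}^I_m)$ via the standard uniqueness-of-supporting-hyperplane criterion: once $\textup{conv}(\mathcal{P}^I_m)$ is known to be full-dimensional, \eqref{eq:tight-max-output-0} defines a facet if and only if every linear equation that holds on the face it induces is a scalar multiple of \eqref{eq:tight-max-output-0} itself. Throughout I would write $U_{h,k}$ for the coefficient multiplying $\tau^m_{h,k}$ on the right-hand side of \eqref{eq:tight-max-output-0} (the case-wise bound read off Table \ref{tab:max-output}), with the convention $U_{h,k}=0$ whenever $t\notin[h,k]$, so that \eqref{eq:tight-max-output-0} reads $\widetilde{P}_t\le\sum_{[h,k]\in A_m}U_{h,k}\,\tau^m_{h,k}$, and I would set $F=\{x\in\textup{conv}(\mathcal{P}^I_m):\widetilde{P}_t=\sum_{[h,k]\in A_m}U_{h,k}\tau^m_{h,k}\}$.

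First I would establish that $\textup{conv}(\mathcal{P}^I_m)$ is full-dimensional, of dimension $|A_m|+M$. The origin (all $\tau^m_{h,k}=0$, all $\widetilde{P}_s=0$) together with the $|A_m|$ single-interval points $e_{[h,k]}$ (one $\tau^m_{h,k}=1$, all $\widetilde{P}_s=0$) are feasible integer points of $\mathcal{P}^I_m$: a single on-interval trivially satisfies \eqref{eq:inequality-taotao}, and every generation bound holds since $\widetilde{P}_s=0$. By Theorem \ref{binary-integer-0} these are genuine points of $\mathcal{B}^I_m$, and the $|A_m|+1$ of them are affinely independent. To these I would append $M$ further points obtained from the fully-on interval $[m,m+M-1]\in A_m$ (whose bound equals $1$ in every period by case 4 of Table \ref{tab:max-output}) by raising a single coordinate $\widetilde{P}_s$ to a positive feasible value one $s$ at a time. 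Taking differences from the origin, the first $|A_m|$ vectors span the $\tau$-directions and the last $M$ span the $\widetilde{P}$-directions, so the $|A_m|+M+1$ points are affinely independent; hence no nontrivial equation holds on $\textup{conv}(\mathcal{P}^I_m)$, and $F$ is a proper nonempty face (the origin lies on $F$, while $e_{[m,m+M-1]}$ with all $\widetilde{P}_s=0$ does not).

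Next I would take an arbitrary hyperplane $\sum_{[h,k]\in A_m}a_{h,k}\tau^m_{h,k}+\sum_{s=m}^{m+M-1}b_s\widetilde{P}_s=c$ that contains $F$ and pin down its coefficients by substituting points of $F$. The origin gives $c=0$. For each $[h,k]\in A_m$, the point with $\tau^m_{h,k}=1$, $\widetilde{P}_t=U_{h,k}$, and all other coordinates $0$ lies on $F$ and is feasible, forcing $a_{h,k}+b_tU_{h,k}=0$, i.e. $a_{h,k}=-b_tU_{h,k}$ (the case $U_{h,k}=0$ giving $a_{h,k}=0$). Finally, using the fully-on interval with $\widetilde{P}_t$ held at its bound $1$, I would compare the point setting a given $\widetilde{P}_s$ ($s\ne t$) to $0$ against the one setting it to a positive feasible value—both on $F$—to obtain $b_s=0$ for every $s\ne t$. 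Thus $(a,b,c)$ equals $-b_t$ times the coefficient vector of \eqref{eq:tight-max-output-0}; since a trivial hyperplane would force $b_t=0$ and hence $a\equiv0$, we must have $b_t\ne0$, so the containing hyperplane is a nonzero scalar multiple of \eqref{eq:tight-max-output-0}. This is exactly the facet criterion.

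The main obstacle will be the feasibility bookkeeping for the witness points: for each chosen $(\tau,\widetilde{P})$ I must verify not only \eqref{eq:inequality-taotao} and that \eqref{eq:tight-max-output-0} holds with equality at period $t$, but also that the generation bounds in all other periods $s\ne t$ are respected and that the single-interval and fully-on configurations are admissible elements of $A_m$ with enough slack in the residual bounds to actually move $\widetilde{P}_s$. Particular care is needed in the degenerate cases $U_{h,k}=0$ (period $t$ outside the interval), ensuring the point used still lies on $F$ and the resulting equation reduces consistently to $a_{h,k}=0$. Once this explicit family of feasible points is checked, the dimension count of the second paragraph and the coefficient identities of the third combine immediately to yield the facet conclusion.
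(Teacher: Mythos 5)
Your proposal is correct and follows essentially the same route as the paper's proof: both establish the facet via the indirect criterion (every hyperplane containing all points tight at (\ref{eq:tight-max-output-0}) must be a scalar multiple of it, which the paper invokes as Theorem 3.6 of \cite{wolsey1988integer}), and both instantiate it with single-interval witness points whose power profile is zero except at period $t$ together with small perturbations of the other $\widetilde{P}_s$. The only differences are that you verify full-dimensionality explicitly (the paper leaves it implicit) and that your witness points are simpler than the ramp-feasible profiles of Appendix~1 (admissible here precisely because $\mathcal{P}^I_m$ carries no ramping constraints); both choices are sound.
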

\begin{proof}[\textbf{Proof}]
	See ``Appendix 2".
\end{proof}

\begin{theorem}
	$\mathcal{P}^R_m=\textup{conv}(\mathcal{P}^I_m)$.
\end{theorem}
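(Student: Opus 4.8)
The plan is to prove the two inclusions separately. The inclusion $\textup{conv}(\mathcal{P}^I_m)\subseteq\mathcal{P}^R_m$ is immediate, since $\mathcal{P}^I_m\subseteq\mathcal{P}^R_m$ by definition and $\mathcal{P}^R_m$ is convex. For the reverse inclusion, I would note that $\mathcal{P}^R_m$ is bounded (all $\tau^m_{h,k}\in[0,1]$ and $0\le\widetilde{P}_t\le 1$), hence a polytope, so it suffices to show that every vertex of $\mathcal{P}^R_m$ belongs to $\mathcal{P}^I_m$; then $\mathcal{P}^R_m=\textup{conv}(\textup{vert}(\mathcal{P}^R_m))\subseteq\textup{conv}(\mathcal{P}^I_m)$. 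Concretely, the goal is to prove that at any vertex the $\tau$-coordinates are integral, since the $\widetilde{P}$-coordinates are already allowed to be continuous in the definition of $\mathcal{P}^I_m$.

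Before analysing vertices I would record two structural facts. First, the bound $\widetilde{P}_t\le 1$ is redundant: every coefficient on the right-hand side of (\ref{eq:tight-max-output-0}) is at most $1$, and the index set $\{[h,k]:t\in[h,k]\}$ is contained in $\{[h,k]:t\in[h,k+\underline{T}_{off}]\}$, so the right-hand side of (\ref{eq:tight-max-output-0}) is bounded above by $\sum_{\{[h,k]\in A_m,\,t\in[h,k+\underline{T}_{off}]\}}\tau^m_{h,k}\le 1$ by (\ref{eq:inequality-taotao}). Second, once the $\tau$-coordinates are fixed the remaining constraints on the power variables decouple across $t$: each $\widetilde{P}_t$ is constrained only by $0\le\widetilde{P}_t$ and by (\ref{eq:tight-max-output-0}), whose right-hand side I denote $U_t(\tau)$ and which is a \emph{linear} function of $\tau$. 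Consequently, if some coordinate satisfied $0<\widetilde{P}_t<U_t(\bar\tau)$ at a candidate vertex $(\bar\tau,\bar P)$, one could perturb $\widetilde{P}_t$ up and down keeping every other coordinate fixed, exhibiting $(\bar\tau,\bar P)$ as a midpoint of two feasible points; hence at a vertex each $\bar P_t\in\{0,U_t(\bar\tau)\}$.

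The main step is to show that the $\tau$-part $\bar\tau$ of a vertex is itself a vertex of $\mathcal{B}^R_m$. Suppose not; since $\bar\tau\in\mathcal{B}^R_m$ one can write $\bar\tau=\tfrac12(\tau'+\tau'')$ with $\tau',\tau''\in\mathcal{B}^R_m$ distinct. I would lift each to the power space by setting $P'_t=0$ (resp.\ $P''_t=0$) whenever $\bar P_t=0$, and $P'_t=U_t(\tau')$ (resp.\ $P''_t=U_t(\tau'')$) whenever $\bar P_t=U_t(\bar\tau)$. Both lifted points lie in $\mathcal{P}^R_m$: they satisfy (\ref{eq:inequality-taotao}) because $\tau',\tau''$ do, and they satisfy (\ref{eq:tight-max-output-0}) with equality or slack by construction, with $0\le P'_t\le U_t(\tau')\le 1$. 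Using linearity of $U_t$, the average $\tfrac12(P'_t+P''_t)$ equals $0=\bar P_t$ in the first case and $U_t(\tfrac12(\tau'+\tau''))=U_t(\bar\tau)=\bar P_t$ in the second, so $(\bar\tau,\bar P)=\tfrac12\big((\tau',P')+(\tau'',P'')\big)$ is a proper midpoint of two distinct feasible points, contradicting that it is a vertex. Hence $\bar\tau$ is a vertex of $\mathcal{B}^R_m$, and by Theorem~\ref{binary-integer-0} it is integral, so $(\bar\tau,\bar P)\in\mathcal{P}^I_m$.

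I expect the lifting argument of the previous paragraph to be the crux: the decoupling of the power variables across time together with the linearity of the right-hand side $U_t(\tau)$ are exactly what allow the vertex to be split without disturbing feasibility, reducing integrality of the full polytope to the already-established integrality of the $\tau$-polytope $\mathcal{B}^R_m$ from Theorem~\ref{binary-integer-0}. The routine verifications — that $\mathcal{P}^R_m$ is a polytope, that $\widetilde{P}_t\le 1$ is redundant, and that the lifted points are feasible — are straightforward and I would keep them brief.
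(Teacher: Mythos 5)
Your proof is correct and takes essentially the same route as the paper: both reduce the integrality of $\mathcal{P}^R_m$ to the integrality of $\mathcal{B}^R_m$ established in Theorem \ref{binary-integer-0}, exploiting that each $\widetilde{P}_t$ is bounded below by $0$ and above by a linear, nonnegative function of the $\tau$-variables. The only difference is that the paper delegates this reduction to Lemma 4 of \cite{gentile2017tight}, whereas your vertex-splitting and lifting argument proves that step explicitly (and correctly), making the proof self-contained.
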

\begin{proof}[\textbf{Proof}]
	We have proved in Theorem \ref{binary-integer-0} that $\mathcal{B}^R_m$ is an integral polyhedron. According to the Lemma 4 in \cite{gentile2017tight}, it is not difficult to prove $\mathcal{P}^R_m=\textup{conv}(\mathcal{P}^I_m)$.
\end{proof}

We refer to the feasible set defined by constraints (\ref{eq:inequality-taotao})(\ref{eq:tight-max-output-0})-(\ref{eq:tight-ramp-down-0}) as $\mathcal{Q}^R_m$, i.e. $\mathcal{Q}^R_m=\{(\prod_{[h,k]\in A_m}\tau^m_{h,k},\widetilde{P}_m,\cdots,\widetilde{P}_{m+M-1})\in [0,1]^{\lvert A_m\rvert+M}:\eqref{eq:inequality-taotao}(\ref{eq:tight-max-output-0})-(\ref{eq:tight-ramp-down-0})\}$. Let $\mathcal{Q}^I_m=\mathcal{Q}^R_m\cap\{(\prod_{[h,k]\in A_m}\tau^m_{h,k},\widetilde{P}_m,\cdots,\widetilde{P}_{m+M-1})\in\{0,1\}^{\lvert A_m\rvert}\times[0,1]^M\}$.

\begin{proposition}\label{prp2}
	Inequality (\ref{eq:tight-max-output-0}) defines a facet of $\textup{conv}(\mathcal{Q}^I_m)$.
\end{proposition}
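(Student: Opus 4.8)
The plan is to leverage Proposition~\ref{prp1}, which already establishes that (\ref{eq:tight-max-output-0}) defines a facet of $\textup{conv}(\mathcal{P}^I_m)$, and to transfer this property to the smaller polytope $\textup{conv}(\mathcal{Q}^I_m)$. Since $\mathcal{Q}^I_m$ is obtained from $\mathcal{P}^I_m$ merely by appending the ramping inequalities (\ref{eq:tight-ramp-up-0})--(\ref{eq:tight-ramp-down-0}), we have $\mathcal{Q}^I_m\subseteq\mathcal{P}^I_m$ and hence $\textup{conv}(\mathcal{Q}^I_m)\subseteq\textup{conv}(\mathcal{P}^I_m)$. In particular (\ref{eq:tight-max-output-0}) remains valid for $\mathcal{Q}^I_m$ and therefore defines a face $F_Q$ of $\textup{conv}(\mathcal{Q}^I_m)$; it suffices to show $\dim(F_Q)=\dim(\textup{conv}(\mathcal{Q}^I_m))-1$.

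First I would verify that $\textup{conv}(\mathcal{Q}^I_m)$ is full-dimensional, i.e.\ of dimension $|A_m|+M$. Under the standing assumptions (the multi-period analogues of $\widetilde{P}_{start}+\widetilde{P}_{up}<1$, $\widetilde{P}_{shut}+\widetilde{P}_{down}<1$, $2\widetilde{P}_{up}<1$, $2\widetilde{P}_{down}<1$) the ``maximal'' power profile associated with any single interval $[h,k]\in A_m$ -- ramp up from $\widetilde{P}_{start}$ at rate $\widetilde{P}_{up}$, plateau at the cap $1$, then ramp down to $\widetilde{P}_{shut}$ at rate $\widetilde{P}_{down}$ -- is itself ramp-feasible, so the corresponding mixed-integer points lie in $\mathcal{Q}^I_m$. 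Since (\ref{eq:inequality-taotao}) imposes no equality binding all of these points within window $m$, perturbing the continuous coordinates downward by small amounts and switching the active interval $\tau^m_{h,k}$ yields $|A_m|+M+1$ affinely independent points, establishing full dimensionality.

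Next, to prove the facet I would exhibit $|A_m|+M$ affinely independent points of $\mathcal{Q}^I_m$ lying on $F_Q$, i.e.\ satisfying (\ref{eq:tight-max-output-0}) with equality for the period $t$ in question. The natural candidates are the points used in the proof of Proposition~\ref{prp1}, now required to be ramp-feasible: for every interval $[h,k]\in A_m$ with $h\le t\le k$, set $\tau^m_{h,k}=1$, fix $\widetilde{P}_t$ at the value $\textup{UB}(\widetilde{P}_t)$ prescribed by Table~\ref{tab:max-output}, and complete the remaining periods with the maximal up/plateau/down profile, which is engineered precisely so that $\widetilde{P}_t$ attains this bound. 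Because that bound is read off directly from the ramping and generation data, these tight points are automatically feasible for (\ref{eq:tight-ramp-up-0})--(\ref{eq:tight-ramp-down-0}). Varying the interval $[h,k]$ over those containing $t$ spans the relevant $\tau$-directions; independently lowering each free $\widetilde{P}_{t'}$ with $t'\neq t$ by a small $\varepsilon$ (while keeping $\widetilde{P}_t$ pinned and ramping intact) spans the $M-1$ continuous directions transverse to $\widetilde{P}_t$; and adjoining the all-off point (where $t$ lies in no interval, so $\widetilde{P}_t=0$ trivially meets equality) supplies the remaining independent point.

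The main obstacle will be the bookkeeping that guarantees ramp-feasibility is preserved simultaneously with affine independence and tightness of (\ref{eq:tight-max-output-0}). When $\widetilde{P}_t$ is held at its upper bound, one must check that every admissible perturbation of the neighbouring periods still respects the step bounds $\widetilde{P}_{up}$, $\widetilde{P}_{down}$ and the startup/shutdown caps $\widetilde{P}_{start}$, $\widetilde{P}_{shut}$ at the interval endpoints; the four cases of Table~\ref{tab:max-output} (interior versus boundary startup, interior versus boundary shutdown) each leave a slightly different amount of slack, and the strict inequalities in the assumptions are exactly what keep this perturbation room open. Verifying these cases and confirming that the resulting matrix of points has full rank is the technical heart of the argument, and this is where I would expect to mirror, step by step, the case analysis already carried out for Proposition~\ref{prp1} in ``Appendix 2''.
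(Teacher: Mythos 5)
Your overall strategy (the ``direct'' method: verify full-dimensionality and then exhibit $\lvert A_m\rvert+M$ affinely independent tight points) differs from the paper's, which uses the indirect characterization (Theorem 3.6 of \S I.4.3 in \cite{wolsey1988integer}): assume every point of $\mathcal{Q}^I_m$ tight at (\ref{eq:tight-max-output-0}) satisfies a generic equality $\sum_j\omega_j\widetilde{P}_j+\sum_{[h,k]}\varphi_{h,k}\tau^m_{h,k}=\lambda_0$ and pin down the coefficients one by one using the feasible points of Appendix~1. Either route is legitimate, but as written your plan has two concrete gaps. First, a counting gap: you list one tight point per interval $[h,k]\ni t$, $M-1$ perturbation points, and the all-off point, which totals $N_t+M$ where $N_t$ is the number of intervals containing $t$; you need $\lvert A_m\rvert+M$. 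The missing $\lvert A_m\rvert-N_t$ points are exactly those with $\tau^m_{h,k}=1$ for an interval \emph{not} containing $t$ and all powers zero (both sides of (\ref{eq:tight-max-output-0}) vanish, so they lie on the face); the paper uses precisely these (point (\ref{p:3})) to force $\varphi_{h,k}=0$ for $t\notin[h,k]$.

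Second, and more seriously, the perturbation scheme fails: on the maximal up/plateau/down profile the consecutive differences along the ramp-up (resp.\ ramp-down) segment equal $\widetilde{P}_{up}$ (resp.\ $\widetilde{P}_{down}$) exactly, so the two-period instances of (\ref{eq:tight-ramp-up-0}) and (\ref{eq:tight-ramp-down-0}) are already tight there with zero slack. Lowering a single $\widetilde{P}_{t'}$ by $\varepsilon$ while pinning $\widetilde{P}_t$ at its bound then violates a ramping constraint, and the strict inequalities in the standing assumptions do not create slack on these segments, contrary to what you claim. The fix is to take the $\varepsilon$-perturbations on a different base point: activate an interval that contains $t'$ but not $t$ (e.g.\ $[m,t-1]$ for $t'<t$ or $[t+1,m+M-1]$ for $t'>t$), set $\widetilde{P}_{t'}=\varepsilon$ and all other powers to zero. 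Such points are trivially ramp-feasible, lie on the face, and are exactly the paper's points (\ref{p:4}); with them (and the points of the first correction) your $\lvert A_m\rvert+M$ affinely independent points can be assembled and the direct argument goes through.
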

\begin{proof}[\textbf{Proof}]
	See ``Appendix 2".
\end{proof}

\begin{proposition}\label{prp3}
	Inequality (\ref{eq:tight-ramp-up-0}) defines a facet of $\textup{conv}(\mathcal{Q}^I_m)$ if and only if $a<\frac{1}{\widetilde{P}_{up}}$.
\end{proposition}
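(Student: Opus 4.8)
The plan is to prove the two implications separately, relying on the dimension of $\textup{conv}(\mathcal{Q}^I_m)$ and the facet property of the generation-limit inequality (\ref{eq:tight-max-output-0}) already established in Proposition~\ref{prp2} (Appendix~2). Validity of (\ref{eq:tight-ramp-up-0}) is immediate from the termwise upper bounds $\textup{UB}(\widetilde{P}_t-\widetilde{P}_{t-a})$ tabulated in Table~\ref{tab:ramp}, so the whole content is the dimension count of the face $F=\{x\in\textup{conv}(\mathcal{Q}^I_m)\mid(\ref{eq:tight-ramp-up-0})\text{ is tight}\}$, and how it depends on the sign of $1-a\widetilde{P}_{up}$.

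For necessity, I would show that $a\ge 1/\widetilde{P}_{up}$, i.e. $a\widetilde{P}_{up}\ge 1$, makes (\ref{eq:tight-ramp-up-0}) collapse onto (\ref{eq:tight-max-output-0}). Indeed, every interval $[h,k]$ occurring in the third and fourth sums of (\ref{eq:tight-ramp-up-0}) satisfies $h\le t-a$, hence $t-h\ge a$ and $\widetilde{P}_{start}+(t-h)\widetilde{P}_{up}\ge a\widetilde{P}_{up}\ge 1$; together with $\min(1,a\widetilde{P}_{up})=1$ this forces each right-hand coefficient of (\ref{eq:tight-ramp-up-0}) to equal the matching coefficient of (\ref{eq:tight-max-output-0}) for period $t$. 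Consequently, on $F$ we have both $\widetilde{P}_t-\widetilde{P}_{t-a}=\textup{RHS}$ and, from (\ref{eq:tight-max-output-0}), $\widetilde{P}_t\le\textup{RHS}$, which together with $\widetilde{P}_{t-a}\ge 0$ forces $\widetilde{P}_{t-a}=0$ and tightness of (\ref{eq:tight-max-output-0}). Thus $F$ is contained in the facet $\{(\ref{eq:tight-max-output-0})\text{ tight}\}$ of Proposition~\ref{prp2} and additionally satisfies $\widetilde{P}_{t-a}=0$; since that facet contains points with $\widetilde{P}_{t-a}>0$ (e.g. the unit on at full power throughout the window), the extra equation cuts it strictly, giving $\textup{dim}(F)\le\textup{dim}(\textup{conv}(\mathcal{Q}^I_m))-2$, so (\ref{eq:tight-ramp-up-0}) is not a facet.

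For sufficiency, assume $a\widetilde{P}_{up}<1$ and let $\sum_{[h,k]\in A_m}\lambda_{h,k}\tau^m_{h,k}+\sum_{s=m}^{m+M-1}\mu_s\widetilde{P}_s=\pi_0$ be any equation valid on all of $F$; the goal is to show its coefficient vector is a scalar multiple of that of (\ref{eq:tight-ramp-up-0}). I would insert a family of feasible points lying on $F$: the all-off point, which yields $\pi_0=0$; for each period $s\notin\{t-a,t\}$ a pair of points on $F$ differing only in $\widetilde{P}_s$ (a free perturbation of an on-period power not coupled to the ramp $t-a\to t$), forcing $\mu_s=0$; a point obtained by shifting the whole plateau so that $\widetilde{P}_t$ and $\widetilde{P}_{t-a}$ move together while their difference stays fixed, forcing $\mu_t=-\mu_{t-a}$; for each interval $[h,k]$ with $t\notin[h,k]$ a point with $\tau^m_{h,k}=1$, $\widetilde{P}_{t-a}=0$ and all on-powers zero, forcing $\lambda_{h,k}=0$ (matching the absent term); and for each interval $[h,k]\ni t$ a profile ramping up at the maximal rate so that $\widetilde{P}_t-\widetilde{P}_{t-a}$ equals the tabulated coefficient of Table~\ref{tab:ramp}, which pins $\lambda_{h,k}=-\mu_t\cdot\textup{coeff}_{h,k}$. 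Factoring out $\rho=\mu_t$ then exhibits the equation as $\rho$ times (\ref{eq:tight-ramp-up-0}), establishing the facet property.

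The main obstacle will be the sufficiency direction, and specifically the construction, for each interval $[h,k]$ with $h\le t\le k$, of an explicit feasible power profile that keeps the ramp-up on $[t-a,t]$ tight at the tabulated value while satisfying all of (\ref{eq:tight-max-output-0})--(\ref{eq:tight-ramp-down-0}) on the remaining periods; in particular one must ramp down in time to respect the shut-down bound $\widetilde{P}_{shut}+(k-t)\widetilde{P}_{down}$ at the end of the interval, and treat separately the two regimes of the $\min$ according to whether $a\widetilde{P}_{up}$ or the shut-down term attains it. The hypothesis $a\widetilde{P}_{up}<1$ is exactly what guarantees that $\widetilde{P}_t=\widetilde{P}_{t-a}+a\widetilde{P}_{up}\le 1$ is attainable, so the ramp bound is active rather than dominated by the generation limit; once achievability of each tabulated coefficient is secured, verifying that the resulting point family is affinely independent and spans a face of dimension $\textup{dim}(\textup{conv}(\mathcal{Q}^I_m))-1$ is the remaining bookkeeping.
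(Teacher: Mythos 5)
Your proposal is correct and follows essentially the same route as the paper: necessity by observing that when $a\widetilde{P}_{up}\ge 1$ every coefficient of (\ref{eq:tight-ramp-up-0}) collapses to the corresponding coefficient of (\ref{eq:tight-max-output-0}), so the inequality is dominated by (\ref{eq:tight-max-output-0}) together with $\widetilde{P}_{t-a}\ge 0$ and cannot be facet-defining; sufficiency by the indirect hyperplane method of Theorem 3.6 of \cite{wolsey1988integer}, using exactly the point families the paper constructs in Appendix~1 (the all-off point for $\lambda_0=0$, $\varepsilon$-perturbed pairs for $\omega_j=0$, the uniformly shifted maximal ramp plateau for $\omega_{t-a}=-\omega_t$, and maximal-rate ramp profiles to pin each $\varphi_{h,k}$). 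The remaining work you flag --- exhibiting the explicit feasible profiles for each interval class --- is precisely what the paper supplies via points (\ref{p:3}), (\ref{p:4}), (\ref{p:12}), (\ref{p:18}), (\ref{p:24}), (\ref{p:25}), (\ref{p:28}) and (\ref{p:32}).
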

\begin{proof}[\textbf{Proof}]
	See ``Appendix 2".
\end{proof}

\begin{proposition}\label{prp4}
	Inequality (\ref{eq:tight-ramp-down-0}) defines a facet of $\textup{conv}(\mathcal{Q}^I_m)$ if and only if $a<\frac{1}{\widetilde{P}_{down}}$.
\end{proposition}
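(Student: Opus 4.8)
The plan is to treat Proposition \ref{prp4} as the mirror image of Proposition \ref{prp3} under time reversal: reversing the index $s\mapsto 2m+M-1-s$ inside the window $[m,m+M-1]$ carries the ramp-up inequality \eqref{eq:tight-ramp-up-0} into the ramp-down inequality \eqref{eq:tight-ramp-down-0} while interchanging $\widetilde{P}_{up}\leftrightarrow\widetilde{P}_{down}$ and $\widetilde{P}_{start}\leftrightarrow\widetilde{P}_{shut}$, so that the threshold $a<1/\widetilde{P}_{up}$ of Proposition \ref{prp3} becomes $a<1/\widetilde{P}_{down}$. Before either direction I would record that $\textup{conv}(\mathcal{Q}^I_m)$ is full-dimensional in $\mathbb{R}^{\lvert A_m\rvert+M}$ (as in the proof of Proposition \ref{prp2}), so that \eqref{eq:tight-ramp-down-0} is facet-defining precisely when the face $F$ it induces has dimension $\lvert A_m\rvert+M-1$.

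For necessity I would argue the contrapositive by showing that $a\widetilde{P}_{down}\ge1$ makes \eqref{eq:tight-ramp-down-0} redundant. The point is that $a\widetilde{P}_{down}\ge1$ removes the $a\widetilde{P}_{down}$ branch from every nested $\min$ in the coefficients: writing $k-t+a=k-(t-a)$ and noting that for every interval with $t\le k$ one has $\widetilde{P}_{shut}+(k-(t-a))\widetilde{P}_{down}\ge a\widetilde{P}_{down}\ge1$, a term-by-term comparison with Table \ref{tab:max-output} shows that the right-hand side of \eqref{eq:tight-ramp-down-0} collapses to exactly $\textup{UB}(\widetilde{P}_{t-a})$, i.e. the right-hand side of the generation limit \eqref{eq:tight-max-output-0} written at period $t-a$. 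Consequently \eqref{eq:tight-ramp-down-0} is the sum of \eqref{eq:tight-max-output-0} at $t-a$ and the valid bound $\widetilde{P}_t\ge0$; its face $F$ is therefore contained in the intersection of the two distinct faces these induce, so $\dim F\le \lvert A_m\rvert+M-2$ and \eqref{eq:tight-ramp-down-0} is not a facet.

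For sufficiency I would assume $a\widetilde{P}_{down}<1$ and use the direct method, exhibiting $\lvert A_m\rvert+M$ affinely independent points of $\mathcal{Q}^I_m$ lying on $F$. The all-off point and, for each interval $[h,k]$ avoiding $\{t-a,t\}$, the point with that single interval active and all powers zero already satisfy \eqref{eq:tight-ramp-down-0} with equality and provide the $\tau$-directions. For the remaining $M$ directions I would, in each of the four interval cases of Table \ref{tab:ramp}, drive $\widetilde{P}_{t-a}$ up to its generation upper bound and $\widetilde{P}_t$ down so that the branch of the $\min$ that is genuinely attained when $a\widetilde{P}_{down}<1$ becomes tight, completing the remaining periods along a maximal ramp-down trajectory. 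Equivalently, one may run the indirect method: given any equation $\sum\mu_{h,k}\tau^m_{h,k}+\sum\nu_s\widetilde{P}_s=\mu_0$ valid on all of $F$, differencing these tight points forces $(\mu,\nu,\mu_0)$ to be a scalar multiple of the coefficient vector of \eqref{eq:tight-ramp-down-0}.

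The main obstacle is the sufficiency direction, namely the explicit construction of a feasible maximal ramp-down power profile for each interval case that makes \eqref{eq:tight-ramp-down-0} tight while simultaneously satisfying all generation limits \eqref{eq:tight-max-output-0}, every other ramping inequality \eqref{eq:tight-ramp-up-0}--\eqref{eq:tight-ramp-down-0}, and the startup/shutdown levels at the endpoints $h$ and $k$; one must then verify that the assembled family is affinely independent and of the full cardinality $\lvert A_m\rvert+M$. Managing the four cases together with the saturation pattern of the nested $\min$ operators is where the bookkeeping concentrates, and the hypothesis $a\widetilde{P}_{down}<1$ is exactly what guarantees that the $a\widetilde{P}_{down}$ branch can be realized by a genuine interior power move, which in turn is what pins down the defining equation of $F$ up to scaling.
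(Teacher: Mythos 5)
Your overall strategy matches the paper's. For necessity, your argument is essentially the one the paper gives: when $a\widetilde{P}_{down}\ge 1$ the branch $a\widetilde{P}_{down}$ (and, via $k-t+a\ge a$, the branch $\widetilde{P}_{shut}+(k-t+a)\widetilde{P}_{down}$ whenever it exceeds $1$) drops out of every coefficient, the right-hand side of (\ref{eq:tight-ramp-down-0}) collapses to that of (\ref{eq:tight-max-output-0}) written at period $t-a$, and the inequality is then the sum of (\ref{eq:tight-max-output-0}) and $\widetilde{P}_t\ge 0$, hence not facet-defining; this is exactly the paper's domination argument, with your codimension-two remark making explicit what the paper leaves implicit. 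For sufficiency the paper runs the indirect method of Theorem 3.6 of \S I.4.3 in \cite{wolsey1988integer}, postulating a generic valid equality on the face and pinning down its coefficients using the explicit points (\ref{p:1}), (\ref{p:3})--(\ref{p:4}), (\ref{p:26})--(\ref{p:27}), (\ref{p:30}), (\ref{p:40}), (\ref{p:12}), (\ref{p:7}); your proposed points (all-off, single-interval with zero power, and maximal ramp trajectories saturating the active branch of each $\min$) are the same objects, and your observation that $a\widetilde{P}_{down}<1$ is what makes the $a\widetilde{P}_{down}$ branch realizable is precisely how the paper derives $\omega_{t-a}=-\omega_t$ from points (\ref{p:26})--(\ref{p:27}). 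The one genuinely different element is your time-reversal reduction $s\mapsto 2m+M-1-s$ with $\widetilde{P}_{up}\leftrightarrow\widetilde{P}_{down}$, $\widetilde{P}_{start}\leftrightarrow\widetilde{P}_{shut}$: since $A_m$ and the constraint system (\ref{eq:inequality-taotao})(\ref{eq:tight-max-output-0})--(\ref{eq:tight-ramp-down-0}) are invariant under this involution, Proposition \ref{prp4} does follow formally from Proposition \ref{prp3}, a shortcut the paper does not exploit (it proves the two propositions by parallel but independent computations). Two points of imprecision in your sketch: the single-interval zero-power points that are tight at (\ref{eq:tight-ramp-down-0}) are those with $t-a\notin[h,k]$ (the paper's part 4), not those avoiding both $t-a$ and $t$, so your count of $\tau$-directions as stated falls short; and the actual construction and feasibility/affine-independence verification of the ramp-trajectory points --- which is where all the work in the paper's eight-step argument lies --- is deferred rather than carried out, as you yourself acknowledge.
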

\begin{proof}[\textbf{Proof}]
	See ``Appendix 2".
\end{proof}

We let $G_1=\min(\lceil\frac{1}{\widetilde{P}_{up}}\rceil,M)$ and $G_2=\min(\lceil\frac{1}{\widetilde{P}_{down}}\rceil,M)$. Our work identifies facets and removes a large number of redundant inequalities (roughly accounts for   $\frac{2M(M-1)-(G_1-1)(2M-G_1)-(G_2-1)(2M-G_2)}{2M(M-1)}\times100\%)$, as shown in Fig. \ref{fig:facet-constraints} in the numerical experiment, which can significantly improve the speed of modeling and solving.

\subsection{Multi-period locally-facet-based UC formulation}\label{subsec6}

Based on sliding window, we can obtain Multi-Period Locally-facet-based UC formulation. Since we omit the state variable $u_t^i$, we have to transform some of the original constraints.

\begin{equation}
	\sum\nolimits_{\{[h,k] \in A_{\min(t,T-M+1)},t \in [h,k]\}}\tau^{\min(t,T-M+1)}_{h,k}=u_0,\qquad t\in[0,W+L]\label{eq:new-initial-status-0}
\end{equation}
We should note that qualities (\ref{eq:new-initial-status-0}) can replace (\ref{eq:initial-status}). 

For the convenience of processing, we classify the units as follows. We let $J^{>x}:=\{i\vert\underline{T}_{on}^i>x\} $ and $K^{>x}:=\{i\vert\underline{T}_{off}^i>x\}$ where $x\in[1,T]$.
We should note that the minimum up time constraints are actually implicit in the definition of $A^i_m$. But it's not complete in the global space. According to linear relations (\ref{eq:m-taou})-(\ref{eq:m-taow}), we can reformulate minimum up/down time constraints (\ref{eq:min-up-time})(\ref{eq:min-down-time}) as follows:
\begin{align}
	&\sum\nolimits_{\overline{\omega}=[t-\underline{T}_{on}]^++1}^t\sum\nolimits_{\{[\overline{\omega},k] \in A_{\max(0,\overline{\omega}-M+1)}\}}\tau^{\max(0,\overline{\omega}-M+1)}_{\overline{\omega},k}\le\notag\\
	&\sum\nolimits_{\{[h,k] \in A_{\min(t,T-M+1)},t \in [h,k]\}}\tau^{\min(t,T-M+1)}_{h,k}, \quad t \in [W+1,T]\label{eq:tao-min-up-time}\\
	&\sum\nolimits_{\overline{\omega}=[t-\underline{T}_{off}]^++1}^t\sum\nolimits_{\{[h,\overline{\omega}-1] \in A_{\min(\overline{\omega}-1,T-M+1)}\}}\tau^{\min(\overline{\omega}-1,T-M+1)}_{h,\overline{\omega}-1}\le\notag\\
	&1-\sum\nolimits_{\{[h,k] \in A_{\min(t,T-M+1)},t \in [h,k]\}}\tau^{\min(t,T-M+1)}_{h,k}, \quad t \in [L+1,T]\label{eq:tao-min-down-time}
\end{align}
If $\underline{T}^i_{on}<M^i$, then (\ref{eq:tao-min-up-time}) are redundant and can be removed. If $\underline{T}^i_{off}<M^i$, then (\ref{eq:tao-min-down-time}) are redundant and can be removed.

Using (\ref{eq:m-taou})-(\ref{eq:m-taow}), we can reformulate (\ref{eq:spinning-reserve}), (\ref{eq:new-power-balance}) and (\ref{eq:start-cost}) as:
\begin{align}
	&\widetilde{S}_t\ge (C_{cold}-C_{hot})[\sum\nolimits_{\{[t,k] \in A_{\max(0,t-M+1)}\}}\tau^{\max(0,t-M+1)}_{t,k}-\notag\\
	&\sum\nolimits_{\pi=\max(1,t-\underline{T}_{off}-T_{cold})}^{t-1}\sum\nolimits_{\{[h,\pi-1] \in A_{\min(\pi-1,T-M+1)}\}}\tau^{\min(\pi-1,T-M+1)}_{h,\pi-1}-m_t]\label{eq:tao-start-cost}\\
	&\sum\nolimits_{i=1}^{N}[\widetilde{P}^i_t(\overline{P}^i-\underline{P}^i)+\underline{P}^i\sum\nolimits_{\{[h,k] \in A^i_{\min(t,T-M^i+1)},t \in [h,k]\}}\tau^{i,\min(t,T-M^i+1)}_{h,k}]\notag\\
	&-P_{D,t}=0\label{eq:tao-power-balance}\\
	&\sum\nolimits_{i=1}^N\overline{P}^i\sum\nolimits_{\{[h,k] \in A^i_{\min(t,T-M^i+1)},t \in [h,k]\}}\tau^{i,\min(t,T-M^i+1)}_{h,k}\ge P_{D,t}+R_t\label{eq:tao-spinning-reserve}
\end{align}

We reformulate $\widetilde{f}^i(\widetilde{P}^i_t)$ as $\widetilde{f}^i(\widetilde{P}^i_t)=\widetilde{\alpha}_i\sum\nolimits_{\{[h,k]\in A^i_m,t\in[h,k]\}}\tau^{i,m}_{h,k}+\widetilde{\beta}_i\widetilde{P}_t^i+\widetilde{\gamma}_i(\widetilde{P}_t^i )^2,m=\min(t,T-M^i+1)$. A multi-period tight MIQP UC formulation, denoted as MP-1, that contains only binary variables $\tau^{i,m}_{h,k}$ could be obtained directly.
\begin{align}
	&\min \quad\sum_{i=1}^{N}\sum_{t=1}^{T}[\widetilde{f}_i(\widetilde{P}^i_t)+C^i_{hot}\sum\nolimits_{\{[t,k] \in A^i_{\max(0,t-M^i+1)}\}}\tau^{i,\max(0,t-M^i+1)}_{t,k}+\widetilde{S}^i_t]\notag\\
	&s.t.\left\{
	\begin{aligned}
		&(\ref{eq:inequality-taotao})(\ref{eq:m-taou-m})(\ref{eq:m-taov-m})(\ref{eq:m-taow-m})(\ref{eq:tight-max-output-0})(\ref{eq:new-initial-status-0})(\ref{eq:tao-spinning-reserve})(\ref{eq:tao-power-balance})(\ref{eq:tao-start-cost})\\
		&(\ref{eq:tight-ramp-up-0})\text{ for }m=T-M^i+1\text{ or }a=t-m\\
		&(\ref{eq:tight-ramp-down-0})\text{ for }m=0\text{ or }t=m+M^i-1\\
		&(\ref{eq:tao-min-up-time})\text{ for } i\in J^{>M^i-1},(\ref{eq:tao-min-down-time})\text{ for } i\in K^{>M^i-1}\\
		&\widetilde{P}^i_t\in [0,1],\widetilde{S}^i_t\in R_+\\
		&\tau^{i,m}_{h,k}\in \{0,1\},[h,k]\in A^i_m,m\in [0,T-M^i+1]
	\end{aligned}
	\right.
\end{align}
We should note that $M^i$ can take on any value belonging to $[2,T+1]$. The variables $u^i_t, v^i_t, w^i_t$ in UC formulation are replaced with $\tau^{i,m}_{h,k}$ by using (\ref{eq:m-taou})-(\ref{eq:m-taow}) and adding (\ref{eq:m-taou-m})-(\ref{eq:m-taow-m}) to the model. In this way, the number of non-zero elements in the formulation increase dramatically. For any unit $i$ in $M^i$ periods, we note that, when only considering state variable constraints, Minimum up/down time constraints, and unit generation limits, our model gives the ideal formulation (convex hull of the solutions of the single-unit commitment problem). And when ramping constraints are considered furthermore, our model gives the facets of unit generation limits and ramping constraints.

In addition, compared to DP-based model, our model can segment the UC problem which is based on sliding window. For example, for a 24-period scheduling span, we can take every six periods as a group to build the multi-period model presented in this paper and then connect them together. As to the size of sliding window, we need to consider comprehensively to get the most efficient model.

\begin{table}[t]
	\begin{center}
		\begin{minipage}{\textwidth}
			\caption{Number of variables and constraints.}\label{tab:variable-num}
			\resizebox{\textwidth}{!}{
				\begin{tabular}{ccccccccccccccccccccccccc}
					\toprule
					M & 2 & 3 & 4 & 5 & 6 & 7 & 8 & 9 & 10 & 11 & 12 & 13 & 14 & 15 & 16 & 17 & 18 & 19 & 20 & 21 & 22 & 23 & 24 & 25 \\
					\midrule
					$u_t,v_t,w_t$ & 72 & 72 & 72 & 72 & 72 & 72 & 72 & 72 & 72 & 72 & 72 & 72 & 72 & 72 & 72 & 72 & 72 & 72 & 72 & 72 & 72 & 72 & 72 & 72 \\
					$n_1$ & 72 & 138 & 220 & 315 & 420 & 532 & 648 & 765 & 880 & 990 & 1092 & 1183 & 1260 & 1320 & 1360 & 1377 & 1368 & 1330 & 1260 & 1155 & 1012 & 828 & 600 & 325 \\
					$n_2$ & 72 & 115 & 154 & 189 & 220 & 247 & 270 & 289 & 304 & 315 & 322 & 325 & 324 & 319 & 310 & 297 & 280 & 259 & 234 & 205 & 172 & 135 & 94 & 49 \\
					Upper bounds & 47 & 68 & 87 & 104 & 119 & 132 & 143 & 152 & 159 & 164 & 167 & 168 & 167 & 164 & 159 & 152 & 143 & 132 & 119 & 104 & 87 & 68 & 47 & 24 \\
					Ramping constraints & 48 & 138 & 264 & 420 & 600 & 798 & 1008 & 1224 & 1440 & 1650 & 1848 & 2028 & 2184 & 2310 & 2400 & 2448 & 2448 & 2394 & 2280 & 2100 & 1848 & 1518 & 1104 & 600 \\
					\botrule
				\end{tabular}
			}
		\end{minipage}
	\end{center}
\end{table}

If the minimum up/down time constraints and generator’s history state are not taken into consideration in the definition of $A^i_m$, the number of variables $\tau^{i,m}_{h,k}$ is equal to $n_1=\frac{M^i(M^i+1)(T-M^i+2)}{2}$. As the value of $M^i$ increases, the number of variables first increases and then decreases. If we take minimum up time constraints into account, the number of variables $\tau^{i,m}_{h,k}$ is equal to $n_2=\{2M^i-1+\frac{{[M^i-\underline{T}^i_{on}-1]}^+(M^i-\underline{T}^i_{on})}{2}\}(T-M^i+2)$. With $M^i$ fixed, $n_2$ reaches minimum when $\underline{T}^i_{on}\ge M^i-1$. The number of generation limits and ramping constraints are equal to $M^i(T-M^i+2)-1$ and $M^i(M^i-1)(T-M^i+2)$ respectively. Here we let $T=24$, $\underline{T}^i_{on}=M^i-1$ and list the number of variables and constraints for a single unit in the Table \ref{tab:variable-num}.

In order to construct a compact and tight model, we can adjust the values of $M^i$ for different unit $i$. When $M^i\ge \max(\lceil\frac{1-\widetilde{P}^i_{start}}{\widetilde{P}^i_{up}}\rceil+1,\lceil\frac{1-\widetilde{P}^i_{shut}}{\widetilde{P}^i_{down}}\rceil+1,\lceil\frac{1}{\widetilde{P}^i_{up}}\rceil,\lceil\frac{1}{\widetilde{P}^i_{down}}\rceil,2)$, we could get tightest generation upper bounds and ramping constraints in the global space.

\subsection{Improvements of multi-period UC formulation}\label{subsec7}

The number of nonzero elements of model MP-1 increases as all of the variables $u^i_t,v^i_t,w^i_t$ in the model are replaced with $\tau^{i,m}_{h,k}$. By adding the variables $u^i_t,v^i_t,w^i_t$, model MP-1 can be transformed to
\begin{align}
	&\min \quad(\ref{eq:new-objective-function})\notag\\
	&s.t.\left\{
	\begin{aligned}
		&(\ref{eq:spinning-reserve})(\ref{eq:logical})(\ref{eq:min-up-time})(\ref{eq:min-down-time})(\ref{eq:initial-status})(\ref{eq:new-power-balance})(\ref{eq:start-cost})(\ref{eq:m-taou})(\ref{eq:m-taov})(\ref{eq:m-taow})(\ref{eq:tight-max-output-0})\\
		&(\ref{eq:tight-ramp-up-0})\text{ for } m=T-M^i+1\text{ or }a=t-m\\
		&(\ref{eq:tight-ramp-down-0})\text{ for } m=0\text{ or }t=m+M^i-1\\
		&u^i_t,v^i_t,w^i_t\in\{0,1\},\widetilde{P}^i_t\in [0,1],\widetilde{S}^i_t\in R_+\\
		&\tau^{i,m}_{h,k}\in\{0,1\},[h,k]\in A^i_m,m\in[0,T-M^i+1]
	\end{aligned}
	\right.
\end{align}
which is denoted as MP-2.

In general, models that are more compact and tighter are more computationally efficient. However, when the tightness of the model reaches a certain level, the increase of tightness is of little help to the improvement of its computational efficiency. On the other hand, too many binary variables will also increase the computational burden.

This formulation is not compact enough because it contains too many variables. Since variables $u_t,v_t,w_t,\tau^m_{h,k}$ are linearly dependent, some of these variables can be represented by linear combinations of the rest. In order to reduce the number of variables without adding too many non-zero elements, $u_t,v_t,w_t$ should be preserved. And some of $\tau^m_{h,k}$ should be omitted instead. For this purpose, we try to represent $\tau^m_{h,k}$ with $u_t,v_t,w_t,\tau^m_{h,k}$ in the following.

We assume, without loss of generality, that $\underline{T}_{on}=1$. Firstly, $\tau^m_{h,k}\in\{\tau^m_{h,k}\vert h \in [m+1,m+M-2],k \in [h,m+M-2]\}$ can be viewed as basic variables.
\begin{align}
	&\left\{
	\begin{aligned}
		&\sum\nolimits_{\{t \in [h,k],h \in [m,m+M-1],k \in [h,m+M-1]\}}\tau^m_{h,k}=u_t, \quad t \in [m,m+M-1] \\ 
		&\sum\nolimits_{\{h=t,h \in [m,m+M-1],k \in [h,m+M-1]\}}\tau^m_{h,k}=v_t, \quad t \in [m+1,m+M-1] \\ 
		&\tau^m_{h,k}=\tau^m_{h,k}, \quad h \in [m+1,m+M-2],k \in [h,m+M-2]
	\end{aligned}
	\right.\notag\\
	&\Longrightarrow\qquad \bm{E}\bm{x}=\bm{b}
\end{align}
where $\bm{E}$ is an $\frac{M(M+1)}{2}\times\frac{M(M+1)}{2}$ matrix, $\bm{x}=(\tau^m_{m,m},\cdots,\tau^m_{m,m+M-1},\\\tau^m_{m+1,m+M-1},\cdots,\tau^m_{m+M-1,m+M-1},
\tau^m_{m+1,m+1},\cdots,\tau^m_{m+M-2,m+M-2})$, \\$\bm{b}=(u_m,\cdots,u_{m+M-1},v_{m+1},\cdots,v_{m+M-1},\tau^m_{m+1,m+1},\cdots,\tau^m_{m+M-2,m+M-2})$. Matrix $\bm{E}$ can be cut into four blocks.  
\begin{align}
	\bm{E}=
	\left[\begin{array}{c|c}
		\bm{C} & \bm{B} \\ 
		\hline 
		\bm{0} & \bm{I}
	\end{array}\right]
\end{align} 
where $\bm{I}$ is an identity matrix, and matrix $\bm{C}$ have the structure as follows.
$$
C=\begin{blockarray}{cccccccccc}
	1 & 2 & \cdots & M-1 & M & M+1 & \cdots & 2M-2 & 2M-1 & \\
	\begin{block}{(ccccccccc)c}
		1 & 1 & \cdots & 1 & 1 & 0 & \cdots & 0 & 0 & 1 \\	
		0 & 1 & \cdots & 1 & 1 & 1 & \ddots & \vdots & \vdots & 2 \\
		\vdots & 0 & \ddots & \vdots & \vdots & \vdots & \ddots & 0 & \vdots & \vdots \\
		0 & \vdots & \ddots & 1 & 1 & 1 & \cdots & 1 & 0 & M-1 \\			
		0 & 0 & \cdots & 0 & 1 & 1 & \cdots & 1 & 1 & M \\
		0 & 0 & \cdots & 0 & 0 & 1 & 0 & \cdots & 0 & M+1 \\	
		\vdots & \vdots & \ddots & \vdots & \vdots & 0 & \ddots & \ddots & \vdots & \vdots \\	
		0 & 0 & \cdots & 0 & 0 & \vdots & \ddots & 1 & 0 & 2M-2 \\	
		0 & 0 & \cdots & 0 & 0 & 0 & \cdots & 0 & 1 & 2M-1 \\	
	\end{block}
\end{blockarray}
$$

Matrix $\bm{E}$ can be easily transformed to identity matrix by elementary transformation, which means that matrix $\bm{E}$ is equivalent to identity matrix. Hence, matrix $\bm{E}$ is nonsingular. It is not difficult to verify that state variables $u_m$, $\cdots$, $u_{m+M-1}$, $v_{m+1}$, $\cdots$, $v_{m+M-1}$, $\tau^m_{h,k}\in \{\tau^m_{h,k}\vert m<h\le k<m+M-1\}$, are linearly independent. And all the new state variables $\tau^m_{h,k}$ can be expressed as linear combinations of these base variables.
\begin{equation}
	\bm{x}=\bm{E}^{-1}\bm{b}\label{eq:tao-taobase}
\end{equation}
According to the definition of $\tau^m_{h,k}$, we have the following inequalities.
\begin{align}
	\bm{E}^{-1}\bm{b}\ge \bm{0}\label{eq:taom-in-0}\\
	\bm{E}^{-1}\bm{b}\le \bm{1}\label{eq:taom-in-1}
\end{align}
And inequalities corresponding to variables $\tau^m_{h,k}\in \{\tau^m_{h,k}\vert m<h\le k<m+M-1\}$ in inequality groups (\ref{eq:taom-in-0})(\ref{eq:taom-in-1}) can be deleted. If $\underline{T}_{on}>1$, (\ref{eq:taom-in-0})(\ref{eq:taom-in-1}) still hold with $\tau^m_{h,k}=0,k\in[h,h+\underline{T}_{on}-2]$.

Now, we present our multi-period tight and compact MIQP UC formulation, denoted as M-period tight model (MP-3), in high-dimensional space:
\begin{align}
	&\min \quad(\ref{eq:new-objective-function})\notag\\
	&s.t.\left\{
	\begin{aligned}
		&(\ref{eq:spinning-reserve})(\ref{eq:logical})(\ref{eq:min-up-time})(\ref{eq:min-down-time})(\ref{eq:initial-status})(\ref{eq:new-power-balance})(\ref{eq:start-cost})(\ref{eq:tight-max-output-0})(\ref{eq:taom-in-0})(\ref{eq:taom-in-1})\\
		&(\ref{eq:tight-ramp-up-0})\text{ for } m=T-M^i+1\text{ or }a=t-m\\
		&(\ref{eq:tight-ramp-down-0})\text{ for } m=0\text{ or }t=m+M^i-1\\
		&u^i_t,v^i_t,w^i_t\in\{0,1\},\widetilde{P}^i_t\in [0,1],\widetilde{S}^i_t\in R_+,m\in[0,T-M^i+1]\\
		&\tau^{i,m}_{h,k}\in\{0,1\},[h,k]\in A^i_m, m<h\le k<m+M^i-1
	\end{aligned}
	\right.
\end{align}
We should note that all of the ariables $\tau^{i,m}_{h,k}\in \{\tau^{i,m}_{h,k}\vert m\le h\le k\le m+M^i-1,h=m\text{ or }k=m+M^i-1\text{ or }[h,k]\notin A^i_m\}$ in this model have been eliminated by using (\ref{eq:tao-taobase}). In fact, our 3P-3 formulation is equivalent to the 3P-HD formulation. However, 3P-HD formulation is simplified by using (\ref{eq:logical}) to make the model more compact with less nonzeros.

\section{History-dependent multi-period formulation}\label{sec4}

In the discussion in Section \ref{sec3}, we did not consider the influence of the initial state on the definition of $A^i_m$, the upper bound of power output and the ramping constraints. Although the constraints obtained in this way is ideal in the M-period, they are not tight enough in the global space. Therefore, we will make some improvements to them.

\subsection{Tighter constraints based on historical status}\label{subsec8}

If the unit $i$ has been offline for $-T^i_0$ periods prior to the first period of the time span, then $A^i_m:=\{[h,k]\vert h=m\ge L^i+1,k\in [\min(\max(m,L^i+\underline{T}^i_{on}),m+M^i-1),m+M^i-1],\text{ or }h\in[\max(m+1,L^i+1),m+M^i-1],k\in [\min(h+\underline{T}^i_{on}-1,m+M^i-1),m+M^i-1]\}$.

With ramping limits taken into consideration , we let $U^i=\min\{T,u^i_0[\max(\lceil \frac{\max(0,\widetilde{P}^i_0-\widetilde{P}^i_{shut})}{\widetilde{P}^i_{down}}\rceil,\underline{T}^i_{on}-\underline{T}^i_0)]\}$. If the unit $i$ has been online for $T^i_0$ periods prior to the first period of the time span, then $A^i_m:=\{[h,k]\vert h=m,k\in [\min(\max(m,U^i),m+M^i-1),m+M^i-1],\text{ or }h\in [\max(m+1,U^i+\underline{T}^i_{off}+1),m+M^i-1],k\in [\min(h+\underline{T}^i_{on}-1,m+M^i-1),m+M^i-1]\}$.
If the unit $i$ has been online prior to the first period of the time span, there must be one and only one continuous operating interval $[h,k]$ in each M-period $m$ where $m\le\min(U^i,T-M^i+1)$. Then we have
\begin{equation}
	\sum\nolimits_{[m,k]\in A_m}\tau^m_{m,k}=1,\qquad m\in[0,\min(U,T-M+1)]\label{eq:new-initial-status-1}
\end{equation}

In the following discussion of this section, we will adopt this new definition of $A^i_m$. Similar to Theorem \ref{binary-integer-0}, we can derive the following theorem.
\begin{theorem}\label{binary-integer-1}
	For any M-period $m$ we define the polyhedron $\widetilde{\mathcal{B}}^R_m=\{\\\prod_{[h,k]\in A_m}\tau^m_{h,k}\in [0,1]^{\lvert A_m\rvert}:(\ref{eq:inequality-taotao})(\ref{eq:new-initial-status-1})\}$. Let $\widetilde{\mathcal{B}}^I_m=\widetilde{\mathcal{B}}^R_m\cap\{\prod_{[h,k]\in A_m}\tau^m_{h,k}\in\{0,1\}^{\lvert A_m\rvert}\}$. Then $\widetilde{\mathcal{B}}^R_m=\textup{conv}(\widetilde{\mathcal{B}}^I_m)$.
\end{theorem}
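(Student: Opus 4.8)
The plan is to mirror the proof of Theorem \ref{binary-integer-0}: show that the constraint system defining $\widetilde{\mathcal{B}}^R_m$ has a totally unimodular coefficient matrix together with integral right-hand sides and variable bounds, so that integrality of the polyhedron follows from Proposition 2.3 of §$\uppercase\expandafter{\romannumeral3}.1.2$ in \cite{wolsey1988integer}, which is equivalent to $\widetilde{\mathcal{B}}^R_m=\textup{conv}(\widetilde{\mathcal{B}}^I_m)$. The only genuinely new ingredient relative to Theorem \ref{binary-integer-0} is the extra equality (\ref{eq:new-initial-status-1}), so the work concentrates on showing that appending its coefficient row to the interval matrix $\bm{B_m}$ of (\ref{eq:inequality-taotao}) keeps the matrix an interval matrix.

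First I would dispose of the easy cases. For $m\notin[0,\min(U,T-M+1)]$ the constraint (\ref{eq:new-initial-status-1}) is absent, and the argument is literally that of Theorem \ref{binary-integer-0} applied to the index set $A_m$ arising from the history-dependent definition. Since that new definition only \emph{restricts} the ranges of $h$ and $k$ (for the online case $\max(m,U)\ge m$ and $\max(m+1,U+\underline{T}_{off}+1)\ge m+1$, and analogously for the offline case), it merely deletes intervals $[h,k]$ from the original $A_m$; deleting the corresponding columns of $\bm{B_m}$ cannot destroy the consecutive-ones structure, so $\bm{B_m}$ remains an interval matrix and hence totally unimodular.

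For $m\in[0,\min(U,T-M+1)]$ I would add to $\bm{B_m}$ the single row encoding (\ref{eq:new-initial-status-1}), which carries a $1$ in exactly the columns $\tau^m_{m,k}$ with $h=m$ and a $0$ in every column with $h>m$. Using the block description $\bm{B_m}=[\bm{C_m},\cdots,\bm{C_{m+M-1}}]$ from the proof of Theorem \ref{binary-integer-0}, every column of the block $\bm{C_m}$ has its $1$'s forming a contiguous run beginning in the top row $t=m$, while every column with $h>m$ has its run beginning strictly below $t=m$. Therefore, if I insert the equality row as a new topmost row, each $h=m$ column acquires a $1$ immediately above a run that already started at $t=m$, so the run stays contiguous, whereas each $h>m$ column acquires a $0$ above a run that does not reach the top and so remains contiguous as well. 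The augmented matrix is thus again an interval matrix, hence totally unimodular by Corollary 2.10 of §$\uppercase\expandafter{\romannumeral3}.1.2$ in \cite{wolsey1988integer}.

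To finish I would invoke the integrality proposition exactly as before: writing the equality (\ref{eq:new-initial-status-1}) as the two-sided bound $1\le(\text{row})\le 1$, taking $\bm{d'}=\bm{0},\bm{d}=\bm{1}$ for the box $[0,1]^{\lvert A_m\rvert}$ and integral $\bm{b'},\bm{b}$ for the remaining right-hand sides, Proposition 2.3 of §$\uppercase\expandafter{\romannumeral3}.1.2$ in \cite{wolsey1988integer} yields that $\widetilde{\mathcal{B}}^R_m$ is integral, giving $\widetilde{\mathcal{B}}^R_m=\textup{conv}(\widetilde{\mathcal{B}}^I_m)$. I expect the main obstacle to be the bookkeeping of the row ordering that makes the equality row compatible with the interval structure; once one observes that the $h=m$ columns are precisely the columns whose support touches the first period $t=m$, placing the equality row at the top is forced and the verification becomes routine.
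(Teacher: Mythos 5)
Your proposal is correct and takes essentially the same route as the paper's proof: both reduce the claim to total unimodularity of the coefficient matrix $\bm{B_m}$ of (\ref{eq:inequality-taotao}) augmented by the row of (\ref{eq:new-initial-status-1}), and then invoke the integrality criterion (Proposition 2.3 of \S III.1.2 in \cite{wolsey1988integer}) with integral bounds. The only cosmetic difference is how total unimodularity of the augmented matrix is established: you re-verify the consecutive-ones (interval) structure directly, observing that the new row is supported exactly on the columns whose runs begin at $t=m$ (it is in fact a duplicate of the $t=m$ row of $\bm{B_m}$), whereas the paper obtains the same conclusion by applying TU-preserving row operations to $\bm{B_m}$; your explicit treatment of the case $m\notin[0,\min(U,T-M+1)]$ and of the restricted $A_m$ is a slightly more careful account of details the paper leaves implicit.
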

\begin{proof}[\textbf{Proof}]
	If $m\in[0,\min(U,T-M+1)]$, we get $\sum\nolimits_{[m,k]\in A_m}\tau^m_{m,k}\le 1$ and \\$-\sum\nolimits_{[m,k]\in A_m}\tau^m_{m,k}\le -1$ from  (\ref{eq:new-initial-status-1}). We let $\bm{B_m}$ be the coefficient matrix of inequalities (\ref{eq:inequality-taotao}), and $\bm{D_m}$ be the coefficient matrix of the inequalities obtained from (\ref{eq:new-initial-status-1}).
	$$\bm{E_m}=\begin{bmatrix}
		\bm{B_m} \\
		\bm{D_m} \\
	\end{bmatrix} \textup{can be obtained by a series of pivot operations on} \begin{bmatrix}
		\bm{B_m} \\
		\bm{0} \\
	\end{bmatrix}$$
	We have shown that $\bm{B_m}$ is totally unimodular in Theorem \ref{binary-integer-0}. According to the Proposition 2.1 of §$\uppercase\expandafter{\romannumeral3}.1.2$ in \cite{wolsey1988integer}, $\bm{E_m}$ is totally unimodular. Let $\bm{d'}=\bm{0}$, $\bm{d}=\bm{1}$, $\bm{b'}=(\bm{0},\bm{-1})$, $\bm{b}=(\bm{1},\bm{-1})$, and then we know that $\widetilde{\mathcal{B}}^R_m$ is an integral polyhedron according to Proposition 2.3 of §$\uppercase\expandafter{\romannumeral3}.1.2$ in \cite{wolsey1988integer}.
\end{proof}
We define $\mathcal{D}$ as $\mathcal{D}:=\{i\vert u_0^i=0\}$, $\mathcal{U}$ as $\mathcal{U}:=\{i\vert u_0^i=1\}$, and $\mathcal{V}$ as $\mathcal{V}:=\{i\vert U^i+\underline{T}^i_{off}<m\}$.The theorems and propositions in Section \ref{sec3} still hold with new definition of $A^i_m$ for unit $i\in \mathcal{D}\cup(\mathcal{U}\cap\mathcal{V})$ in any M-period $m$. Hence, we won't repeat them here. For unit $i\in (\mathbb{N}-(\mathcal{D}\cup(\mathcal{U}\cap\mathcal{V})))$, we can further tighten (\ref{eq:tight-max-output-0})-(\ref{eq:tight-ramp-down-0}) and lower bound of generation limits with considering $\widetilde{P}_0$.
\begin{table}[t]
	\begin{center}
		\begin{minipage}{\textwidth}
			\caption{Illustration for upper bound of generation limits with historical status.}\label{tab:max-output-1}
			\resizebox{\textwidth}{!}{
				\begin{tabular}{ccc}
					\toprule
					$\tau_{h,k}=1$ & $\textup{UB}(\widetilde{P}_t)$ \\
					\midrule
					$m=h\le t\le k<m+M-1$ & $\min[1,\widetilde{P}_0+t\widetilde{P}_{up},\widetilde{P}_{shut}+(k-t)\widetilde{P}_{down}]$\\
					$m=h\le t\le k=m+M-1$ & $\min(1,\widetilde{P}_0+t\widetilde{P}_{up})$\\
					$m<h\le t\le k<m+M-1$ & $\min[1,\widetilde{P}_{start}+(t-h)\widetilde{P}_{up},\widetilde{P}_{shut}+(k-t)\widetilde{P}_{down}]$\\
					$m<h\le t\le k=m+M-1$ & $\min[1,\widetilde{P}_{start}+(t-h)\widetilde{P}_{up}]$\\
					\botrule
				\end{tabular}
			}
		\end{minipage}
	\end{center}
\end{table}

Taking historical status into consideration, the lower bound of generation limits for the unit in period $t$ is:
\begin{equation}
	\textup{LB}(\widetilde{P}_t)=\sum\nolimits_{\{[m,k] \in A_m,t\le k\}}\tau^m_{m,k}\max(0,\widetilde{P}_0-t\widetilde{P}_{down})
\end{equation}
Then, we obtain the following inequalities:
\begin{align}
	\widetilde{P}_t\ge\sum\nolimits_{\{[m,k] \in A_m,t\le k\}}\tau^m_{m,k}\max(0,\widetilde{P}_0-t\widetilde{P}_{down}),\quad t\in[m,m+M-1]\label{eq:tight-min-output-1}
\end{align}
As inllustrated in Table \ref{tab:max-output-1}, we get tighter inequalities for upper bound of generation limits.
\begin{align}
	\widetilde{P}_t\le &\sum\nolimits_{\{[h,m+M-1] \in A_m,m<h\le t\}}\tau^m_{h,m+M-1}\min[1,\widetilde{P}_{start}+(t-h)\widetilde{P}_{up}]\notag\\
	&+\sum\nolimits_{\{[h,k] \in A_m,m<h\le t\le k<m+M-1\}}\tau^m_{h,k}\times\notag\\
	&\min[1,\widetilde{P}_{start}+(t-h)\widetilde{P}_{up},\widetilde{P}_{shut}+(k-t)\widetilde{P}_{down}]\notag\\
	&+\sum\nolimits_{\{[m,k] \in A_m,t\le k<m+M-1\}}\tau^m_{m,k}\min[1,\widetilde{P}_0+t\widetilde{P}_{up},\widetilde{P}_{shut}+(k-t)\widetilde{P}_{down}]\notag\\
	&+\sum\nolimits_{\{[m,m+M-1]\in A_m\}}\tau^m_{m,m+M-1}\min(1,\widetilde{P}_0+t\widetilde{P}_{up}),\notag\\
	&t\in[m,m+M-1]\label{eq:tight-max-output-1}
\end{align}
Similarly, we obtain tighter inequalities for ramping constraints according to Table \ref{tab:tight-ramp-1}.
\begin{align}
	\widetilde{P}_t-\widetilde{P}_{t-a}\le &\sum\nolimits_{\{[h,m+M-1]\in A_m,t-a<h\le t\}}\tau^m_{h,m+M-1}\min[1,\widetilde{P}_{start}+(t-h)\widetilde{P}_{up}]\notag\\
	&+\sum\nolimits_{\{[h,k] \in A_m,t-a<h\le t\le k<m+M-1\}}\tau^m_{h,k}\times\notag\\
	&\min[1,\widetilde{P}_{start}+(t-h)\widetilde{P}_{up},\widetilde{P}_{shut}+(k-t)\widetilde{P}_{down}]\notag\\
	&+\sum\nolimits_{\{[h,k] \in A_m,m<h\le t-a<t\le k<m+M-1\}}\tau^m_{h,k}\times\notag\\
	&\min[1,a\widetilde{P}_{up},\widetilde{P}_{shut}+(k-t)\widetilde{P}_{down}]\notag\\
	&+\sum\nolimits_{\{[h,m+M-1] \in A_m,m<h\le t-a\}}\tau^m_{h,m+M-1}\min(1,a\widetilde{P}_{up})\notag\\
	&+\sum\nolimits_{\{[m,k] \in A_m,t\le k<m+M-1\}}\tau^m_{m,k}\min\{a\widetilde{P}_{up},1-\max[0,\widetilde{P}_0-\notag\\
	&(t-a)\widetilde{P}_{down}],\widetilde{P}_{shut}+(k-t)\widetilde{P}_{down}-\max[0,\widetilde{P}_0-(t-a)\widetilde{P}_{down}]\}\notag\\
	&+\sum\nolimits_{\{[m,m+M-1] \in A_m\}}\tau^m_{m,m+M-1}\times\notag\\
	&\min\{1-\max[0,\widetilde{P}_0-(t-a)\widetilde{P}_{down}],a\widetilde{P}_{up}\}\notag\\
	&-\sum\nolimits_{\{[m,k] \in A_m,t-a\le k<t\}}\tau^m_{m,k}\max[0,\widetilde{P}_0-(t-a)\widetilde{P}_{down}],\notag\\
	&t\in[m+1,m+M-1],a\in[1,t-m]\label{eq:tight-ramp-up-1}\\
	\widetilde{P}_{t-a}-\widetilde{P}_t\le &\sum\nolimits_{\{[h,k] \in A_m,m<h\le t-a<t\le k\}}\tau^m_{h,k}\times\notag\\
	&\min[1,\widetilde{P}_{start}+(t-a-h)\widetilde{P}_{up},a\widetilde{P}_{down}]\notag\\
	&+\sum\nolimits_{\{[h,k] \in A_m,m<h\le t-a\le k<t\}}\tau^m_{h,k}\times\notag\\
	&\min[1,\widetilde{P}_{start}+(t-a-h)\widetilde{P}_{up},\widetilde{P}_{shut}+(k-t+a)\widetilde{P}_{down}]\notag\\
	&+\sum\nolimits_{\{[m,k] \in A_m,t\le k\}}\tau^m_{m,k}\min[1,\widetilde{P}_0+(t-a)\widetilde{P}_{up},a\widetilde{P}_{down}]\notag\\
	&+\sum\nolimits_{\{[m,k] \in A_m,t-a\le k<t\}}\tau^m_{m,k}\times\notag\\
	&\min[1,\widetilde{P}_0+(t-a)\widetilde{P}_{up},\widetilde{P}_{shut}+(k-t+a)\widetilde{P}_{down}],\notag\\
	&t\in[m+1,m+M-1],a\in[1,t-m]\label{eq:tight-ramp-down-1}
\end{align}
\begin{table}[t]
	\begin{center}
		\begin{minipage}{\textwidth}
			\caption{Illustration for upper bound of ramping limits with historical status.}\label{tab:tight-ramp-1}
			\resizebox{\textwidth}{!}{
				\begin{tabular}{cc}
					\toprule
					$\tau_{h,k}=1$ & $\textup{UB}(\widetilde{P}_t-\widetilde{P}_{t-a})$ \\
					\midrule
					$m=h\le t-a<t\le k<m+M-1$ & $\min[1-\textup{LB}(\widetilde{P}_{t-a}),a\widetilde{P}_{up},\widetilde{P}_{shut}+(k-t)\widetilde{P}_{down}-\textup{LB}(\widetilde{P}_{t-a})]$ \\
					$m=h\le t-a<t\le k=m+M-1$ & $\min[1-\textup{LB}(\widetilde{P}_{t-a}),a\widetilde{P}_{up}]$ \\
					$m=h\le t-a\le k<t$ & $\textup{UB}(\widetilde{P}_t)-\textup{LB}(\widetilde{P}_{t-a})$ \\
					$m<h\le t-a<t\le k<m+M-1$ & $\min[1,a\widetilde{P}_{up},\widetilde{P}_{shut}+(k-t)\widetilde{P}_{down}]$ \\
					$m<h\le t-a<t\le k=m+M-1$ & $\min(1,a\widetilde{P}_{up})$ \\
					$t-a<h\le t\le k<m+M-1$ & $\min[1,\widetilde{P}_{start}+(t-h)\widetilde{P}_{up},\widetilde{P}_{shut}+(k-t)\widetilde{P}_{down}]-\textup{LB}(\widetilde{P}_{t-a})$ \\
					$t-a<h\le t\le k=m+M-1$ & $\min[1,\widetilde{P}_{start}+(t-h)\widetilde{P}_{up}]-\textup{LB}(\widetilde{P}_{t-a})$ \\
					\botrule
				\end{tabular}
			}
			\resizebox{\textwidth}{!}{
				\begin{tabular}{cc}
					\toprule
					$\tau_{h,k}=1$ & $\textup{UB}(\widetilde{P}_{t-a}-\widetilde{P}_t)$ \\
					\midrule
					$m=h\le t-a<t\le k$ & $\min[1,\widetilde{P}_0+(t-a)\widetilde{P}_{up},a\widetilde{P}_{down}]$\\
					$m=h\le t-a\le k<t$ & $\min[1,\widetilde{P}_0+(t-a)\widetilde{P}_{up},\widetilde{P}_{shut}+(k-t+a)\widetilde{P}_{down}]$\\
					$m<h\le t-a<t\le k$ & $\min[1,\widetilde{P}_{start}+(t-a-h)\widetilde{P}_{up},a\widetilde{P}_{down}]$\\
					$m<h\le t-a\le k<t$ & $\min[1,\widetilde{P}_{start}+(t-a-h)\widetilde{P}_{up},\widetilde{P}_{shut}+(k-t+a)\widetilde{P}_{down}]$\\
					\botrule
				\end{tabular}
			}
		\end{minipage}
	\end{center}
\end{table}

Next, we will talk about the tightness of (\ref{eq:tight-min-output-1})-(\ref{eq:tight-ramp-down-1}).

We let $\widetilde{\mathcal{P}}^R_m:=\{(\prod_{[h,k]\in A_m}\tau^m_{h,k},\widetilde{P}_{\max(1,m)},\cdots,\widetilde{P}_{m+M-1})\in {[0,1]}^{\lvert A_m\rvert+M+\min(0,m-1)}:(\ref{eq:inequality-taotao})(\ref{eq:new-initial-status-1})(\ref{eq:tight-min-output-1})(\ref{eq:tight-max-output-1})\}$. Let $\widetilde{\mathcal{P}}^I_m:=\widetilde{\mathcal{P}}^R_m\cap\{(\prod_{[h,k]\in A_m}\tau^m_{h,k},\widetilde{P}_{\max(1,m)},\cdots,\widetilde{P}_{m+M-1})\in\{0,1\}^{\lvert A_m\rvert }\times[0,1]^{M+\min(0,m-1)}\}$.
\begin{proposition}\label{prp5}
	Inequality (\ref{eq:tight-min-output-1}) defines a facet of $\textup{conv}(\widetilde{\mathcal{P}}^I_m)$.
\end{proposition}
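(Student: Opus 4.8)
The plan is to prove Proposition~\ref{prp5} by the indirect (affine-hull) characterization of a facet: I will show that every linear equation satisfied by all points of the face cut out by \eqref{eq:tight-min-output-1} is a linear combination of that inequality taken at equality and of the equations defining $\textup{aff}(\textup{conv}(\widetilde{\mathcal{P}}^I_m))$. First I would record validity and fix notation. Writing $c:=\max(0,\widetilde{P}_0-t\widetilde{P}_{down})\ge 0$, inequality \eqref{eq:tight-min-output-1} reads $\widetilde{P}_t\ge c\sum_{\{[m,k]\in A_m,\,k\ge t\}}\tau^m_{m,k}$, and its validity is exactly the lower-bound computation preceding it ($c$ is the least power reachable at $t$ when the unit runs continuously from the window start $m$, and the right-hand side is $0$ otherwise). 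I would then pin down the affine hull of $P:=\textup{conv}(\widetilde{\mathcal{P}}^I_m)$: by Theorem~\ref{binary-integer-1} the $\tau$-block is governed by the single equation \eqref{eq:new-initial-status-1}, $\sum_{[m,k]\in A_m}\tau^m_{m,k}=1$, whose affine hull is $\{\sum_{[m,k]}\tau^m_{m,k}=1\}$, and together with the fact that each power $\widetilde{P}_s$ genuinely moves between a strictly smaller lower and a strictly larger upper bound in some feasible configuration this gives $\textup{dim}\,P=|A_m|+M+\min(0,m-1)-1$.

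Next, let $F$ be the face where \eqref{eq:tight-min-output-1} holds with equality, and suppose $\sum_{[h,k]}\alpha_{h,k}\tau^m_{h,k}+\sum_s\beta_s\widetilde{P}_s=\gamma$ on all of $F$. The first reduction kills the power coefficients away from $t$: for each $s\ne t$ I would exhibit a point of $F$ in which period $s$ lies strictly between its lower and upper power bounds, so $\widetilde{P}_s$ can be perturbed up and down while staying in $F$ (the active constraint only pins $\widetilde{P}_t$), forcing $\beta_s=0$. Substituting the defining equality $\widetilde{P}_t=c\sum_{k\ge t}\tau^m_{m,k}$ into what remains turns the relation into a pure $\tau$-identity, $\sum_{[h,k]}\big(\alpha_{h,k}+\beta_t c\,\mathbf{1}[h=m,\,k\ge t]\big)\tau^m_{h,k}=\gamma$, valid on every feasible $\tau$, because each $\tau\in\widetilde{\mathcal{B}}^I_m$ extends to a point of $F$ by setting $\widetilde{P}_t$ to its feasible minimum and the other powers to any ramp-compatible values.

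Then I would invoke Theorem~\ref{binary-integer-1}: since $\widetilde{\mathcal{B}}^R_m=\textup{conv}(\widetilde{\mathcal{B}}^I_m)$ has affine hull exactly $\{\sum_{[m,k]}\tau^m_{m,k}=1\}$, a linear functional constant on $\widetilde{\mathcal{B}}^I_m$ must be of the form $\mu\,\mathbf{1}[h=m]$. Matching coefficients gives $\alpha_{h,k}=0$ for $h>m$, $\alpha_{m,k}=\mu$ for $k<t$, and $\alpha_{m,k}=\mu-\beta_t c$ for $k\ge t$, which is precisely $\beta_t$ times the coefficient vector of \eqref{eq:tight-min-output-1} plus $\mu$ times the coefficient vector of \eqref{eq:new-initial-status-1}, with $\gamma=\mu$. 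Hence the equation lies in the span of the defining inequality and the affine-hull equation, so $\textup{dim}\,F=\textup{dim}\,P-1$ and \eqref{eq:tight-min-output-1} defines a facet.

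The main obstacle I anticipate is the non-degeneracy bookkeeping underlying the first two steps: I must confirm that $P$ is full-dimensional modulo \eqref{eq:new-initial-status-1}, i.e. that no power variable is secretly frozen by coinciding upper and lower bounds (which would inject spurious equations and inflate the face-equation span), and that for every $s\ne t$, as well as for the extension of an arbitrary $\tau$, genuine two-sided power slack is available. The case analysis concentrates on the boundary periods near $m$ and near $m+M-1$ and on the degenerate subcase $c=0$, where \eqref{eq:tight-min-output-1} collapses to $\widetilde{P}_t\ge 0$; the argument above nevertheless goes through uniformly once the requisite feasible points have been produced.
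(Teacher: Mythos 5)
Your proposal is correct and follows essentially the same route as the paper: the paper likewise applies the indirect facet characterization (Theorem 3.6 of \S I.4.3 in \cite{wolsey1988integer}), assumes a generic equation $\sum_j\omega_j\widetilde{P}_j+\sum_{[h,k]\in A_m}\varphi_{h,k}\tau^m_{h,k}=\lambda_0$ on the face, kills $\omega_j$ for $j\ne t$ by two-sided perturbation at points with interior power values, and then pins the $\tau$-coefficients to $\lambda_0$ (a multiple of \eqref{eq:new-initial-status-1}) minus $\omega_t\max(0,\widetilde{P}_0-t\widetilde{P}_{down})$ on intervals $[m,k]$ with $k\ge t$ and to $0$ for $h\ne m$, using the explicit feasible points of Appendix 1. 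The one caveat is your blanket claim that the affine hull always contains $\sum_{[m,k]\in A_m}\tau^m_{m,k}=1$: that equation is only imposed for $m\le\min(U,T-M+1)$, and for $m>U$ the all-off point is feasible, so the affine hull is larger; your own coefficient-matching argument then forces $\mu=\gamma=0$ in that regime (the paper treats this as the separate case ``$\lambda_0=0$ when $m>U$''), so the conclusion still stands.
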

\begin{proof}[\textbf{Proof}]
	See ``Appendix 2".
\end{proof}

\begin{proposition}\label{prp6}
	Inequality (\ref{eq:tight-max-output-1}) defines a facet of $\textup{conv}(\widetilde{\mathcal{P}}^I_m)$.
\end{proposition}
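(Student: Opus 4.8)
The plan is to establish Proposition \ref{prp6} by the standard ``indirect'' facet criterion. After recording validity, I would assume that every point of the face $F=\{x\in\textup{conv}(\widetilde{\mathcal{P}}^I_m):\eqref{eq:tight-max-output-1}\text{ holds with equality}\}$ satisfies some linear equation $\sum_{[h,k]\in A_m}a_{h,k}\tau^m_{h,k}+\sum_s b_s\widetilde{P}_s=c$, and then show that this equation is forced to be a linear combination of \eqref{eq:tight-max-output-1} (taken at equality) and the single affine equation \eqref{eq:new-initial-status-1} that cuts out the affine hull of $\widetilde{\mathcal{P}}^I_m$. This yields $\dim(F)=\dim(\textup{conv}(\widetilde{\mathcal{P}}^I_m))-1$, which is precisely the facet condition. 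Validity itself is immediate from Table \ref{tab:max-output-1}: in any integer point at most one active interval $[h,k]$ contains period $t$, the coefficient multiplying its $\tau^m_{h,k}$ is exactly $\textup{UB}(\widetilde{P}_t)$, and every other term multiplies a variable equal to $0$.

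The structural fact I would exploit is that $\widetilde{\mathcal{P}}^R_m$ carries no ramping coupling: its only defining inequalities are \eqref{eq:inequality-taotao}, \eqref{eq:new-initial-status-1}, the lower bound \eqref{eq:tight-min-output-1}, and the upper bound \eqref{eq:tight-max-output-1}. Hence, once the integer $\tau$-pattern (a set of non-overlapping admissible intervals) is fixed, the continuous variables lie in an axis-parallel box $\prod_s[\textup{LB}(\widetilde{P}_s),\textup{UB}(\widetilde{P}_s)]$ with no inter-period interaction. Combined with the integrality of the $\tau$-polytope from Theorem \ref{binary-integer-1} and the same mechanism (Lemma 4 of \cite{gentile2017tight}) used to prove $\mathcal{P}^R_m=\textup{conv}(\mathcal{P}^I_m)$, this gives $\widetilde{\mathcal{P}}^R_m=\textup{conv}(\widetilde{\mathcal{P}}^I_m)$ and fixes the dimension as $|A_m|+M+\min(0,m-1)$ minus one, the deduction accounting for \eqref{eq:new-initial-status-1} on the units under consideration.

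The core argument proceeds in two steps. First I would show $b_s=0$ for every variable index $s\neq t$: for each such $s$ I exhibit a feasible schedule on $F$ in which period $s$ is online with strict slack, $\textup{LB}(\widetilde{P}_s)<\textup{UB}(\widetilde{P}_s)$, while the interval covering $t$ keeps $\widetilde{P}_t$ pinned at its upper bound. Because the box structure lets $\widetilde{P}_s$ move without disturbing the tightness of \eqref{eq:tight-max-output-1}, two points of $F$ differing only in $\widetilde{P}_s$ force $b_s=0$. Second, writing $\lambda:=b_t$, I evaluate the assumed equation at carefully chosen schedules to pin each $a_{h,k}$: the forced interval $[m,k]$ alone fixes $a_{m,k}$ (with $\widetilde{P}_t$ at $\textup{UB}^{[m,k]}(\widetilde{P}_t)$ when $t\le k$, at $0$ when $t>k$), and superposing a late interval $[h,k']$ and subtracting then isolates $a_{h,k'}$. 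One finds every coefficient equals $-\lambda\,\textup{UB}^{[h,k]}(\widetilde{P}_t)$, augmented by an extra $c$ exactly on the intervals starting at $m$; collecting terms shows the equation is $\lambda$ times \eqref{eq:tight-max-output-1} plus $c$ times \eqref{eq:new-initial-status-1}, which is the codimension-one claim.

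The main obstacle I anticipate is the bookkeeping forced by the history-dependent definition of $A^i_m$ and by \eqref{eq:new-initial-status-1}. Since the online-unit $A_m$ admits only the initial interval $[m,k]$ (extending at least to $U$) together with late intervals $h\ge U+\underline{T}_{off}+1$, and \eqref{eq:new-initial-status-1} forces exactly one interval to begin at $m$, no ``all-off'' reference state is available; the single-interval probes that separate the coefficients must be built compatibly with these restrictions, and the coefficients of intervals with $h>m$ must be extracted by superposing the probe onto the mandatory interval at $m$ and cancelling its contribution. The boundary situations each demand a dedicated construction: the case $m=0$, where $\widetilde{P}_0$ is a fixed historical datum rather than a variable (so rows~1 and~2 of Table \ref{tab:max-output-1} involve the constant $\widetilde{P}_0+t\widetilde{P}_{up}$, and the index $t=0$ is not a genuine variable), and the window-spanning interval $[m,m+M-1]$ whose upper bound never invokes $\widetilde{P}_{shut}$. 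Finally, one must invoke the implicit nondegeneracy conditions (the analogues of $\widetilde{P}_{start}+\widetilde{P}_{up}<1$, $\widetilde{P}_{shut}+\widetilde{P}_{down}<1$, and $\widetilde{P}_0-t\widetilde{P}_{down}$ remaining in range) so that every $\textup{UB}$ entry of Table \ref{tab:max-output-1} is attainable and all probe schedules are feasible; verifying that these guarantee the full set of affinely independent witnesses is the delicate part of the proof.
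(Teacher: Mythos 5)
Your proposal is correct and follows essentially the same route as the paper: the indirect facet criterion of Theorem 3.6 of \S I.4.3 in \cite{wolsey1988integer}, first forcing $\omega_j=0$ for $j\ne t$ by perturbing a single power output within its box, then pinning each $\varphi_{h,k}$ to $-\omega_t\,\textup{UB}^{[h,k]}(\widetilde{P}_t)$ (plus $\lambda_0$ exactly on the intervals with $h=m$, reflecting the multiplier on \eqref{eq:new-initial-status-1}) by superposing probe intervals onto the mandatory initial interval. The paper's Appendix~2 proof of Proposition~\ref{prp6} carries out precisely this bookkeeping, including the case split $m\le U$ versus $m>U$ that you flag as the main obstacle.
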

\begin{proof}[\textbf{Proof}]
	See ``Appendix 2".
\end{proof}

\begin{theorem}
	$\widetilde{\mathcal{P}}^R_m=\textup{conv}(\widetilde{\mathcal{P}}^I_m)$.
\end{theorem}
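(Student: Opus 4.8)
The plan is to mirror the proof of the companion statement $\mathcal{P}^R_m=\textup{conv}(\mathcal{P}^I_m)$: I would transfer the integrality already established for the pure-$\tau$ block to the full mixed system by invoking Lemma 4 of \cite{gentile2017tight}. The essential point is that, once the binary block is integral and each continuous power variable is trapped between two affine functions of the $\tau$'s that decouple across periods, the whole polyhedron inherits the convex-hull property.

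First I would note that the projection of $\widetilde{\mathcal{P}}^R_m$ onto the $\tau$-coordinates is precisely $\widetilde{\mathcal{B}}^R_m$, since the only constraints of $\widetilde{\mathcal{P}}^R_m$ involving $\tau$ alone are (\ref{eq:inequality-taotao}) and (\ref{eq:new-initial-status-1}). By Theorem \ref{binary-integer-1} this projection is an integral polyhedron, i.e. $\widetilde{\mathcal{B}}^R_m=\textup{conv}(\widetilde{\mathcal{B}}^I_m)$, so every vertex of the $\tau$-block is $0/1$-valued.

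Next I would check the structural hypotheses of the lemma. For each $t\in[\max(1,m),m+M-1]$ the remaining constraints (\ref{eq:tight-min-output-1}) and (\ref{eq:tight-max-output-1}) provide exactly one lower and one upper bound on the single variable $\widetilde{P}_t$, both affine in $\tau$, and no constraint of $\widetilde{\mathcal{P}}^R_m$ couples two distinct power variables (the ramping inequalities (\ref{eq:tight-ramp-up-1})--(\ref{eq:tight-ramp-down-1}) are deliberately excluded from this set). Hence, after fixing $\tau$ at any integral point of $\widetilde{\mathcal{B}}^I_m$, the feasible power region is a box $\prod_t[\textup{LB}(\widetilde{P}_t),\textup{UB}(\widetilde{P}_t)]$; one must verify that this box is nonempty, i.e. that $\textup{LB}(\widetilde{P}_t)\le\textup{UB}(\widetilde{P}_t)$ holds for every feasible $0/1$ assignment. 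This follows because both bounds were read off from Tables \ref{tab:max-output-1} and \ref{tab:tight-ramp-1} as, respectively, the smallest and largest power level attainable along an admissible trajectory consistent with the active interval $[h,k]$, so a feasible power value always exists.

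With these two ingredients in place, Lemma 4 of \cite{gentile2017tight} applies and yields $\widetilde{\mathcal{P}}^R_m=\textup{conv}(\widetilde{\mathcal{P}}^I_m)$: every vertex of $\widetilde{\mathcal{P}}^R_m$ has integral $\tau$-part and a power part sitting at a box corner, hence lies in $\widetilde{\mathcal{P}}^I_m$. I expect the only delicate step to be the consistency check $\textup{LB}(\widetilde{P}_t)\le\textup{UB}(\widetilde{P}_t)$, since here---unlike in the history-free case, where the lower bound was simply $0$---the lower bound (\ref{eq:tight-min-output-1}) is a genuine affine function of $\tau$; confirming that $\max(0,\widetilde{P}_0-t\widetilde{P}_{down})$ never exceeds the corresponding upper-bound entry for the relevant intervals $[m,k]$ is exactly what makes the box nonempty and the lemma applicable.
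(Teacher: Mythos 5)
Your proposal follows the same route as the paper's own proof: both rest on the integrality of $\widetilde{\mathcal{B}}^R_m$ established in Theorem \ref{binary-integer-1} and then transfer it to the mixed polyhedron via Lemma 4 of \cite{gentile2017tight}. The paper leaves the application of that lemma as ``not difficult to prove,'' whereas you usefully spell out the two hypotheses being used (one affine lower and one affine upper bound per $\widetilde{P}_t$ with no coupling across periods, and nonemptiness of the resulting box), but the argument is essentially identical.
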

\begin{proof}[\textbf{Proof}]
	We have proved in Theorem \ref{binary-integer-1} that $\widetilde{\mathcal{B}}^R_m$ is an integral polyhedron. According to the Lemma 4 in \cite{gentile2017tight}, it is not difficult to prove $\widetilde{\mathcal{P}}^R_m=\textup{conv}(\widetilde{\mathcal{P}}^I_m)$.
\end{proof}
We refer to the feasible set defined by constraints (\ref{eq:inequality-taotao})(\ref{eq:new-initial-status-1})(\ref{eq:tight-min-output-1})-(\ref{eq:tight-ramp-down-1}) as $\widetilde{\mathcal{Q}}^R_m$, i.e. $\widetilde{\mathcal{Q}}^R_m:=\{(\prod_{[h,k]\in A_m}\tau^m_{h,k},\widetilde{P}_{\max(1,m)},\cdots,\widetilde{P}_{m+M-1})\in{[0,1]}^{\lvert A_m\rvert+M+\min(0,m-1)}:(\ref{eq:inequality-taotao})(\ref{eq:new-initial-status-1})(\ref{eq:tight-min-output-1})-(\ref{eq:tight-ramp-down-1})\}$. Let $\widetilde{\mathcal{Q}}^I_m:=\widetilde{\mathcal{Q}}^R_m\cap\{(\prod_{[h,k]\in A_m}\tau^m_{h,k},\widetilde{P}_{\max(1,m)},\cdots,\widetilde{P}_{m+M-1})\in\{0,1\}^{\lvert A_m\rvert }\times[0,1]^{M+\min(0,m-1)}\}$, $K=\min\{T,u_0[\max(\max(0,\lfloor \frac{\widetilde{P}_0-\widetilde{P}_{shut}}{\widetilde{P}_{down}}\rfloor+1),\underline{T}_{on}-\underline{T}_0)]\}$.
\begin{proposition}\label{prp7}
	Inequality (\ref{eq:tight-min-output-1}) defines a facet of $\textup{conv}(\widetilde{\mathcal{Q}}^I_m)$ if and only if one of the following conditions hold:
	\begin{enumerate}
		\item $t=\max(1,m)$.
		\item $\min(K,\frac{\widetilde{P}_0}{\widetilde{P}_{down}})<t$.
	\end{enumerate}
\end{proposition}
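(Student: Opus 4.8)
Throughout write $P:=\textup{conv}(\widetilde{\mathcal{Q}}^I_m)$, let $d:=\textup{dim}(P)$, and abbreviate $c_t:=\max(0,\widetilde{P}_0-t\widetilde{P}_{down})$ and $\sigma_t:=\sum_{\{[h,k]\in A_m,\,h=m,\,t\le k\}}\tau^m_{m,k}$, so that (\ref{eq:tight-min-output-1}) reads $\widetilde{P}_t\ge c_t\sigma_t$ and its associated face is $F:=\{x\in P:\widetilde{P}_t=c_t\sigma_t\}$. The plan is first to pin down $\textup{aff}(P)$ and $d$: in the online/history regime considered here the only equality cutting the ambient box $[0,1]^{\lvert A_m\rvert+M+\min(0,m-1)}$ is (\ref{eq:new-initial-status-1}), so (as for the hulls already treated in Propositions \ref{prp5}--\ref{prp6}) $d=\lvert A_m\rvert+M+\min(0,m-1)-1$. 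I then prove both implications through the dimension criterion: (\ref{eq:tight-min-output-1}) is a facet of $P$ iff $\textup{dim}(F)=d-1$.

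For necessity I assume both conditions fail, i.e. $t>\max(1,m)$ and $\min(K,\widetilde{P}_0/\widetilde{P}_{down})\ge t$, and I show $\textup{dim}(F)\le d-2$. Since $\widetilde{P}_0/\widetilde{P}_{down}\ge t$ we have $c_t\ge 0$ and $c_{t-1}=\widetilde{P}_0-(t-1)\widetilde{P}_{down}>0$; since $K\ge t>t-1$ the online unit is forced on at both periods, so $\sigma_{t-1}\equiv 1$ on $P$ by (\ref{eq:new-initial-status-1}) and the lower bound (\ref{eq:tight-min-output-1}) at $t-1$ yields $\widetilde{P}_{t-1}\ge c_{t-1}$ throughout $P$. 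On $F$ I combine this with the ramp-down inequality $\widetilde{P}_{t-1}-\widetilde{P}_t\le\widetilde{P}_{down}$ (the $a=1$, $h=m$ instance of (\ref{eq:tight-ramp-down-1})) and with $\widetilde{P}_t=c_t\sigma_t$: when the unit is on at $t$ this squeezes $\widetilde{P}_{t-1}=c_{t-1}$, and in the borderline case $t=K$ permitting a shutdown at $t$, the shutdown-ramp bound $\widetilde{P}_{t-1}\le\widetilde{P}_{shut}$ together with $\widetilde{P}_{t-1}\ge c_{t-1}=\widetilde{P}_{shut}$ forces the same value --- this identity $c_{t-1}=\widetilde{P}_{shut}$ is exactly why the threshold is $K$ (defined with $\lfloor\cdot\rfloor+1$) and not $U$ (defined with $\lceil\cdot\rceil$). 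Hence $F\subseteq\textup{aff}(P)\cap\{\widetilde{P}_t=c_t\}\cap\{\widetilde{P}_{t-1}=c_{t-1}\}$; these two equations involve the distinct coordinates $\widetilde{P}_t,\widetilde{P}_{t-1}$ and the second is not valid on all of $P$, so they are independent modulo $\textup{aff}(P)$, giving $\textup{dim}(F)\le d-2$. The hypothesis $t>\max(1,m)$ is used precisely to ensure $\widetilde{P}_{t-1}$ is a coordinate of the space.

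For sufficiency I show that under condition 1 or condition 2 the obstruction above disappears, and I exhibit $d$ affinely independent points of $F$. If $t=\max(1,m)$, period $t-1$ carries no free power variable, so no second equation can be forced. If instead $\min(K,\widetilde{P}_0/\widetilde{P}_{down})<t$, then either $\widetilde{P}_0/\widetilde{P}_{down}<t$, whence $c_t=0$ and (\ref{eq:tight-min-output-1}) degenerates to $\widetilde{P}_t\ge 0$, or $K<t$, whence the unit may be off by period $t$ so that $\sigma_t=0$ is attainable; in either situation $\widetilde{P}_t=0$ is tight while $\widetilde{P}_{t-1}$ still ranges over a nondegenerate interval, because $c_{t-1}<\widetilde{P}_{down}$ leaves slack in $\widetilde{P}_{t-1}-\widetilde{P}_t\le\widetilde{P}_{down}$. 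In each case I build the required family from a base tight point by (i) perturbing each $\widetilde{P}_s$, $s\ne t$, using the available ramp and upper-bound slack, and (ii) switching among the feasible start/stop interval patterns encoded by the $\tau^m_{h,k}$ that keep (\ref{eq:tight-min-output-1}) tight; integrality of $\widetilde{\mathcal{B}}^R_m$ (Theorem \ref{binary-integer-1}) guarantees these $\tau$-patterns are genuine vertices.

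I expect sufficiency to be the main obstacle. The difficulty is bookkeeping: the coupled constraints (\ref{eq:tight-max-output-1})--(\ref{eq:tight-ramp-down-1}) are minima of several affine terms, so every perturbation must be anchored at a base point where the binding term is a slack rather than the one defining $F$, and the count of affinely independent points must be verified to equal $d$ uniformly across the floor/ceiling boundary cases. Necessity, by contrast, reduces to the clean two-inequality squeeze above, its only subtlety being the $K$-versus-$U$ edge case, which the shutdown-ramp identity $c_{t-1}=\widetilde{P}_{shut}$ resolves.
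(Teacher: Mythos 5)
Your necessity direction is essentially sound, and it takes a mildly different route from the paper's: you bound the dimension of the face $F$ by showing it satisfies the extra equation $\widetilde{P}_{t-1}=\widetilde{P}_0-(t-1)\widetilde{P}_{down}$, whereas the paper argues that when both conditions fail, inequality (\ref{eq:tight-min-output-1}) at period $t$ is dominated by (\ref{eq:tight-min-output-1}) at $t=m$ together with (\ref{eq:tight-ramp-down-1}) with $t-a=m$; both arguments turn on the same $K$-versus-$U$ borderline identity $c_{t-1}=\widetilde{P}_{shut}$. One slip: in the borderline case $U=t-1$ the face contains points with $\sigma_t=0$ and $\widetilde{P}_t=0\ne c_t$, so $F\not\subseteq\{\widetilde{P}_t=c_t\}$; the containment you actually have is $F\subseteq\{\widetilde{P}_t=c_t\sigma_t\}\cap\{\widetilde{P}_{t-1}=c_{t-1}\}$, which still has codimension two in $\textup{aff}(P)$, so the conclusion survives.

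The genuine gap is sufficiency, which you flag as the main obstacle and then leave as a plan rather than a proof. You never construct the $d$ affinely independent tight points, never verify their feasibility against the coupled system (\ref{eq:inequality-taotao})(\ref{eq:new-initial-status-1})(\ref{eq:tight-min-output-1})--(\ref{eq:tight-ramp-down-1}), and never check the count; moreover the plan anchors on a wrong picture: when $K<t$ but $\widetilde{P}_0-t\widetilde{P}_{down}>0$, the tight set is not ``$\widetilde{P}_t=0$'' --- it splits into points with $\sigma_t=1$, $\widetilde{P}_t=c_t>0$ and points with $\sigma_t=0$, $\widetilde{P}_t=0$, and both families, one for each interval class $[h,k]\in A_m$, are needed to reach dimension $d-1$. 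You also assert $\textup{dim}(P)=\lvert A_m\rvert+M+\min(0,m-1)-1$ without proof, and this only holds when $m\le U$, since for $m>U$ the equality (\ref{eq:new-initial-status-1}) is vacuous. The paper spends essentially the entire proof exactly here: it applies the indirect facet criterion (Theorem 3.6 of Section I.4.3 of \cite{wolsey1988integer}), posits a generic equality $\sum_j\omega_j\widetilde{P}_j+\sum_{[h,k]\in A_m}\varphi_{h,k}\tau^m_{h,k}=\lambda_0$ on the tight set, and pins down every coefficient by substituting the explicit feasible points of Appendix 1 (for this proposition, (\ref{p:1}), (\ref{p:3}), (\ref{p:5}), (\ref{p:10}), (\ref{p:54})--(\ref{p:57})), treating $m\le U$ and $m>U$ and each class of $\varphi_{h,k}$ separately. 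To complete your proposal you would have to either carry out that coefficient matching or actually produce and verify the affinely independent family; as written, the ``if'' half of the equivalence is not established.
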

\begin{proof}[\textbf{Proof}]
	See ``Appendix 2".
\end{proof}
\begin{proposition}\label{prp8}
	Inequality (\ref{eq:tight-max-output-1}) defines a facet of $\textup{conv}(\widetilde{\mathcal{Q}}^I_m)$ if and only if one of the following conditions hold:
	\begin{enumerate}
		\item $t=\max(1,m)$.
		\item $\min(K,\frac{1-\widetilde{P}_0}{\widetilde{P}_{up}})<t$.
		\item $K<m+M-1,\max(\frac{\widetilde{P}_{shut}-\widetilde{P}_0}{\widetilde{P}_{up}} ,\frac{K\widetilde{P}_{down}+\widetilde{P}_{shut}-\widetilde{P}_0}{\widetilde{P}_{up}+\widetilde{P}_{down}})<t<m+M-1$.
	\end{enumerate}
\end{proposition}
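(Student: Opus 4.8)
The plan is to follow the same facet-establishing scheme used for Propositions \ref{prp6} and \ref{prp7}, but now tracking carefully which argument of each nested $\min$ in the coefficients of (\ref{eq:tight-max-output-1}) is active. First I would fix the dimension of $\textup{conv}(\widetilde{\mathcal{Q}}^I_m)$: since adjoining the ramping inequalities (\ref{eq:tight-ramp-up-1})-(\ref{eq:tight-ramp-down-1}) to the generation-limit polytope does not lower the dimension (the box on the $\widetilde{P}$ coordinates still has interior in the relevant directions), $\textup{conv}(\widetilde{\mathcal{Q}}^I_m)$ sits in the same affine hull as $\textup{conv}(\widetilde{\mathcal{P}}^I_m)$, modulo the implicit equality (\ref{eq:new-initial-status-1}) that is valid when $m\le\min(U,T-M+1)$. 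I would record $\dim(\textup{conv}(\widetilde{\mathcal{Q}}^I_m))$ as the target count of affinely independent tight points.

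For sufficiency I would treat the three conditions as three regimes in which the right-hand side of (\ref{eq:tight-max-output-1}) at period $t$ is attained by a term that cannot be synthesized from the remaining inequalities. In regime~1 ($t=\max(1,m)$) the bound reduces to the genuine base-period value and is a true boundary constraint; in regime~2 ($t>\min(K,\frac{1-\widetilde{P}_0}{\widetilde{P}_{up}})$) either $t$ exceeds the forced-online horizon $K$, so feasible points in which the unit shuts down just after $t$ keep the hyperplane non-redundant, or else $\widetilde{P}_0+t\widetilde{P}_{up}>1$ and the active term is the trivial cap $1$; in regime~3 the shutdown term $\widetilde{P}_{shut}+(k-t)\widetilde{P}_{down}$ binds for an initial interval $[m,k]$ ending before $m+M-1$, which $K<m+M-1$ permits. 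In each regime I would exhibit a family of integer-$\tau$ feasible points, obtained by selecting the operating interval $[h,k]$ appropriately and then setting the $\widetilde{P}$ profile to saturate (\ref{eq:tight-max-output-1}) while remaining free in the orthogonal coordinates, and verify affine independence by the usual triangular-incidence argument on the $\tau$ blocks (as for matrix $\bm{C}$) together with independent perturbations of the unconstrained $\widetilde{P}_s$.

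For necessity I would prove the contrapositive: when none of the three conditions holds, that is $t>\max(1,m)$, $t\le\min(K,\frac{1-\widetilde{P}_0}{\widetilde{P}_{up}})$, and the negation of regime~3, the active minimum in every coefficient equals a ramp term of the form $\widetilde{P}_0+(t-h)\widetilde{P}_{up}$, so (\ref{eq:tight-max-output-1}) becomes a nonnegative combination of the lower bound (\ref{eq:tight-min-output-1}), the single-step ($a=1$) ramp-up inequalities (\ref{eq:tight-ramp-up-1}), and the initial-interval equality (\ref{eq:new-initial-status-1}). Concretely, $t\le K$ forces $\tau^m_{m,k}=1$ for some $k$ and $t\le\frac{1-\widetilde{P}_0}{\widetilde{P}_{up}}$ keeps $\widetilde{P}_0+t\widetilde{P}_{up}\le 1$, so telescoping $\sum_{s=\max(1,m)+1}^{t}(\widetilde{P}_s-\widetilde{P}_{s-1})$ against the ramp bounds and adding $\widetilde{P}_{\max(1,m)}\le\widetilde{P}_0+\max(1,m)\widetilde{P}_{up}$ reproduces the right-hand side exactly; hence the induced face has dimension strictly below $\dim(\textup{conv}(\widetilde{\mathcal{Q}}^I_m))-1$ and (\ref{eq:tight-max-output-1}) is not facet-defining.

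The main obstacle will be the bookkeeping in the necessity step: I must check that the telescoping combination reproduces not only the $\widetilde{P}_0+t\widetilde{P}_{up}$ coefficient on the $\tau^m_{m,k}$ and $\tau^m_{m,m+M-1}$ terms but also every coefficient with $h>m$, and that the thresholds in conditions~2 and~3 are precisely the breakpoints at which some coefficient's active argument switches away from the ramp term. Making these breakpoints coincide exactly with $\min(K,\frac{1-\widetilde{P}_0}{\widetilde{P}_{up}})$ and with the two ratios $\frac{\widetilde{P}_{shut}-\widetilde{P}_0}{\widetilde{P}_{up}}$ and $\frac{K\widetilde{P}_{down}+\widetilde{P}_{shut}-\widetilde{P}_0}{\widetilde{P}_{up}+\widetilde{P}_{down}}$ — so that redundancy holds on the entire complement and fails the instant any condition is met — is where the careful case analysis concentrates.
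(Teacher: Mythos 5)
Your overall scheme (domination argument for necessity, tight-point argument for sufficiency) is the right one, but the proposal mislocates where the three conditions actually do their work, and this is not mere bookkeeping. For sufficiency, the conditions are \emph{not} three regimes distinguishing which argument of the nested $\min$ is active in the coefficient of period $t$; they are exactly what is needed to handle the intermediate periods $\max(1,m)\le j<t$. In the paper's proof the only condition-dependent step is showing $\omega_j=0$ for those $j$: one must produce two tight feasible points that differ only in the coordinate $\widetilde{P}_j$, and when $t\le K$, $\widetilde{P}_0+t\widetilde{P}_{up}\le 1$ and the shutdown term does not bind, every point tight at (\ref{eq:tight-max-output-1}) has its whole profile on $[\max(1,m),t]$ pinned to the ramp trajectory, so no such perturbation exists and the face drops dimension. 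Condition 1 makes the set of such $j$ empty, condition 2 frees $\widetilde{P}_j$ via a shutdown-before-$t$ point or via slack under the cap $1$, and condition 3 frees it via a point riding the $\widetilde{P}_{shut}+(k-t)\widetilde{P}_{down}$ branch. Your plan of exhibiting tight points "free in the orthogonal coordinates" and checking affine independence will stall precisely at these coordinates unless you make this dependence explicit; the rest of the coefficients $\varphi_{h,k}$ are determined without any reference to the conditions.

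For necessity, your claim that "the active minimum in every coefficient equals a ramp term of the form $\widetilde{P}_0+(t-h)\widetilde{P}_{up}$" is false for the coefficients with $h>m$ (those read $\widetilde{P}_{start}+(t-h)\widetilde{P}_{up}$, and the negated conditions say nothing about them). The step that actually saves the argument is structural: $t\le K\le U+1\le U+\underline{T}_{off}$ places $t$ inside the forced initial on-interval, so no restarted interval $[h,k]$ with $h>m$ can contain $t$ and, on the feasible set, (\ref{eq:tight-max-output-1}) collapses onto the $\tau^m_{m,k}$ terms alone. The paper then dominates it by (\ref{eq:tight-max-output-1}) at $t=m$ together with a \emph{single} inequality (\ref{eq:tight-ramp-up-1}) with $a=t-m$, rather than your telescoped chain of $a=1$ steps; your chain can be made to work for the $\tau^m_{m,m+M-1}$ coefficient, but matching the $\min[1,\widetilde{P}_0+t\widetilde{P}_{up},\widetilde{P}_{shut}+(k-t)\widetilde{P}_{down}]$ coefficients for $k<m+M-1$ under the negation of condition 3 is exactly the unresolved case split (the paper needs three subcases, including $U=m+M-2$, to do it). Also, the lower bound (\ref{eq:tight-min-output-1}) you list as an ingredient of the dominating combination plays no role there.
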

\begin{proof}[\textbf{Proof}]
	See ``Appendix 2".
\end{proof}

\begin{proposition}\label{prp9}
	We define condition $\mathcal{M}_1$ as one of the following conditions hold:
	\begin{enumerate}
		\item $a=1$.
		\item $t>K$.
		\item $\min(\frac{1}{\widetilde{P}_{up}},\frac{1-\widetilde{P}_0+t\widetilde{P}_{down}}{\widetilde{P}_{up}+\widetilde{P}_{down}})<a$.
		\item $\max(t,K)<m+M-1,\min\{\frac{\widetilde{P}_{shut}+[K-t]^+ \widetilde{P}_{down}}{\widetilde{P}_{up}},\frac{\widetilde{P}_{shut}+\max(t,K)×\widetilde{P}_{down}-\widetilde{P}_0}{\widetilde{P}_{up}+\widetilde{P}_{down}}\}<a$.
		\item $\min(\underline{T}_{on},\frac{1-\widetilde{P}_{start}}{\widetilde{P}_{up}}+1)<a\le t-U-\underline{T}_{off}$.
		\item $t<m+M-1,\max[\frac{\widetilde{P}_{shut}-\widetilde{P}_{start}}{\widetilde{P}_{up}},\frac{(\underline{T}_{on}-1)\widetilde{P}_{down}+\widetilde{P}_{shut}-\widetilde{P}_{start}}{\widetilde{P}_{up}+\widetilde{P}_{down}},t+\underline{T}_{on}-m-M]+1<a\le t-U-\underline{T}_{off}$.
	\end{enumerate}
	We define condition $\mathcal{M}_2$ as one of the following conditions hold:
	\begin{enumerate}
		\item $t-a=0$.
		\item $a<\min(\frac{1}{\widetilde{P}_{up}},\frac{1-\widetilde{P}_0+t\widetilde{P}_{down}}{\widetilde{P}_{up}+\widetilde{P}_{down}})$.
		\item $a<\min(\frac{1}{\widetilde{P}_{up}},t-U-\underline{T}_{off})$.
	\end{enumerate}
	Inequality (\ref{eq:tight-ramp-up-1}) defines a facet of $\textup{conv}(\widetilde{\mathcal{Q}}^I_m)$ if and only if condition $\mathcal{M}_1$ and condition $\mathcal{M}_2$ both hold.
\end{proposition}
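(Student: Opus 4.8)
The plan is to apply the standard facet criterion: for the full-dimensional-in-its-affine-hull polytope $P:=\textup{conv}(\widetilde{\mathcal{Q}}^I_m)$, the valid inequality (\ref{eq:tight-ramp-up-1}) defines a facet iff every inequality $\sum_{[h,k]\in A_m}a_{h,k}\tau^m_{h,k}+\sum_s c_s\widetilde{P}_s=b$ satisfied with equality by all points of the face $F:=\{x\in P:\text{(\ref{eq:tight-ramp-up-1}) tight}\}$ must be $\lambda\,(\text{\ref{eq:tight-ramp-up-1}})+\sum_j\gamma_j\,(\text{\ref{eq:new-initial-status-1}})$ for some $\lambda>0$. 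First I would record validity by a case split over the unique operating interval $[h,k]$ with $\tau^m_{h,k}=1$: each coefficient of (\ref{eq:tight-ramp-up-1}) matches the corresponding entry of $\textup{UB}(\widetilde{P}_t-\widetilde{P}_{t-a})$ in Table \ref{tab:tight-ramp-1}, so the inequality holds. Integrality of the $\tau$-block from Theorem \ref{binary-integer-1}, together with the interior freedom of the $\widetilde{P}$ variables, pins down $\dim P$ and shows the only surviving equalities are (\ref{eq:new-initial-status-1}).

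Sufficiency ($\mathcal{M}_1\wedge\mathcal{M}_2\Rightarrow$ facet): take a generic tight inequality as above. For any $s\notin\{t,t-a\}$ I would exhibit two points of $F$ differing only in $\widetilde{P}_s$ (possible precisely because, under $\mathcal{M}_1$ and $\mathcal{M}_2$, $\widetilde{P}_s$ is not pinned by a bound active on the ramp face), forcing $c_s=0$; moving along the edge of $F$ on which $\widetilde{P}_t-\widetilde{P}_{t-a}$ stays at its upper bound while both coordinates vary then gives $c_t=-c_{t-a}=:\lambda$, with $\lambda>0$ after normalization. The clauses of $\mathcal{M}_2$ are exactly what guarantee that the active branch of the minima governing the $(t-a)$-side is the genuine ramp value $a\widetilde{P}_{up}$ or the history value involving $\widetilde{P}_0$ rather than a saturated cap, so a tight point with $\widetilde{P}_{t-a}$ strictly interior exists; the clauses of $\mathcal{M}_1$ supply the matching freedom on the $t$-side. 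Sweeping the commitment $\tau$ over every interval of $A_m$ and reading the forced $a_{h,k}$ off the resulting tight $\widetilde{P}$-profile then reproduces the coefficients of (\ref{eq:tight-ramp-up-1}) up to $\lambda$ and the equality (\ref{eq:new-initial-status-1}); equivalently this exhibits $\dim P-1$ affinely independent points on $F$.

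Necessity: I would argue the contrapositive — if $\mathcal{M}_1\wedge\mathcal{M}_2$ fails, then (\ref{eq:tight-ramp-up-1}) is a nonnegative combination of other constraints of $\widetilde{\mathcal{Q}}^R_m$, so $F$ lies in the intersection of at least two other facets and $\dim F\le\dim P-2$. Each clause of $\mathcal{M}_1$ (resp.\ $\mathcal{M}_2$) marks the regime in which the $t$-side (resp.\ the $(t-a)$-side) is controlled by a genuine, non-saturated branch of its minimum; when every clause of one of the two disjunctions fails at once, that side saturates — either the cap $1$ or the shutdown bound $\widetilde{P}_{shut}+(k-t)\widetilde{P}_{down}$ binds on the $t$-side, or the history floor $\max(0,\widetilde{P}_0-(t-a)\widetilde{P}_{down})$ or a shorter-horizon ramp binds on the $(t-a)$-side. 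In each such saturated regime I would write (\ref{eq:tight-ramp-up-1}) explicitly as the sum of a generation-bound facet (\ref{eq:tight-max-output-1}) or (\ref{eq:tight-min-output-1}) and a ramp inequality of the same family with strictly smaller index $a$ (or a ramp-down inequality (\ref{eq:tight-ramp-down-1})), certifying redundancy; the thresholds $a=1/\widetilde{P}_{up}$ and $t-a=0$ in clause (2) of $\mathcal{M}_2$ and clauses (3)--(4) of $\mathcal{M}_1$ are exactly where these decompositions first apply.

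The hard part will be the bookkeeping in the necessity direction. The coefficients of (\ref{eq:tight-ramp-up-1}) are nested minima, and one must verify, threshold by threshold, precisely which dominating combination takes over as a given argument of a min becomes active — in particular the history terms $\max(0,\widetilde{P}_0-(t-a)\widetilde{P}_{down})$ and the interplay of $U$, $\underline{T}_{off}$ and $\underline{T}_{on}$ appearing in clauses (5)--(6) of $\mathcal{M}_1$ and clause (3) of $\mathcal{M}_2$, where the ramp is measured against a forced initial on-interval rather than a fresh start-up. This is the delicate case analysis that is deferred to ``Appendix 2''.
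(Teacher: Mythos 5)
Your proposal is correct and follows essentially the same route as the paper: sufficiency via the indirect facet criterion (every hyperplane containing the tight points must be a combination of the inequality and the equalities \eqref{eq:new-initial-status-1}), established by exhibiting pairs of tight feasible points that force $c_s=0$ for $s\notin\{t,t-a\}$, then $c_{t-a}=-c_t$, then the $\tau$-coefficients interval by interval; and necessity by showing that when a clause block fails the inequality is dominated --- by the $a=1$ ramp inequalities when $\mathcal{M}_1$ fails and by \eqref{eq:tight-max-output-1} together with \eqref{eq:tight-min-output-1} when $\mathcal{M}_2$ fails. The deferred bookkeeping you identify (the threshold-by-threshold analysis of the nested minima and the role of $U$, $K$, $\underline{T}_{on}$, $\underline{T}_{off}$) is exactly what the paper's Appendix 2 carries out.
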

\begin{proof}[\textbf{Proof}]
	See ``Appendix 2".
\end{proof}

\begin{proposition}\label{prp10}
	We define condition $\mathcal{M}_3$ as one of the following conditions hold:
	\begin{enumerate}
		\item $a=1$.
		\item $a\le t-U-\underline{T}_{off}$.
		\item $\min(\frac{1}{\widetilde{P}_{down}},\frac{\widetilde{P}_0+t\widetilde{P}_{up}}{\widetilde{P}_{up}+\widetilde{P}_{down}})<a$.
		\item $U<t,\min(\frac{1-\widetilde{P}_{shut}}{\widetilde{P}_{down}}+1,\frac{\widetilde{P}_0+t\widetilde{P}_{up}+\widetilde{P}_{down}-\widetilde{P}_{shut}}{\widetilde{P}_{up}+\widetilde{P}_{down}})<a$.
	\end{enumerate}
	We define condition $\mathcal{M}_4$ as one of the following conditions hold:
	\begin{enumerate}
		\item $t-a=0$.
		\item $a<\min(\frac{1}{\widetilde{P}_{down}},\frac{\widetilde{P}_0+t\widetilde{P}_{up}}{\widetilde{P}_{up}+\widetilde{P}_{down}})$.
		\item $a<\min(\frac{1}{\widetilde{P}_{down}},t-U-\underline{T}_{off},\frac{\widetilde{P}_{start}+(t-U-\underline{T}_{off}-1)\widetilde{P}_{up}}{\widetilde{P}_{up}+\widetilde{P}_{down}})$.
	\end{enumerate}
	Inequality (\ref{eq:tight-ramp-down-1}) defines a facet of $\textup{conv}(\widetilde{\mathcal{Q}}^I_m)$ if and only if condition $\mathcal{M}_3$ and condition $\mathcal{M}_4$ both hold.
\end{proposition}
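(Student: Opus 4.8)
Validity of (\ref{eq:tight-ramp-down-1}) is not at issue: by construction each summand coefficient equals the case-by-case value $\textup{UB}(\widetilde{P}_{t-a}-\widetilde{P}_t)$ tabulated in the lower half of Table \ref{tab:tight-ramp-1}, so for every integral commitment (a choice of active interval $[h,k]\in A_m$) the right-hand side is exactly the largest attainable value of $\widetilde{P}_{t-a}-\widetilde{P}_t$. The first task is therefore to fix $\textup{dim}(\textup{conv}(\widetilde{\mathcal{Q}}^I_m))$. I would argue, exactly as in Proposition \ref{prp4} and its history-aware generalization, that the only equalities satisfied by every point of $\widetilde{\mathcal{Q}}^I_m$ are (\ref{eq:new-initial-status-1}) for $m\le\min(U,T-M+1)$: integrality of $\widetilde{\mathcal{B}}^R_m$ (Theorem \ref{binary-integer-1}) guarantees that the $\tau$-coordinates already span a full-dimensional integral polytope modulo (\ref{eq:new-initial-status-1}), and the continuous $\widetilde{P}_s$ each retain a free direction because $\widetilde{P}_{start}+\widetilde{P}_{up}<1$, $\widetilde{P}_{shut}+\widetilde{P}_{down}<1$. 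This pins down the dimension $d$ of the affine hull, which I will use as the target count.

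For the facet proof proper I would use the indirect coefficient-matching method rather than raw affine-independence counting. Suppose a linear equation $\sum_{[h,k]\in A_m}\pi_{h,k}\,\tau^m_{h,k}+\sum_s\rho_s\,\widetilde{P}_s=\pi_0$ holds on the whole face $F=\{x\in\textup{conv}(\widetilde{\mathcal{Q}}^I_m):\ (\ref{eq:tight-ramp-down-1})\text{ tight}\}$; the goal is to show $(\pi,\rho,\pi_0)$ is a combination of (\ref{eq:tight-ramp-down-1}) with the equalities (\ref{eq:new-initial-status-1}), forcing $\textup{dim}(F)=d-1$. I would first produce, for each admissible interval $[h,k]$, a point of $F$ by turning that interval on and choosing a power profile that drives $\widetilde{P}_{t-a}$ up to its envelope and $\widetilde{P}_t$ down, attaining equality. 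Perturbing the continuous coordinates $\widetilde{P}_s$ with $s\notin\{t-a,t\}$ inside $F$ gives $\rho_s=0$ there, and shifting the common level of $\widetilde{P}_{t-a},\widetilde{P}_t$ forces $\rho_{t-a}=\lambda=-\rho_t$ for some $\lambda$. Comparing pairs of points that differ only by swapping the active interval then expresses each $\pi_{h,k}$ as $\lambda$ times the corresponding right-hand-side coefficient of (\ref{eq:tight-ramp-down-1}), up to the multiplier attached to (\ref{eq:new-initial-status-1}). The role of conditions $\mathcal{M}_3$ and $\mathcal{M}_4$ is precisely to guarantee that, for the intervals that matter, such an equality-attaining profile actually exists in $\widetilde{\mathcal{Q}}^I_m$: $\mathcal{M}_4$ (the ``$a$ small'' conditions, e.g. $a<\min(\tfrac{1}{\widetilde{P}_{down}},\tfrac{\widetilde{P}_0+t\widetilde{P}_{up}}{\widetilde{P}_{up}+\widetilde{P}_{down}})$) ensures the ramp term $a\widetilde{P}_{down}$, not the trivial cap $1$ nor the history-startup value, is the binding branch of the $\min$, while $\mathcal{M}_3$ (the ``$a$ large enough / within-reach'' conditions) ensures $\widetilde{P}_t$ can be pushed low enough relative to the shutdown/history envelope for the gap to be realized.

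For the converse I would show that whenever a sub-condition of $\mathcal{M}_3$ or $\mathcal{M}_4$ fails the inequality ceases to be facet-defining, via one of two mechanisms. If $\mathcal{M}_4$ fails (so $a\widetilde{P}_{down}\ge 1$ or the history term dominates), (\ref{eq:tight-ramp-down-1}) is a nonnegative combination of the upper generation bound (\ref{eq:tight-max-output-1}) on $\widetilde{P}_{t-a}$, the lower bound (\ref{eq:tight-min-output-1}) on $\widetilde{P}_t$, and sign constraints, hence redundant; I would exhibit the explicit multipliers case by case, paralleling the $a\ge\tfrac1{\widetilde{P}_{down}}$ argument behind Proposition \ref{prp4}. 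If $\mathcal{M}_3$ fails, the active branch of the $\min$ collapses onto the coefficient of a neighbouring constraint, so every point of $F$ satisfies an extra independent equation (typically that $\widetilde{P}_{t-a}$ or $\widetilde{P}_t$ is frozen at a shutdown/startup level), dropping $\textup{dim}(F)$ below $d-1$; I would identify that forced equation in each numbered case. The main obstacle is exactly this bookkeeping: the right-hand side is a sum of $\min$'s of three competing quantities whose active branch depends jointly on $t,a,h,k,U,\widetilde{P}_0$ and the ramp parameters, so pinning down the precise thresholds at which the $\min$ switches branch—and thereby separating the genuine history contribution ($\widetilde{P}_0,U,\underline{T}_{off}$) from the fresh-startup contribution ($\widetilde{P}_{start}$) to recover the exact bounds in conditions 3--4 of $\mathcal{M}_3$ and condition 3 of $\mathcal{M}_4$—is where the delicate and lengthy verification lies.
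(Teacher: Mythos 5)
Your overall skeleton matches the paper's: the sufficiency direction is proved by the indirect coefficient-matching method (Theorem 3.6 of \S I.4.3 in \cite{wolsey1988integer}), constructing for each admissible interval $[h,k]$ an equality-attaining point and a perturbed companion to force $\omega_j=0$ for $j\notin\{t-a,t\}$, $\omega_{t-a}=-\omega_t$, and each $\varphi_{h,k}$ equal to $\omega_t$ times the corresponding right-hand-side coefficient (modulo the multiplier on the equality (\ref{eq:new-initial-status-1}), which the paper absorbs through the constant $\lambda_0$ when $m\le U$); and the necessity direction is proved by exhibiting dominating inequalities. One imprecision in your sketch: the subconditions of $\mathcal{M}_3$ and $\mathcal{M}_4$ are not primarily about which branch of the $\min$ binds on the right-hand side. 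In the paper they are exactly what is needed to build the perturbation points that kill the coefficients $\omega_j$ of the \emph{intermediate} periods $j\in(t-a,t)$ (this is what $\mathcal{M}_3$ buys) and of the periods $j<t-a$ together with the coupling $\omega_{t-a}=-\omega_t$ (this is what $\mathcal{M}_4$ buys). Without them one cannot move $\widetilde{P}_j$ by $\varepsilon$ while staying on the face, and the coefficient-matching stalls.

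The genuine gap is in your necessity argument when $\mathcal{M}_3$ fails. You propose to show that the face picks up an extra independent equation (``$\widetilde{P}_{t-a}$ or $\widetilde{P}_t$ frozen at a shutdown/startup level''), but you never identify that equation, and in general it does not exist in that form. The paper's argument is different and concrete: negating all four subconditions of $\mathcal{M}_3$ (so $a>1$, $t-a<U+\underline{T}_{off}$, $a\widetilde{P}_{down}\le\min[1,\widetilde{P}_0+(t-a)\widetilde{P}_{up}]$, and either $t\le U$ or $\widetilde{P}_{shut}+(a-1)\widetilde{P}_{down}\le\min[1,\widetilde{P}_0+(t-a)\widetilde{P}_{up}]$) forces, via the history-dependent definition of $A_m$ and (\ref{eq:new-initial-status-1}), every surviving coefficient of (\ref{eq:tight-ramp-down-1}) to reduce to either $a\widetilde{P}_{down}$ or $\widetilde{P}_{shut}+(k-t+a)\widetilde{P}_{down}$, after which the $a$-step inequality is dominated by the telescoped sum of the $a=1$ instances of the same family. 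Your plan, by contrast, only names (\ref{eq:tight-max-output-1}), (\ref{eq:tight-min-output-1}) and sign constraints as dominating inequalities, which covers the failure of $\mathcal{M}_4$ but not of $\mathcal{M}_3$; to close the argument you must bring the shorter-span ramp-down inequalities into the dominating combination. Without that identification the ``only if'' half of the proposition is not established.
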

\begin{proof}[\textbf{Proof}]
	See ``Appendix 2".
\end{proof}

Then, model MP-3 can be transformed to history-dependent multi-period tight MIQP UC formulation which takes generator’s history into account, denoted as M-period tight model (MP-Ti):
\begin{align}
	&\min \quad(\ref{eq:new-objective-function})\notag\\
	&s.t.\left\{
	\begin{aligned}
		&(\ref{eq:spinning-reserve})(\ref{eq:logical})(\ref{eq:min-up-time})(\ref{eq:min-down-time})(\ref{eq:initial-status})(\ref{eq:new-power-balance})(\ref{eq:start-cost})(\ref{eq:taom-in-0})(\ref{eq:taom-in-1})\\
		&(\ref{eq:tight-min-output-1})\text{ for } i\in (\mathbb{N}-(\mathcal{D}\cup(\mathcal{U}\cap\mathcal{V})))\\
		&(\ref{eq:tight-max-output-0})\text{ for } i\in \mathcal{D}\cup(\mathcal{U}\cap\mathcal{V}),(\ref{eq:tight-max-output-1})\text{ for } i\in (\mathbb{N}-(\mathcal{D}\cup(\mathcal{U}\cap\mathcal{V})))\\
		&(\ref{eq:tight-ramp-up-0})\text{ for } i\in \mathcal{D}\cup(\mathcal{U}\cap\mathcal{V}),m=T-M^i+1\text{ or }a=t-m\\
		&(\ref{eq:tight-ramp-down-0})\text{ for } i\in \mathcal{D}\cup(\mathcal{U}\cap\mathcal{V}),m=0\text{ or }t=m+M^i-1\\
		&(\ref{eq:tight-ramp-up-1})\text{ for } i\in (\mathbb{N}-(\mathcal{D}\cup(\mathcal{U}\cap\mathcal{V}))),m=T-M^i+1\text{ or }a=t-m\\
		&(\ref{eq:tight-ramp-down-1})\text{ for } i\in (\mathbb{N}-(\mathcal{D}\cup(\mathcal{U}\cap\mathcal{V}))),m=0\text{ or }t=m+M^i-1\\
		&u^i_t,v^i_t,w^i_t\in\{0,1\},\widetilde{P}^i_t\in [0,1],\widetilde{S}^i_t\in R_+,m\in[0,T-M^i+1]\\
		&\tau^{i,m}_{h,k}\in\{0,1\},[h,k]\in A^i_m, m<h\le k<m+M^i-1
	\end{aligned}
	\right.
\end{align}

\subsection{MILP approximations}\label{subsec9}

Solving MILP is easier than solving MIQP, so it is very popular to approximate MIQP as MILP and then use MILP solver to solve the UC problem.

Assume that $L$ is a given parameter, let $p^i_l=\underline{P}^i+l(\overline{P}^i-\underline{P}^i)/L$ and 
$l=0,1,2,\cdots,L$. For 2P and 3P, after replacing $\gamma_i(P^i_t)^2$ in the objective function with a corresponding new variable $z^i_t$ and adding the following linear constraints to the formulation.
\begin{equation}
	z^i_t\ge 2\gamma_ip^i_lP^i_t-\gamma_i(p^i_l)^2
\end{equation}
we obtain the MILP UC models that approximate the original MIQP models.

Similarly, let $\widetilde{p}^i_l=l/L$. Replace $\widetilde{\gamma_i}(\widetilde{P}^i_t)^2$ in the 3P-HD and MP models with $z^i_t$ and add the following constraints:
\begin{equation}
	z^i_t\ge 2\widetilde{\gamma_i}\widetilde{p}^i_l\widetilde{P}^i_t-\widetilde{\gamma_i}(\widetilde{p}^i_l)^2
\end{equation}
then we obtain the MILP approximations of 3P-HD and MP models.

\section{Numerical results and analysis}\label{sec5}

In this section, two data sets are used to test the tightness and computational performances of the new formulations proposed in Section \ref{sec3} and Section \ref{sec4}, compared with the state-of-the-art 2-period and 3-period formulations.

The first data set stems from the synthetic instances of \cite{ostrowski2012tight}, which are replicated dataes from \cite{carrion2006computationally}. The second data set, including 5 different data values for 10-, 20-, 50-, 75-, 100-, 150-unit system, and 12 different data values for 200-unit system, is published at \url{http://groups.di.unipi.it/optimize/Data/UC.html}. In total, seventy-three realistic instances with 10-1080 units running for a time span of 24 hours are used in our experiments. The machine on which we perform all of our computations is a desktop with Intel i7-8700K 3.7 GHz CPU and 8 GB of RAM, running MS-Windows 10 (64-bit) and MATLAB 2016b. We use MATLAB to call GUROBI 9.1.1 to solve the MILP problems. The time limit for the solver is set to 3600 seconds. All the codes and instances for the simulations in this article are available from \url{https://github.com/linfengYang/multi-period-UC-model}.

For MP-1 (MP-2, MP-3, MP-Ti) formulations, we choose four different sizes of sliding windows to construct the models, where $M$ equals to 2, 3, $H^i=\max(\lceil\frac{1-\widetilde{P}^i_{start}}{\widetilde{P}^i_{up}}\rceil+1,\lceil\frac{1-\widetilde{P}^i_{shut}}{\widetilde{P}^i_{down}}\rceil+1,\lceil\frac{1}{\widetilde{P}^i_{up}}\rceil,\lceil\frac{1}{\widetilde{P}^i_{down}}\rceil,2)$, $T+1$ respectively, named 2P-1 (2P-2, 2P-3, 2P-Ti), 3P-1 (3P-2, 3P-3, 3P-Ti), HP-1 (HP-2, HP-3, HP-Ti) and TP-1 (TP-2, TP-3, TP-Ti) respectively.

We have made a series of improvements to improve the computational efficiency of our models.

First, we eliminate some redundant inequalities by using Propositions \ref{prp3}-\ref{prp4}. ``MP-1-all" represent the models that retain all proposed inequalities. ``MP-1" represent the models that exclude redundant inequalities.

In order to reflect the difference in computational efficiency of each model, we introduce relative time ``rTime", which can be defined as $(\textup{time}-\textup{reftime})/\textup{reftime}$. In this expression, ``time" represents the solving time of each model, and ``reftime" represents the solving time of reference model. Here, we choose 3P model as reference model. We compare MP-1-all and MP-1 models in terms of rTime in Fig. \ref{fig:facet-runtime}. There is no obvious difference between MP-1-all model and MP-1 model in 2-period, 3-period and H-period. However, TP-1 model performs better than TP-1-all model in general.
\begin{figure}[t]
	\begin{center}
		\subfigure[2-period]{
			\includegraphics[width=0.48\textwidth,height=4cm]{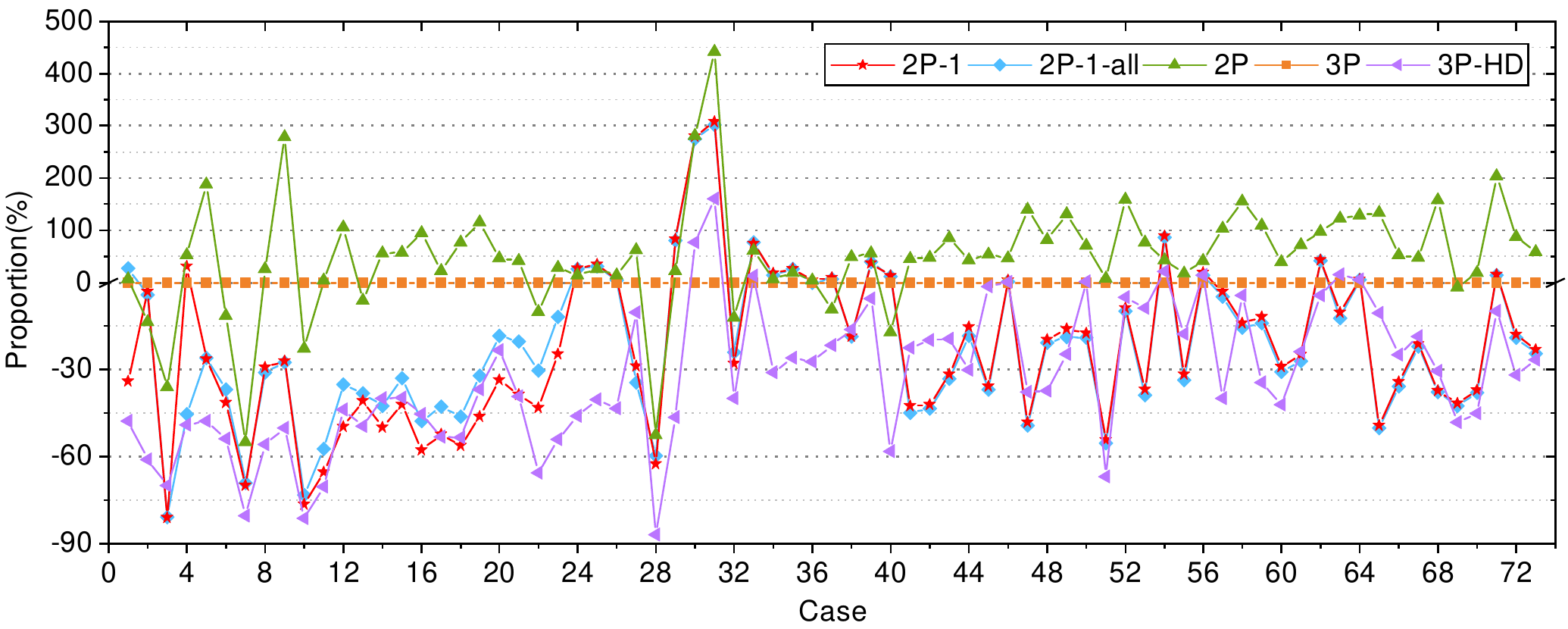}}\subfigure[3-period]{
			\includegraphics[width=0.48\textwidth,height=4cm]{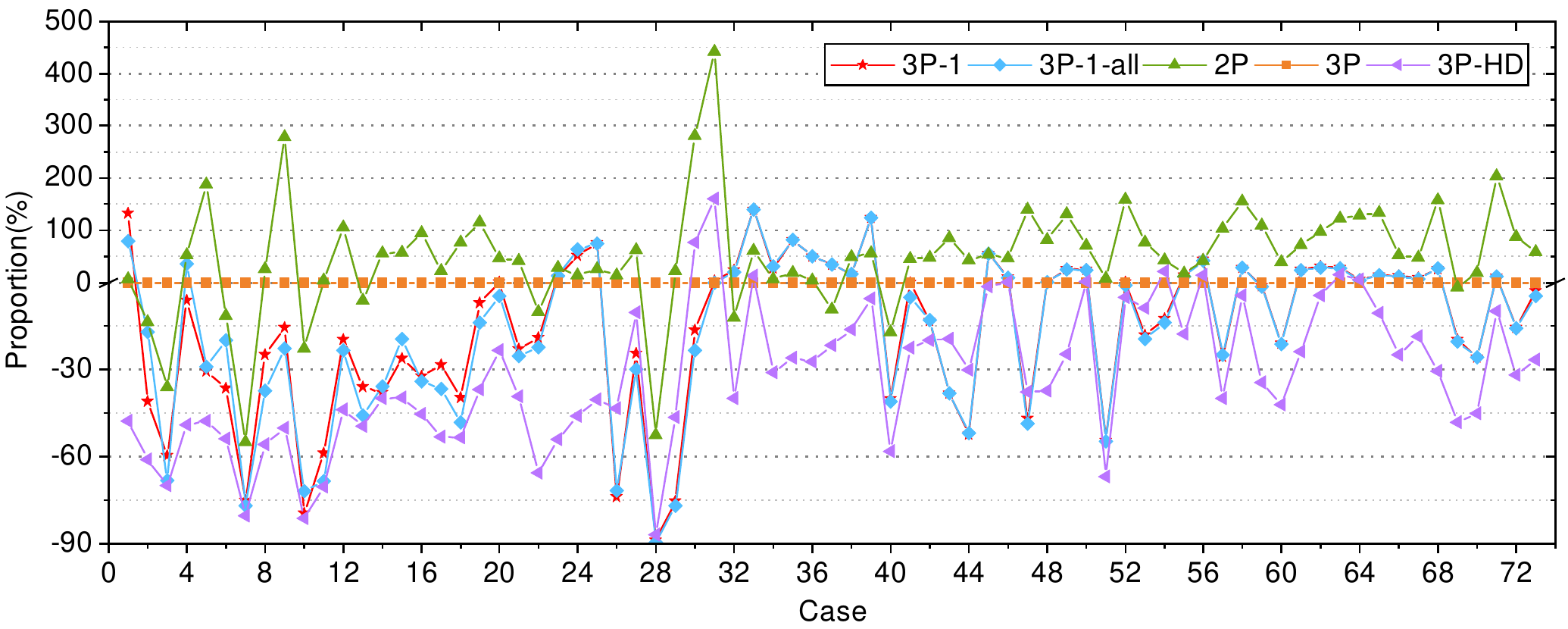}}
		\subfigure[H-period]{
			\includegraphics[width=0.48\textwidth,height=4cm]{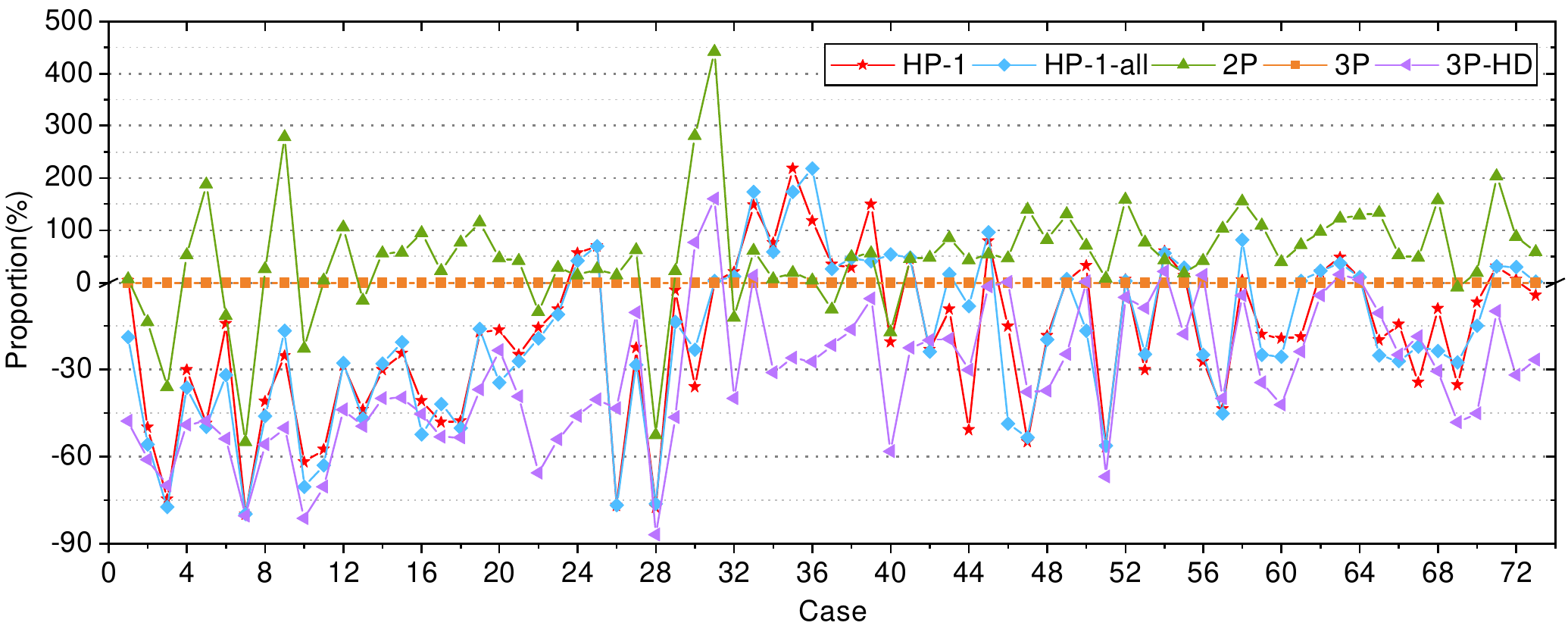}}\subfigure[T-period]{
			\includegraphics[width=0.48\textwidth,height=4cm]{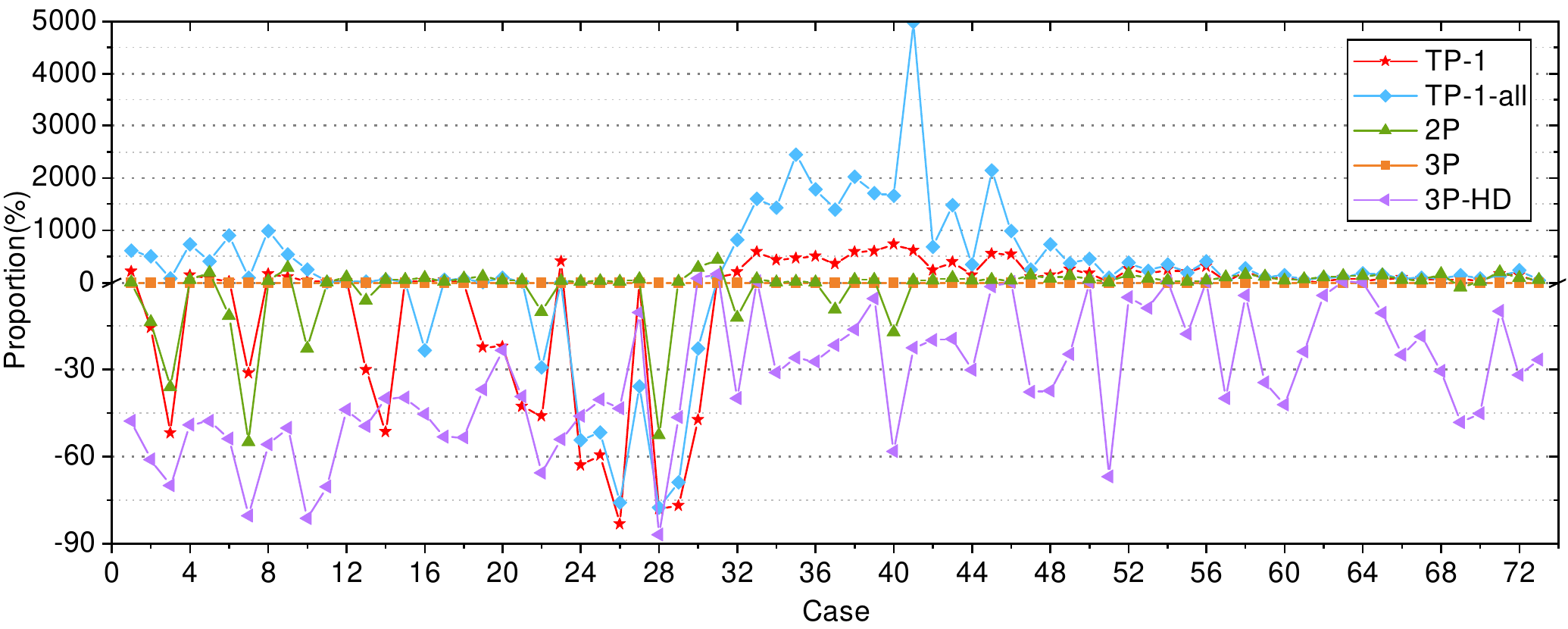}}
		\caption{Comparison of MP-1, MP-1-all and the other three state-of-the-art MILP formulations in terms of rTime.}
		\label{fig:facet-runtime}
	\end{center}
\end{figure}

We adopt the performance profiles to show the difference in computational efficiency of each model more clearly \cite{dolan2002benchmarking}. we define $t_{p,s}$ as the computing time required to solve problem $p$ by model $s$.
\begin{equation}
	\rho_s(\tau)=\frac{1}{n_p}\textup{size}\{p\in\mathscr{P}:\frac{t_{p,s}}{\min\{t_{p,s}:s\in\mathscr{S}\}}\le\tau\}
\end{equation}
where $\mathscr{S}$ is the set of models, $\mathscr{P}$ is the test set, $n_p$ is the total number of problems. As shown in Fig. \ref{fig:facet-profile}, model MP-1 has excellent performance in T-period.
\begin{figure}[t]
	\begin{center}
		\subfigure[2-period]{
			\includegraphics[width=0.48\textwidth,height=4cm]{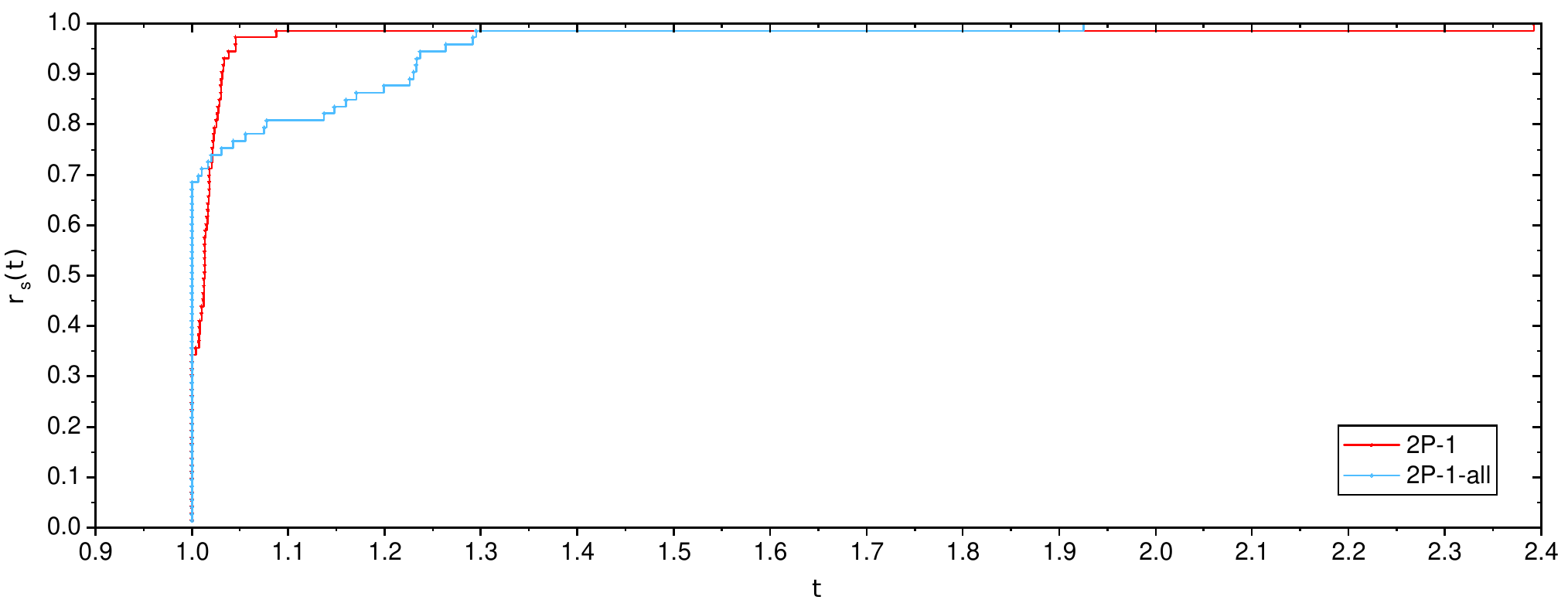}}\subfigure[3-period]{
			\includegraphics[width=0.48\textwidth,height=4cm]{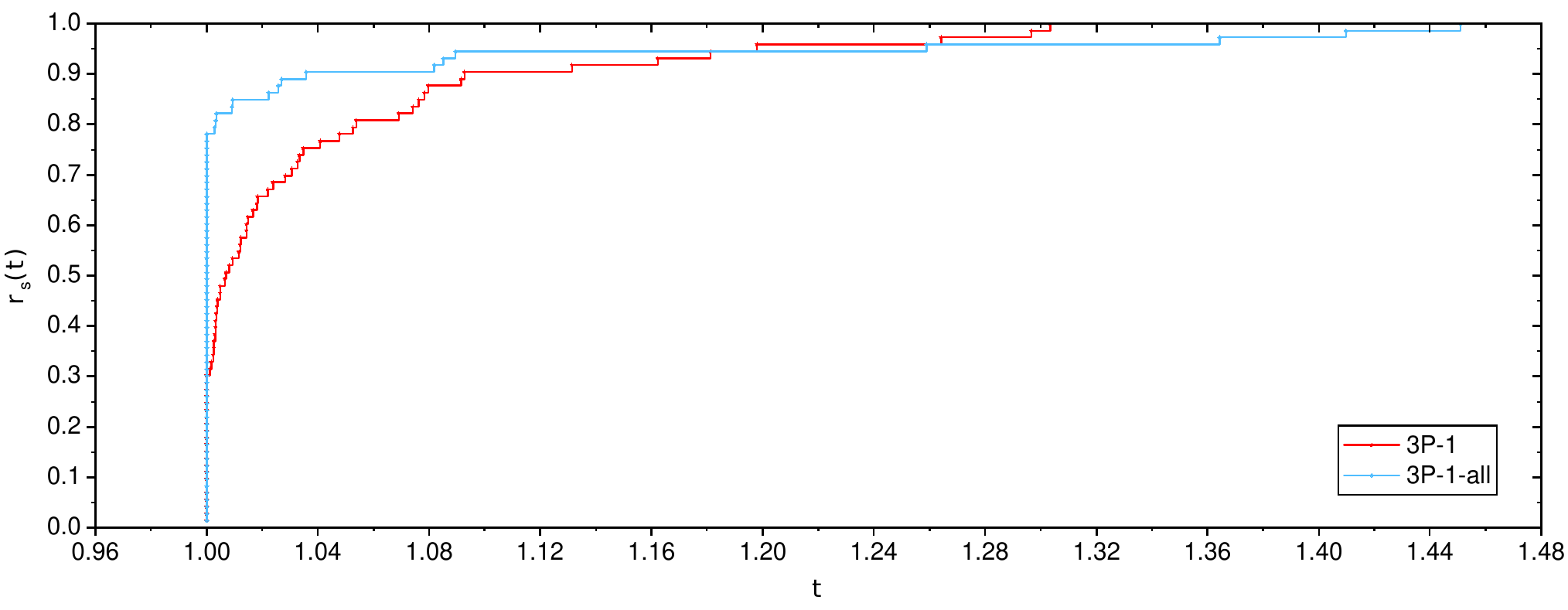}}
		\subfigure[H-period]{
			\includegraphics[width=0.48\textwidth,height=4cm]{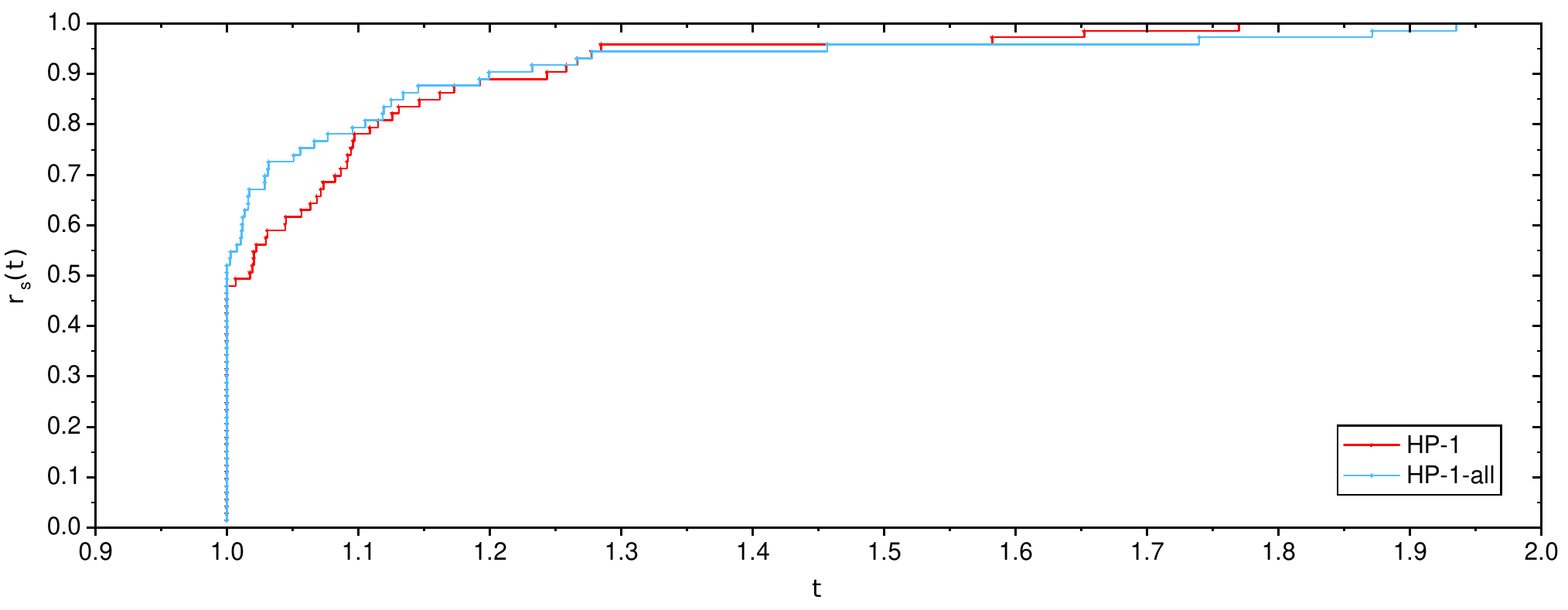}}\subfigure[T-period]{
			\includegraphics[width=0.48\textwidth,height=4cm]{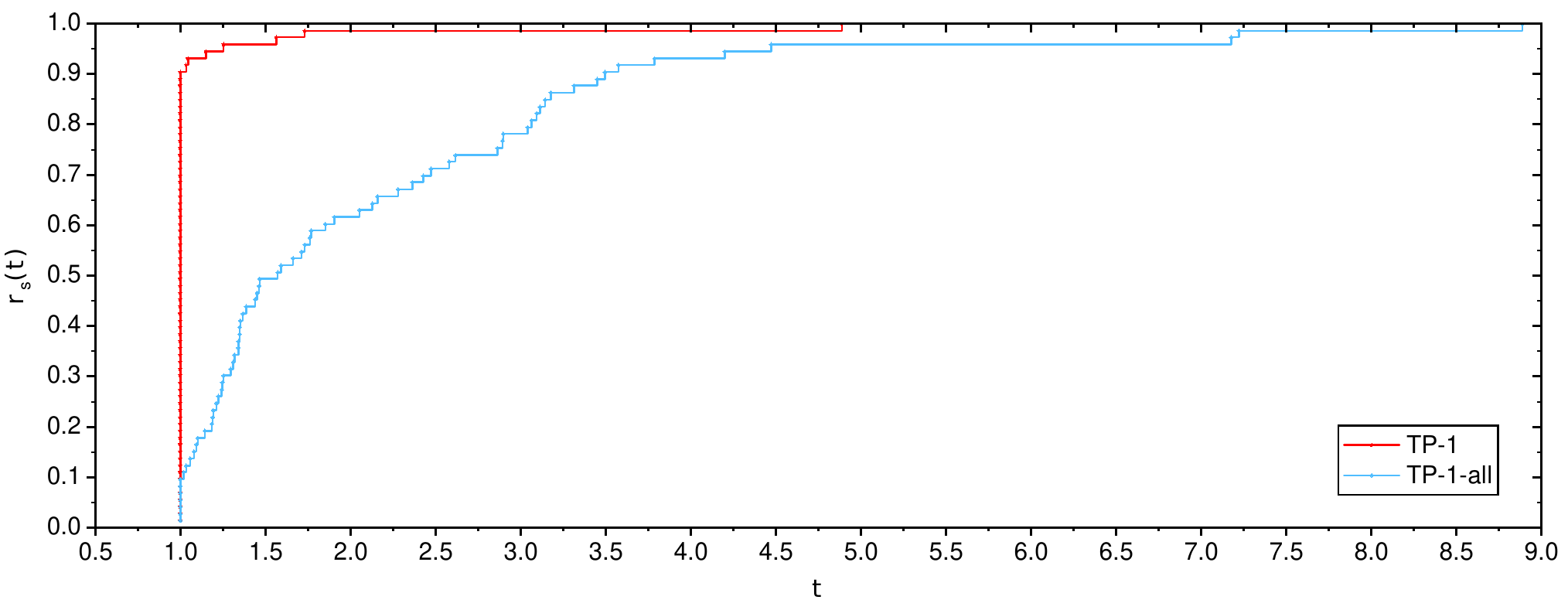}}
		\caption{Performance profiles on CPU time for MP-1 and MP-1-all formulations.}
		\label{fig:facet-profile}
	\end{center}
\end{figure}

We define ``redu\_con" as the percentage decreases in the number of constraints of model MP-1 compared to model MP-1-all. As shown in Fig. \ref{fig:facet-constraints}, compared with MP-1-all model, the number of constraints in MP-1 model decreases more significantly for T-period.
\begin{figure}[t]
 	\centering
 	\includegraphics[width=\textwidth,height=6cm]{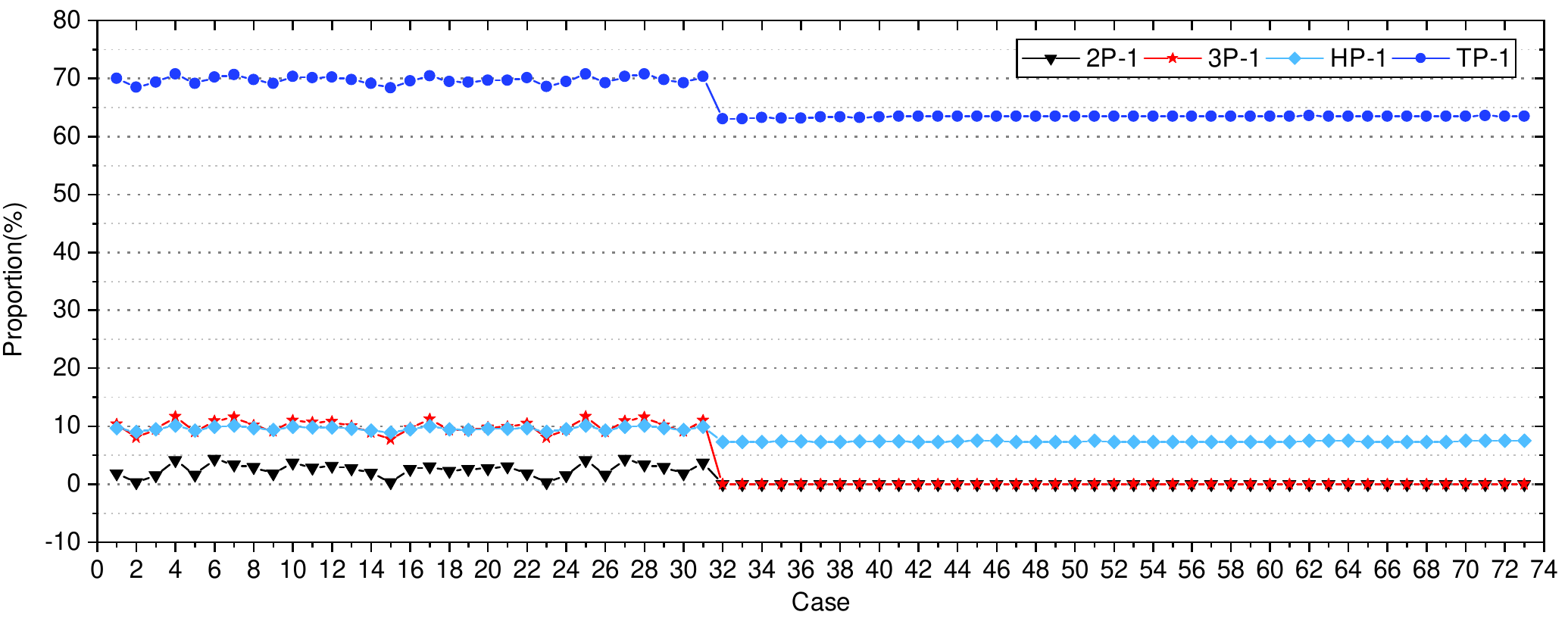}
 	\caption{The redu\_con of four MP-1 formulations.}
 	\label{fig:facet-constraints}
\end{figure}

The solving process was frequently interrupted due to insufficient memory, when we used TP-1-all model to solve the continuous relaxation of the MIP problem with more than 900 units. And finally we got the solutions after several attempts. To simplify the model, for lower/upper bound of generation limits and ramping constraints, we select only inequalities that satisfy the conditions of Propositions \ref{prp3}-\ref{prp4} for models MP-1, MP-2, MP-3. And only inequalities that satisfy the conditions of Propositions \ref{prp7}-\ref{prp10} are selected for models MP-Ti.

Next, considering the compactness of the models, we improved MP-1 models with respect to binary variables, and get two groups of MP models ``MP-2, MP-3". As shown in Fig. \ref{fig:Variable-runtime} and Fig. \ref{fig:Variable-performance-profile}, MP-3 models always perform best.
\begin{figure}[t]
	\begin{center}
		\subfigure[2-period]{
			\includegraphics[width=0.48\textwidth,height=4cm]{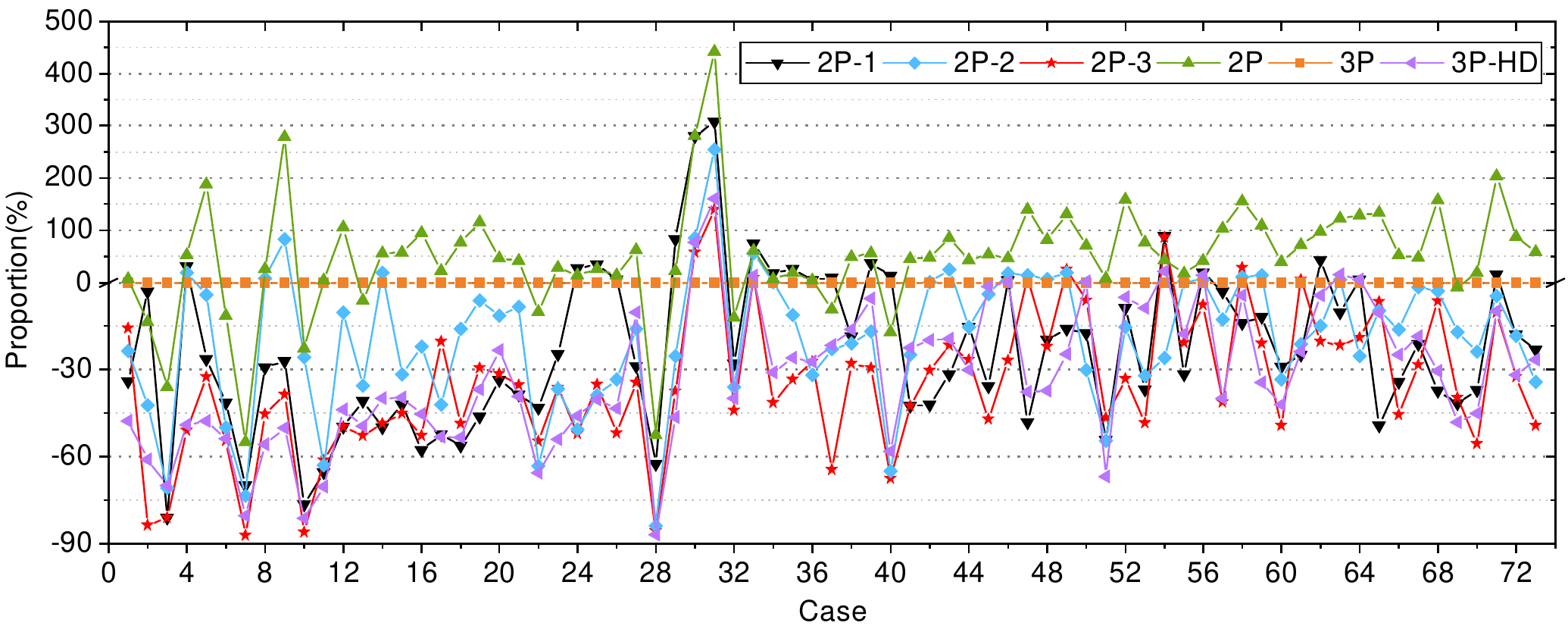}}\subfigure[3-period]{
			\includegraphics[width=0.48\textwidth,height=4cm]{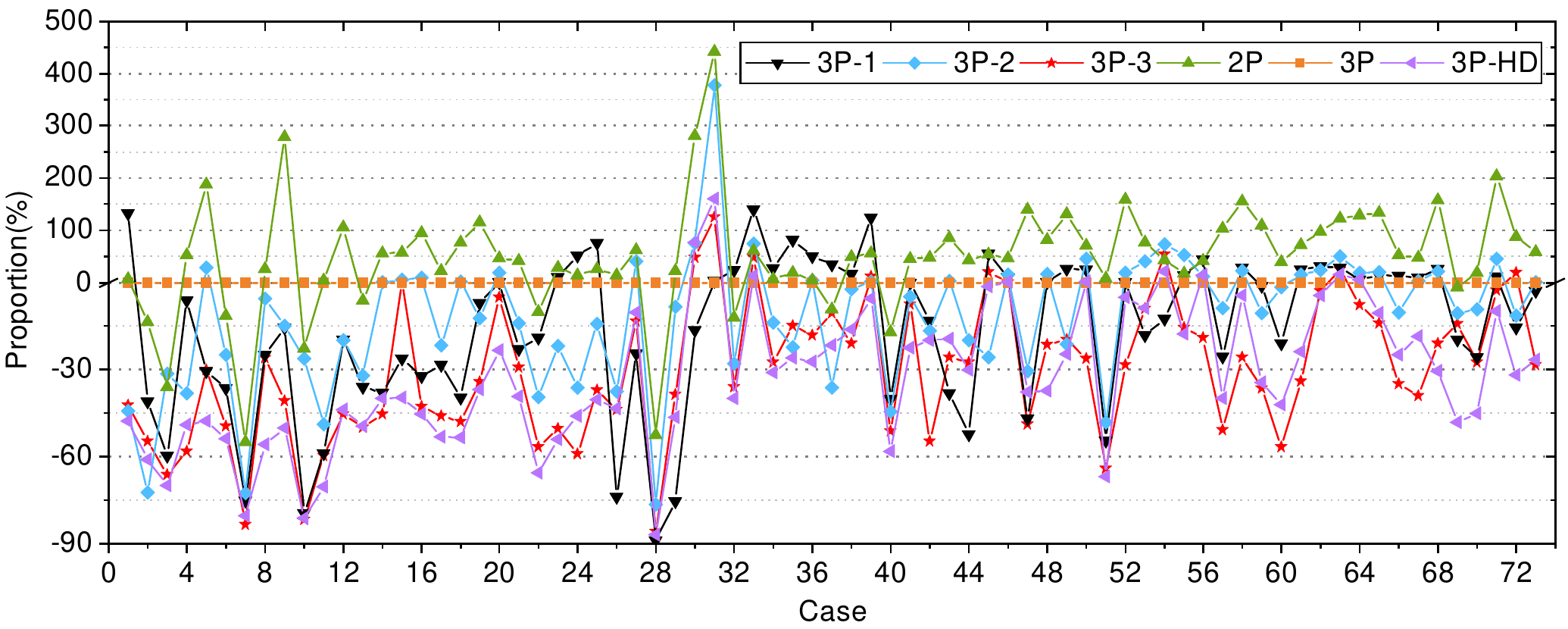}}
		\subfigure[H-period]{
			\includegraphics[width=0.48\textwidth,height=4cm]{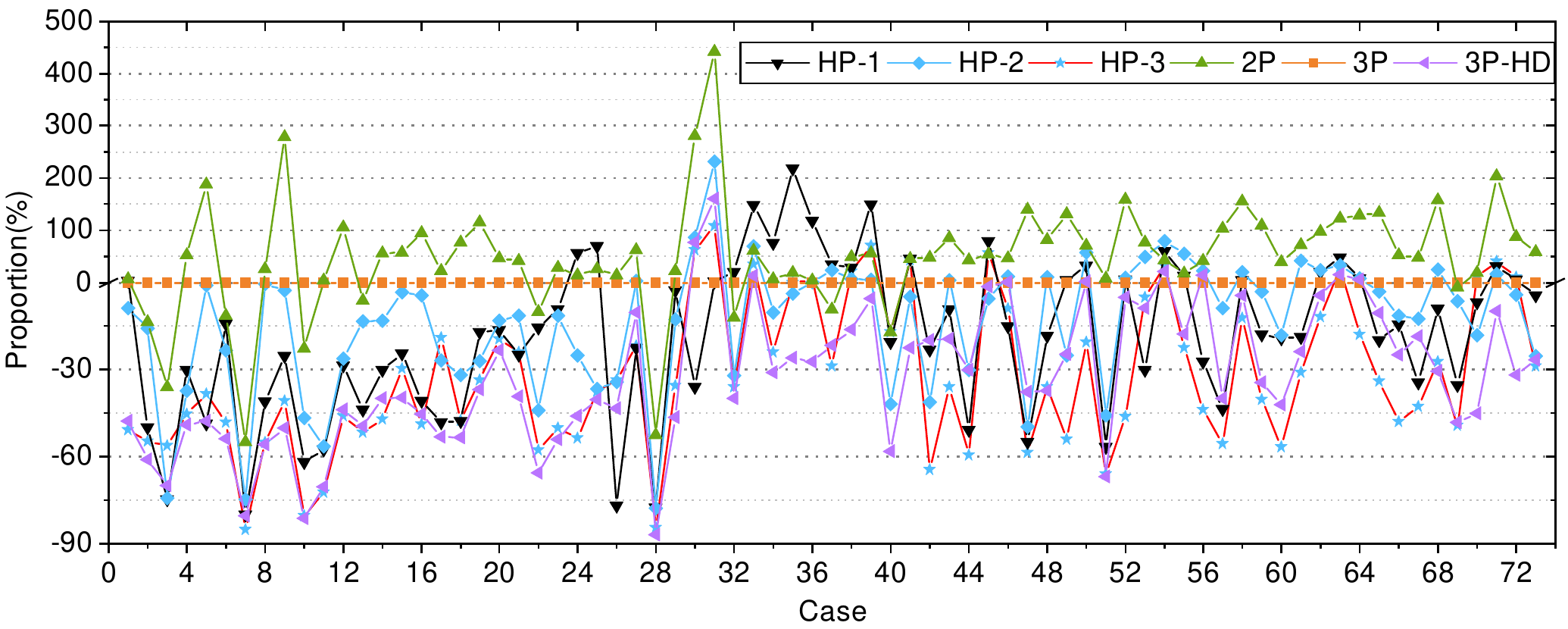}}\subfigure[T-period]{
			\includegraphics[width=0.48\textwidth,height=4cm]{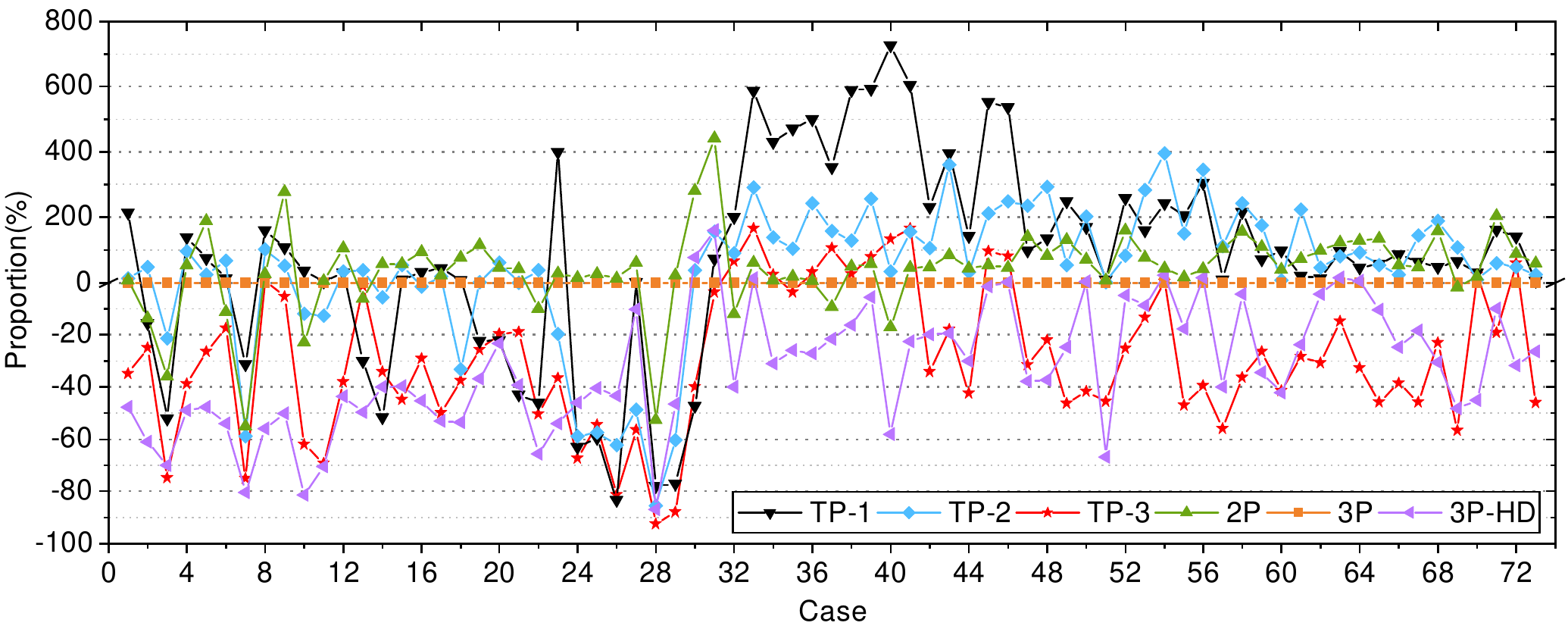}}
		\caption{Comparison of MP-1, MP-2, MP-3 and the other three state-of-the-art MILP formulations in terms of rTime.}
		\label{fig:Variable-runtime}
	\end{center}
\end{figure}
\begin{figure}[t]
	\begin{center}
		\subfigure[2-period]{
			\includegraphics[width=0.48\textwidth,height=4cm]{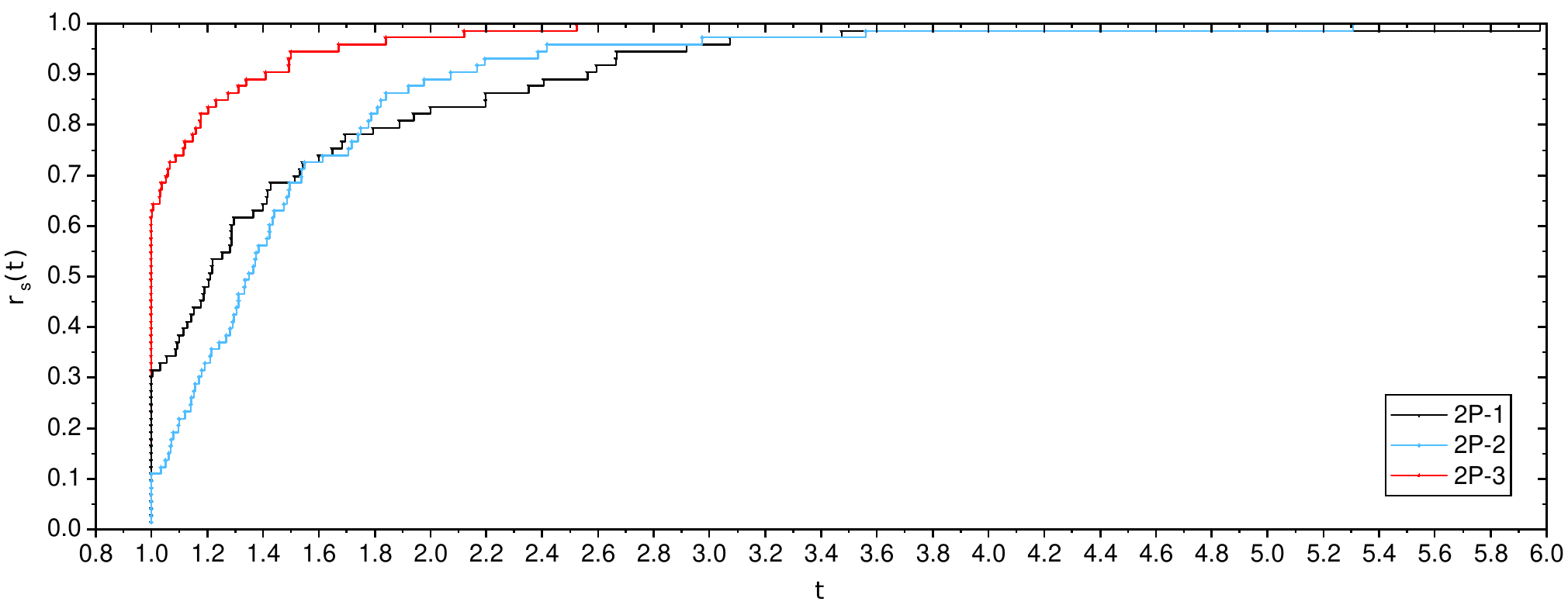}}\subfigure[3-period]{
			\includegraphics[width=0.48\textwidth,height=4cm]{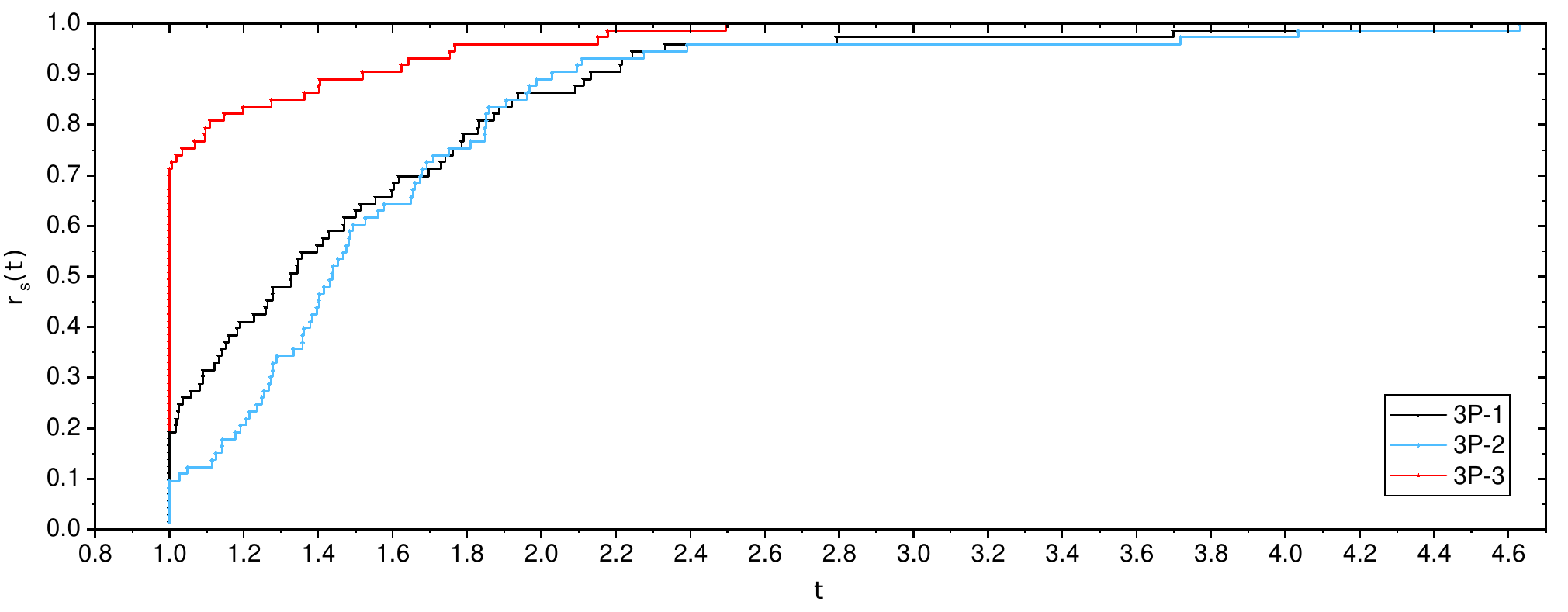}}
		\subfigure[H-period]{
			\includegraphics[width=0.48\textwidth,height=4cm]{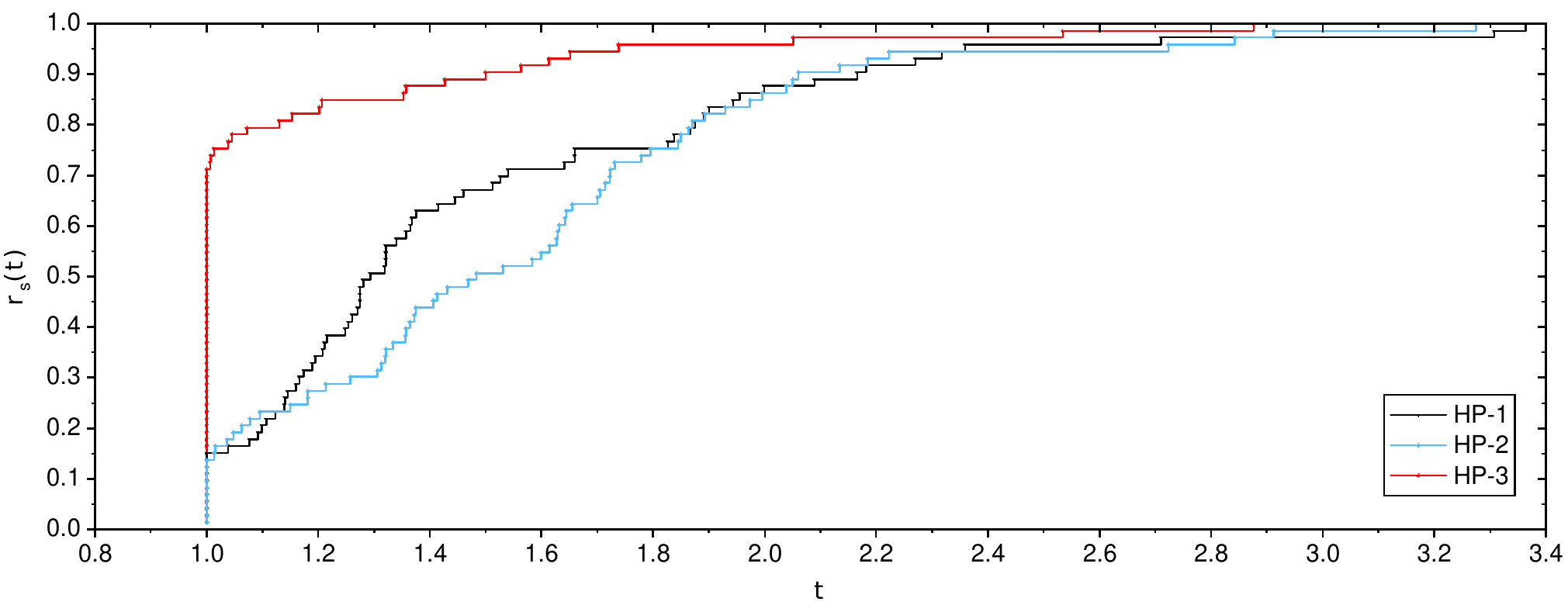}}\subfigure[T-period]{
			\includegraphics[width=0.48\textwidth,height=4cm]{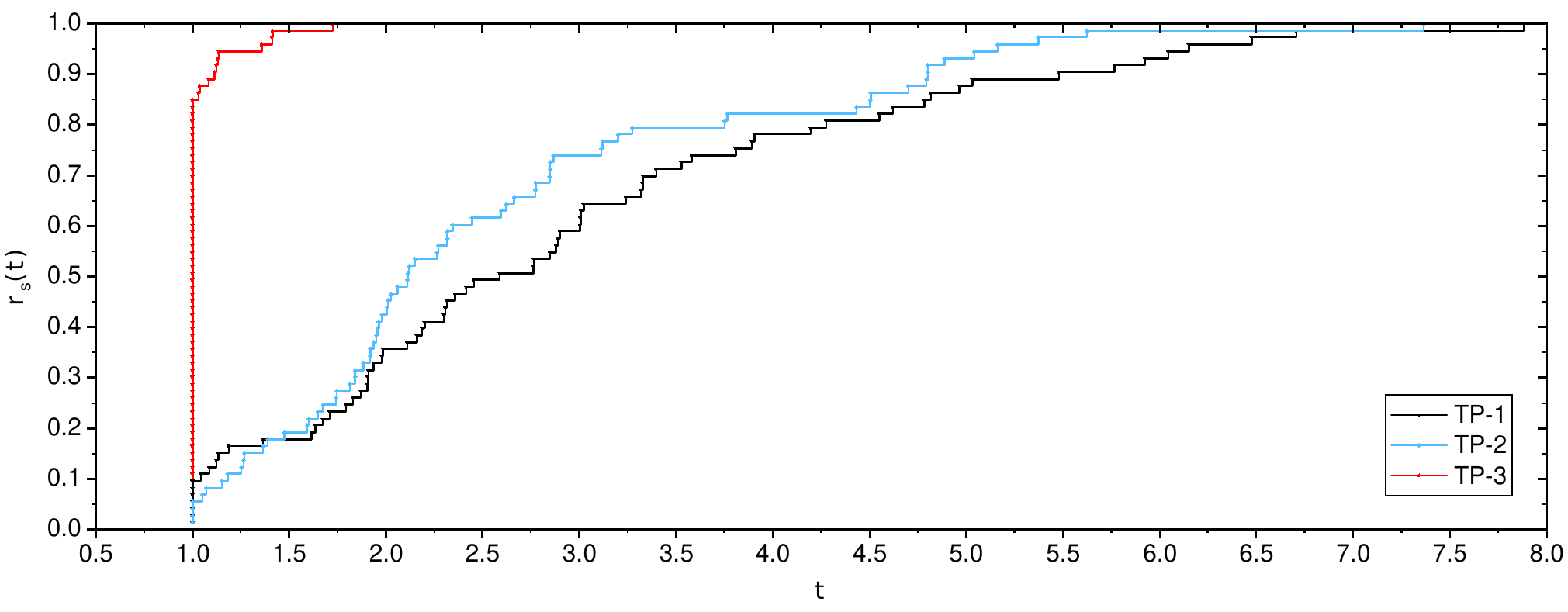}}
		\caption{Performance profiles on CPU time for MP-1, MP-2 and MP-3 formulations.}
		\label{fig:Variable-performance-profile}
	\end{center}
\end{figure}

Finally, we took historical status into consideration in order to make the models tighter and got MP-Ti. We compare the compactness of MP-3 and MP-Ti MILP formulations in Table \ref{tab:HistoryCompactness1} and Table \ref{tab:HistoryCompactness2}. The columns ``pre\_cons", ``pre\_vars", and ``pre\_nozs" represent the numbers of constraints, variables, and nonzeros respectively for the corresponding problem after being presolved by Gurobi. The columns ``redu\_con", ``redu\_var", and ``redu\_noz" represent the percentage decreases in the number of constraints, variables, and nonzeros respectively of MP-Ti compared to MP-3. As seen in Table \ref{tab:HistoryCompactness1} and Table \ref{tab:HistoryCompactness2}, there is little difference in the number of constraints and variables between MP-3 and MP-Ti. The number of nonzeros in TP-Ti model increases greatly compared with TP-3 model.
\begin{sidewaystable}
	\begin{center}
		\begin{minipage}{\textheight}
			\caption{Comparison of MP-3 and MP-Ti MILP formulations in terms of compactness for the first data set.}\label{tab:HistoryCompactness1}
			\resizebox{\textheight}{!}{
				\begin{tabular}{@{\extracolsep{\fill}}lccccccccccccccccccccccccc@{\extracolsep{\fill}}}
					\toprule
					&&\multicolumn{6}{@{}c@{}}{2P-Ti}&\multicolumn{6}{@{}c@{}}{3P-Ti}&\multicolumn{6}{@{}c@{}}{HP-Ti}&\multicolumn{6}{@{}c@{}}{TP-Ti}\\
					\cmidrule(lr){3-8}\cmidrule(lr){9-14}\cmidrule(lr){15-20}\cmidrule(lr){21-26}
					Case & Units &  pre\_cons & redu\_con &  pre\_vars & redu\_var &  pre\_nozs & redu\_noz &  pre\_cons & redu\_con &  pre\_vars & redu\_var &  pre\_nozs & redu\_noz &  pre\_cons & redu\_con &  pre\_vars & redu\_var &  pre\_nozs & redu\_noz &  pre\_cons & redu\_con &  pre\_vars & redu\_var &  pre\_nozs & redu\_noz \\
					& & & (\%) & & (\%) & & (\%) & & (\%) & & (\%) & & (\%) & & (\%) & & (\%) & & (\%) & & (\%) & & (\%) & & (\%) \\
					\midrule
					1 & 28 & 7213  & 0.00  & 3378  & 0.00  & 48395  & 0.00  & 7749  & 0.09  & 3378  & 0.00  & 51491  & 0.05  & 7773  & 0.10  & 3378  & 0.00  & 51686  & 0.07  & 7023  & 0.10  & 4689  & 0.00  & 43711  & -1.05  \\
					2 & 35 & 9153  & 0.00  & 4199  & 0.00  & 62161  & 0.00  & 10146  & 0.39  & 4219  & 0.00  & 68071  & 0.23  & 10205  & 0.45  & 4199  & 0.00  & 68933  & 0.29  & 9074  & 0.44  & 5590  & 0.00  & 55975  & -0.90  \\
					3 & 44 & 11317  & 0.00  & 5352  & 0.00  & 73226  & 0.00  & 12499  & 0.26  & 5412  & 0.00  & 79896  & 0.16  & 12418  & 0.31  & 5352  & 0.00  & 80219  & 0.21  & 11330  & 0.29  & 7924  & 0.00  & 73420  & -0.78  \\
					4 & 45 & 11216  & 0.00  & 5554  & 0.00  & 70109  & 0.00  & 12183  & 0.23  & 5614  & 0.00  & 75410  & 0.15  & 12091  & 0.26  & 5554  & 0.00  & 75618  & 0.18  & 11309  & 0.25  & 8956  & 0.00  & 69607  & -0.56  \\
					5 & 49 & 12605  & 0.00  & 5958  & 0.00  & 81200  & 0.00  & 13855  & 0.38  & 5978  & 0.00  & 88834  & 0.24  & 13959  & 0.44  & 5958  & 0.00  & 90060  & 0.29  & 12686  & 0.42  & 8793  & 0.00  & 81600  & -0.64  \\
					6 & 50 & 12406  & 0.00  & 6215  & 0.00  & 74658  & 0.00  & 13740  & 0.44  & 6315  & 0.00  & 82175  & 0.30  & 13626  & 0.51  & 6215  & 0.00  & 82882  & 0.37  & 12847  & 0.47  & 10544  & 0.00  & 79070  & -0.24  \\
					7 & 51 & 12813  & 0.00  & 6262  & 0.00  & 81373  & 0.00  & 13919  & 0.09  & 6342  & 0.00  & 87179  & 0.06  & 13724  & 0.11  & 6262  & 0.00  & 86823  & 0.07  & 12781  & 0.10  & 9854  & 0.00  & 81959  & -0.76  \\
					8 & 51 & 12854  & 0.00  & 6293  & 0.00  & 79658  & 0.00  & 14212  & 0.35  & 6433  & 0.00  & 78675  & 0.25  & 14148  & 0.41  & 6293  & 0.00  & 88143  & 0.28  & 13192  & 0.38  & 10246  & 0.00  & 85132  & -0.37  \\
					9 & 52 & 13296  & 0.00  & 6354  & 0.00  & 84563  & 0.00  & 14872  & 0.40  & 6454  & 0.00  & 93478  & 0.25  & 14749  & 0.46  & 6354  & 0.00  & 94121  & 0.31  & 13530  & 0.43  & 9760  & 0.00  & 86867  & -0.58  \\
					10 & 54 & 13485  & 0.00  & 6693  & 0.00  & 81980  & 0.00  & 14918  & 0.29  & 6813  & 0.00  & 89800  & 0.20  & 14701  & 0.35  & 6693  & 0.00  & 89915  & 0.24  & 13837  & 0.32  & 11202  & 0.00  & 84870  & -0.46  \\
					11 & 132 & 33269  & 0.00  & 16194  & 0.00  & 213763  & 0.00  & 36893  & 0.22  & 16514  & 0.00  & 232707  & 0.14  & 36189  & 0.26  & 16194  & 0.00  & 232026  & 0.18  & 33479  & 0.25  & 25173  & 0.00  & 198776  & -0.29  \\
					12 & 156 & 39199  & 0.00  & 19174  & 0.00  & 247908  & 0.00  & 43093  & 0.24  & 19434  & 0.00  & 269364  & 0.16  & 42664  & 0.29  & 19174  & 0.00  & 269915  & 0.19  & 39602  & 0.26  & 30362  & 0.00  & 233730  & -0.21  \\
					13 & 156 & 39423  & 0.00  & 19162  & 0.00  & 249182  & 0.00  & 43794  & 0.32  & 19462  & 0.00  & 273556  & 0.21  & 43355  & 0.38  & 19162  & 0.00  & 274711  & 0.26  & 40083  & 0.35  & 30183  & 0.00  & 237059  & -0.31  \\
					14 & 160 & 40802  & 0.00  & 19537  & 0.00  & 261028  & 0.00  & 45397  & 0.47  & 19717  & 0.00  & 288411  & 0.30  & 45523  & 0.54  & 19537  & 0.00  & 292484  & 0.36  & 41564  & 0.51  & 29701  & 0.00  & 245005  & -0.05  \\
					15 & 165 & 43033  & 0.00  & 19866  & 0.00  & 286426  & 0.00  & 47677  & 0.45  & 19886  & 0.00  & 315410  & 0.27  & 48279  & 0.51  & 19866  & 0.00  & 320907  & 0.33  & 43086  & 0.49  & 27296  & 0.00  & 250126  & -0.05  \\
					16 & 167 & 42225  & 0.00  & 20486  & 0.00  & 266401  & 0.00  & 46847  & 0.41  & 20726  & 0.00  & 293251  & 0.26  & 46736  & 0.47  & 20486  & 0.00  & 296259  & 0.32  & 43089  & 0.45  & 32174  & 0.00  & 254236  & -0.12  \\
					17 & 172 & 43252  & 0.00  & 21118  & 0.00  & 274647  & 0.00  & 47392  & 0.12  & 21458  & 0.00  & 296262  & 0.07  & 46557  & 0.14  & 21118  & 0.00  & 294721  & 0.09  & 43326  & 0.13  & 33312  & 0.00  & 253008  & -0.27  \\
					18 & 182 & 46249  & 0.00  & 22271  & 0.00  & 294854  & 0.00  & 51338  & 0.42  & 22511  & 0.00  & 324558  & 0.26  & 51285  & 0.48  & 22271  & 0.00  & 328095  & 0.33  & 47021  & 0.45  & 34210  & 0.00  & 274945  & -0.16  \\
					19 & 182 & 46075  & 0.00  & 22272  & 0.00  & 293823  & 0.00  & 51196  & 0.48  & 22492  & 0.00  & 324165  & 0.30  & 51309  & 0.55  & 22272  & 0.00  & 328773  & 0.37  & 47041  & 0.52  & 34414  & 0.00  & 276345  & -0.12  \\
					20 & 183 & 46255  & 0.00  & 22402  & 0.00  & 295570  & 0.00  & 51159  & 0.39  & 22642  & 0.00  & 323946  & 0.25  & 51072  & 0.45  & 22402  & 0.00  & 327177  & 0.31  & 46941  & 0.43  & 34619  & 0.00  & 274911  & -0.20  \\
					21 & 187 & 47074  & 0.00  & 22961  & 0.00  & 297810  & 0.00  & 52051  & 0.45  & 23181  & 0.00  & 327159  & 0.29  & 52132  & 0.52  & 22961  & 0.00  & 331455  & 0.35  & 48052  & 0.48  & 36066  & 0.00  & 281953  & -0.03  \\
					22 & 560 & 143367  & 0.00  & 67560  & 0.00  & 967982  & 0.00  & 154087  & 0.09  & 67560  & 0.00  & 1029904  & 0.05  & 154567  & 0.10  & 67560  & 0.00  & 1033804  & 0.07  & 139567  & 2.15  & 93780  & 0.00  & 927750  & 19.23  \\
					23 & 700 & 182167  & 0.00  & 83980  & 0.00  & 1243299  & 0.00  & 202027  & 0.39  & 84380  & 0.00  & 1361500  & 0.23  & 203207  & 0.45  & 83980  & 0.00  & 1378731  & 0.29  & 184387  & 0.43  & 111800  & 0.00  & 1465883  & 0.21  \\
					24 & 880 & 225447  & 0.00  & 107040  & 0.00  & 1464515  & 0.00  & 249087  & 0.26  & 108240  & 0.00  & 1597920  & 0.16  & 247467  & 0.31  & 107040  & 0.00  & 1604367  & 0.21  & 229787  & 0.30  & 158480  & 0.00  & 1865371  & 0.14  \\
					25 & 900 & 223427  & 0.00  & 111080  & 0.00  & 1402179  & 0.00  & 242767  & 0.23  & 112280  & 0.00  & 1508200  & 0.15  & 240927  & 0.26  & 111080  & 0.00  & 1512347  & 0.19  & 228787  & 0.34  & 179120  & 0.00  & 1917874  & 0.17  \\
					26 & 980 & 251207  & 0.00  & 119160  & 0.00  & 1623991  & 0.00  & 276207  & 0.38  & 119560  & 0.00  & 1776680  & 0.24  & 278287  & 0.44  & 119160  & 0.00  & 1801171  & 0.29  & 257187  & 0.45  & 175860  & 0.00  & 2075852  & 0.23  \\
					27 & 1000 & 247227  & 0.00  & 124300  & 0.00  & 1493160  & 0.00  & 273907  & 0.44  & 126300  & 0.00  & 1643500  & 0.30  & 271627  & 0.51  & 124300  & 0.00  & 1657623  & 0.37  & 259427  & 0.55  & 210880  & 0.00  & 2159739  & 1.62  \\
					28 & 1020 & 255367  & 0.00  & 125240  & 0.00  & 1627460  & 0.00  & 277487  & 0.09  & 126840  & 0.00  & 1743580  & 0.06  & 273587  & 0.11  & 125240  & 0.00  & 1736423  & 0.08  & 259007  & 0.17  & 197080  & 0.00  & 2144855  & 0.64  \\
					29 & 1020 & 256187  & 0.00  & 125860  & 0.00  & 1593160  & 0.00  & 287327  & 0.35  & 128660  & 0.00  & 1761340  & 0.23  & 282067  & 0.41  & 125860  & 0.00  & 1762851  & 0.28  & 267007  & 0.40  & 204920  & 0.00  & 2158174  & 3.51  \\
					30 & 1040 & 265027  & 0.00  & 127080  & 0.00  & 1691247  & 0.00  & 296547  & 0.40  & 129080  & 0.00  & 1869560  & 0.25  & 294087  & 0.46  & 127080  & 0.00  & 1882407  & 0.31  & 274247  & 0.44  & 195200  & 0.00  & 2245151  & 0.22  \\
					31 & 1080 & 268807  & 0.00  & 133860  & 0.00  & 1639600  & 0.00  & 297467  & 0.29  & 136260  & 0.00  & 1796000  & 0.20  & 293127  & 0.35  & 133860  & 0.00  & 1798275  & 0.24  & 279727  & 0.38  & 224040  & 0.00  & 2307866  & 2.10  \\
					\botrule
				\end{tabular}
			}
		\end{minipage}
	\end{center}
\end{sidewaystable}
\begin{sidewaystable}
	\begin{center}
		\begin{minipage}{\textheight}
			\caption{Comparison of MP-3 and MP-Ti MILP formulations in terms of compactness for the second data set.}\label{tab:HistoryCompactness2}
			\resizebox{\textheight}{!}{
				\begin{tabular}{@{\extracolsep{\fill}}lccccccccccccccccccccccccc@{\extracolsep{\fill}}}
					\toprule
					&&\multicolumn{6}{@{}c@{}}{2P-Ti}&\multicolumn{6}{@{}c@{}}{3P-Ti}&\multicolumn{6}{@{}c@{}}{HP-Ti}&\multicolumn{6}{@{}c@{}}{TP-Ti}\\
					\cmidrule(lr){3-8}\cmidrule(lr){9-14}\cmidrule(lr){15-20}\cmidrule(lr){21-26}
					Case & Units  &  pre\_cons & redu\_con &  pre\_vars & redu\_var &  pre\_nozs & redu\_noz &  pre\_cons & redu\_con &  pre\_vars & redu\_var &  pre\_nozs & redu\_noz &  pre\_cons & redu\_con &  pre\_vars & redu\_var &  pre\_nozs & redu\_noz &  pre\_cons & redu\_con &  pre\_vars & redu\_var &  pre\_nozs & redu\_noz \\
					& & & (\%) & & (\%) & & (\%) & & (\%) & & (\%) & & (\%) & & (\%) & & (\%) & & (\%) & & (\%) & & (\%) & & (\%) \\
					\midrule
					32 & 10 & 2207  & 0.05  & 953  & 0.00  & 13295  & 0.03  & 2714  & 2.30  & 952  & 0.00  & 17098  & 2.75  & 2887  & 2.40  & 950  & 0.21  & 18560  & 2.80  & 2580  & 2.68  & 1636  & 0.00  & 33061  & -83.61  \\
					33 & 10 & 2148  & 0.00  & 937  & -0.11  & 12308  & 0.00  & 2669  & 1.33  & 934  & 0.00  & 15642  & 0.79  & 2847  & 1.39  & 936  & -0.21  & 17143  & 0.96  & 2575  & 1.79  & 1846  & -0.44  & 46400  & -144.75  \\
					34 & 10 & 2439  & 0.08  & 1070  & 0.00  & 15234  & 0.05  & 3009  & 1.86  & 1069  & 0.00  & 20764  & 2.10  & 3207  & 2.23  & 1068  & 0.19  & 22498  & 2.58  & 2846  & 2.43  & 2055  & -0.15  & 44978  & -118.57  \\
					35 & 10 & 2306  & 0.00  & 1053  & 0.00  & 12574  & 0.27  & 3079  & 1.75  & 1052  & -0.10  & 19458  & 2.16  & 3573  & 1.79  & 1253  & -0.24  & 23750  & 1.99  & 2839  & 2.20  & 2234  & -0.27  & 24328  & -9.62  \\
					36 & 10 & 2180  & 0.00  & 978  & 0.00  & 10473  & 0.00  & 2918  & 2.18  & 977  & 0.00  & 15008  & 2.38  & 3200  & 1.99  & 1078  & 0.09  & 19873  & 1.84  & 2728  & 2.29  & 2124  & -0.14  & 33775  & -55.80  \\
					37 & 20 & 4535  & 0.00  & 2017  & 0.00  & 28566  & 0.00  & 5619  & 1.21  & 2010  & 0.00  & 36195  & 0.69  & 5984  & 1.42  & 2006  & 0.30  & 39547  & 0.68  & 5363  & 1.83  & 3803  & -0.13  & 74438  & -100.33  \\
					38 & 20 & 4576  & 0.00  & 2017  & 0.00  & 26416  & 0.00  & 5689  & 1.32  & 2013  & 0.00  & 35159  & 1.04  & 6057  & 1.78  & 2011  & 0.15  & 38367  & 1.48  & 5407  & 1.71  & 3621  & -0.19  & 59189  & -63.15  \\
					39 & 20 & 4446  & 0.00  & 1979  & 0.00  & 26253  & 0.00  & 5495  & 1.52  & 1972  & 0.00  & 33520  & 0.85  & 5856  & 1.70  & 1970  & 0.20  & 36759  & 0.96  & 5318  & 1.70  & 4039  & -0.40  & 102512  & -183.06  \\
					40 & 20 & 4349  & 0.05  & 2052  & 0.00  & 23762  & 0.03  & 5904  & 0.64  & 2049  & -0.39  & 32995  & 0.34  & 6492  & 1.96  & 2259  & 1.40  & 43474  & 1.79  & 5557  & 2.23  & 4893  & 2.76  & 134478  & -204.25  \\
					41 & 20 & 4461  & 0.02  & 2073  & 0.00  & 23723  & 0.01  & 6063  & 1.16  & 2072  & 0.00  & 32671  & 0.91  & 7079  & 4.45  & 2495  & 4.41  & 44191  & 5.57  & 5644  & 5.81  & 4703  & 11.91  & 118228  & -153.01  \\
					42 & 50 & 11536  & 0.00  & 5179  & 0.00  & 57211  & 0.36  & 14287  & 1.22  & 5159  & 0.00  & 79867  & 0.96  & 15215  & 1.67  & 5141  & 0.39  & 89946  & 1.04  & 13694  & 1.60  & 10301  & -0.23  & 217748  & -136.76  \\
					43 & 50 & 11656  & 0.00  & 5225  & 0.00  & 56654  & 0.08  & 14418  & 1.50  & 5208  & 0.00  & 79097  & 1.14  & 15350  & 2.02  & 5193  & 0.33  & 89278  & 1.42  & 13895  & 1.40  & 10661  & -0.46  & 247907  & -168.32  \\
					44 & 50 & 11328  & -0.01  & 5091  & 0.00  & 56373  & 0.34  & 13990  & 1.54  & 5070  & 0.00  & 78473  & 1.34  & 14888  & 1.94  & 5053  & 0.34  & 88387  & 1.49  & 13483  & 1.61  & 10487  & -0.46  & 249359  & -170.76  \\
					45 & 50 & 11050  & -0.01  & 5229  & 0.00  & 48980  & 0.30  & 14944  & 1.33  & 5218  & -0.04  & 75564  & 1.18  & 17443  & 1.78  & 6271  & 0.21  & 95646  & 3.04  & 13963  & 1.90  & 12420  & -0.07  & 290565  & -166.55  \\
					46 & 50 & 11311  & -0.02  & 5329  & 0.00  & 50313  & 0.08  & 15254  & 1.49  & 5319  & 0.00  & 76549  & 1.42  & 17790  & 1.90  & 6369  & 0.22  & 97586  & 3.21  & 14253  & 2.07  & 12635  & -0.09  & 299209  & -169.54  \\
					47 & 75 & 16935  & 0.00  & 7677  & 0.00  & 79713  & 0.08  & 20983  & 1.13  & 7637  & 0.00  & 111907  & 1.37  & 22281  & 1.67  & 7601  & 0.47  & 127657  & 1.13  & 20327  & 1.04  & 16416  & -0.37  & 394974  & -182.75  \\
					48 & 75 & 17350  & 0.00  & 7832  & 0.00  & 81491  & 0.14  & 21476  & 1.17  & 7796  & 0.00  & 113951  & 1.06  & 22835  & 1.90  & 7768  & 0.36  & 129339  & 1.51  & 20719  & 1.46  & 16431  & -0.31  & 385202  & -173.77  \\
					49 & 75 & 17224  & 0.01  & 7744  & 0.00  & 81162  & 0.11  & 21313  & 1.81  & 7714  & 0.00  & 113002  & 1.65  & 22714  & 1.98  & 7692  & 0.29  & 128408  & 1.50  & 20491  & 1.60  & 15696  & -0.34  & 352952  & -156.95  \\
					50 & 75 & 17194  & 0.00  & 7733  & 0.00  & 80472  & 0.23  & 21269  & 1.84  & 7706  & 0.00  & 112069  & 1.80  & 22636  & 2.08  & 7683  & 0.30  & 127357  & 1.68  & 20496  & 1.58  & 15975  & -0.48  & 373117  & -173.00  \\
					51 & 75 & 16589  & 0.00  & 7890  & 0.00  & 69644  & 0.34  & 22497  & 1.20  & 7872  & -0.04  & 107831  & 1.18  & 26327  & 1.56  & 9476  & 0.21  & 142681  & 1.54  & 21038  & 2.09  & 19129  & -0.07  & 489668  & -195.24  \\
					52 & 100 & 23057  & 0.00  & 10375  & 0.00  & 105032  & 0.18  & 28471  & 1.68  & 10378  & 0.27  & 146059  & 1.57  & 30277  & 2.10  & 10334  & 0.60  & 166452  & 1.57  & 27391  & 1.74  & 21220  & -0.25  & 490400  & -170.12  \\
					53 & 100 & 22744  & 0.00  & 10256  & 0.00  & 103288  & 0.10  & 28125  & 1.53  & 10242  & 0.16  & 144024  & 1.58  & 29903  & 1.95  & 10195  & 0.49  & 164430  & 1.49  & 27096  & 1.59  & 21090  & -0.30  & 478637  & -165.05  \\
					54 & 100 & 22878  & 0.00  & 10301  & 0.00  & 103417  & 0.04  & 28248  & 1.58  & 10315  & 0.38  & 143736  & 1.50  & 30039  & 1.99  & 10276  & 0.58  & 163693  & 1.63  & 27274  & 1.68  & 21609  & -0.38  & 514915  & -180.28  \\
					55 & 100 & 22758  & -0.00  & 10244  & 0.00  & 103415  & 0.22  & 28119  & 1.54  & 10242  & 0.29  & 144454  & 1.51  & 29917  & 1.91  & 10199  & 0.56  & 164639  & 1.40  & 27151  & 1.55  & 21150  & -0.31  & 490836  & -169.01  \\
					56 & 100 & 22949  & -0.00  & 10325  & 0.00  & 104097  & 0.29  & 28399  & 1.59  & 10310  & 0.14  & 145324  & 1.65  & 30220  & 1.98  & 10269  & 0.39  & 165785  & 1.43  & 27286  & 1.51  & 20628  & -0.31  & 427577  & -137.37  \\
					57 & 150 & 34234  & -0.01  & 15562  & 0.00  & 149229  & 0.20  & 42342  & 1.82  & 15462  & 0.46  & 206781  & 1.67  & 45046  & 2.19  & 15397  & 0.73  & 236090  & 1.72  & 40768  & 1.90  & 31574  & -0.29  & 694568  & -160.55  \\
					58 & 150 & 34164  & -0.01  & 15559  & 0.00  & 149167  & 0.07  & 42317  & 1.61  & 15457  & 0.35  & 207201  & 1.44  & 45016  & 2.00  & 15390  & 0.59  & 237072  & 1.50  & 40818  & 1.57  & 32023  & -0.38  & 758207  & -182.89  \\
					59 & 150 & 34007  & -0.01  & 15501  & 0.00  & 148146  & 0.11  & 42098  & 1.64  & 15403  & 0.46  & 205463  & 1.55  & 44793  & 2.08  & 15336  & 0.65  & 235073  & 1.59  & 40650  & 1.65  & 32105  & -0.35  & 773638  & -186.52  \\
					60 & 150 & 34142  & -0.01  & 15561  & 0.00  & 149225  & 0.06  & 42293  & 1.48  & 15452  & 0.41  & 207869  & 1.39  & 44961  & 1.93  & 15379  & 0.74  & 238184  & 1.41  & 40887  & 1.59  & 32472  & -0.29  & 780618  & -187.18  \\
					61 & 150 & 33841  & -0.01  & 15450  & 0.00  & 147221  & 0.18  & 41888  & 1.56  & 15344  & 0.50  & 204771  & 1.42  & 44557  & 1.97  & 15271  & 0.77  & 234371  & 1.50  & 40557  & 1.49  & 32330  & -0.42  & 780435  & -189.98  \\
					62 & 200 & 44731  & 0.00  & 21357  & 0.00  & 167475  & 0.04  & 60483  & 1.30  & 21263  & 0.30  & 257349  & 1.39  & 70586  & 1.54  & 25410  & 0.33  & 370863  & 1.42  & 56629  & 2.01  & 50463  & -0.12  & 1133378  & -163.07  \\
					63 & 200 & 44203  & 0.00  & 21186  & 0.00  & 165529  & 0.05  & 59849  & 1.17  & 21093  & 0.37  & 255332  & 1.20  & 69903  & 1.51  & 25267  & 0.42  & 369229  & 1.40  & 56190  & 1.95  & 51025  & -0.09  & 1223444  & -182.20  \\
					64 & 200 & 43858  & 0.00  & 20993  & 0.00  & 164254  & 0.08  & 59467  & 1.25  & 20897  & 0.25  & 254063  & 1.33  & 69485  & 1.42  & 25059  & 0.35  & 367603  & 1.30  & 55708  & 1.82  & 50023  & -0.08  & 1154743  & -169.64  \\
					65 & 200 & 45840  & 0.01  & 20808  & 0.01  & 195239  & 0.10  & 56815  & 1.69  & 20674  & 0.26  & 272026  & 1.55  & 60454  & 2.06  & 20594  & 0.52  & 312792  & 1.52  & 54588  & 1.67  & 41449  & -0.32  & 872550  & -145.99  \\
					66 & 200 & 45750  & -0.01  & 20821  & 0.00  & 194451  & 0.11  & 56714  & 1.57  & 20688  & 0.30  & 271524  & 1.42  & 60331  & 1.97  & 20606  & 0.57  & 312305  & 1.51  & 54706  & 1.52  & 42760  & -0.39  & 996296  & -177.77  \\
					67 & 200 & 45314  & -0.01  & 20642  & 0.00  & 192382  & 0.20  & 56153  & 1.47  & 20506  & 0.29  & 268985  & 1.36  & 59694  & 1.94  & 20414  & 0.59  & 309434  & 1.43  & 54209  & 1.44  & 42641  & -0.30  & 985777  & -175.81  \\
					68 & 200 & 45113  & -0.00  & 20507  & 0.00  & 191704  & 0.05  & 55933  & 1.48  & 20372  & 0.17  & 267790  & 1.37  & 59454  & 1.93  & 20286  & 0.49  & 307691  & 1.47  & 53868  & 1.47  & 41497  & -0.31  & 908161  & -158.94  \\
					69 & 200 & 45847  & -0.00  & 20891  & 0.00  & 194728  & 0.07  & 56802  & 1.64  & 20759  & 0.35  & 271331  & 1.46  & 60426  & 2.05  & 20666  & 0.60  & 311804  & 1.55  & 54818  & 1.58  & 43272  & -0.35  & 1020516  & -183.33  \\
					70 & 200 & 43877  & 0.00  & 21039  & 0.00  & 164491  & 0.09  & 59510  & 1.19  & 20945  & 0.26  & 254759  & 1.26  & 69495  & 1.39  & 25121  & 0.37  & 368583  & 1.27  & 55828  & 1.85  & 50702  & -0.09  & 1199085  & -177.97  \\
					71 & 200 & 44706  & 0.00  & 21348  & 0.00  & 167841  & 0.04  & 60528  & 1.16  & 21254  & 0.16  & 258721  & 1.25  & 70615  & 1.41  & 25418  & 0.20  & 372703  & 1.26  & 56612  & 1.77  & 50409  & -0.11  & 1094816  & -152.40  \\
					72 & 200 & 44226  & 0.00  & 21235  & 0.00  & 165590  & 0.04  & 59929  & 1.16  & 21137  & 0.31  & 255669  & 1.08  & 70049  & 1.48  & 25332  & 0.39  & 369875  & 1.37  & 56203  & 1.95  & 51323  & -0.08  & 1279255  & -194.51  \\
					73 & 200 & 44347  & 0.00  & 21187  & 0.00  & 165972  & 0.04  & 59975  & 1.26  & 21091  & 0.35  & 255558  & 1.33  & 69983  & 1.46  & 25234  & 0.30  & 368469  & 1.30  & 56191  & 1.90  & 50077  & -0.15  & 1053273  & -143.82  \\
					\botrule
				\end{tabular}
			}
		\end{minipage}
	\end{center}
\end{sidewaystable}

The tightness of an MIP formulation is usually characterized by using the relative integrality gap $(Z_{MIP}-Z_{CR})/Z_{MIP}$ \cite{wolsey2020integer}, which is denoted as ``iGap". In this expression, $Z_{CR}$ represents the optimal value for the continuous relaxation of the MIP problem, which will vary with the formulation we construct for the MIP problem. And $Z_{MIP}$ represents the optimal value for the MIP problem. Due to the limitation of solving algorithm and solving time, it is usually difficult to get the real optimal value of MIP problem. The solving process usually terminates with an optimal value satisfying the preset optimal tolerance. To ensure fairness, we use the same $Z_{MIP}$ for all of the MILP formulations, which is equal to the minimum of the optimal values obtained by using GUROBI with an accuracy of 0.1\% among the aforementioned MILP formulations. In this way, the difference in iGap could accurately reflect the difference in tightness of MIP model’s continuous relaxation.

Fig. \ref{fig:History-iGap} shows the iGap for MP-3, MP-Ti and the other three state-of-the-art MILP formulations on all of the instances in comparison with 3P (using ratios), where the ratio for the 3P model is always 100\%. The differences in gap among MP-3 and MP-Ti models is large for the first dataset, but small for the second dataset. In additional, we have the following tightness relationship among the eleven formulations:
\begin{align}
	&2P\precsim 3P\precsim 2P-3\precsim 2P-Ti\\
	&2P\precsim 3P\precsim 2P-3\precsim 3P-HD=3P-3\precsim 3P-Ti\\
	&2P\precsim 3P\precsim 2P-3\precsim 3P-HD\precsim HP-3\precsim HP-Ti\\
	&2P\precsim 3P\precsim 2P-3\precsim 3P-HD\precsim TP-3\precsim TP-Ti
\end{align}

It is worth mentioning that the tightness relationship between 2P-Ti formualion and 3P-HD formualion is ambiguous. As shown in Fig. \ref{fig:History-iGap}, 2P-Ti formualion is more tighter than 3P-HD formualion for instances in the first data set. However, 3P-HD formualion is more tighter than 2P-Ti formualion for all instances in the second data set except three instances. The main reason is that MP-Ti formulations improve tightness mainly by considering the historical status in the lower/upper bound of generation limits and ramping constraints (as analyzed in Section \ref{sec4}), and the historical status plays a key role in some instances.
\begin{figure}[t]
	\begin{center}
		\subfigure[2-period]{
			\includegraphics[width=0.48\textwidth,height=4cm]{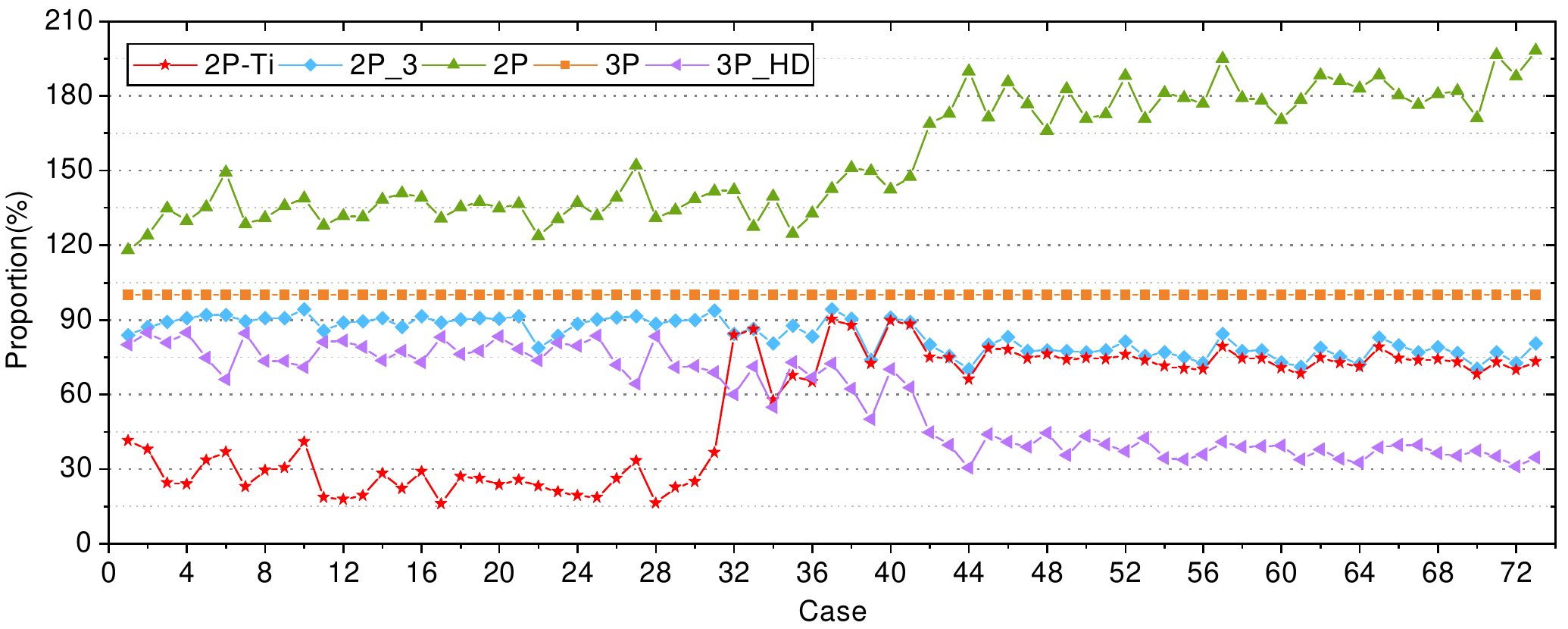}}\subfigure[3-period]{
			\includegraphics[width=0.48\textwidth,height=4cm]{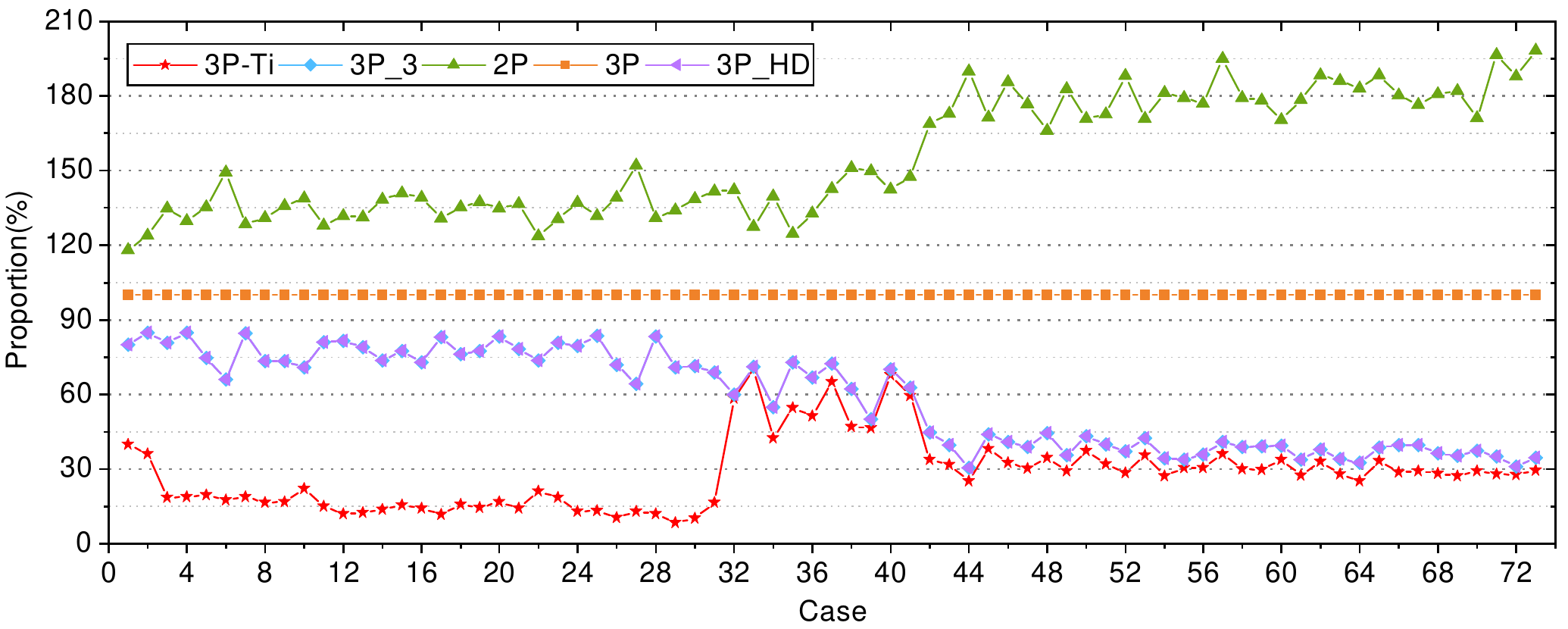}}
		\subfigure[H-period]{
			\includegraphics[width=0.48\textwidth,height=4cm]{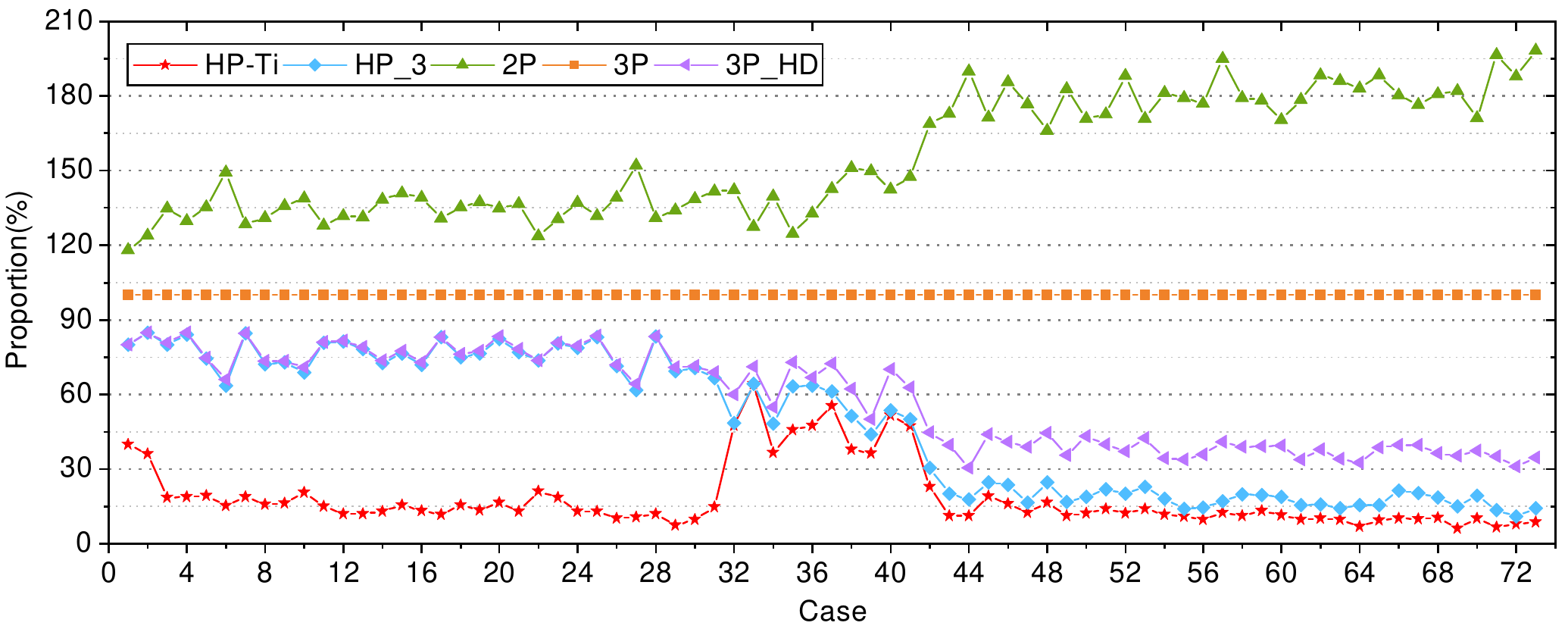}}\subfigure[T-period]{
			\includegraphics[width=0.48\textwidth,height=4cm]{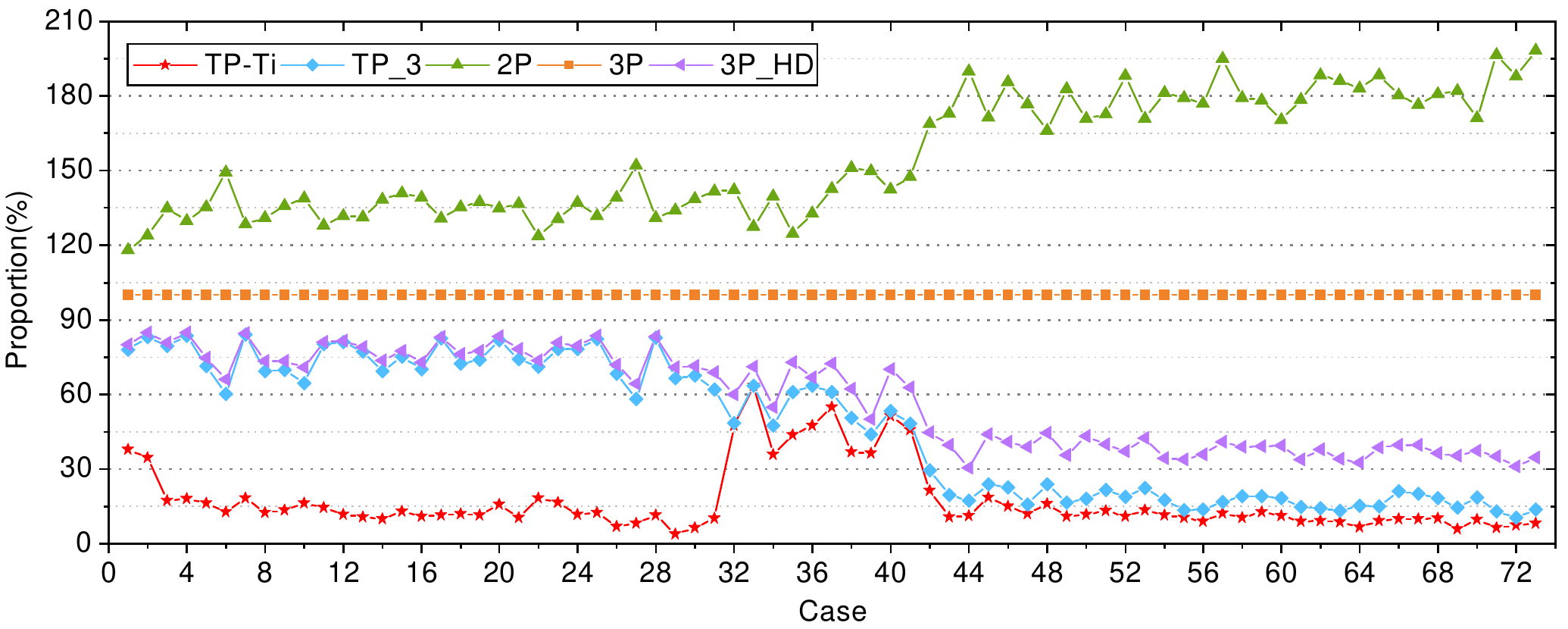}}
		\caption{Comparison of MP-3, MP-Ti and the other three state-of-the-art MILP formulations in terms of iGap.}
		\label{fig:History-iGap}
	\end{center}
\end{figure}

As shown in Fig. \ref{fig:History-runtime} and Fig. \ref{fig:History-profile-1}, the performance of MP-Ti models are slightly better than that of MP-3 models for the first data set. Fig. \ref{fig:History-runtime} and Fig. \ref{fig:History-profile-2} show that the performance of MP-Ti models are slightly worse than that of MP-3 models for 2-period, 3-period and H-period. Furthermore, MP-3 models perform significantly better than MP-Ti for T-period.
\begin{figure}[t]
	\begin{center}
		\subfigure[2-period]{
			\includegraphics[width=0.48\textwidth,height=4cm]{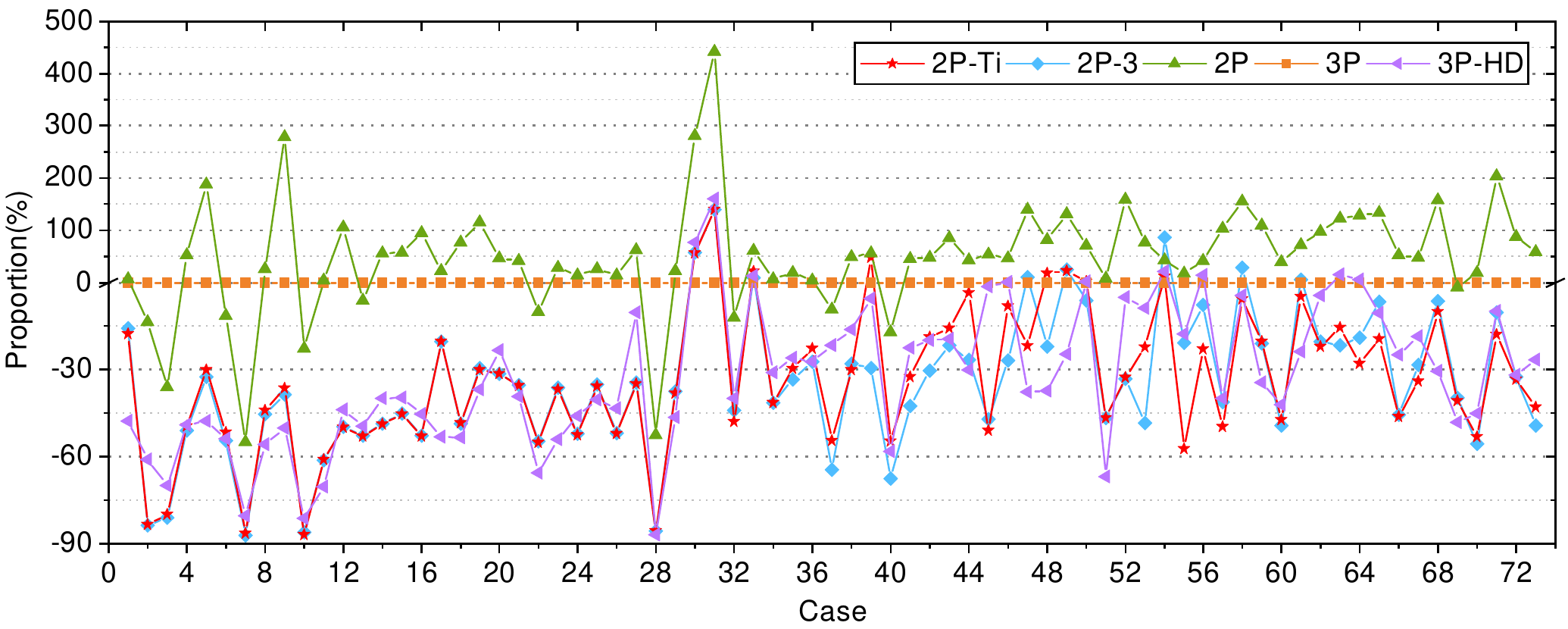}}\subfigure[3-period]{
			\includegraphics[width=0.48\textwidth,height=4cm]{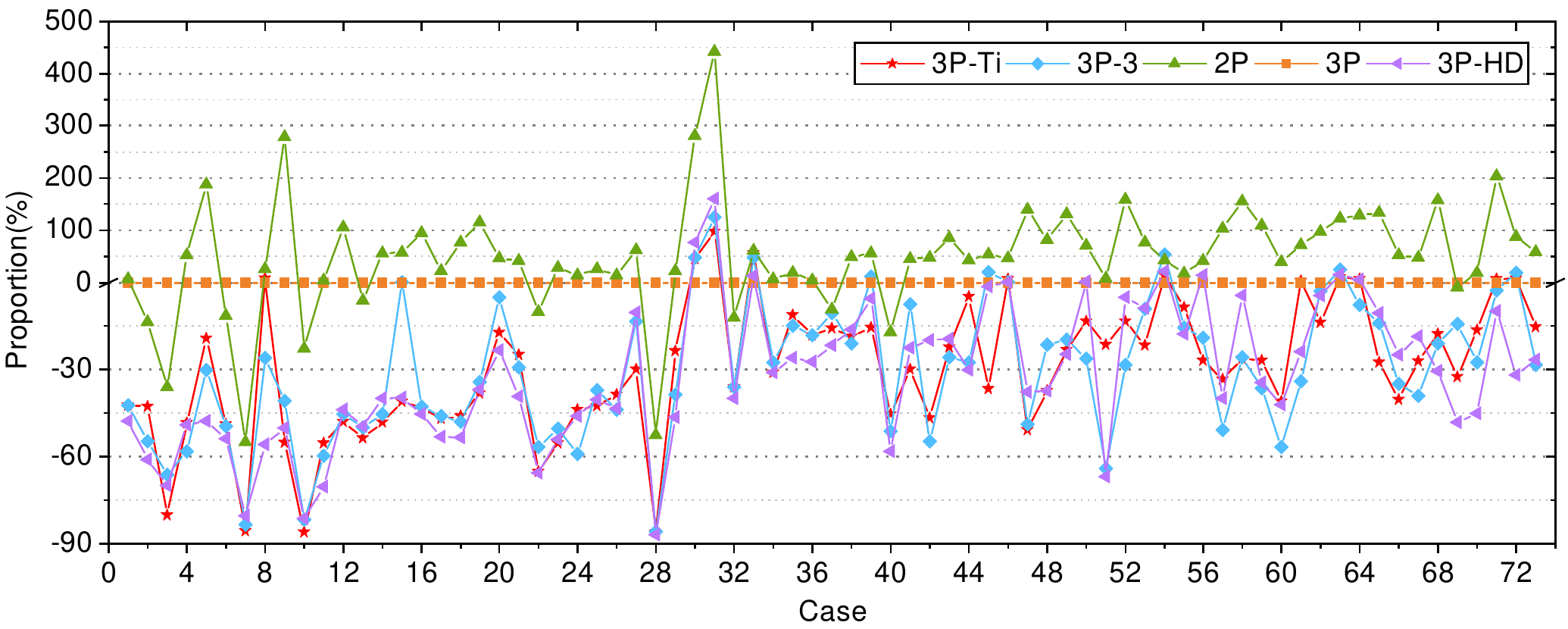}}
		\subfigure[H-period]{
			\includegraphics[width=0.48\textwidth,height=4cm]{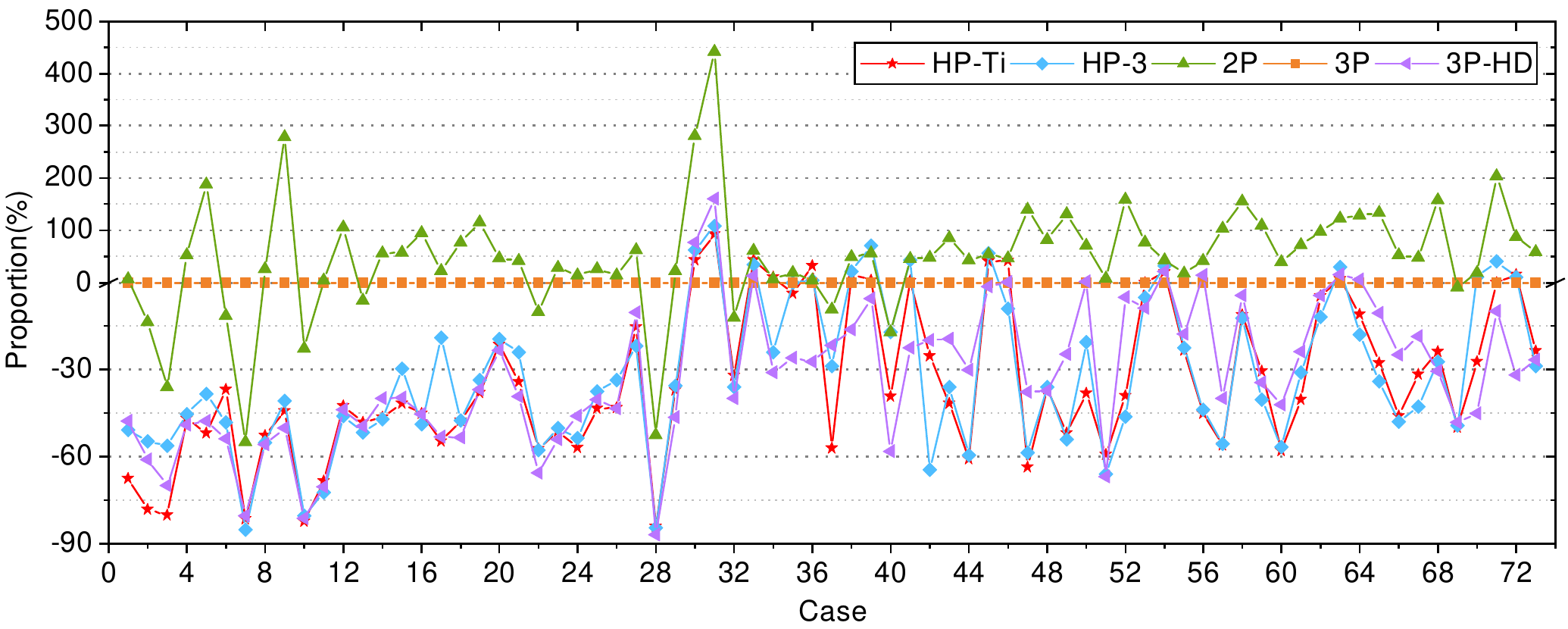}}\subfigure[T-period]{
			\includegraphics[width=0.48\textwidth,height=4cm]{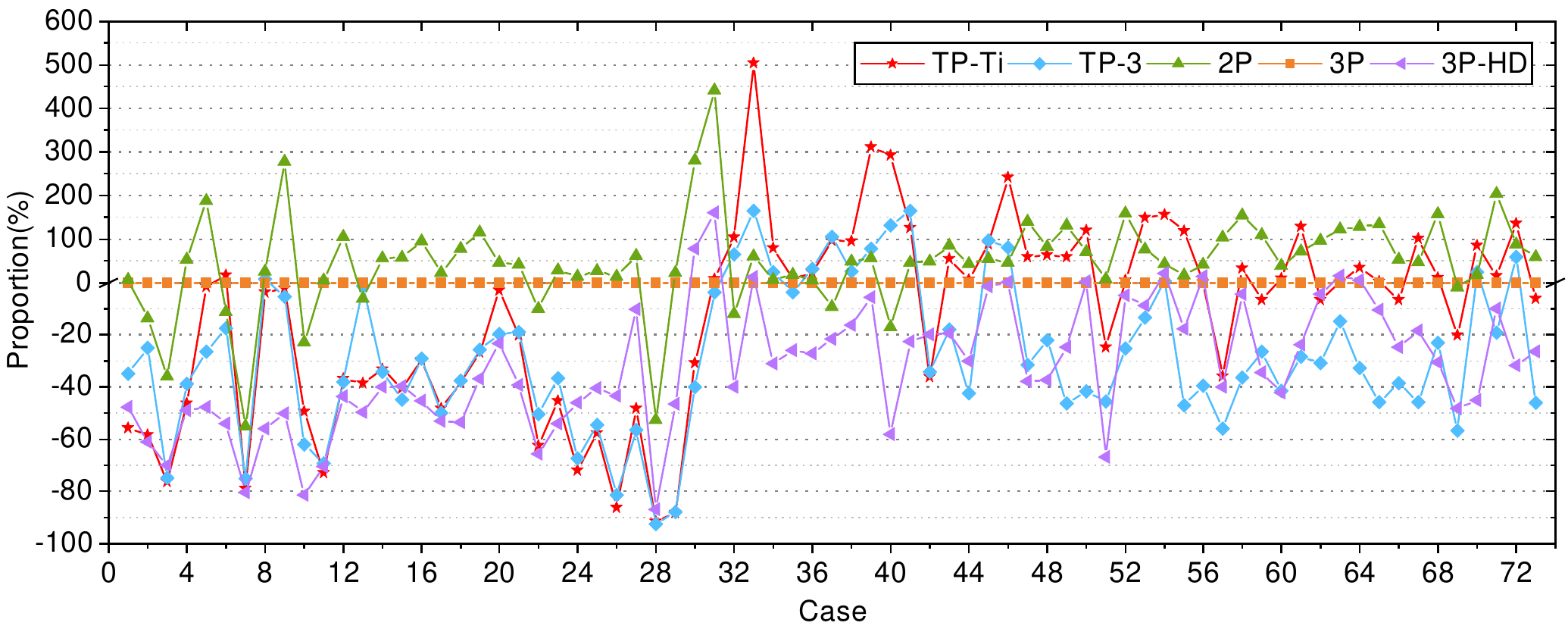}}
		\caption{Comparison of MP-3, MP-Ti and the other three state-of-the-art MILP formulations in terms of rTime.}
		\label{fig:History-runtime}
	\end{center}
\end{figure}
\begin{figure}[t]
	\begin{center}
		\subfigure[2-period]{
			\includegraphics[width=0.48\textwidth,height=4cm]{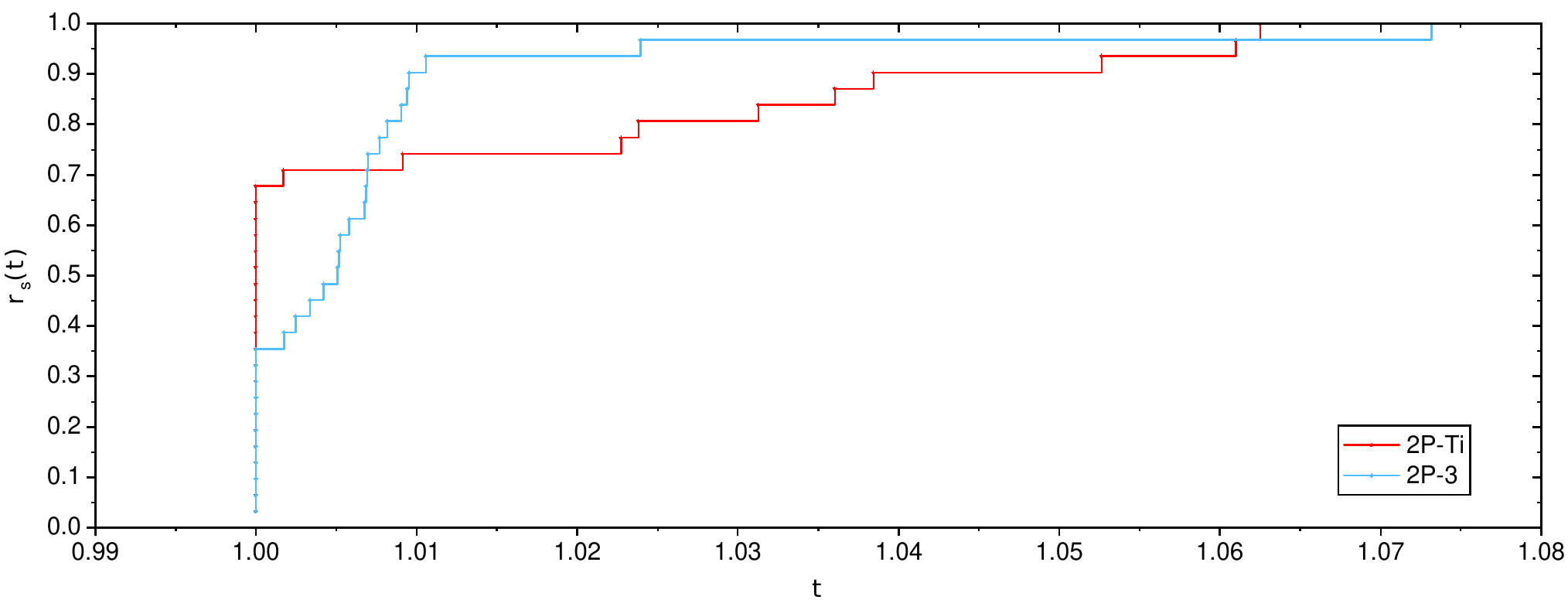}}\subfigure[3-period]{
			\includegraphics[width=0.48\textwidth,height=4cm]{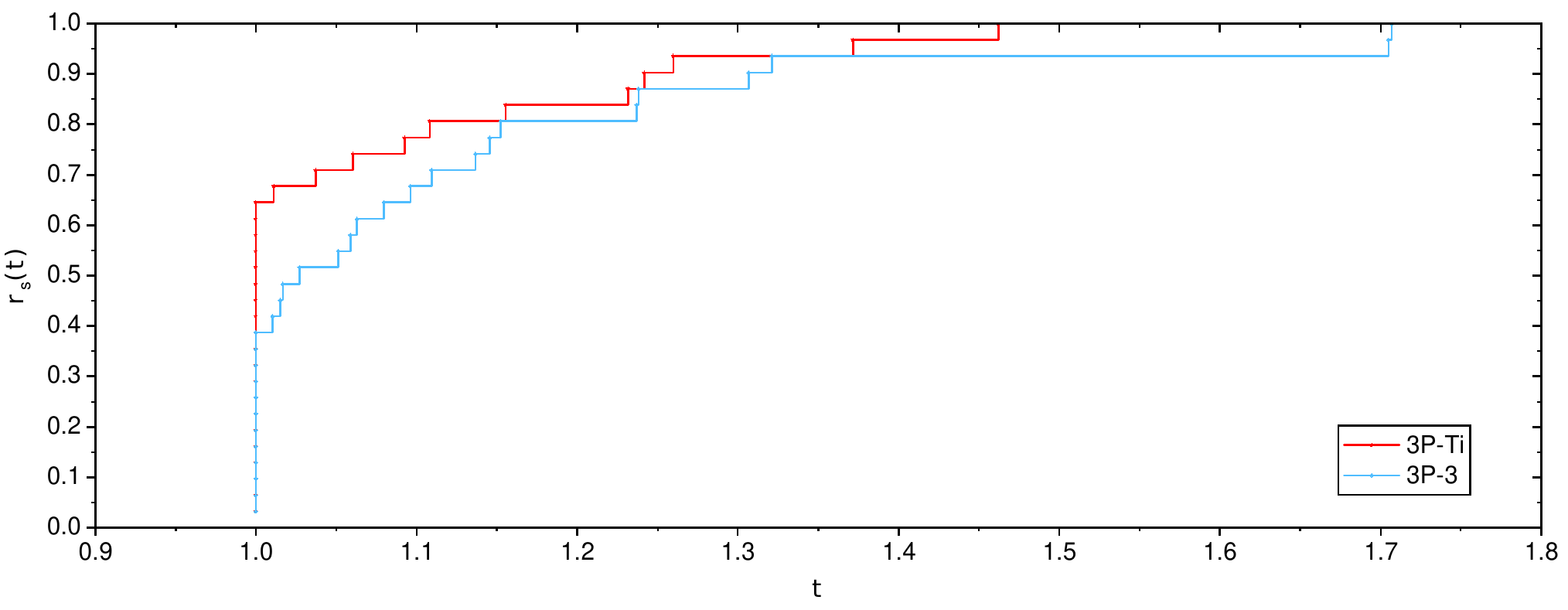}}
		\subfigure[H-period]{
			\includegraphics[width=0.48\textwidth,height=4cm]{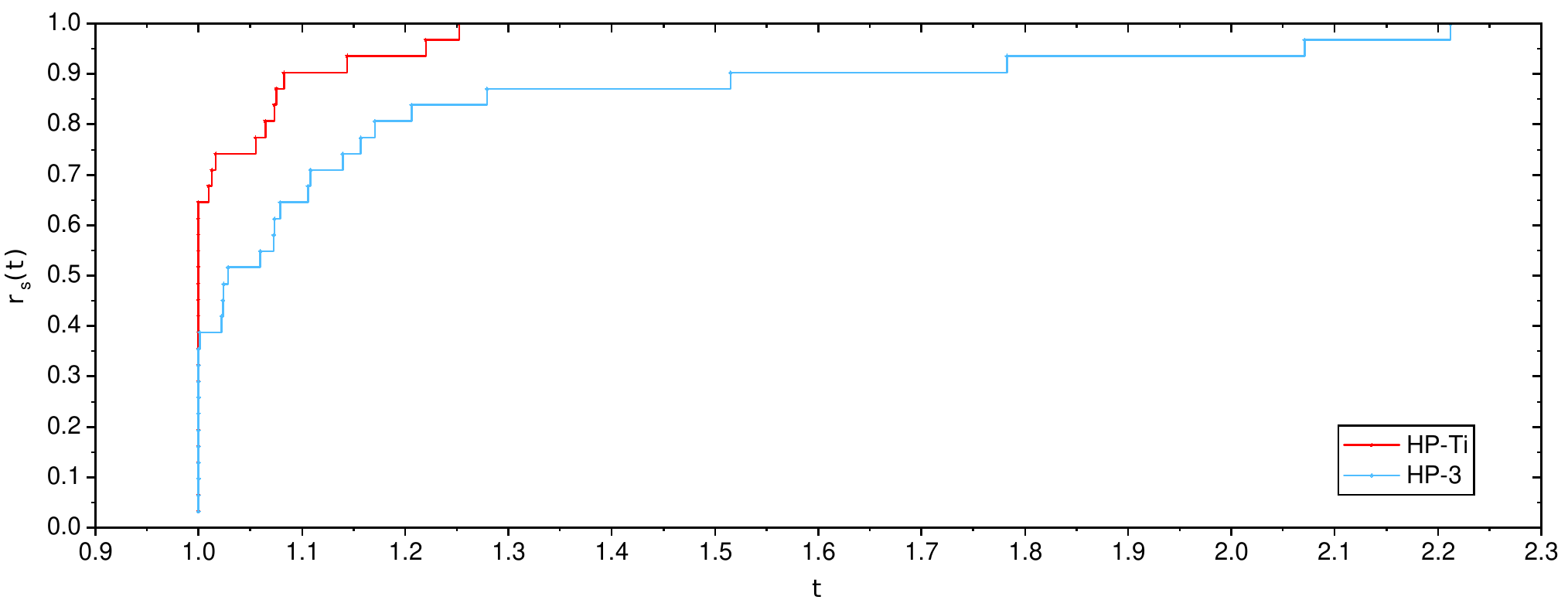}}\subfigure[T-period]{
			\includegraphics[width=0.48\textwidth,height=4cm]{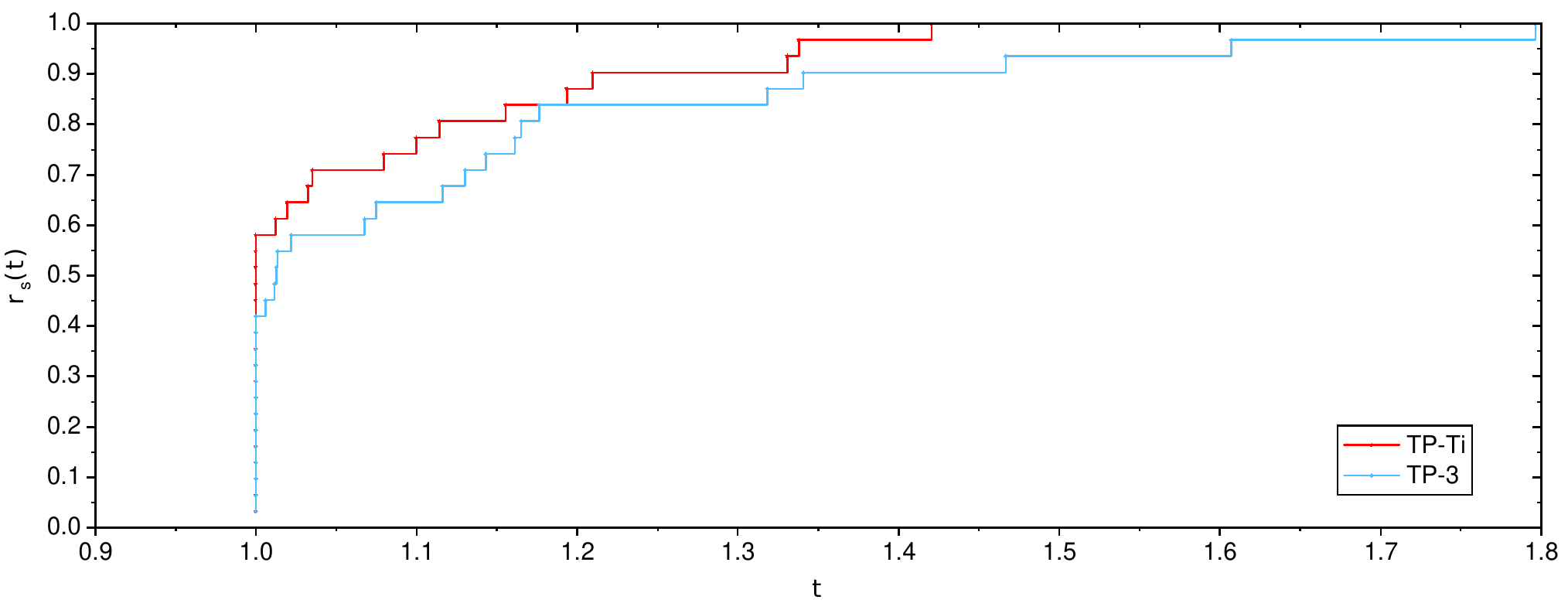}}
		\caption{Performance profiles on CPU time for MP-3 and MP-Ti formulations for the first data set.}
		\label{fig:History-profile-1}
	\end{center}
\end{figure}
\begin{figure}[t]
	\begin{center}
		\subfigure[2-period]{
			\includegraphics[width=0.48\textwidth,height=4cm]{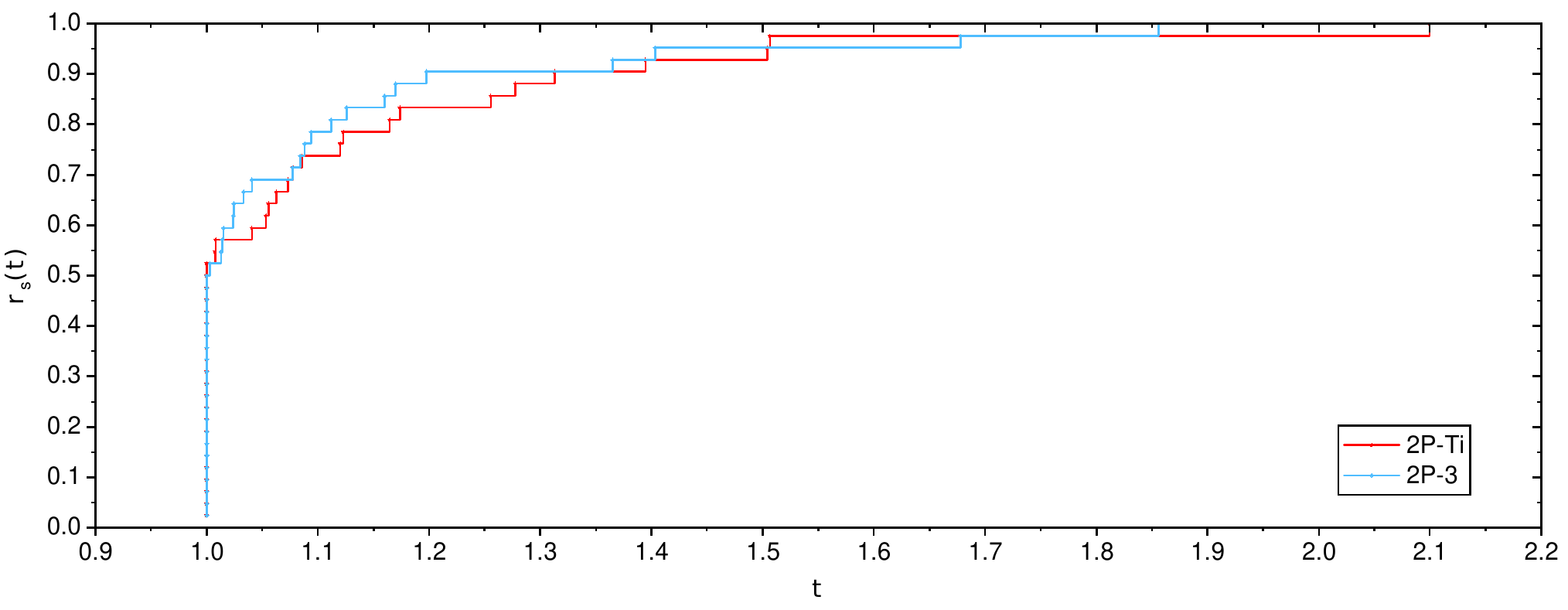}}\subfigure[3-period]{
			\includegraphics[width=0.48\textwidth,height=4cm]{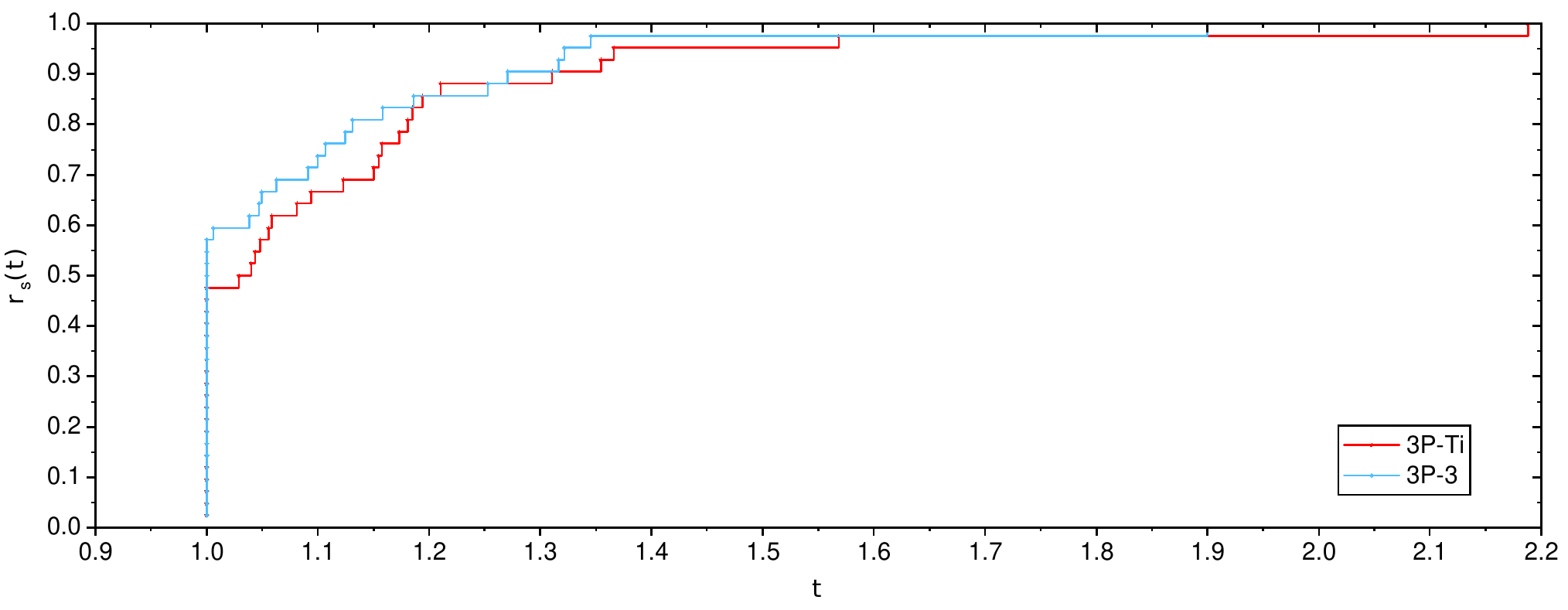}}
		\subfigure[H-period]{
			\includegraphics[width=0.48\textwidth,height=4cm]{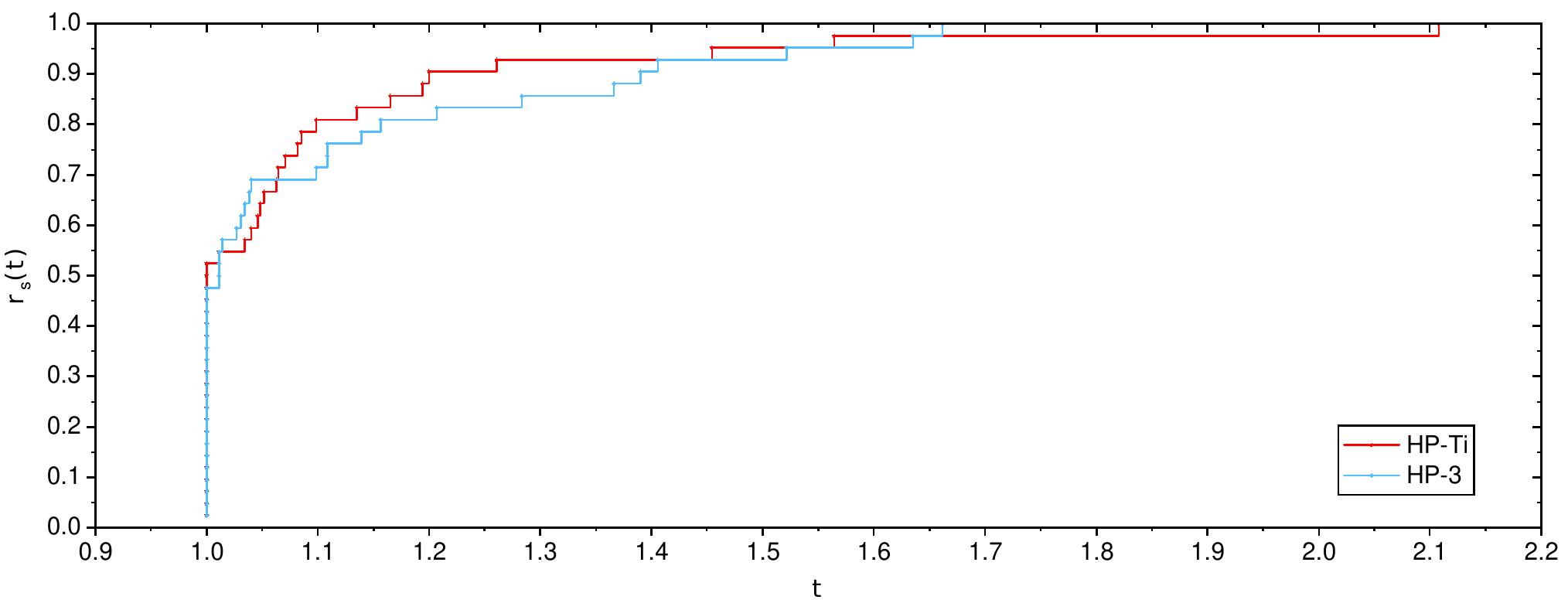}}\subfigure[T-period]{
			\includegraphics[width=0.48\textwidth,height=4cm]{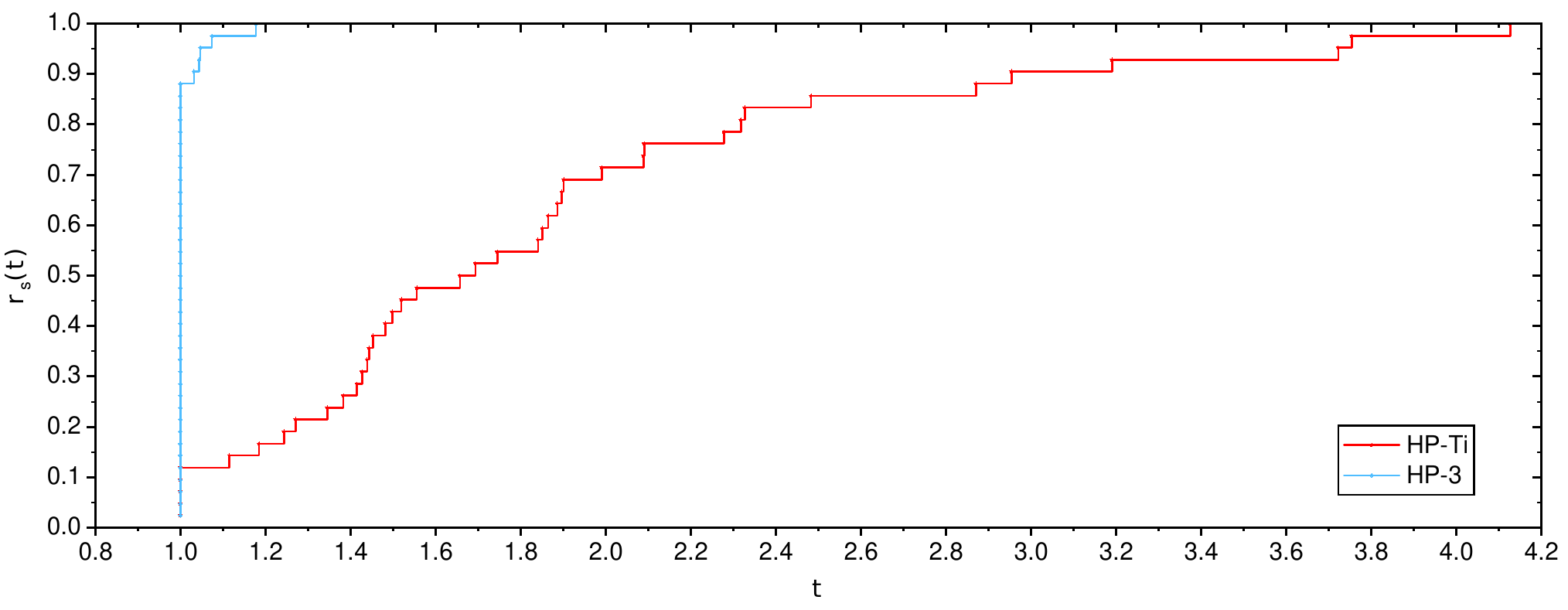}}
		\caption{Performance profiles on CPU time for MP-3 and MP-Ti formulations for the second data set.}
		\label{fig:History-profile-2}
	\end{center}
\end{figure}

Fig. \ref{fig:MP3iGap} shows the iGap for MP-3 and the other three state-of-the-art MILP formulations and all of the instances in comparison with 3P (using ratios). Fig. \ref{fig:MPTiiGap} shows the iGap for MP-Ti and the other three state-of-the-art MILP formulations and all of the instances in comparison with 3P (using ratios). Our TP-3 and TP-Ti models always provide the best gaps. In additional, we have the following tightness relationship among the models:
\begin{align}
	2P\precsim 3P\precsim 2P-3\precsim 3P-HD=3P-3\precsim HP-3\precsim TP-3\\
	2P\precsim 3P\precsim 2P-Ti\precsim3P-Ti\precsim HP-Ti\precsim TP-Ti
\end{align}
\begin{figure}[t]
	\centering
	\includegraphics[width=\textwidth,height=5cm]{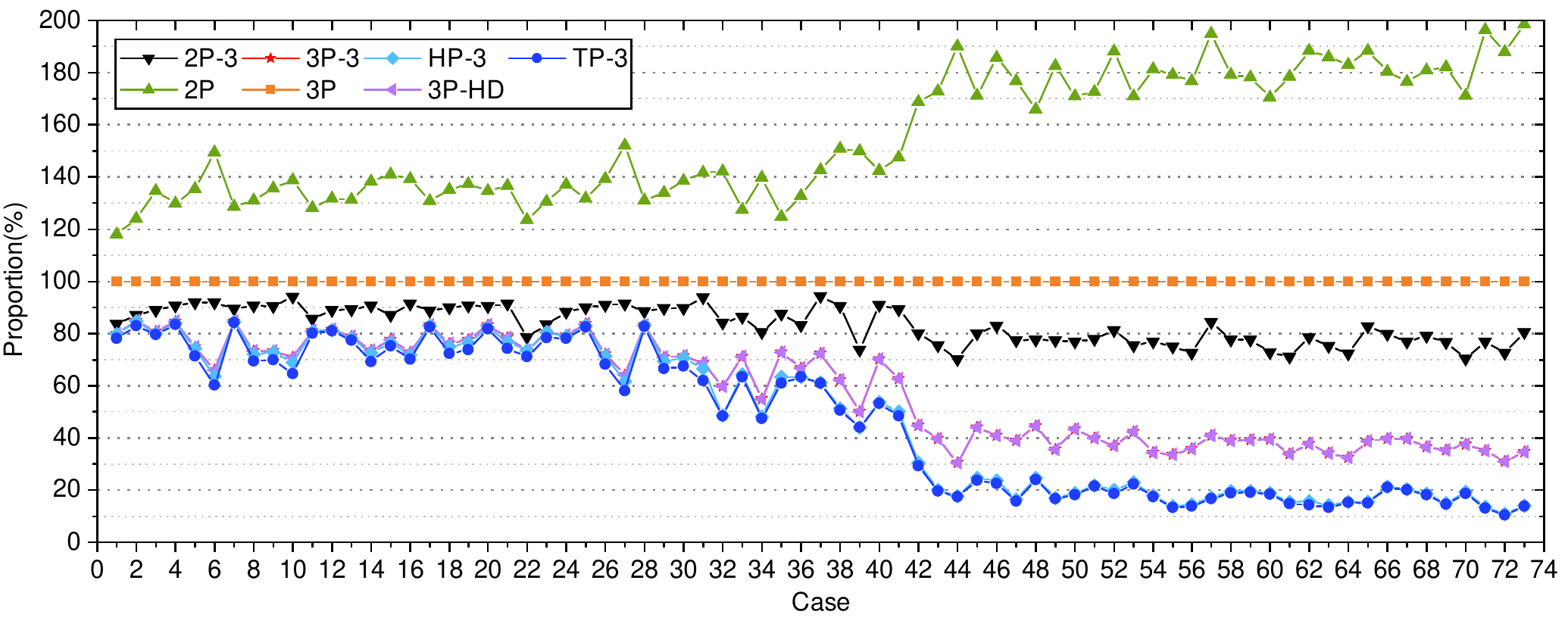}
	\caption{Comparison of MP-3 and the other three state-of-the-art MILP formulations in terms of iGap.}
	\label{fig:MP3iGap}
\end{figure}
\begin{figure}[t]
	\centering
	\includegraphics[width=\textwidth,height=5cm]{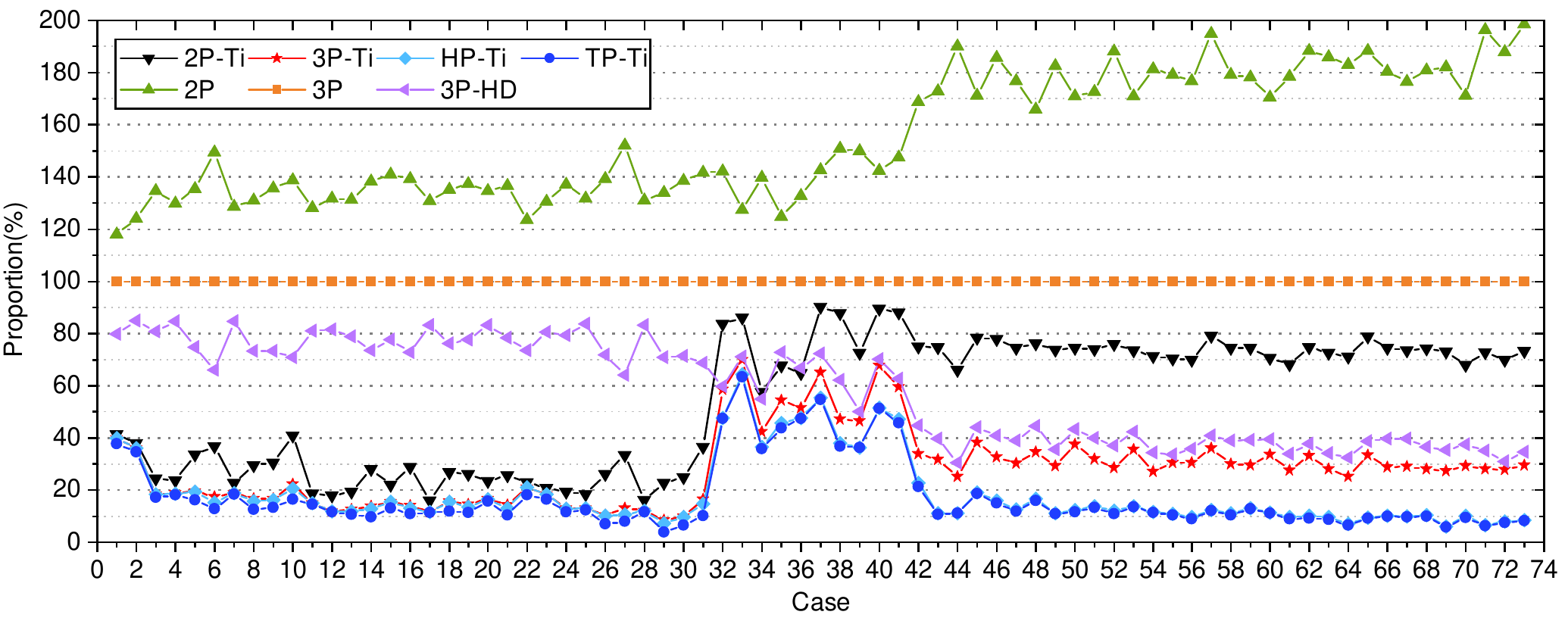}
	\caption{Comparison of MP-Ti and the other three state-of-the-art MILP formulations in terms of iGap.}
	\label{fig:MPTiiGap}
\end{figure}

It can be seen from Fig. \ref{fig:MP3PB} that 2P model and 3P model always give the lowest proportion of integers in binary variables for the relaxation solution. On the contrary, TP-3 model always provides the highest one. For data set 1, the differences among 2P-3, 3P-3, HP-3 and 3P-HD are not significant. For data set 2, HP-3 model performs better than the other three. Fig. \ref{fig:MPTiPB} shows the difference in the proportion of integers in binary variables among MP-Ti and the other three state-of-the-art MILP formulations. The differences among 2P-Ti, 3P-Ti, HP-Ti and 3P-HD are slight for the first data set.
\begin{figure}[t]
	\centering
	\includegraphics[width=\textwidth,height=5cm]{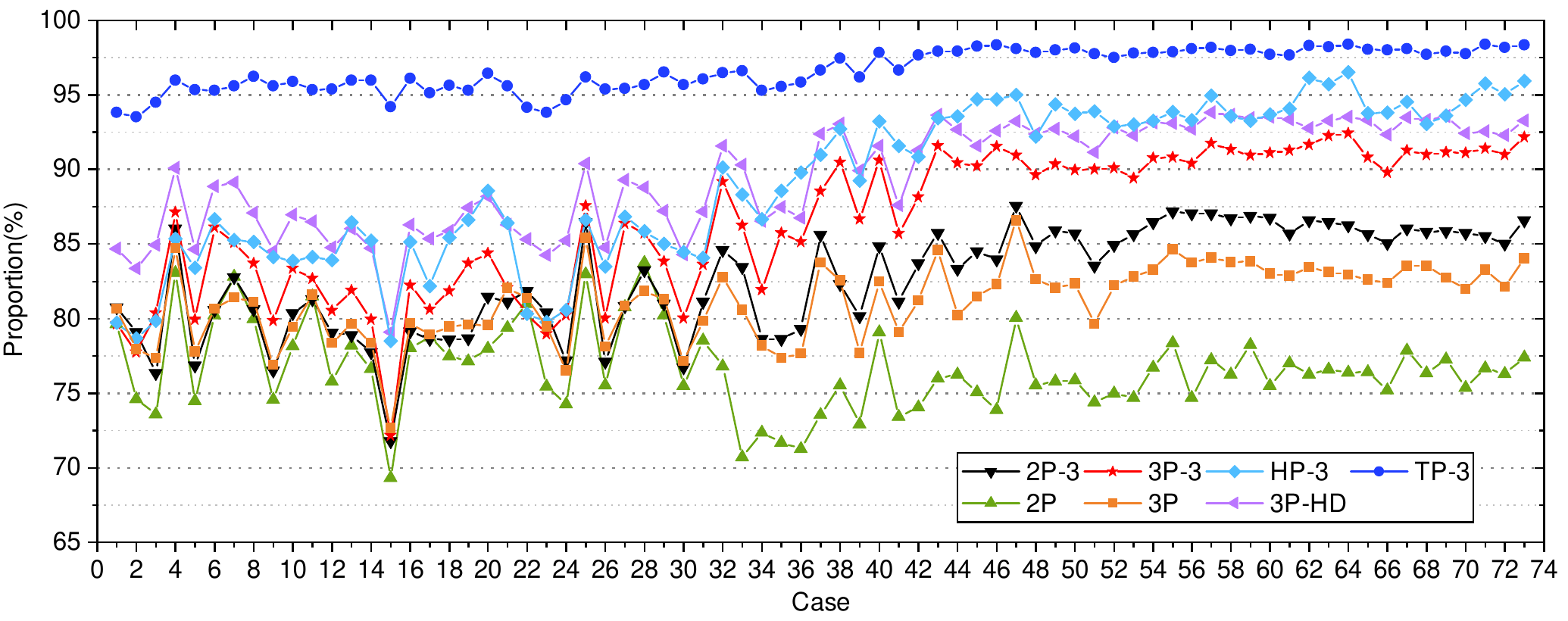}
	\caption{Comparison of MP-3 and the other three state-of-the-art MILP formulations in terms of the proportion of integers in binary variables.}
	\label{fig:MP3PB}
\end{figure}
\begin{figure}[t]
	\centering
	\includegraphics[width=\textwidth,height=5cm]{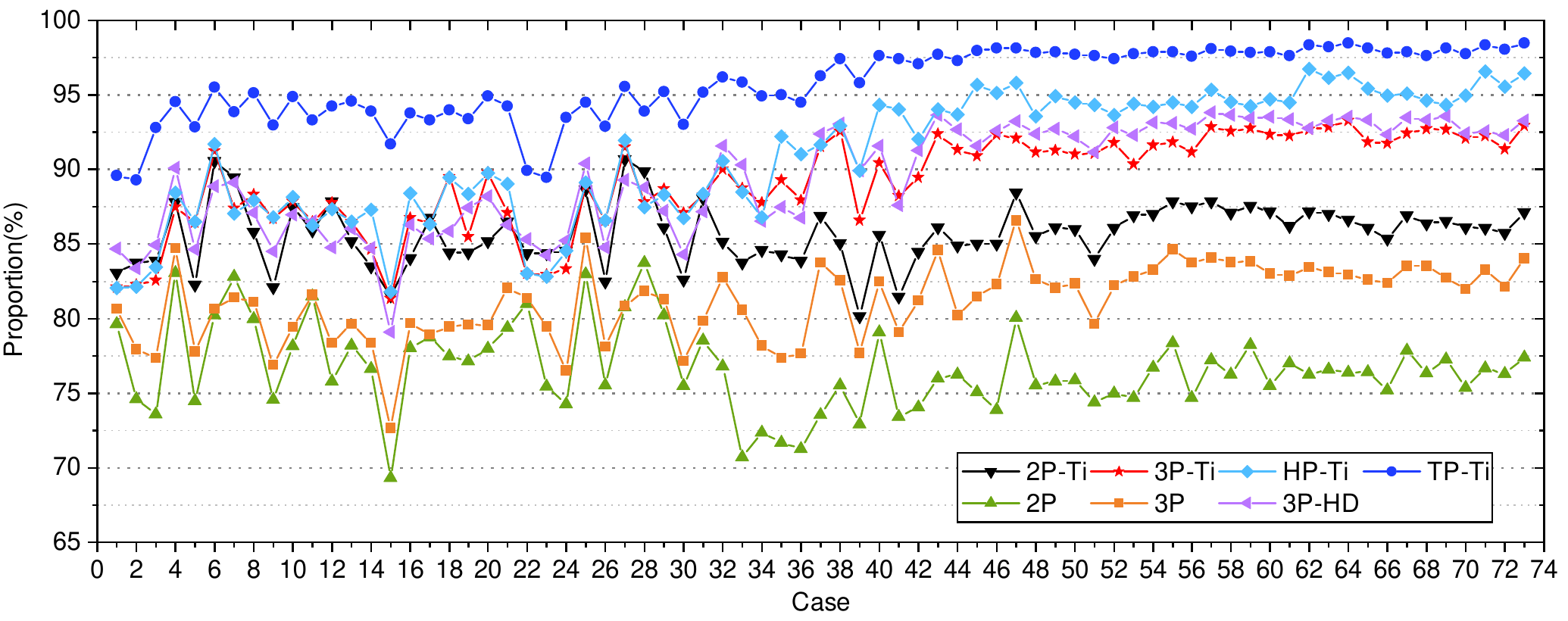}
	\caption{Comparison of MP-Ti and the other three state-of-the-art MILP formulations in terms of the proportion of integers in binary variables.}
	\label{fig:MPTiPB}
\end{figure}

Fig. \ref{fig:MP3Runtime} shows the rTime of MP-3 and the other three state-of-the-art MILP formulations for all test instances. Overall, 2P model performs worst, followed by 3P model , then TP-3 model. Other models perform equally well. In particular, TP-3 model performs well for instances 24-31 in the first data set with units ranging from 560 to 1080. Fig. \ref{fig:MPTiRuntime} shows the rTime of MP-Ti and the other three state-of-the-art MILP formulations for all test instances.
\begin{figure}[t]
	\centering
	\includegraphics[width=\textwidth,height=5cm]{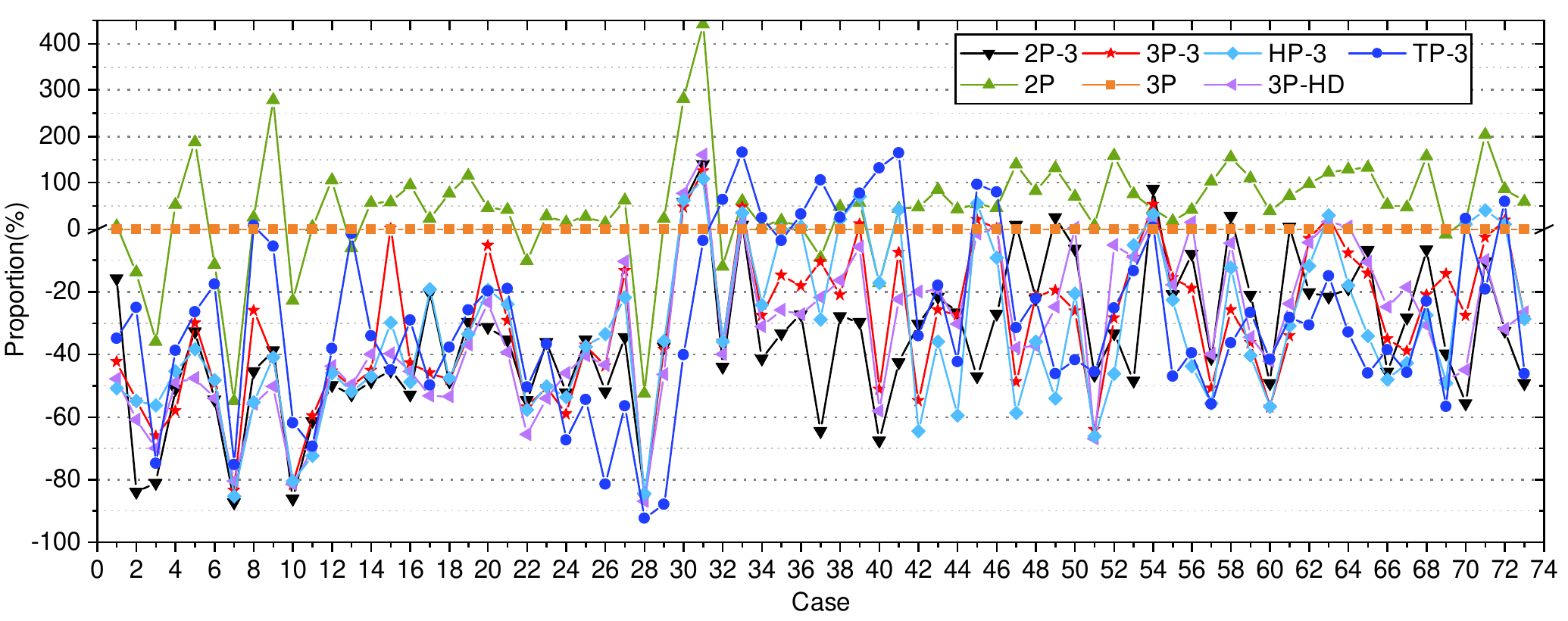}
	\caption{Comparison of MP-3 and the other three state-of-the-art MILP formulations in terms of rTime.}
	\label{fig:MP3Runtime}
\end{figure}
\begin{figure}[t]
	\centering
	\includegraphics[width=\textwidth,height=5cm]{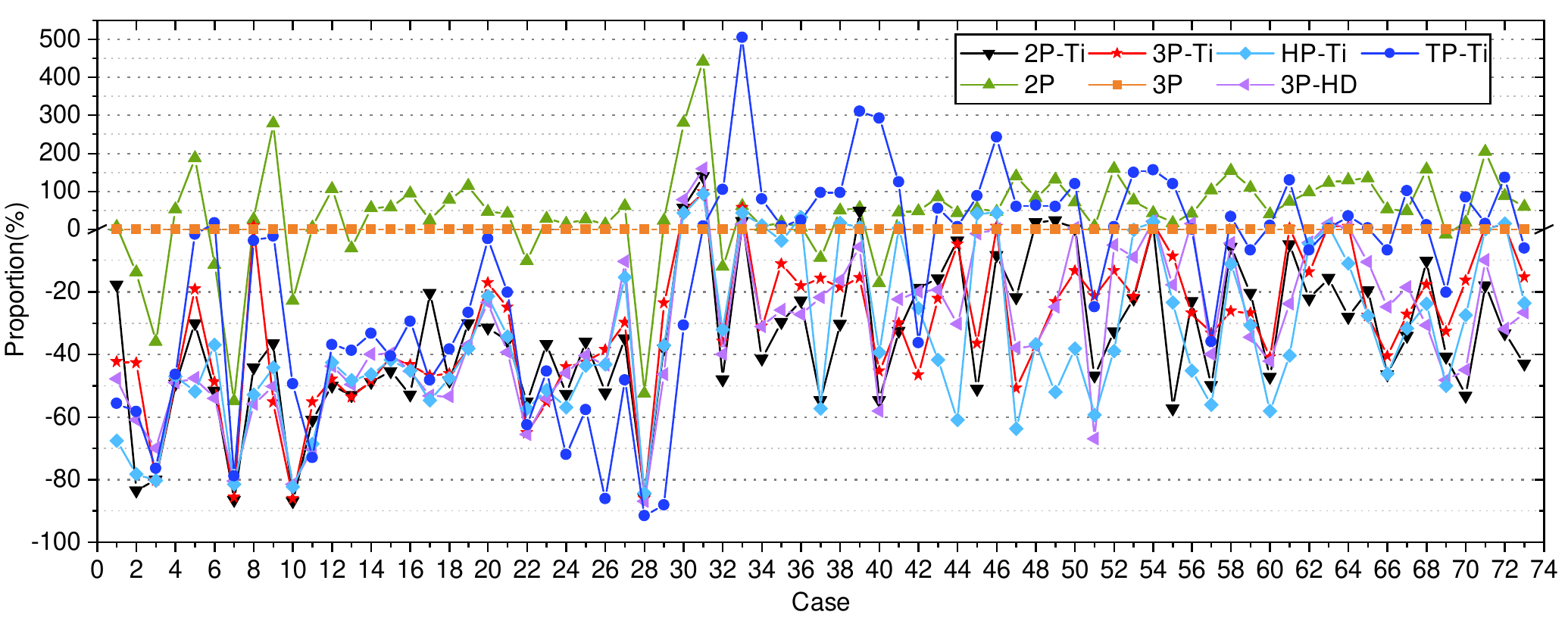}
	\caption{Comparison of MP-Ti and the other three state-of-the-art MILP formulations in terms of rTime.}
	\label{fig:MPTiRuntime}
\end{figure}

As shown in Fig. \ref{fig:performance-profile}, our MP-3, MP-Ti formulations as well as 3P-HD, have a relatively excellent performance compared with the other two state-of-the-art MILP formulations for the first data set. For the seconde data set, 3P model peforms better than TP-Ti. The poor convergences of TP-Ti for the second data set are mainly due to the large number of variables and constraints.  Fig. \ref{fig:performance-profile-clear} shows the differences among the well-performed models more precisely.
\begin{figure}[t]
	\begin{center}
		\subfigure[the first data set]{
			\includegraphics[width=0.48\textwidth,height=4cm]{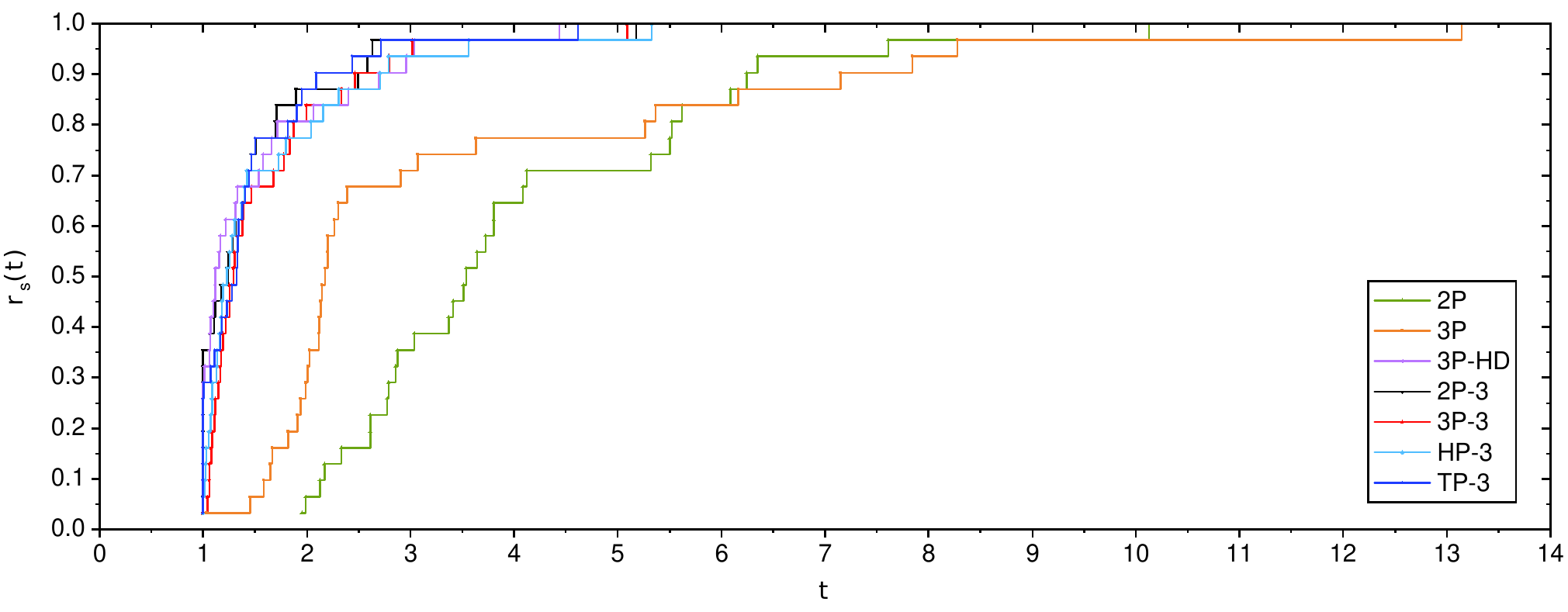}}\subfigure[the first data set]{
			\includegraphics[width=0.48\textwidth,height=4cm]{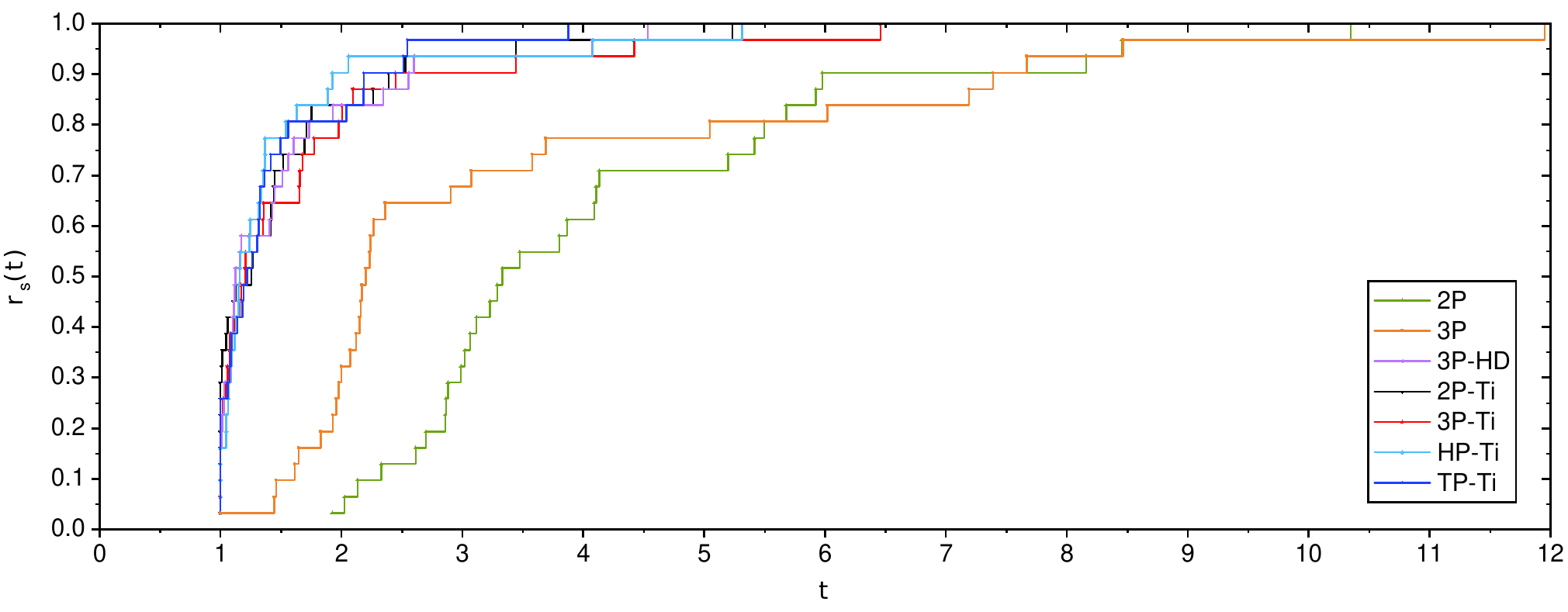}}
		\subfigure[the second data set]{
			\includegraphics[width=0.48\textwidth,height=4cm]{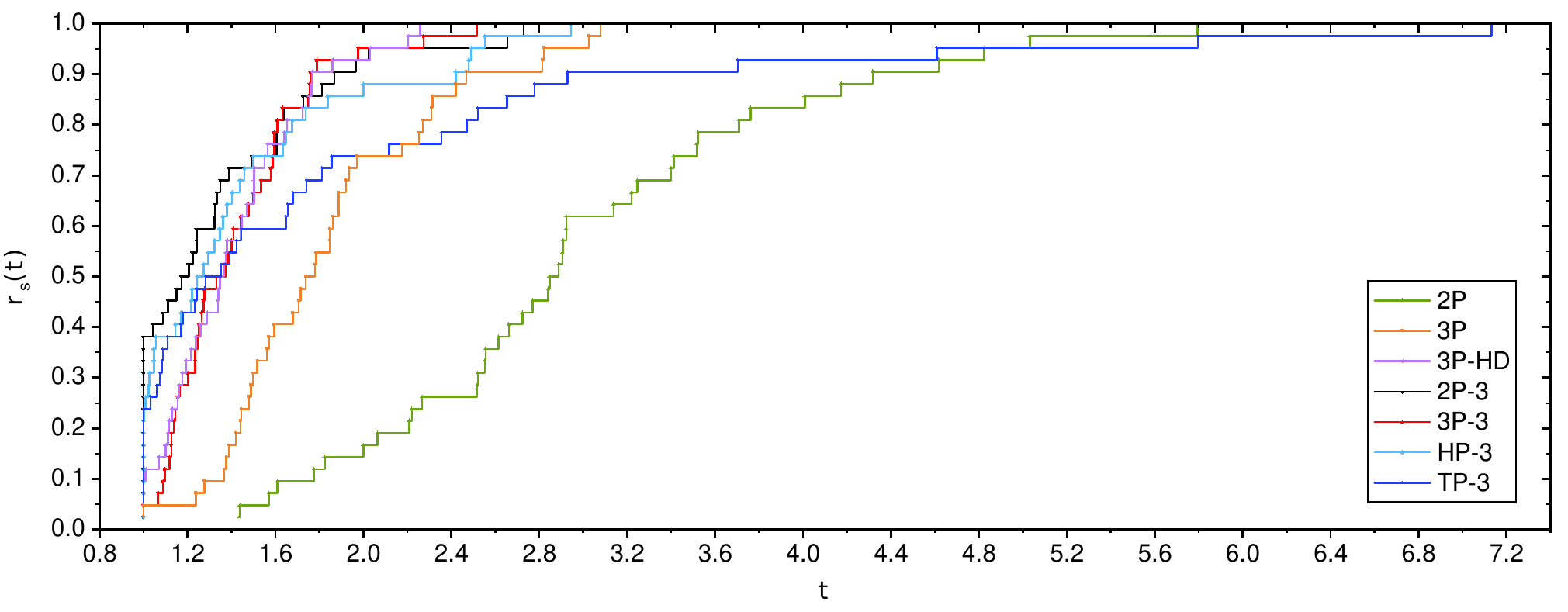}}\subfigure[the second data set]{
			\includegraphics[width=0.48\textwidth,height=4cm]{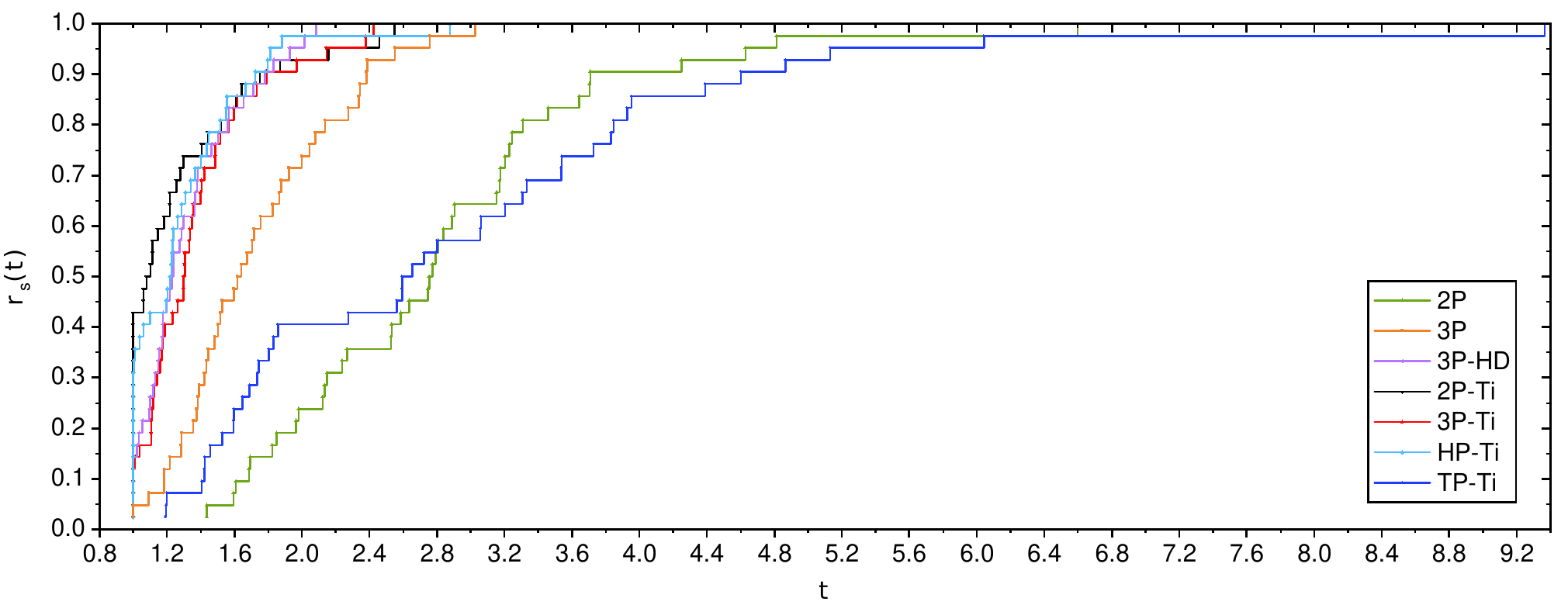}}
		\caption{Performance profiles on CPU time for MP-3, MP-Ti and the other three state-of-the-art MILP formulations.}
		\label{fig:performance-profile}
	\end{center}
\end{figure}
\begin{figure}[t]
	\begin{center}
		\subfigure[the first data set]{
			\includegraphics[width=0.48\textwidth,height=4cm]{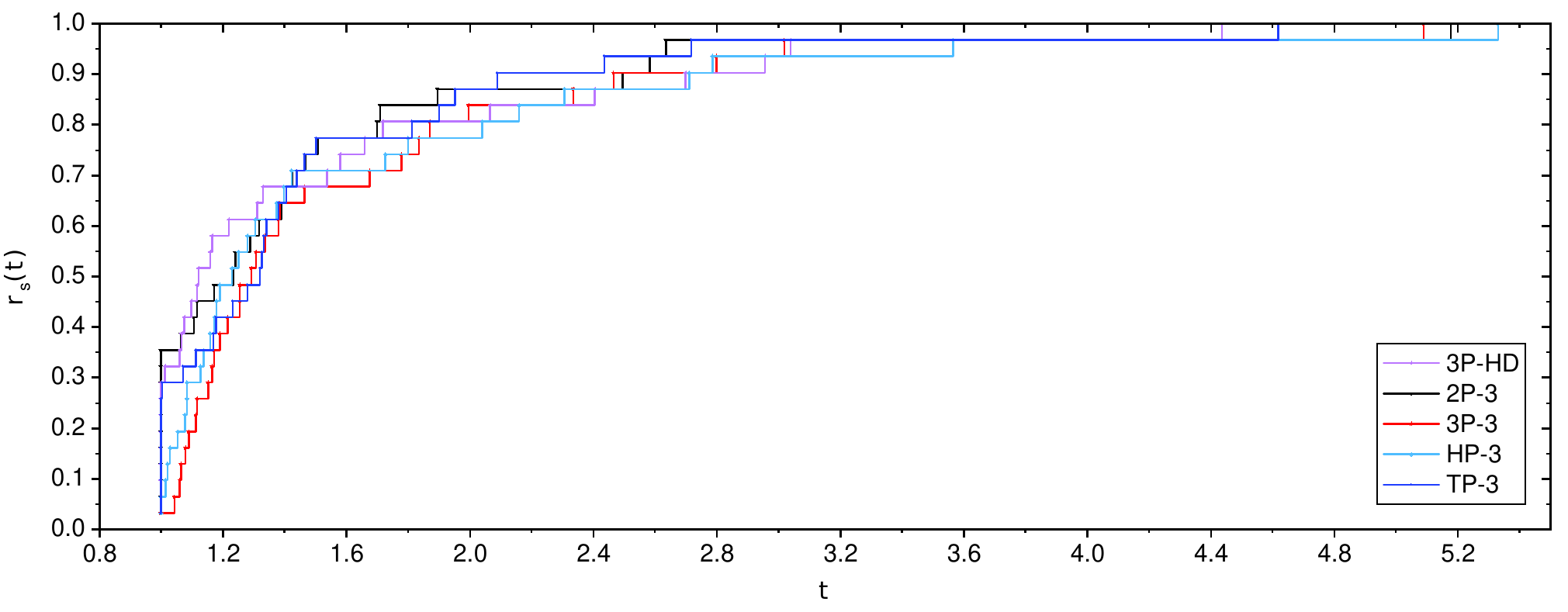}}\subfigure[the first data set]{
			\includegraphics[width=0.48\textwidth,height=4cm]{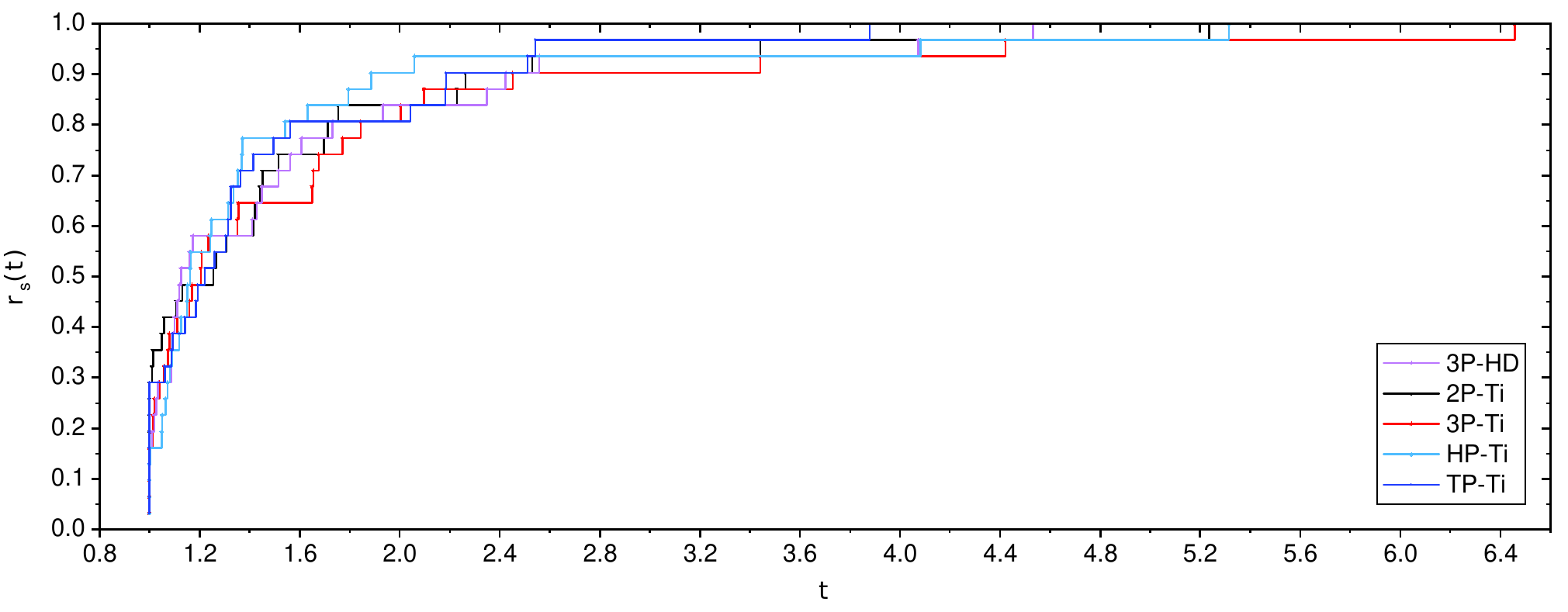}}
		\subfigure[the second data set]{
			\includegraphics[width=0.48\textwidth,height=4cm]{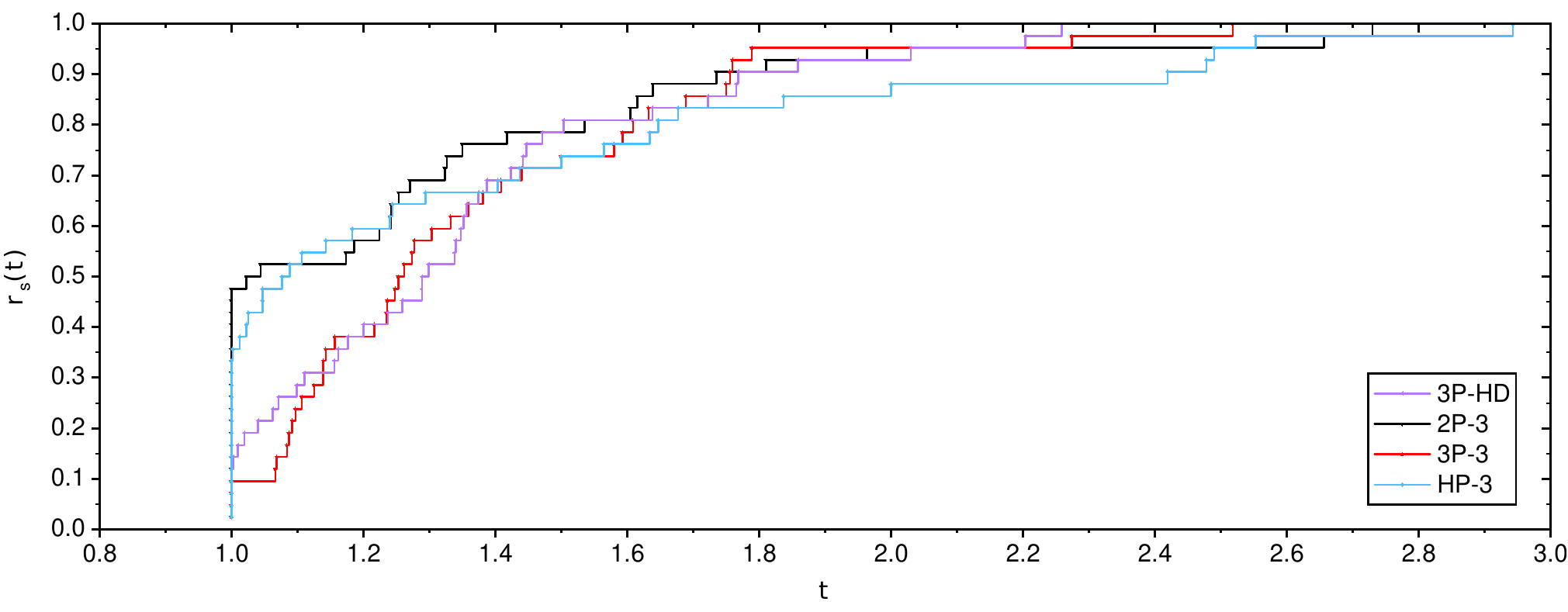}}\subfigure[the second data set]{
			\includegraphics[width=0.48\textwidth,height=4cm]{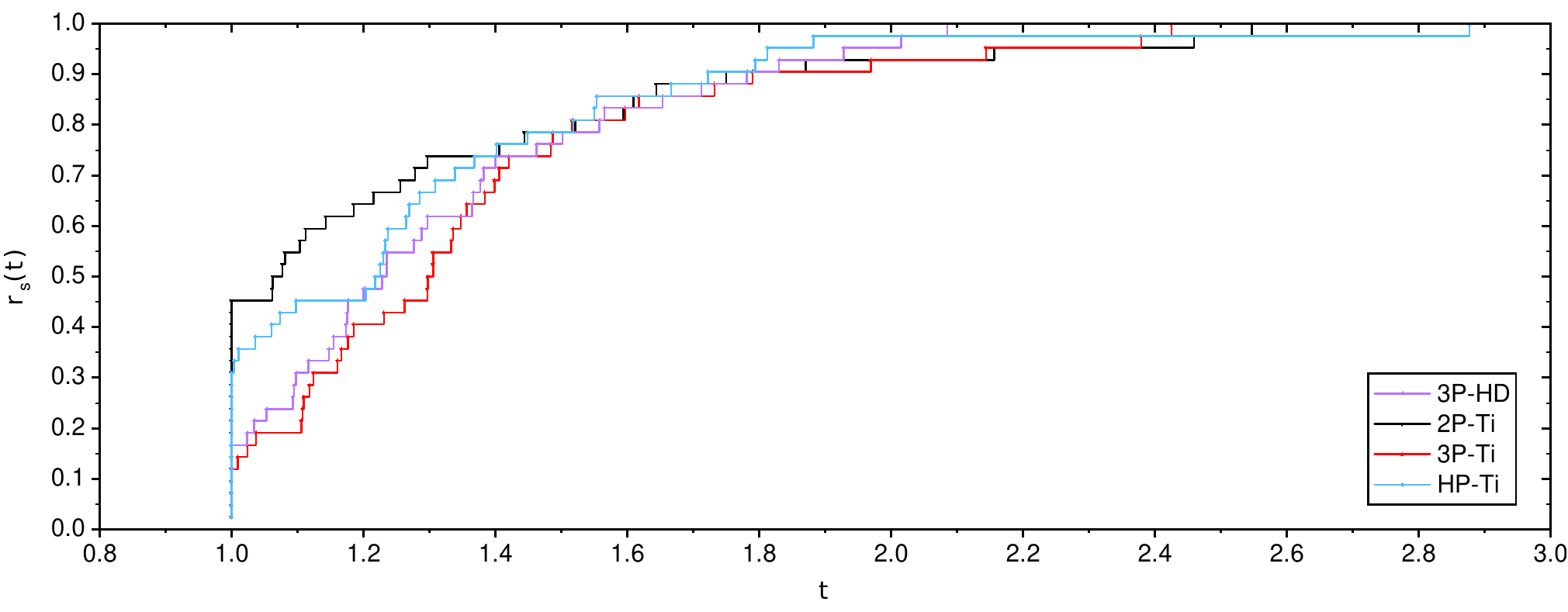}}
		\caption{Performance profiles on CPU time for MP-3, MP-Ti and 3P-HD formulations.}
		\label{fig:performance-profile-clear}
	\end{center}
\end{figure}
In Table \ref{tab:data1} and Table \ref{tab:data2}, we report more detailed results for the two data set respectively, where ``Units" represents the number of units in the system, ``time" represents the execution time for solving, ``$N_b$" represents the proportion of integers in binary variables for the solution of continuous relaxation problem. All of the test instances can be solved to optimality within the time limit of 3600 seconds.
\begin{sidewaystable}
	\begin{center}
		\begin{minipage}{\textheight}
			\caption{Comparison of MP-3 and the other three state-of-the-art MILP formulations in computational performance for the first data set.}\label{tab:data1}
			\resizebox{\textheight}{!}{
				\begin{tabular}{@{}lcccccccccccccccccccccc@{}}
					\toprule
					&&\multicolumn{3}{@{}c@{}}{2P}&\multicolumn{3}{@{}c@{}}{2P-3}&\multicolumn{3}{@{}c@{}}{3P}&\multicolumn{3}{@{}c@{}}{3P-3}&\multicolumn{3}{@{}c@{}}{3P-HD}&\multicolumn{3}{@{}c@{}}{HP-3}&\multicolumn{3}{@{}c@{}}{TP-3}\\
					\cmidrule(lr){3-5}\cmidrule(lr){6-8}\cmidrule(lr){9-11}\cmidrule(lr){12-14}\cmidrule(lr){15-17}\cmidrule(lr){18-20}\cmidrule(lr){21-23}
					Case&Units&$N_b$(\%)&iGap(\%)&time(s)&$N_b$(\%)&iGap(\%)&time(s)&$N_b$(\%)&iGap(\%)&time(s)&$N_b$(\%)&iGap(\%)&time(s)&$N_b$(\%)&iGap(\%)&time(s)&$N_b$(\%)&iGap(\%)&time(s)&$N_b$(\%)&iGap(\%)&time(s)\\
					\midrule
					1 & 28 & 79.66  & 0.67  & 3.39  & 80.75  & 0.47  & 2.67  & 80.65  & 0.57  & 3.17  & 79.66  & 0.45  & 1.83  & 84.66  & 0.45  & 1.66  & 79.71  & 0.45  & 1.56  & 93.83  & 0.44  & 2.06  \\
					2 & 35 & 74.60  & 0.63  & 6.98  & 79.09  & 0.44  & 1.31  & 77.94  & 0.51  & 8.09  & 77.74  & 0.43  & 3.67  & 83.37  & 0.43  & 3.16  & 78.69  & 0.43  & 3.66  & 93.52  & 0.42  & 6.06  \\
					3 & 44 & 73.58  & 0.64  & 5.00  & 76.36  & 0.42  & 1.48  & 77.37  & 0.48  & 7.81  & 80.38  & 0.38  & 2.64  & 84.93  & 0.38  & 2.34  & 79.86  & 0.38  & 3.42  & 94.51  & 0.38  & 1.97  \\
					4 & 45 & 83.06  & 0.57  & 4.67  & 86.05  & 0.40  & 1.50  & 84.72  & 0.44  & 3.06  & 87.13  & 0.37  & 1.28  & 90.08  & 0.37  & 1.56  & 85.37  & 0.37  & 1.67  & 95.98  & 0.36  & 1.87  \\
					5 & 49 & 74.46  & 0.73  & 8.69  & 76.87  & 0.50  & 2.03  & 77.78  & 0.54  & 3.01  & 79.92  & 0.40  & 2.11  & 84.64  & 0.40  & 1.58  & 83.39  & 0.40  & 1.86  & 95.33  & 0.39  & 2.22  \\
					6 & 50 & 80.22  & 0.84  & 2.44  & 80.58  & 0.52  & 1.25  & 80.64  & 0.56  & 2.75  & 86.14  & 0.37  & 1.39  & 88.88  & 0.37  & 1.27  & 86.67  & 0.36  & 1.42  & 95.32  & 0.34  & 2.27  \\
					7 & 51 & 82.82  & 0.56  & 4.53  & 82.76  & 0.39  & 1.28  & 81.40  & 0.44  & 10.04  & 85.12  & 0.37  & 1.66  & 89.12  & 0.37  & 1.97  & 85.27  & 0.37  & 1.48  & 95.60  & 0.37  & 2.50  \\
					8 & 51 & 79.98  & 0.76  & 3.17  & 80.64  & 0.53  & 1.37  & 81.13  & 0.58  & 2.52  & 83.69  & 0.43  & 1.86  & 87.08  & 0.43  & 1.11  & 85.16  & 0.42  & 1.12  & 96.23  & 0.40  & 2.70  \\
					9 & 52 & 74.57  & 0.72  & 10.70  & 76.52  & 0.48  & 1.73  & 76.90  & 0.53  & 2.83  & 79.87  & 0.39  & 1.67  & 84.51  & 0.39  & 1.41  & 84.13  & 0.39  & 1.67  & 95.58  & 0.37  & 2.67  \\
					10 & 54 & 78.16  & 0.92  & 7.59  & 80.35  & 0.63  & 1.37  & 79.45  & 0.67  & 9.83  & 83.38  & 0.47  & 1.80  & 86.98  & 0.47  & 1.83  & 83.87  & 0.46  & 1.92  & 95.90  & 0.43  & 3.73  \\
					11 & 132 & 81.52  & 0.55  & 37.08  & 81.28  & 0.37  & 13.70  & 81.61  & 0.43  & 35.40  & 82.68  & 0.35  & 14.28  & 86.49  & 0.35  & 10.47  & 84.13  & 0.35  & 9.75  & 95.36  & 0.34  & 10.84  \\
					12 & 156 & 75.78  & 0.57  & 50.63  & 79.03  & 0.38  & 12.39  & 78.37  & 0.43  & 24.67  & 80.55  & 0.35  & 13.48  & 84.74  & 0.35  & 13.87  & 83.91  & 0.35  & 13.32  & 95.37  & 0.35  & 15.25  \\
					13 & 156 & 78.21  & 0.62  & 24.21  & 78.86  & 0.42  & 12.18  & 79.66  & 0.47  & 25.79  & 81.89  & 0.37  & 12.90  & 86.03  & 0.37  & 13.00  & 86.44  & 0.37  & 12.43  & 95.99  & 0.37  & 25.43  \\
					14 & 160 & 76.63  & 0.70  & 41.94  & 77.75  & 0.46  & 13.79  & 78.37  & 0.50  & 26.81  & 79.94  & 0.37  & 14.67  & 84.74  & 0.37  & 16.07  & 85.22  & 0.37  & 14.18  & 95.99  & 0.35  & 17.65  \\
					15 & 165 & 69.30  & 0.64  & 54.03  & 71.77  & 0.40  & 18.79  & 72.66  & 0.45  & 34.24  & 72.13  & 0.35  & 34.49  & 79.09  & 0.35  & 20.62  & 78.51  & 0.35  & 24.04  & 94.22  & 0.34  & 18.85  \\
					16 & 167 & 78.03  & 0.72  & 52.02  & 79.14  & 0.47  & 12.62  & 79.67  & 0.51  & 26.74  & 82.23  & 0.37  & 15.32  & 86.28  & 0.37  & 14.61  & 85.15  & 0.37  & 13.67  & 96.11  & 0.36  & 18.95  \\
					17 & 172 & 78.75  & 0.55  & 40.01  & 78.64  & 0.38  & 26.01  & 78.91  & 0.42  & 32.68  & 80.63  & 0.35  & 17.62  & 85.37  & 0.35  & 15.31  & 82.19  & 0.35  & 26.43  & 95.14  & 0.35  & 16.39  \\
					18 & 182 & 77.47  & 0.69  & 63.09  & 78.58  & 0.46  & 18.34  & 79.49  & 0.51  & 35.65  & 81.86  & 0.39  & 18.53  & 85.88  & 0.39  & 16.59  & 85.45  & 0.38  & 18.70  & 95.63  & 0.37  & 22.23  \\
					19 & 182 & 77.15  & 0.63  & 64.03  & 78.66  & 0.42  & 20.96  & 79.59  & 0.46  & 29.77  & 83.69  & 0.36  & 19.59  & 87.45  & 0.36  & 18.78  & 86.63  & 0.35  & 19.79  & 95.31  & 0.34  & 22.10  \\
					20 & 183 & 77.99  & 0.59  & 38.32  & 81.45  & 0.40  & 18.00  & 79.54  & 0.44  & 26.21  & 84.40  & 0.37  & 24.85  & 88.21  & 0.37  & 20.09  & 88.56  & 0.36  & 21.10  & 96.43  & 0.36  & 21.03  \\
					21 & 187 & 79.39  & 0.64  & 39.33  & 81.11  & 0.43  & 17.95  & 82.04  & 0.47  & 27.74  & 82.04  & 0.36  & 19.62  & 86.28  & 0.36  & 16.86  & 86.40  & 0.36  & 21.09  & 95.61  & 0.35  & 22.46  \\
					22 & 560 & 80.98  & 0.53  & 583.02  & 81.84  & 0.34  & 293.73  & 81.38  & 0.43  & 648.07  & 80.34  & 0.32  & 279.95  & 85.30  & 0.32  & 222.98  & 80.31  & 0.32  & 273.94  & 94.14  & 0.31  & 320.83  \\
					23 & 700 & 75.47  & 0.52  & 827.01  & 80.41  & 0.33  & 412.00  & 79.46  & 0.40  & 645.93  & 78.95  & 0.32  & 319.83  & 84.25  & 0.32  & 296.57  & 79.77  & 0.32  & 321.24  & 93.82  & 0.31  & 409.06  \\
					24 & 880 & 74.28  & 0.61  & 938.12  & 77.20  & 0.39  & 392.27  & 76.52  & 0.45  & 820.82  & 80.25  & 0.35  & 335.37  & 85.22  & 0.35  & 443.24  & 80.59  & 0.35  & 379.96  & 94.69  & 0.35  & 267.25  \\
					25 & 900 & 82.97  & 0.54  & 757.57  & 86.44  & 0.37  & 388.36  & 85.40  & 0.41  & 599.64  & 87.57  & 0.34  & 377.10  & 90.40  & 0.34  & 357.37  & 86.62  & 0.34  & 374.68  & 96.17  & 0.34  & 272.59  \\
					26 & 980 & 75.55  & 0.68  & 990.17  & 77.10  & 0.44  & 419.98  & 78.11  & 0.49  & 872.87  & 80.04  & 0.35  & 490.98  & 84.78  & 0.35  & 494.16  & 83.51  & 0.35  & 579.74  & 95.40  & 0.33  & 162.63  \\
					27 & 1000 & 80.79  & 0.81  & 912.14  & 80.81  & 0.49  & 369.38  & 80.88  & 0.53  & 563.73  & 86.39  & 0.34  & 488.71  & 89.31  & 0.34  & 505.82  & 86.85  & 0.33  & 440.66  & 95.41  & 0.31  & 244.96  \\
					28 & 1020 & 83.73  & 0.53  & 1431.49  & 83.26  & 0.36  & 434.44  & 81.84  & 0.40  & 3013.43  & 85.74  & 0.34  & 428.87  & 88.79  & 0.34  & 393.72  & 85.85  & 0.34  & 467.47  & 95.69  & 0.33  & 229.24  \\
					29 & 1020 & 80.24  & 0.71  & 871.92  & 81.00  & 0.48  & 445.75  & 81.29  & 0.53  & 712.96  & 83.84  & 0.38  & 438.08  & 87.24  & 0.38  & 381.91  & 85.01  & 0.37  & 458.86  & 96.54  & 0.35  & 86.09  \\
					30 & 1040 & 75.48  & 0.68  & 1244.44  & 76.73  & 0.44  & 516.05  & 77.16  & 0.49  & 326.78  & 80.01  & 0.35  & 482.78  & 84.31  & 0.35  & 579.05  & 84.49  & 0.35  & 530.72  & 95.67  & 0.33  & 195.88  \\
					31 & 1080 & 78.52  & 0.88  & 1108.14  & 81.11  & 0.58  & 491.57  & 79.86  & 0.62  & 204.61  & 83.63  & 0.43  & 460.09  & 87.19  & 0.43  & 531.69  & 84.08  & 0.41  & 425.42  & 96.05  & 0.38  & 197.05  \\
					\botrule
				\end{tabular}
			}
		\end{minipage}
	\end{center}
\end{sidewaystable}
\begin{sidewaystable}
	\begin{center}
		\begin{minipage}{\textheight}
			\caption{Comparison of MP-3 and the other three state-of-the-art MILP formulations in computational performance for the second data set.}\label{tab:data2}
			\resizebox{\textheight}{!}{
				\begin{tabular}{@{}lcccccccccccccccccccccc@{}}
					\toprule
					&&\multicolumn{3}{@{}c@{}}{2P}&\multicolumn{3}{@{}c@{}}{2P-3}&\multicolumn{3}{@{}c@{}}{3P}&\multicolumn{3}{@{}c@{}}{3P-3}&\multicolumn{3}{@{}c@{}}{3P-HD}&\multicolumn{3}{@{}c@{}}{HP-3}&\multicolumn{3}{@{}c@{}}{TP-3}\\
					\cmidrule(lr){3-5}\cmidrule(lr){6-8}\cmidrule(lr){9-11}\cmidrule(lr){12-14}\cmidrule(lr){15-17}\cmidrule(lr){18-20}\cmidrule(lr){21-23}
					Case&Units&$N_b$(\%)&iGap(\%)&time(s)&$N_b$(\%)&iGap(\%)&time(s)&$N_b$(\%)&iGap(\%)&time(s)&$N_b$(\%)&iGap(\%)&time(s)&$N_b$(\%)&iGap(\%)&time(s)&$N_b$(\%)&iGap(\%)&time(s)&$N_b$(\%)&iGap(\%)&time(s)\\
					\midrule
					32 & 10 & 76.81  & 1.49  & 0.34  & 84.58  & 0.88  & 0.22  & 82.78  & 1.05  & 0.39  & 89.17  & 0.63  & 0.25  & 91.58  & 0.63  & 0.23  & 90.14  & 0.51  & 0.25  & 96.47  & 0.51  & 0.64  \\
					33 & 10 & 70.69  & 2.35  & 0.58  & 83.47  & 1.59  & 0.39  & 80.56  & 1.84  & 0.36  & 86.25  & 1.31  & 0.53  & 90.32  & 1.31  & 0.41  & 88.33  & 1.18  & 0.48  & 96.62  & 1.17  & 0.95  \\
					34 & 10 & 72.36  & 1.91  & 0.48  & 78.61  & 1.10  & 0.27  & 78.19  & 1.36  & 0.45  & 81.94  & 0.75  & 0.33  & 86.53  & 0.75  & 0.31  & 86.67  & 0.66  & 0.34  & 95.31  & 0.65  & 0.56  \\
					35 & 10 & 71.67  & 2.24  & 0.50  & 78.61  & 1.57  & 0.28  & 77.36  & 1.80  & 0.42  & 85.75  & 1.31  & 0.36  & 87.47  & 1.31  & 0.31  & 88.57  & 1.14  & 0.42  & 95.55  & 1.10  & 0.41  \\
					36 & 10 & 71.25  & 1.95  & 0.36  & 79.31  & 1.22  & 0.25  & 77.64  & 1.47  & 0.34  & 85.15  & 0.98  & 0.28  & 86.74  & 0.98  & 0.25  & 89.81  & 0.93  & 0.36  & 95.86  & 0.93  & 0.45  \\
					37 & 20 & 73.54  & 2.06  & 2.16  & 85.63  & 1.36  & 0.84  & 83.75  & 1.44  & 2.37  & 88.54  & 1.04  & 2.12  & 92.37  & 1.04  & 1.86  & 90.97  & 0.88  & 1.69  & 96.65  & 0.88  & 4.89  \\
					38 & 20 & 75.56  & 1.59  & 1.00  & 82.29  & 0.95  & 0.48  & 82.57  & 1.05  & 0.67  & 90.49  & 0.66  & 0.53  & 93.05  & 0.66  & 0.56  & 92.71  & 0.54  & 0.81  & 97.47  & 0.53  & 0.84  \\
					39 & 20 & 72.92  & 1.58  & 1.73  & 80.14  & 0.78  & 0.78  & 77.71  & 1.05  & 1.11  & 86.67  & 0.53  & 1.23  & 89.95  & 0.53  & 1.05  & 89.24  & 0.46  & 1.89  & 96.21  & 0.46  & 1.97  \\
					40 & 20 & 79.10  & 1.87  & 1.52  & 84.86  & 1.19  & 0.59  & 82.50  & 1.31  & 1.83  & 90.60  & 0.92  & 0.89  & 91.58  & 0.92  & 0.77  & 93.24  & 0.70  & 1.52  & 97.85  & 0.70  & 4.23  \\
					41 & 20 & 73.40  & 1.85  & 0.91  & 81.11  & 1.12  & 0.36  & 79.10  & 1.25  & 0.62  & 85.69  & 0.79  & 0.58  & 87.58  & 0.79  & 0.48  & 91.57  & 0.63  & 0.89  & 96.66  & 0.61  & 1.66  \\
					42 & 50 & 74.06  & 1.49  & 9.06  & 83.72  & 0.71  & 4.26  & 81.22  & 0.88  & 6.12  & 88.14  & 0.40  & 2.76  & 91.28  & 0.40  & 4.91  & 90.86  & 0.27  & 2.17  & 97.67  & 0.26  & 4.03  \\
					43 & 50 & 76.00  & 1.13  & 4.01  & 85.72  & 0.49  & 1.70  & 84.58  & 0.65  & 2.17  & 91.58  & 0.26  & 1.61  & 93.64  & 0.26  & 1.75  & 93.44  & 0.13  & 1.39  & 97.91  & 0.13  & 1.78  \\
					44 & 50 & 76.25  & 1.21  & 4.95  & 83.31  & 0.45  & 2.55  & 80.25  & 0.64  & 3.47  & 90.42  & 0.19  & 2.51  & 92.67  & 0.19  & 2.42  & 93.58  & 0.11  & 1.41  & 97.91  & 0.11  & 2.00  \\
					45 & 50 & 75.08  & 1.28  & 6.45  & 84.50  & 0.60  & 2.22  & 81.47  & 0.75  & 4.19  & 90.20  & 0.33  & 5.05  & 91.58  & 0.33  & 4.12  & 94.72  & 0.18  & 6.53  & 98.28  & 0.18  & 8.22  \\
					46 & 50 & 73.89  & 1.22  & 5.25  & 83.94  & 0.54  & 2.62  & 82.31  & 0.65  & 3.59  & 91.54  & 0.27  & 3.62  & 92.61  & 0.27  & 3.61  & 94.70  & 0.15  & 3.26  & 98.35  & 0.15  & 6.48  \\
					47 & 75 & 80.07  & 1.31  & 11.86  & 87.57  & 0.57  & 5.44  & 86.59  & 0.74  & 4.95  & 90.94  & 0.29  & 2.53  & 93.24  & 0.29  & 3.08  & 95.02  & 0.12  & 2.05  & 98.11  & 0.12  & 3.39  \\
					48 & 75 & 75.54  & 1.46  & 8.08  & 84.85  & 0.68  & 3.45  & 82.63  & 0.88  & 4.44  & 89.63  & 0.39  & 3.48  & 92.36  & 0.39  & 2.78  & 92.22  & 0.22  & 2.84  & 97.86  & 0.21  & 3.45  \\
					49 & 75 & 75.80  & 1.27  & 11.95  & 85.93  & 0.54  & 6.48  & 82.06  & 0.70  & 5.17  & 90.33  & 0.25  & 4.16  & 92.72  & 0.25  & 3.89  & 94.39  & 0.12  & 2.37  & 98.01  & 0.12  & 2.78  \\
					50 & 75 & 75.89  & 1.31  & 8.95  & 85.70  & 0.59  & 4.92  & 82.35  & 0.77  & 5.25  & 89.98  & 0.33  & 3.87  & 92.21  & 0.33  & 5.37  & 93.72  & 0.14  & 4.17  & 98.13  & 0.14  & 3.06  \\
					51 & 75 & 74.41  & 1.38  & 9.89  & 83.54  & 0.62  & 4.92  & 79.65  & 0.80  & 9.22  & 90.02  & 0.32  & 3.31  & 91.17  & 0.32  & 3.05  & 93.90  & 0.17  & 3.12  & 97.77  & 0.17  & 5.01  \\
					52 & 100 & 75.00  & 1.29  & 21.78  & 84.93  & 0.56  & 5.61  & 82.24  & 0.68  & 8.40  & 90.08  & 0.25  & 6.01  & 92.80  & 0.25  & 7.98  & 92.88  & 0.14  & 4.51  & 97.51  & 0.13  & 6.28  \\
					53 & 100 & 74.69  & 1.39  & 18.39  & 85.64  & 0.61  & 5.39  & 82.81  & 0.81  & 10.44  & 89.40  & 0.34  & 9.48  & 92.29  & 0.34  & 9.51  & 93.01  & 0.19  & 9.90  & 97.82  & 0.18  & 9.04  \\
					54 & 100 & 76.71  & 1.30  & 12.12  & 86.44  & 0.55  & 15.78  & 83.28  & 0.72  & 8.45  & 90.79  & 0.25  & 12.97  & 93.13  & 0.25  & 10.28  & 93.25  & 0.13  & 11.18  & 97.86  & 0.13  & 8.72  \\
					55 & 100 & 78.36  & 1.27  & 12.53  & 87.19  & 0.53  & 8.47  & 84.63  & 0.71  & 10.70  & 90.82  & 0.24  & 9.04  & 93.11  & 0.24  & 8.79  & 93.86  & 0.10  & 8.28  & 97.86  & 0.09  & 5.67  \\
					56 & 100 & 74.71  & 1.24  & 10.47  & 87.06  & 0.51  & 6.81  & 83.75  & 0.70  & 7.39  & 90.39  & 0.25  & 5.98  & 92.72  & 0.25  & 8.44  & 93.31  & 0.10  & 4.16  & 98.11  & 0.10  & 4.47  \\
					57 & 150 & 77.22  & 1.18  & 32.74  & 87.06  & 0.51  & 9.47  & 84.09  & 0.61  & 16.11  & 91.76  & 0.25  & 7.92  & 93.84  & 0.25  & 9.67  & 94.95  & 0.10  & 7.15  & 98.18  & 0.10  & 7.09  \\
					58 & 150 & 76.28  & 1.22  & 35.51  & 86.70  & 0.53  & 17.92  & 83.81  & 0.68  & 13.92  & 91.32  & 0.26  & 10.33  & 93.66  & 0.26  & 13.31  & 93.58  & 0.13  & 12.22  & 97.95  & 0.13  & 8.86  \\
					59 & 150 & 78.24  & 1.21  & 32.87  & 86.89  & 0.53  & 12.39  & 83.82  & 0.68  & 15.68  & 90.94  & 0.27  & 9.98  & 93.42  & 0.27  & 10.26  & 93.29  & 0.13  & 9.34  & 98.06  & 0.13  & 11.51  \\
					60 & 150 & 75.48  & 1.27  & 25.24  & 86.74  & 0.54  & 9.20  & 83.01  & 0.75  & 18.15  & 91.09  & 0.29  & 7.84  & 93.52  & 0.29  & 10.50  & 93.69  & 0.14  & 7.86  & 97.72  & 0.14  & 10.61  \\
					61 & 150 & 77.01  & 1.30  & 26.79  & 85.68  & 0.52  & 16.45  & 82.85  & 0.73  & 15.54  & 91.30  & 0.25  & 10.25  & 93.35  & 0.25  & 11.84  & 94.06  & 0.11  & 10.73  & 97.68  & 0.11  & 11.14  \\
					62 & 200 & 76.25  & 1.16  & 40.99  & 86.58  & 0.48  & 16.57  & 83.45  & 0.61  & 20.79  & 91.66  & 0.23  & 20.17  & 92.75  & 0.23  & 19.89  & 96.15  & 0.10  & 18.34  & 98.29  & 0.09  & 14.40  \\
					63 & 200 & 76.59  & 1.17  & 42.08  & 86.45  & 0.47  & 14.81  & 83.10  & 0.63  & 18.89  & 92.26  & 0.21  & 23.59  & 93.28  & 0.21  & 21.79  & 95.72  & 0.09  & 24.38  & 98.22  & 0.08  & 16.07  \\
					64 & 200 & 76.38  & 1.17  & 50.52  & 86.24  & 0.46  & 17.90  & 82.93  & 0.64  & 22.10  & 92.43  & 0.21  & 20.39  & 93.54  & 0.21  & 23.26  & 96.53  & 0.10  & 18.14  & 98.41  & 0.10  & 14.86  \\
					65 & 200 & 76.42  & 1.26  & 57.39  & 85.63  & 0.55  & 22.95  & 82.63  & 0.67  & 24.57  & 90.82  & 0.26  & 21.10  & 93.30  & 0.26  & 21.96  & 93.80  & 0.10  & 16.18  & 98.06  & 0.10  & 13.29  \\
					66 & 200 & 75.20  & 1.37  & 41.12  & 85.06  & 0.60  & 14.68  & 82.38  & 0.76  & 26.99  & 89.81  & 0.30  & 17.54  & 92.35  & 0.30  & 20.28  & 93.83  & 0.16  & 14.06  & 98.03  & 0.16  & 16.59  \\
					67 & 200 & 77.85  & 1.24  & 38.12  & 86.03  & 0.54  & 18.53  & 83.54  & 0.71  & 25.85  & 91.27  & 0.28  & 15.76  & 93.49  & 0.28  & 21.04  & 94.56  & 0.14  & 14.78  & 98.10  & 0.14  & 14.00  \\
					68 & 200 & 76.36  & 1.30  & 58.19  & 85.81  & 0.57  & 21.17  & 83.55  & 0.72  & 22.62  & 90.99  & 0.26  & 17.87  & 93.30  & 0.26  & 15.68  & 93.08  & 0.13  & 16.42  & 97.70  & 0.13  & 17.40  \\
					69 & 200 & 77.27  & 1.25  & 30.18  & 85.85  & 0.53  & 18.48  & 82.74  & 0.69  & 30.71  & 91.17  & 0.24  & 26.31  & 93.58  & 0.24  & 15.89  & 93.61  & 0.10  & 15.57  & 97.93  & 0.10  & 13.31  \\
					70 & 200 & 75.38  & 1.28  & 40.58  & 85.76  & 0.53  & 15.25  & 81.98  & 0.75  & 34.35  & 91.12  & 0.28  & 24.88  & 92.43  & 0.28  & 18.87  & 94.65  & 0.14  & 37.96  & 97.76  & 0.14  & 42.40  \\
					71 & 200 & 76.69  & 1.26  & 61.69  & 85.53  & 0.49  & 18.21  & 83.28  & 0.64  & 20.29  & 91.42  & 0.23  & 19.75  & 92.56  & 0.23  & 18.28  & 95.78  & 0.09  & 28.51  & 98.40  & 0.08  & 16.40  \\
					72 & 200 & 76.32  & 1.23  & 53.02  & 85.00  & 0.47  & 19.14  & 82.14  & 0.65  & 28.34  & 90.98  & 0.20  & 33.60  & 92.29  & 0.20  & 19.32  & 95.04  & 0.07  & 31.27  & 98.17  & 0.07  & 45.08  \\
					73 & 200 & 77.39  & 1.13  & 40.24  & 86.60  & 0.46  & 12.83  & 84.05  & 0.57  & 25.28  & 92.18  & 0.20  & 18.07  & 93.25  & 0.20  & 18.56  & 95.95  & 0.08  & 18.01  & 98.35  & 0.08  & 13.61  \\
					\botrule
				\end{tabular}
			}
		\end{minipage}
	\end{center}
\end{sidewaystable}

\section{Conclusion}\label{sec6}

In this paper, tighter upper bounds of generation limits and ramping constraints are constructed by introducing more binary variables to represent unit states. A multi-period locally ideal formulation based on sliding window is proposed. It is worth noting that different window sizes can be set for different units in the system. In general, larger sliding window will generate tighter formulation. However, the number of variables and constraints increase dramatically for large window, which will increase the computational burden. In practical application, we can choose appropriate window size for different units in the system, so as to obtain an efficient model that can find a high-quality solution in a relatively short time.

A method to identify the facets of constraint polytope is provided in this paper. For models with large window, only the facet constraints and the necessary physical constraints will be keep. This reduces memory requirements and improves the computational efficiency without reducing the tightness.

In order to improve the compactness of MP-1 models, we make a series of adjustments to the binary variables of MP-1, and finally get compact models MP-3 with as few binary variables as possible without reducing the tightness of the models. The experimental results show that MP-3 models have excellent performances in computational efficiency.

To further tighten the model, historical status is taken into consideration in upper bounds of generation limits and ramping constraints. This plays an important role in the tightness of models for the first data set. However, the addition of historical status does little to improve the tightness of models for the second data set, and the compactness is greatly reduced in the meantime.

Although our MP-3 fomulations are much more complex than the state-of-the-art MILP formulations, they have excellent performances in computational efficiency. Even if the system contains more than 1000 units, they still can get the optimal solution satisfying the accuracy requirement in 600 seconds.

On the other hand, studying the influence of sliding step size of sliding window on the computational efficiency of the model will be one of the future research directions. Although we have found tight multi-period models with high efficiency, we are still unable to represent convex hull of feasible solutions for the single-unit problem. In order to represent the convex hull, we need a large amount of (probably $O(2^M)$) inequalities without explicit physical significance, such as ramping constraints with ``$P_{i,t-1}-P_{i,t}+P_{i,t+1}$" as left hand side expression.

\vspace{1ex}
{\noindent{\bf{Acknowledgments} }
	The work of LF Yang and SF Chen were supported by the Natural Science Foundation of Guangxi (2020GXNSFAA297173, 2020GXNSFDA238017, 2021GXNSFBA075012), and Natural Science Foundation of China (51767003), and in part by the Thousands of Young and Middle-Aged Backbone Teachers Training Program for Guangxi Higher Education [Education Department of Guangxi (2017)]. The work of ZY Dong was partially supported by funding from the UNSW Digital Grid Futures Institute, UNSW, Sydney, under a cross disciplinary fund scheme. We would like to thank Wei Li of Guangxi University for his valuable discussions.}

\section*{Appendix 1: Some feasible points to the M-period polytope with partial constraints}\label{appendix1}

Theorem 3.6 of §$I.4.3$ in \cite{wolsey1988integer} provides a way to identify whether a valid inequality is a facet. This technique is also used in \cite{damci2016polyhedral}. First of all, we construct some feasible points to the M-period polytope with partial constraints in Appendix 1. We let $\varepsilon$ be a very small number greater than zero and let $[r_1,r_2]$ be one member of the set $A_m$. According to equality (\ref{eq:new-initial-status-1}), there is one and only one $[m,k]\in A_m$ for $m\in [0,\min(U,T-M+1)]$ such that $\tau^m_{m,k}=1$. The values of those variables whose values are not given are equal to zero.
\setcounter{equation}{0}
\renewcommand{\theequation}{A\arabic{equation}}
\begin{footnotesize}
\begin{align}
	&\tau^m_{h,k}=0,\forall [h,k]\in A_m;\widetilde{P}_r=0,\forall r\in [m,m+M-1]\label{p:1}\\
	&\tau^m_{m,m+M-1}=1;\widetilde{P}_r=1,\forall r\in[m,m+M-1]\label{p:2}\\
	&\tau^m_{r_1,r_2}=1;\widetilde{P}_r=0,\forall r\in[r_1,r_2]\label{p:3}\\
	&\tau^m_{r_1,r_2}=1;\widetilde{P}_{r_0}=\varepsilon;\widetilde{P}_r=0,\forall r\in[r_1,r_2],r\ne r_0\label{p:4}\\
	&\tau^m_{m,U}=1;\widetilde{P}_r=\max(0,\widetilde{P}_0-r\widetilde{P}_{down}),\forall r\in [m,U];\tau^m_{r_1,r_2}=1;\widetilde{P}_r=0,\forall r\in [r_1,r_2]\label{p:5}\\
	&\tau^m_{m,U}=1;\widetilde{P}_r=\max(0,\widetilde{P}_0-r\widetilde{P}_{down}),\forall r\in [m,U];\tau^m_{r_1,r_2}=1;\widetilde{P}_{r_0}=\varepsilon,r_1\le r_0\le r_2\label{p:6}\\
	&\tau^m_{m,r_2}=1;\widetilde{P}_{r_3}=\min[1,\widetilde{P}_{shut}+(r_2-r_3)\widetilde{P}_{down}];\notag\\
	&\widetilde{P}_r=\widetilde{P}_{r_3}-\max(0,r-r_3)\times\frac{\max(0,\widetilde{P}_{r_3}-\widetilde{P}_{shut})}{r_2-r_3},\forall r\in [m,r_2]\label{p:7}\\
	&\tau^m_{m,r_2}=1;\widetilde{P}_{r_3}=\min[1,\widetilde{P}_0+r_3\widetilde{P}_{up},\widetilde{P}_{shut}+(r_2-r_3)\widetilde{P}_{down}];\widetilde{P}_r=\widetilde{P}_{r_3}-\max(0,r_3-r)\notag\\
	&\times\frac{\widetilde{P}_{r_3}-\widetilde{P}_0}{r_3}-\max(0,r-r_3)\times\frac{\max(0,\widetilde{P}_{r_3}-\widetilde{P}_{shut})}{r_2-r_3},\forall r\in [m,r_2]\label{p:8}\\
	&\tau^m_{m,r_2}=1;\widetilde{P}_{r_3}=\min[1,\widetilde{P}_0+r_3\widetilde{P}_{up},\widetilde{P}_{shut}+(r_2-r_3)\widetilde{P}_{down}];\widetilde{P}_{r_0}=\lvert\widetilde{P}_{r_3}-\max(0,r_3-r_0)\notag\\
	&\times\frac{\widetilde{P}_{r_3}-\widetilde{P}_0}{r_3}-\max(0,r_0-r_3)\times\frac{\max(0,\widetilde{P}_{r_3}-\widetilde{P}_{shut})}{r_2-r_3}-\varepsilon\rvert;\widetilde{P}_r=\widetilde{P}_{r_3}-\max(0,r_3-r)\notag\\
	&\times\frac{\widetilde{P}_{r_3}-\widetilde{P}_0}{r_3}-\max(0,r-r_3)\times\frac{\max(0,\widetilde{P}_{r_3}-\widetilde{P}_{shut})}{r_2-r_3},\forall r\in [m,r_2],r\ne r_0\label{p:9}\\
	&\tau^m_{m,r_2}=1;\widetilde{P}_r=\max(0,\widetilde{P}_0-r\widetilde{P}_{down}),\forall r\in [m,r_1];\widetilde{P}_r=\max(0,\widetilde{P}_0-r_1\widetilde{P}_{down})-(r-r_1)\notag\\
	&\times\frac{\max[0,\max(0,\widetilde{P}_0-r_1\widetilde{P}_{down})-\widetilde{P}_{shut}]}{r_2-r_1},\forall r\in [r_1,r_2]\label{p:10}\\
	&\widetilde{P}_{r_0}=\lvert\max(0,\widetilde{P}_0-r_1\widetilde{P}_{down})-(r_0-r_1)\times\frac{\max[0,\max(0,\widetilde{P}_0-r_1\widetilde{P}_{down})-\widetilde{P}_{shut}]}{r_2-r_1}-\varepsilon\rvert;\notag\\
	&\tau^m_{m,r_2}=1;\widetilde{P}_r=\max(0,\widetilde{P}_0-r\widetilde{P}_{down}),\forall r\in[m,r_1],r\ne r_0;\widetilde{P}_r=\max(0,\widetilde{P}_0-r_1\widetilde{P}_{down})-\notag\\
	&(r-r_1)\times\frac{\max[0,\max(0,\widetilde{P}_0-r_1\widetilde{P}_{down})-\widetilde{P}_{shut}]}{r_2-r_1},\forall r\in [r_1,r_2],r\ne r_0\label{p:11}\\
	&\tau^m_{r_1,r_2}=1;\widetilde{P}_{r_3}=\min[1,\widetilde{P}_{start}+(r_3-r_1)\widetilde{P}_{up},\widetilde{P}_{shut}+(r_2-r_3)\widetilde{P}_{down}];\notag\\
	&\widetilde{P}_r=\widetilde{P}_{r_3}-\max(0,r_3-r)\times\frac{\max(0,\widetilde{P}_{r_3}-\widetilde{P}_{start})}{r_3-r_1}-\max(0,r-r_3)\times\frac{\max(0,\widetilde{P}_{r_3}-\widetilde{P}_{shut})}{r_2-r_3},\notag\\
	&\forall r\in [r_1,r_2]\label{p:12}\\
	&\tau^m_{r_1,r_2}=1;\widetilde{P}_{r_3}=\min[1,\widetilde{P}_{start}+(r_3-r_1)\widetilde{P}_{up},\widetilde{P}_{shut}+(r_2-r_3)\widetilde{P}_{down}];\widetilde{P}_{r_0}=\lvert\widetilde{P}_{r_3}-\notag\\
	&\max(0,r_3-r_0)\times\frac{\max(0,\widetilde{P}_{r_3}-\widetilde{P}_{start})}{r_3-r_1}-\max(0,r_0-r_3)\times\frac{\max(0,\widetilde{P}_{r_3}-\widetilde{P}_{shut})}{r_2-r_3}-\varepsilon\rvert;\notag\\
	&\widetilde{P}_r=\widetilde{P}_{r_3}-\max(0,r_3-r)\times\frac{\max(0,\widetilde{P}_{r_3}-\widetilde{P}_{start})}{r_3-r_1}-\max(0,r-r_3)\times\frac{\max(0,\widetilde{P}_{r_3}-\widetilde{P}_{shut})}{r_2-r_3},\notag\\
	&\forall r\in [r_1,r_2],r\ne r_0\label{p:13}\\
	&\tau^m_{m,U}=1;\widetilde{P}_r=\max(0,\widetilde{P}_0-r\widetilde{P}_{down}),\forall r\in[m,U];\tau^m_{r_1,r_2}=1;\notag\\
	&\widetilde{P}_{r_3}=\min[1,\widetilde{P}_{start}+(r_3-r_1)\widetilde{P}_{up},\widetilde{P}_{shut}+(r_2-r_3)\widetilde{P}_{down}];\widetilde{P}_r=\widetilde{P}_{r_3}-\max(0,r_3-r)\notag\\
	&\times\frac{\max(0,\widetilde{P}_{r_3}-\widetilde{P}_{start})}{r_3-r_1}-\max(0,r-r_3)\times\frac{\max(0,\widetilde{P}_{r_3}-\widetilde{P}_{shut})}{r_2-r_3},\forall r\in [r_1,r_2]\label{p:14}\\
	&\tau^m_{m,U}=1;\widetilde{P}_r=\max(0,\widetilde{P}_0-r\widetilde{P}_{down}),\forall r\in[m,U];\tau^m_{r_1,r_2}=1;\widetilde{P}_{r_3}=\min[1,\widetilde{P}_{start}+\notag\\
	&(r_3-r_1)\widetilde{P}_{up},\widetilde{P}_{shut}+(r_2-r_3)\widetilde{P}_{down}];\widetilde{P}_{r_0}=\lvert\widetilde{P}_{r_3}-\max(0,r_3-r_0)\times\frac{\max(0,\widetilde{P}_{r_3}-\widetilde{P}_{start})}{r_3-r_1}\notag\\
	&-\max(0,r_0-r_3)\times\frac{\max(0,\widetilde{P}_{r_3}-\widetilde{P}_{shut})}{r_2-r_3}-\varepsilon\rvert;\widetilde{P}_r=\widetilde{P}_{r_3}-\max(0,r_3-r)\notag\\
	&\times\frac{\max(0,\widetilde{P}_{r_3}-\widetilde{P}_{start})}{r_3-r_1}-\max(0,r-r_3)\times\frac{\max(0,\widetilde{P}_{r_3}-\widetilde{P}_{shut})}{r_2-r_3},\forall r\in [r_1,r_2],r\ne r_0\label{p:15}\\
	&\tau^m_{m,U}=1;\widetilde{P}_{r_3}=\min[1,\widetilde{P}_0+r_3\widetilde{P}_{up},\widetilde{P}_{shut}+(U-r_3)\widetilde{P}_{down}];\widetilde{P}_r=\widetilde{P}_{r_3}-\max(0,r_3-r)\notag\\
	&\times\frac{\widetilde{P}_{r_3}-\widetilde{P}_0}{r_3}-\max(0,r-r_3)\times\frac{\max(0,\widetilde{P}_{r_3}-\widetilde{P}_{shut})}{U-r_3},\forall r\in[m,U];\tau^m_{r_1,r_2}=1;\notag\\
	&\widetilde{P}_{r_4}=\min[1,\widetilde{P}_{start}+(r_4-r_1)\widetilde{P}_{up},\widetilde{P}_{shut}+(r_2-r_4)\widetilde{P}_{down}];\widetilde{P}_r=\widetilde{P}_{r_4}-\max(0,r_4-r)\notag\\
	&\times\frac{\max(0,\widetilde{P}_{r_4}-\widetilde{P}_{start})}{r_4-r_1}-\max(0,r-r_4)\times\frac{\max(0,\widetilde{P}_{r_4}-\widetilde{P}_{shut})}{r_2-r_4},\forall r\in [r_1,r_2]\label{p:16}\\
	&\tau^m_{m,U}=1;\widetilde{P}_{r_3}=\min[1,\widetilde{P}_0+r_3\widetilde{P}_{up},\widetilde{P}_{shut}+(U-r_3)\widetilde{P}_{down}];\notag\\
	&\widetilde{P}_r=\widetilde{P}_{r_3}-\max(0,r_3-r)\times\frac{\widetilde{P}_{r_3}-\widetilde{P}_0}{r_3}-\max(0,r-r_3)\times\frac{\max(0,\widetilde{P}_{r_3}-\widetilde{P}_{shut})}{U-r_3},\forall r\in[m,U];\notag\\
	&\tau^m_{r_1,r_2}=1;\widetilde{P}_r=0,\forall r\in [r_1,r_2]\label{p:17}\\
	&\tau^m_{r_1,m+M-1}=1;\widetilde{P}_{r_3}=\min[1,\widetilde{P}_{start}+(r_3-r_1)\widetilde{P}_{up}];\notag\\
	&\widetilde{P}_r=\widetilde{P}_{r_3}-\max(0,r_3-r)\times\frac{\max(0,\widetilde{P}_{r_3}-\widetilde{P}_{start})}{r_3-r_1},\forall r\in[r_1,m+M-1]\label{p:18}\\
	&\tau^m_{r_1,m+M-1}=1;\widetilde{P}_{r_3}=\min[1,\widetilde{P}_{start}+(r_3-r_1)\widetilde{P}_{up}];\notag\\
	&\widetilde{P}_{r_0}=\lvert\widetilde{P}_{r_3}-\max(0,r_3-r_0)\times\frac{\max(0,\widetilde{P}_{r_3}-\widetilde{P}_{start})}{r_3-r_1}-\varepsilon\rvert;\notag\\
	&\widetilde{P}_r=\widetilde{P}_{r_3}-\max(0,r_3-r)\times\frac{\max(0,\widetilde{P}_{r_3}-\widetilde{P}_{start})}{r_3-r_1},\forall r\in[r_1,m+M-1]\label{p:19}\\
	&\tau^m_{m,U}=1;\widetilde{P}_r=\max(0,\widetilde{P}_0-r\widetilde{P}_{down}),\forall r\in[m,U];\notag\\
	&\tau^m_{r_1,m+M-1}=1;\widetilde{P}_{r_3}=\min[1,\widetilde{P}_{start}+(r_3-r_1)\widetilde{P}_{up}];\notag\\
	&\widetilde{P}_r=\widetilde{P}_{r_3}-\max(0,r_3-r)\times\frac{\max(0,\widetilde{P}_{r_3}-\widetilde{P}_{start})}{r_3-r_1},\forall r\in[r_1,m+M-1]\label{p:20}\\
	&\tau^m_{m,U}=1;\widetilde{P}_r=\max(0,\widetilde{P}_0-r\widetilde{P}_{down}),\forall r\in[m,U];\widetilde{P}_{r_3}=\min[1,\widetilde{P}_{start}+(r_3-r_1)\widetilde{P}_{up}];\notag\\
	&\tau^m_{r_1,m+M-1}=1;\widetilde{P}_{r_0}=\lvert\widetilde{P}_{r_3}-\max(0,r_3-r_0)\times\frac{\max(0,\widetilde{P}_{r_3}-\widetilde{P}_{start})}{r_3-r_1}-\varepsilon\rvert;\notag\\
	&\widetilde{P}_r=\widetilde{P}_{r_3}-\max(0,r_3-r)\times\frac{\max(0,\widetilde{P}_{r_3}-\widetilde{P}_{start})}{r_3-r_1},\forall r\in[r_1,m+M-1],r\ne r_0\label{p:21}\\
	&\tau^m_{m,m+M-1}=1;\widetilde{P}_{r_3}=\min(1,\widetilde{P}_0+r_3\widetilde{P}_{up});\notag\\
	&\widetilde{P}_r=\widetilde{P}_{r_3}-\max(0,r_3-r)\times\frac{\widetilde{P}_{r_3}-\widetilde{P}_0}{r_3},\forall r\in[m,m+M-1]\label{p:22}\\
	&\tau^m_{m,m+M-1}=1;\widetilde{P}_{r_3}=\min(1,\widetilde{P}_0+r_3\widetilde{P}_{up});\widetilde{P}_{r_0}=\lvert\widetilde{P}_{r_3}-\max(0,r_3-r_0)\times\frac{\widetilde{P}_{r_3}-\widetilde{P}_0}{r_3}-\varepsilon\rvert;\notag\\
	&\widetilde{P}_r=\widetilde{P}_{r_3}-\max(0,r_3-r)\times\frac{\widetilde{P}_{r_3}-\widetilde{P}_0}{r_3},\forall r\in[m,m+M-1],r\ne r_0\label{p:23}\\
	&\tau^m_{m,m+M-1}=1;\widetilde{P}_r=\max[0,1-\max(0,r_3-r)\times\widetilde{P}_{up}],\forall r\in[m,m+M-1]\label{p:24}\\
	&\tau^m_{m,m+M-1}=1;\widetilde{P}_r=\max[0,1-\max(0,r_3-r)\times\widetilde{P}_{up}-\varepsilon],\forall r\in[m,m+M-1]\label{p:25}\\
	&\tau^m_{m,m+M-1}=1;\widetilde{P}_r=\max[0,1-\max(0,r-r_3)\times\widetilde{P}_{down}],\forall r\in[m,m+M-1]\label{p:26}\\
	&\tau^m_{m,m+M-1}=1;\widetilde{P}_r=\max[0,1-\max(0,r-r_3)\times\widetilde{P}_{down}-\varepsilon],\forall r\in[m,m+M-1]\label{p:27}\\
	&\tau^m_{r_1,r_2}=1;\widetilde{P}_{r_3}=\min[1,\widetilde{P}_{start}+(r_3-r_1)\widetilde{P}_{up},\widetilde{P}_{shut}+(r_2-r_3)\widetilde{P}_{down}];\notag\\
	&\widetilde{P}_r=\widetilde{P}_{r_3}-\max(0,r_3-r)\times\frac{\widetilde{P}_{ramp}}{a}-\max(0,r-r_3)\times\frac{\max(0,\widetilde{P}_{r_3}-\widetilde{P}_{shut})}{r_2-r_3},\forall r\in [r_4,r_2];\notag\\
	&\widetilde{P}_r=\widetilde{P}_{r_4}-(r_4-r)\times\frac{\max(0,\widetilde{P}_{r_4}-\widetilde{P}_{start})}{r_4-r_1},r\in [r_1,r_4-1]\label{p:28}\\
	&\tau^m_{m,U}=1;\widetilde{P}_r=\max(0,\widetilde{P}_0-r\widetilde{P}_{down}),\forall r\in[m,U];\notag\\
	&\tau^m_{r_1,r_2}=1;\widetilde{P}_{r_3}=\min[1,\widetilde{P}_{start}+(r_3-r_1)\widetilde{P}_{up},\widetilde{P}_{shut}+(r_2-r_3)\widetilde{P}_{down}];\notag\\
	&\widetilde{P}_r=\widetilde{P}_{r_3}-\max(0,r_3-r)\times\frac{\widetilde{P}_{ramp}}{a}-\max(0,r-r_3)\times\frac{\max(0,\widetilde{P}_{r_3}-\widetilde{P}_{shut})}{r_2-r_3},\forall r\in [r_4,r_2];\notag\\
	&\widetilde{P}_r=\widetilde{P}_{r_4}-(r_4-r)\times\frac{\max(0,\widetilde{P}_{r_4}-\widetilde{P}_{start})}{r_4-r_1},\forall r\in [r_1,r_4-1]\label{p:29}\\
	&\tau^m_{r_1,r_2}=1;\widetilde{P}_{r_3}=\min[1,\widetilde{P}_{start}+(r_3-r_1)\widetilde{P}_{up},\widetilde{P}_{shut}+(r_2-r_3)\widetilde{P}_{down}];\notag\\
	&\widetilde{P}_r=\widetilde{P}_{r_3}-\max(0,r-r_3)\times\frac{\widetilde{P}_{ramp}}{a}-\max(0,r_3-r)\times\frac{\max(0,\widetilde{P}_{r_3}-\widetilde{P}_{start})}{r_3-r_1},\forall r\in [r_1,r_4];\notag\\
	&\widetilde{P}_r=\widetilde{P}_{r_4}-(r-r_4)\times\frac{\max(0,\widetilde{P}_{r_4}-\widetilde{P}_{shut})}{r_2-r_4},\forall r\in [r_4+1,r_2]\label{p:30}\\
	&\tau^m_{m,U}=1;\widetilde{P}_r=\max(0,\widetilde{P}_0-r\widetilde{P}_{down}),\forall r\in[m,U];\notag\\
	&\tau^m_{r_1,r_2}=1;\widetilde{P}_{r_3}=\min[1,\widetilde{P}_{start}+(r_3-r_1)\widetilde{P}_{up},\widetilde{P}_{shut}+(r_2-r_3)\widetilde{P}_{down}];\notag\\
	&\widetilde{P}_r=\widetilde{P}_{r_3}-\max(0,r-r_3)\times\frac{\widetilde{P}_{ramp}}{a}-\max(0,r_3-r)\times\frac{\max(0,\widetilde{P}_{r_3}-\widetilde{P}_{start})}{r_3-r_1},\forall r\in [r_1,r_4];\notag\\
	&\widetilde{P}_r=\widetilde{P}_{r_4}-\max(0,r-r_4)\times\frac{\max(0,\widetilde{P}_{r_4}-\widetilde{P}_{shut})}{r_2-r_4},\forall r\in [r_4+1,r_2]\label{p:31}\\
	&\tau^m_{r_1,m+M-1}=1;\widetilde{P}_{r_3}=\min[1,\widetilde{P}_{start}+(r_3-r_1)\widetilde{P}_{up}];\notag\\
	&\widetilde{P}_r=\max[0,\widetilde{P}_{r_3}-\max(0,r_3-r)\times\frac{\widetilde{P}_{ramp}}{a}],\forall r\in [r_1,m+M-1]\label{p:32}\\
	&\tau^m_{r_1,m+M-1}=1;\widetilde{P}_{r_3}=\min[1,\widetilde{P}_{start}+(r_3-r_1)\widetilde{P}_{up}]-\varepsilon;\notag\\
	&\widetilde{P}_r=\max[0,\widetilde{P}_{r_3}-\max(0,r_3-r)\times\frac{\widetilde{P}_{ramp}}{a}],\forall r\in [r_1,m+M-1]\label{p:33}\\
	&\tau^m_{m,U}=1;\widetilde{P}_r=\max(0,\widetilde{P}_0-r\widetilde{P}_{down}),\forall r\in[m,U];\notag\\
	&\tau^m_{r_1,m+M-1}=1;\widetilde{P}_{r_3}=\min[1,\widetilde{P}_{start}+(r_3-r_1)\widetilde{P}_{up}];\notag\\
	&\widetilde{P}_r=\max[0,\widetilde{P}_{r_3}-\max(0,r_3-r)\times\frac{\widetilde{P}_{ramp}}{a}],\forall r\in [r_1,m+M-1]\label{p:34}\\
	&\tau^m_{m,U}=1;\widetilde{P}_r=\max(0,\widetilde{P}_0-r\widetilde{P}_{down}),\forall r\in[m,U];\notag\\
	&\tau^m_{r_1,m+M-1}=1;\widetilde{P}_{r_3}=\min[1,\widetilde{P}_{start}+(r_3-r_1)\widetilde{P}_{up}]-\varepsilon;\notag\\
	&\widetilde{P}_r=\max[0,\widetilde{P}_{r_3}-\max(0,r_3-r)\times\frac{\widetilde{P}_{ramp}}{a}],\forall r\in [r_1,m+M-1]\label{p:35}\\
	&\tau^m_{r_1,m+M-1}=1;\widetilde{P}_{r_3}=\min[1,\widetilde{P}_{start}+(r_3-r_1)\widetilde{P}_{up}];\notag\\
	&\widetilde{P}_r=\max[0,\widetilde{P}_{r_3}-\max(0,r-r_3)\times\frac{\widetilde{P}_{ramp}}{a}],\forall r\in [r_3+1,m+M-1];\notag\\
	&\widetilde{P}_r=\widetilde{P}_{r_3}-(r_3-r)\times\frac{\max(0,\widetilde{P}_{r_3}-\widetilde{P}_{start})}{r_3-r_1},\forall r\in[r_1,r_3-1]\label{p:36}\\
	&\tau^m_{r_1,m+M-1}=1;\widetilde{P}_{r_3}=\min[1,\widetilde{P}_{start}+(r_3-r_1)\widetilde{P}_{up}]-\varepsilon;\notag\\
	&\widetilde{P}_r=\max[0,\widetilde{P}_{r_3}-\max(0,r-r_3)\times\frac{\widetilde{P}_{ramp}}{a}],\forall r\in [r_3+1,m+M-1];\notag\\
	&\widetilde{P}_r=\widetilde{P}_{r_3}-(r_3-r)\times\frac{\max(0,\widetilde{P}_{r_3}-\widetilde{P}_{start})}{r_3-r_1},\forall r\in[r_1,r_3-1]\label{p:37}\\
	&\tau^m_{m,U}=1;\widetilde{P}_r=\max(0,\widetilde{P}_0-r\widetilde{P}_{down}),\forall r\in[m,U];\notag\\
	&\tau^m_{r_1,m+M-1}=1;\widetilde{P}_{r_3}=\min[1,\widetilde{P}_{start}+(r_3-r_1)\widetilde{P}_{up}];\notag\\
	&\widetilde{P}_r=\max[0,\widetilde{P}_{r_3}-\max(0,r-r_3)\times\frac{\widetilde{P}_{ramp}}{a}],\forall r\in [r_3+1,m+M-1];\notag\\
	&\widetilde{P}_r=\widetilde{P}_{r_3}-(r_3-r)\times\frac{\max(0,\widetilde{P}_{r_3}-\widetilde{P}_{start})}{r_3-r_1},\forall r\in[r_1,r_3-1]\label{p:38}\\
	&\tau^m_{m,U}=1;\widetilde{P}_r=\max(0,\widetilde{P}_0-r\widetilde{P}_{down}),\forall r\in[m,U];\notag\\
	&\tau^m_{r_1,m+M-1}=1;\widetilde{P}_{r_3}=\min[1,\widetilde{P}_{start}+(r_3-r_1)\widetilde{P}_{up}]-\varepsilon;\notag\\
	&\widetilde{P}_r=\max[0,\widetilde{P}_{r_3}-\max(0,r-r_3)\times\frac{\widetilde{P}_{ramp}}{a}],\forall r\in [r_3+1,m+M-1];\notag\\
	&\widetilde{P}_r=\widetilde{P}_{r_3}-(r_3-r)\times\frac{\max(0,\widetilde{P}_{r_3}-\widetilde{P}_{start})}{r_3-r_1},\forall r\in[r_1,r_3-1]\label{p:39}\\
	&\tau^m_{m,r_2}=1;\widetilde{P}_{r_3}=\min[1,\widetilde{P}_{shut}+(r_2-r_3)\widetilde{P}_{down}];\notag\\
	&\widetilde{P}_r=\widetilde{P}_{r_3}-\max(0,r-r_3)\times\frac{\widetilde{P}_{ramp}}{a},\forall r\in[m,r_4];\notag\\
	&\widetilde{P}_r=\widetilde{P}_{r_4}-(r-r_4)\times\frac{\max(0,\widetilde{P}_{r_4}-\widetilde{P}_{shut})}{r_2-r_4},\forall r\in[r_4+1,r_2]\label{p:40}\\
	&\tau^m_{m,r_2}=1;\widetilde{P}_{r_3}=\min[1,\widetilde{P}_0+r_3\widetilde{P}_{up},\widetilde{P}_{shut}+(r_2-r_3)\widetilde{P}_{down}];\notag\\
	&\widetilde{P}_r=\widetilde{P}_{r_3}-\max(0,r_3-r)\times\frac{\widetilde{P}_{ramp}}{a}-\max(0,r-r_3)\times\frac{\max(0,\widetilde{P}_{r_3}-\widetilde{P}_{shut})}{r_2-r_3},\forall r\in[r_4,r_2];\notag\\
	&\widetilde{P}_r=\widetilde{P}_0-r\times\frac{\widetilde{P}_0-\widetilde{P}_{r_4}}{r_4},\forall r\in[m,r_4-1]\label{p:41}\\
	&\tau^m_{m,r_2}=1;\widetilde{P}_{r_3}=\min[1,\widetilde{P}_0+r_3\widetilde{P}_{up},\widetilde{P}_{shut}+(r_2-r_3)\widetilde{P}_{down}];\widetilde{P}_{r_0}=\lvert\widetilde{P}_{r_3}-\max(0,r_3-r_0)\notag\\
	&\times\frac{\widetilde{P}_{ramp}}{a}-\max(0,r_0-r_3)\times\frac{\max(0,\widetilde{P}_{r_3}-\widetilde{P}_{shut})}{r_2-r_3}-\varepsilon\rvert,r_4<r_0\le r_2;\widetilde{P}_r=\widetilde{P}_{r_3}-\notag\\
	&\max(0,r_3-r)\times\frac{\widetilde{P}_{ramp}}{a}-\max(0,r-r_3)\times\frac{\max(0,\widetilde{P}_{r_3}-\widetilde{P}_{shut})}{r_2-r_3},\forall r\in[r_4,r_2],r\ne r_0;\notag\\
	&\widetilde{P}_r=\widetilde{P}_0-r\times\frac{\widetilde{P}_0-\widetilde{P}_{r_4}}{r_4},\forall r\in[1,r_4-1]\label{p:42}\\
	&\tau^m_{m,r_2}=1;\widetilde{P}_{r_3}=\min[1,\widetilde{P}_0+r_3\widetilde{P}_{up},\widetilde{P}_{shut}+(r_2-r_3)\widetilde{P}_{down}];\widetilde{P}_r=\max[0,\widetilde{P}_{r_3}-(r-r_3)\notag\\
	&\times\frac{\widetilde{P}_{ramp}}{a}],\forall r\in[r_3+1,m+M-1];\widetilde{P}_r=\widetilde{P}_0-r\times\frac{\widetilde{P}_0-\widetilde{P}_{r_3}}{r_3},\forall r\in[m,r_3-1]\label{p:43}\\
	&\tau^m_{m,m+M-1}=1;\widetilde{P}_{r_3}=\min(1,\widetilde{P}_0+r_3\widetilde{P}_{up});\widetilde{P}_r=\widetilde{P}_{r_3}-\max(0,r_3-r)\times\frac{\widetilde{P}_{ramp}}{a},\notag\\
	&\forall r\in[r_4,m+M-1];\widetilde{P}_r=\widetilde{P}_0-r\times\frac{\widetilde{P}_0-\widetilde{P}_{r_4}}{r_4},\forall r\in[m,r_4-1]\label{p:44}\\
	&\tau^m_{m,m+M-1}=1;\widetilde{P}_{r_3}=\min(1,\widetilde{P}_0+r_3\widetilde{P}_{up});\widetilde{P}_{r_0}=\lvert\widetilde{P}_{r_3}-\max(0,r_3-r_0)\times\frac{\widetilde{P}_{ramp}}{a}-\varepsilon\rvert,\notag\\
	&r_4<r_0\le m+M-1;\widetilde{P}_r=\widetilde{P}_{r_3}-\max(0,r_3-r)\times\frac{\widetilde{P}_{ramp}}{a},\forall r\in[r_4,m+M-1],r\ne r_0;\notag\\
	&\widetilde{P}_r=\widetilde{P}_0-r\times\frac{\widetilde{P}_0-\widetilde{P}_{r_4}}{r_4},\forall r\in[m,r_4-1],r\ne r_0\label{p:45}\\
	&\tau^m_{m,m+M-1}=1;\widetilde{P}_{r_3}=\min(1,\widetilde{P}_0+r_3\widetilde{P}_{up})-\varepsilon;\widetilde{P}_r=\widetilde{P}_{r_3}-\max(0,r_3-r)\times\frac{\widetilde{P}_{ramp}}{a},\notag\\
	&\forall r\in[r_4,m+M-1];\widetilde{P}_r=\widetilde{P}_0-r\times\frac{\widetilde{P}_0-\widetilde{P}_{r_4}}{r_4},\forall r\in[m,r_4-1]\label{p:46}\\
	&\tau^m_{m,m+M-1}=1;\widetilde{P}_{r_3}=\min(1,\widetilde{P}_0+r_3\widetilde{P}_{up})-\varepsilon;\widetilde{P}_{r_0}=\widetilde{P}_{r_3}-\max(0,r_3-r_0)\times\frac{\widetilde{P}_{ramp}}{a}-\varepsilon,\notag\\
	&r_4<r_0\le m+M-1;\widetilde{P}_r=\widetilde{P}_{r_3}-\max(0,r_3-r)\times\frac{\widetilde{P}_{ramp}}{a},\forall r\in[r_4,m+M-1],r\ne r_0;\notag\\
	&\widetilde{P}_{r_0}=\widetilde{P}_0-r_0\times\frac{\widetilde{P}_0-\widetilde{P}_{r_4}}{r_4}-\varepsilon,m\le r_0<r_4;\notag\\
	&\widetilde{P}_r=\widetilde{P}_0-r\times\frac{\widetilde{P}_0-\widetilde{P}_{r_4}}{r_4},\forall r\in[m,r_4-1],r\ne r_0\label{p:47}\\
	&\tau^m_{m,U}=1;\widetilde{P}_{r_3}=\min[1,\widetilde{P}_0+r_3\widetilde{P}_{up},\widetilde{P}_{shut}+(U-r_3)\widetilde{P}_{down}];\notag\\
	&\widetilde{P}_r=\widetilde{P}_{r_3}-\max(0,r_3-r)\times\frac{\widetilde{P}_{ramp}}{a}-\max(0,r-r_3)\times\frac{\max(0,\widetilde{P}_{r_3}-\widetilde{P}_{shut})}{U-r_3},\forall r\in[r_4,U];\notag\\
	&\widetilde{P}_r=\widetilde{P}_0-r\times\frac{\widetilde{P}_0-\widetilde{P}_{r_4}}{r_4},\forall r\in[m,r_4-1];\tau^m_{r_1,r_2}=1;\widetilde{P}_r=0,\forall r\in[r_1,r_2]\label{p:48}\\
	&\tau^m_{m,U}=1;\widetilde{P}_{r_3}=\min[1,\widetilde{P}_0+r_3\widetilde{P}_{up},\widetilde{P}_{shut}+(U-r_3)\widetilde{P}_{down}];\notag\\
	&\widetilde{P}_r=\max[0,\widetilde{P}_{r_3}-(r-r_3)\times\frac{\widetilde{P}_{ramp}}{a}],\forall r\in[r_3,U];\notag\\
	&\widetilde{P}_r=\widetilde{P}_0-r\times\frac{\widetilde{P}_0-\widetilde{P}_{r_3}}{r_3},\forall r\in[m,r_3-1];\tau^m_{r_1,r_2}=1;\widetilde{P}_r=0,\forall r\in[r_1,r_2]\label{p:49}\\
	&\tau^m_{m,m+M-1}=1;\widetilde{P}_{r_3}=\min(1,\widetilde{P}_0+r_3\widetilde{P}_{up});\notag\\
	&\widetilde{P}_r=\max[0,\widetilde{P}_{r_3}-(r-r_3)\times\frac{\widetilde{P}_{ramp}}{a}],\forall r\in[r_3+1,r_4];\notag\\
	&\widetilde{P}_r=\widetilde{P}_{r_4},\forall r\in[r_4+1,m+M-1];\widetilde{P}_r=\widetilde{P}_0-r\times\frac{\widetilde{P}_0-\widetilde{P}_{r_3}}{r_3},\forall r\in[m,r_3-1]\label{p:50}\\
	&\tau^m_{m,m+M-1}=1;\widetilde{P}_{r_3}=\min(1,\widetilde{P}_0+r_3\widetilde{P}_{up});\widetilde{P}_{r_0}=\max[0,\widetilde{P}_{r_3}-(r_0-r_3)\times\frac{\widetilde{P}_{ramp}}{a}]+\varepsilon,\notag\\
	&r_3<r_0<r_4;\widetilde{P}_r=\max[0,\widetilde{P}_{r_3}-(r-r_3)\times\frac{\widetilde{P}_{ramp}}{a}],\forall r\in[r_3+1,r_4],r\ne r_0;\notag\\
	&\widetilde{P}_{r_0}=\widetilde{P}_{r_4}+\varepsilon,r_4<r_0\le m+M-1;\widetilde{P}_r=\widetilde{P}_{r_4},\forall r\in[r_4+1,m+M-1],r\ne r_0;\notag\\
	&\widetilde{P}_r=\widetilde{P}_0-r\times\frac{\widetilde{P}_0-\widetilde{P}_{r_3}}{r_3},\forall r\in[m,r_3-1]\label{p:51}\\
	&\tau^m_{m,m+M-1}=1;\widetilde{P}_{r_3}=\min(1,\widetilde{P}_0+r_3\widetilde{P}_{up})-\varepsilon;\notag\\
	&\widetilde{P}_r=\max[0,\widetilde{P}_{r_3}-(r-r_3)\times\frac{\widetilde{P}_{ramp}}{a}],\forall r\in[r_3+1,r_4];\notag\\
	&\widetilde{P}_r=\widetilde{P}_{r_4},\forall r\in[r_4+1,m+M-1];\widetilde{P}_r=\widetilde{P}_0-r\times\frac{\widetilde{P}_0-\widetilde{P}_{r_3}}{r_3},\forall r\in[m,r_3-1]\label{p:52}\\
	&\tau^m_{m,m+M-1}=1;\widetilde{P}_{r_3}=\min(1,\widetilde{P}_0+r_3\widetilde{P}_{up})-\varepsilon;\widetilde{P}_r=\max[0,\widetilde{P}_{r_3}-(r-r_3)\times\frac{\widetilde{P}_{ramp}}{a}],\notag\\
	&\forall r\in[r_3+1,r_4];\widetilde{P}_r=\widetilde{P}_{r_4},\forall r\in[r_4+1,m+M-1];\widetilde{P}_{r_0}=\widetilde{P}_0-r_0\times\frac{\widetilde{P}_0-\widetilde{P}_{r_3}}{r_3}-\varepsilon,\notag\\
	&m\le r_0<r_3;\widetilde{P}_r=\widetilde{P}_0-r\times\frac{\widetilde{P}_0-\widetilde{P}_{r_3}}{r_3},\forall r\in[m,r_3-1],r\ne r_0\label{p:53}\\
	&\tau^m_{m,m+M-1}=1;\widetilde{P}_{r_3}=\max(0,\widetilde{P}_0-r_3\widetilde{P}_{down});\notag\\
	&\widetilde{P}_r=\widetilde{P}_{r_3}+\max(0,r_3-r)\times\frac{\widetilde{P}_0-\widetilde{P}_{r_3}}{r_3},\forall r\in[m,m+M-1]\label{p:54}\\
	&\tau^m_{m,m+M-1}=1;\widetilde{P}_{r_3}=\max(0,\widetilde{P}_0-r_3\widetilde{P}_{down});\widetilde{P}_{r_0}=\lvert\widetilde{P}_{r_3}+\max(0,r_3-r_0)\times\frac{\widetilde{P}_0-\widetilde{P}_{r_3}}{r_3}\notag\\
	&-\varepsilon\rvert;\widetilde{P}_r=\widetilde{P}_{r_3}+\max(0,r_3-r)\times\frac{\widetilde{P}_0-\widetilde{P}_{r_3}}{r_3},\forall r\in[m,m+M-1],r\ne r_0\label{p:55}\\
	&\tau^m_{m,r_2}=1;\widetilde{P}_r=\widetilde{P}_0-r\times\frac{\max(0,\widetilde{P}_0-\widetilde{P}_{shut})}{r_2},\forall r\in[m,r_2]\label{p:56}\\
	&\tau^m_{m,r_2}=1;\widetilde{P}_{r_0}=\lvert\widetilde{P}_0-r_0\times\frac{\max(0,\widetilde{P}_0-\widetilde{P}_{shut})}{r_2}-\varepsilon\rvert;\notag\\
	&\widetilde{P}_r=\widetilde{P}_0-r\times\frac{\max(0,\widetilde{P}_0-\widetilde{P}_{shut})}{r_2},\forall r\in[m,r_2],r\ne r_0\label{p:57}
\end{align}
\end{footnotesize}

\section*{Appendix 2: Proof of proposition for history-denpendent polytope}
\setcounter{equation}{0}
\renewcommand{\theequation}{B\arabic{equation}}
\setcounter{proposition}{0}

\begin{proof}[\textbf{Proof of Proposition \upshape\ref{prp1}}]
	If all points of $\mathcal{P}_m^I$ that are tight at inequality (\ref{eq:tight-max-output-0}) satisfy
	\begin{equation}
		\sum^{m+M-1}_{j=m}\omega_j\widetilde{P}_j+\sum\nolimits_{[h,k]\in A_m}\varphi_{h,k}\tau^m_{h,k}=\lambda_0\label{pr:1}
	\end{equation}
	then
	\begin{enumerate}
		\item $\lambda_0=0$.\\
		Point (\ref{p:1}) satisfies inequality (\ref{eq:tight-max-output-0}) at equality. If Point (\ref{p:1}) satisfies equality (\ref{pr:1}), then $\lambda_0=0$.
		\item $\omega_j=0,j\in[m,m+M-1]\backslash\{t\}$.\\
		Consider the following two cases:
		\begin{enumerate}
			\item $m\le j<t$.\\
			Consider point (\ref{p:3}) and point (\ref{p:4}) with $r_1=m$, $r_2=t-1$, $r_0=j$. Both points are valid and satisfy inequality (\ref{eq:tight-max-output-0}) at equality. Because both points satisfy equality (\ref{pr:1}), we get $\omega_j\times 0=\omega_j\varepsilon$. Hence $\omega_j=0$.
			\item $t<j\le m+M-1$.\\
			Consider point (\ref{p:3}) and point (\ref{p:4}) with $r_1=t+1$, $r_2=m+M-1$, $r_0=j$. Both points are valid and satisfy inequality (\ref{eq:tight-max-output-0}) at equality. Because both points satisfy equality (\ref{pr:1}), we get $\omega_j\times 0=\omega_j\varepsilon$. Hence $\omega_j=0$.
		\end{enumerate}
	    \item $\varphi_{h,k}=0,t\notin[h,k]$.\\
	    If $[h,k]\in A_m$ exists such that $t\notin[h,k]$, we consider point (\ref{p:3}) with $r_1=h$, $r_2=k$. Clearly this point is valid and satisfies inequality (\ref{eq:tight-max-output-0}) at equality. If Point (\ref{p:3}) satisfies equality (\ref{pr:1}), then $\varphi_{h,k}=\lambda_0$. We have shown that $\lambda_0=0$ in part 1. Hence $\varphi_{h,k}=0$.
	    \item $\varphi_{h,k}=-\omega_t\times\min[1,\widetilde{P}_{start}+(t-h)\widetilde{P}_{up},\widetilde{P}_{shut}+(k-t)\widetilde{P}_{down}],m<h\le t\le k<m+M-1$.\\
	    If $[h,k]\in A_m$ exists such that $m<h\le t\le k<m+M-1$, we consider point (\ref{p:12}) with $r_1=h$, $r_2=k$, $r_3=t$. Clearly this point is valid and satisfies inequality (\ref{eq:tight-max-output-0}) at equality. We have shown that $\omega_j=0,j\in[m,m+M-1]\backslash\{t\}$ in part 2. If Point (\ref{p:12}) satisfies equality (\ref{pr:1}), then we get $\omega_t\times\min[1,\widetilde{P}_{start}+(t-h)\widetilde{P}_{up},\widetilde{P}_{shut}+(k-t)\widetilde{P}_{down}]+\varphi_{h,k}=\lambda_0$. Hence, $\varphi_{h,k}=-\omega_t\times\min[1,\widetilde{P}_{start}+(t-h)\widetilde{P}_{up},\widetilde{P}_{shut}+(k-t)\widetilde{P}_{down}]$.
	    \item $\varphi_{h,m+M-1}=-\omega_t\times\min[1,\widetilde{P}_{start}+(t-h)\widetilde{P}_{up}],m<h\le t$.\\
	    If $[h,m+M-1]\in A_m$ exists such that $m<h\le t$, we consider point (\ref{p:18}) with $r_1=h$, $r_3=t$. Clearly this point is valid and satisfies inequality (\ref{eq:tight-max-output-0}) at equality. We have shown that $\omega_j=0,j\in[m,m+M-1]\backslash\{t\}$ in part 2. If Point (\ref{p:18}) satisfies equality (\ref{pr:1}), then we get $\omega_t\times\min[1,\widetilde{P}_{start}+(t-h)\widetilde{P}_{up}]+\varphi_{h,k}=\lambda_0$. Hence, $\varphi_{h,k}=-\omega_t\times\min[1,\widetilde{P}_{start}+(t-h)\widetilde{P}_{up}]$.
	    \item $\varphi_{m,k}=-\omega_t\times\min[1,\widetilde{P}_{shut}+(k-t)\widetilde{P}_{down}],t\le k<m+M-1$.\\
	    If $[m,k]\in A_m$ exists such that $t\le k<m+M-1$, we consider point (\ref{p:7}) with $r_2=k$, $r_3=t$. Clearly this point is valid and satisfies inequality (\ref{eq:tight-max-output-0}) at equality. We have shown that $\omega_j=0,j\in[m,m+M-1]\backslash\{t\}$ in part 2. If Point (\ref{p:7}) satisfies equality (\ref{pr:1}), then we get $\omega_t\times\min[1,\widetilde{P}_{shut}+(k-t)\widetilde{P}_{down}]+\varphi_{h,k}=\lambda_0$. Hence, $\varphi_{h,k}=-\omega_t\times\min[1,\widetilde{P}_{shut}+(k-t)\widetilde{P}_{down}]$.
	    \item $\varphi_{m,m+M-1}=-\omega_t$.\\
	    If $[m,m+M-1]\in A_m$ exists, we consider point (\ref{p:2}). Clearly this point is valid and satisfies inequality (\ref{eq:tight-max-output-0}) at equality. We have shown that $\omega_j=0,j\in[m,m+M-1]\backslash\{t\}$ in part 2. If Point (\ref{p:2}) satisfies equality (\ref{pr:1}), then we get $\omega_t+\varphi_{h,k}=\lambda_0$. Hence, $\varphi_{h,k}=-\omega_t$.
	\end{enumerate}
According to theorem 3.6 of §$I.4.3$ in \cite{wolsey1988integer}, we get Proposition \ref{prp1}.
\end{proof}

\begin{proof}[\textbf{Proof of Proposition \upshape\ref{prp2}}]
	If all points of $\mathcal{Q}_m^I$ that are tight at inequality (\ref{eq:tight-max-output-0}) satisfy
	\begin{equation}
		\sum^{m+M-1}_{j=m}\omega_j\widetilde{P}_j+\sum\nolimits_{[h,k]\in A_m}\varphi_{h,k}\tau^m_{h,k}=\lambda_0\label{pr:2}
	\end{equation}
	then
	\begin{enumerate}
		\item $\lambda_0=0$.\\
		Point (\ref{p:1}) satisfies inequality (\ref{eq:tight-max-output-0}) at equality. If Point (\ref{p:1}) satisfies equality (\ref{pr:2}), then $\lambda_0=0$.
		\item $\omega_j=0,j\in[m,m+M-1]\backslash\{t\}$.\\
		Consider the following two cases:
		\begin{enumerate}
			\item $m\le j<t$.\\
			Consider point (\ref{p:3}) and point (\ref{p:4}) with $r_1=m$, $r_2=t-1$, $r_0=j$. Both points are valid and satisfy inequality (\ref{eq:tight-max-output-0}) at equality. Because both points satisfy equality (\ref{pr:2}), we get $\omega_j\times 0=\omega_j\varepsilon$. Hence $\omega_j=0$.
			\item $t<j\le m+M-1$.\\
			Consider point (\ref{p:3}) and point (\ref{p:4}) with $r_1=t+1$, $r_2=m+M-1$, $r_0=j$. Both points are valid and satisfy inequality (\ref{eq:tight-max-output-0}) at equality. Because both points satisfy equality (\ref{pr:2}), we get $\omega_j\times 0=\omega_j\varepsilon$. Hence $\omega_j=0$.
		\end{enumerate}
		\item $\varphi_{h,k}=0,t\notin[h,k]$.\\
		If $[h,k]\in A_m$ exists such that $t\notin[h,k]$, we consider point (\ref{p:3}) with $r_1=h$, $r_2=k$. Clearly this point is valid and satisfies inequality (\ref{eq:tight-max-output-0}) at equality. If Point (\ref{p:3}) satisfies equality (\ref{pr:2}), then $\varphi_{h,k}=\lambda_0$. We have shown that $\lambda_0=0$ in part 1. Hence $\varphi_{h,k}=0$.
		\item $\varphi_{h,k}=-\omega_t\times\min[1,\widetilde{P}_{start}+(t-h)\widetilde{P}_{up},\widetilde{P}_{shut}+(k-t)\widetilde{P}_{down}],m<h\le t\le k<m+M-1$.\\
		If $[h,k]\in A_m$ exists such that $m<h\le t\le k<m+M-1$, we consider point (\ref{p:12}) with $r_1=h$, $r_2=k$, $r_3=t$. Clearly this point is valid and satisfies inequality (\ref{eq:tight-max-output-0}) at equality. We have shown that $\omega_j=0,j\in[m,m+M-1]\backslash\{t\}$ in part 2. If Point (\ref{p:12}) satisfies equality (\ref{pr:2}), then we get $\omega_t\times\min[1,\widetilde{P}_{start}+(t-h)\widetilde{P}_{up},\widetilde{P}_{shut}+(k-t)\widetilde{P}_{down}]+\varphi_{h,k}=\lambda_0$. Hence, $\varphi_{h,k}=-\omega_t\times\min[1,\widetilde{P}_{start}+(t-h)\widetilde{P}_{up},\widetilde{P}_{shut}+(k-t)\widetilde{P}_{down}]$.
		\item $\varphi_{h,m+M-1}=-\omega_t\times\min[1,\widetilde{P}_{start}+(t-h)\widetilde{P}_{up}],m<h\le t$.\\
		If $[h,m+M-1]\in A_m$ exists such that $m<h\le t$, we consider point (\ref{p:18}) with $r_1=h$, $r_3=t$. Clearly this point is valid and satisfies inequality (\ref{eq:tight-max-output-0}) at equality. We have shown that $\omega_j=0,j\in[m,m+M-1]\backslash\{t\}$ in part 2. If Point (\ref{p:18}) satisfies equality (\ref{pr:2}), then we get $\omega_t\times\min[1,\widetilde{P}_{start}+(t-h)\widetilde{P}_{up}]+\varphi_{h,k}=\lambda_0$. Hence, $\varphi_{h,k}=-\omega_t\times\min[1,\widetilde{P}_{start}+(t-h)\widetilde{P}_{up}]$.
		\item $\varphi_{m,k}=-\omega_t\times\min[1,\widetilde{P}_{shut}+(k-t)\widetilde{P}_{down}],t\le k<m+M-1$.\\
		If $[m,k]\in A_m$ exists such that $t\le k<m+M-1$, we consider point (\ref{p:7}) with $r_2=k$, $r_3=t$. Clearly this point is valid and satisfies inequality (\ref{eq:tight-max-output-0}) at equality. We have shown that $\omega_j=0,j\in[m,m+M-1]\backslash\{t\}$ in part 2. If Point (\ref{p:7}) satisfies equality (\ref{pr:2}), then we get $\omega_t\times\min[1,\widetilde{P}_{shut}+(k-t)\widetilde{P}_{down}]+\varphi_{h,k}=\lambda_0$. Hence, $\varphi_{h,k}=-\omega_t\times\min[1,\widetilde{P}_{shut}+(k-t)\widetilde{P}_{down}]$.
		\item $\varphi_{m,m+M-1}=-\omega_t$.\\
		If $[m,m+M-1]\in A_m$ exists, we consider point (\ref{p:2}). Clearly this point is valid and satisfies inequality (\ref{eq:tight-max-output-0}) at equality. We have shown that $\omega_j=0,j\in[m,m+M-1]\backslash\{t\}$ in part 2. If Point (\ref{p:2}) satisfies equality (\ref{pr:2}), then we get $\omega_t+\varphi_{h,k}=\lambda_0$. Hence, $\varphi_{h,k}=-\omega_t$.
	\end{enumerate}
	According to theorem 3.6 of §$I.4.3$ in \cite{wolsey1988integer}, we get Proposition \ref{prp2}.
\end{proof}

\begin{proof}[\textbf{Proof of Proposition \upshape\ref{prp3}}]
	Necessity: For contradiction we assume that $a\ge \frac{1}{\widetilde{P}_{up}}$ . We have $\min[1,a\widetilde{P}_{up},\widetilde{P}_{shut}+(k-t)\widetilde{P}_{down}]=\min[1,\widetilde{P}_{shut}+(k-t)\widetilde{P}_{down}]$ and $\min(1,a\widetilde{P}_{up})=1$. Then inequality (\ref{eq:tight-ramp-up-0}) can be written as $\widetilde{P}_t-\widetilde{P}_{t-a}\le \sum\nolimits_{\{[h,k]\in A_m,h\le t-a<t\le k<m+M-1\}}\tau^m_{h,k}\min[1,\widetilde{P}_{shut}+(k-t)\widetilde{P}_{down}]+\sum\nolimits_{\{[h,m+M-1]\in A_m,h\le t-a\}}\tau^m_{h,m+M-1}+\sum\nolimits_{\{[h,k]\in A_m,t-a<h\le t\le k<m+M-1\}}\tau^m_{h,k} \min[1,\widetilde{P}_{start}+(t-h)\widetilde{P}_{up},\widetilde{P}_{shut}+(k-t)\widetilde{P}_{down}]+\sum\nolimits_{\{[h,m+M-1]\in A_m,t-a<h\le t\}}\tau^m_{h,m+M-1}\min[1,\widetilde{P}_{start}+(t-h)\widetilde{P}_{up}]$. It is dominated by inequalities (\ref{eq:tight-max-output-0}) and $\widetilde{P}_{t-a}\ge 0$.\\
	Sufficiency: If all points of $\mathcal{Q}_m^I$ that are tight at inequality (\ref{eq:tight-ramp-up-0}) satisfy
	\begin{equation}
		\sum^{m+M-1}_{j=m}\omega_j\widetilde{P}_j+\sum\nolimits_{[h,k]\in A_m}\varphi_{h,k}\tau^m_{h,k}=\lambda_0\label{pr:3}
	\end{equation}
	then
	\begin{enumerate}
		\item $\lambda_0=0$.\\
		Point (\ref{p:1}) satisfies inequality (\ref{eq:tight-ramp-up-0}) at equality. If Point (\ref{p:1}) satisfies equality (\ref{pr:3}), then $\lambda_0=0$.
		\item $\omega_j=0,j\in[m,m+M-1]\backslash\{t-a,t\}$.\\
		Consider the following two cases:
		\begin{enumerate}
			\item $j\in[m,t-a-1]\cup[t-a+1,t-1]$.\\
			Consider point (\ref{p:3}) and point (\ref{p:4}) with $r_1=m$, $r_2=t-1$, $r_0=j$. Both points are valid and satisfy inequality (\ref{eq:tight-ramp-up-0}) at equality. Because both points satisfy equality (\ref{pr:3}), we get $\omega_j\times 0=\omega_j\varepsilon$. Hence $\omega_j=0$.
			\item $j\in[t+1,m+M-1]$.\\
			Consider point (\ref{p:3}) and point (\ref{p:4}) with $r_1=t+1$, $r_2=m+M-1$, $r_0=j$. Both points are valid and satisfy inequality (\ref{eq:tight-ramp-up-0}) at equality. Because both points satisfy equality (\ref{pr:3}), we get $\omega_j\times 0=\omega_j\varepsilon$. Hence $\omega_j=0$.
		\end{enumerate}
	    \item $\omega_{t-a}=-\omega_t$.\\
	    If $a<\frac{1}{\widetilde{P}_{up}}$, we get $a\widetilde{P}_{up}<1$. We consider point (\ref{p:24}) and point (\ref{p:25}) with $r_3=t$. Both points are valid and satisfy inequality (\ref{eq:tight-ramp-up-0}) at equality. We have shown that $\omega_j=0$, $j\in[m,m+M-1]\backslash\{t-a,t\}$ in part 2. Because both points satisfy equality (\ref{pr:3}), we get $\omega_{t-a} (1-a\widetilde{P}_{up})+\omega_t=\omega_{t-a} (1-a\widetilde{P}_{up}-\varepsilon)+\omega_t (1-\varepsilon)$. Hence $\omega_{t-a}=-\omega_t$.
		\item $\varphi_{h,k}=0,t\notin[h,k]$.\\
		If $[h,k]\in A_m$ exists such that $t\notin[h,k]$, we consider point (\ref{p:3}) with $r_1=h$, $r_2=k$. Clearly this point is valid and satisfies inequality (\ref{eq:tight-ramp-up-0}) at equality. If Point (\ref{p:3}) satisfies equality (\ref{pr:3}), then $\varphi_{h,k}=\lambda_0$. We have shown that $\lambda_0=0$ in part 1. Hence $\varphi_{h,k}=0$.
		\item $\varphi_{h,k}=-\omega_t\times\min[1,a\widetilde{P}_{up},\widetilde{P}_{shut}+(k-t)\widetilde{P}_{down}],h\le t-a<t\le k<m+M-1$.\\
		If $[h,k]\in A_m$ exists such that $h\le t-a<t\le k<m+M-1$, we consider point (\ref{p:28}) with $r_1=h$, $r_2=k$, $r_3=t$, $r_4=t-a$, $\widetilde{P}_{ramp}=\min[1,a\widetilde{P}_{up},\widetilde{P}_{shut}+(k-t)\widetilde{P}_{down}]$. Clearly this point is valid and satisfies inequality (\ref{eq:tight-ramp-up-0}) at equality. We have shown that $\omega_j=0,j\in[m,m+M-1]\backslash\{t-a,t\}$ and $\omega_{t-a}=-\omega_t$ above. If Point (\ref{p:28}) satisfies equality (\ref{pr:3}), then we get $\omega_{t-a}\{\min[1,\widetilde{P}_{start}+(t-h)\widetilde{P}_{up},\widetilde{P}_{shut}+(k-t)\widetilde{P}_{down}]-\min[1,a\widetilde{P}_{up},\widetilde{P}_{shut}+(k-t)\widetilde{P}_{down}]\}+\omega_t\times\min[1,\widetilde{P}_{start}+(t-h)\widetilde{P}_{up},\widetilde{P}_{shut}+(k-t)\widetilde{P}_{down}]+\varphi_{h,k}=\lambda_0$. Hence, $\varphi_{h,k}=-\omega_t\times\min[1,a\widetilde{P}_{up},\widetilde{P}_{shut}+(k-t)\widetilde{P}_{down}]$.
		\item $\varphi_{h,m+M-1}=-\omega_t\times\min(1,a\widetilde{P}_{up}),h\le t-a$.\\
		If $[h,m+M-1]\in A_m$ exists such that $h\le t-a$, we consider point (\ref{p:32}) with $r_1=h$, $r_3=t$, $r_4=t-a$, $\widetilde{P}_{ramp}=\min(1,a\widetilde{P}_{up})$. Clearly this point is valid and satisfies inequality (\ref{eq:tight-ramp-up-0}) at equality. We have shown that $\omega_j=0,j\in[m,m+M-1]\backslash\{t-a,t\}$ and $\omega_{t-a}=-\omega_t$ above. If Point (\ref{p:32}) satisfies equality (\ref{pr:3}), then we get $\omega_{t-a}\{\min[1,\widetilde{P}_{start}+(t-h)\widetilde{P}_{up}]-\min(1,a\widetilde{P}_{up})\}+\omega_t\times\min[1,\widetilde{P}_{start}+(t-h)\widetilde{P}_{up}]+\varphi_{h,k}=\lambda_0$. Hence, $\varphi_{h,k}=-\omega_t\times\min(1,a\widetilde{P}_{up})$.
		\item $\varphi_{h,k}=-\omega_t\times\min[1,\widetilde{P}_{start}+(t-h)\widetilde{P}_{up},\widetilde{P}_{shut}+(k-t)\widetilde{P}_{down}],t-a<h\le t\le k<m+M-1$.\\
		If $[h,k]\in A_m$ exists such that $t-a<h\le t\le k<m+M-1$, we consider point (\ref{p:12}) with $r_1=h$, $r_2=k$, $r_3=t$. Clearly this point is valid and satisfies inequality (\ref{eq:tight-ramp-up-0}) at equality. We have shown that $\omega_j=0,j\in[m,m+M-1]\backslash\{t-a,t\}$ in part 2. If Point (\ref{p:12}) satisfies equality (\ref{pr:3}), then we get $\omega_t\times\min[1,\widetilde{P}_{start}+(t-h)\widetilde{P}_{up},\widetilde{P}_{shut}+(k-t)\widetilde{P}_{down}]+\varphi_{h,k}=\lambda_0$. Hence, $\varphi_{h,k}=-\omega_t\times\min[1,\widetilde{P}_{start}+(t-h)\widetilde{P}_{up},\widetilde{P}_{shut}+(k-t)\widetilde{P}_{down}]$.
		\item $\varphi_{h,m+M-1}=-\omega_t\times\min[1,\widetilde{P}_{start}+(t-h)\widetilde{P}_{up}],t-a<h\le t$.\\
		If $[h,m+M-1]\in A_m$ exists such that $t-a<h\le t$, we consider point (\ref{p:18}) with $r_1=h$, $r_3=t$. Clearly this point is valid and satisfies inequality (\ref{eq:tight-ramp-up-0}) at equality. We have shown that $\omega_j=0,j\in[m,m+M-1]\backslash\{t-a,t\}$ in part 2. If Point (\ref{p:18}) satisfies equality (\ref{pr:3}), then we get $\omega_t\times\min[1,\widetilde{P}_{start}+(t-h)\widetilde{P}_{up}]+\varphi_{h,k}=\lambda_0$. Hence, $\varphi_{h,k}=-\omega_t\times\min[1,\widetilde{P}_{start}+(t-h)\widetilde{P}_{up}]$.
	\end{enumerate}
	According to theorem 3.6 of §$I.4.3$ in \cite{wolsey1988integer}, we get Proposition \ref{prp3}.
\end{proof}

\begin{proof}[\textbf{Proof of Proposition \upshape\ref{prp4}}]
	Necessity: For contradiction we assume that $a\ge \frac{1}{\widetilde{P}_{down}}$. We have $\min[1,\widetilde{P}_{start}+(t-a-h)\widetilde{P}_{up},a\widetilde{P}_{down}]=\min[1,\widetilde{P}_{start}+(t-a-h)\widetilde{P}_{up}]$ and $\min(1,a\widetilde{P}_{down})=1$. Then inequality (\ref{eq:tight-ramp-down-0}) can be written as $\widetilde{P}_{t-a}-\widetilde{P}_t\le \sum\nolimits_{\{[h,k]\in A_m,m<h\le t-a<t\le k\}}\tau^m_{h,k}\min[1,\widetilde{P}_{start}+(t-a-h)\widetilde{P}_{up}]+\sum\nolimits_{\{[m,k]\in A_m,t\le k\}}\tau^m_{m,k}+\sum\nolimits_{\{[h,k]\in A_m,m<h\le t-a\le k<t\}}\tau^m_{h,k} \min[1,\widetilde{P}_{start}+(t-a-h)\widetilde{P}_{up},\widetilde{P}_{shut}+(k-t+a)\widetilde{P}_{down}]+\sum\nolimits_{\{[m,k]\in A_m,t-a\le k<t\}}\tau^m_{m,k}\min[1,\widetilde{P}_{shut}+(k-t+a)\widetilde{P}_{down}]$. It is dominated by inequalities (\ref{eq:tight-max-output-0}) and $\widetilde{P}_t\ge 0$.\\
	Sufficiency: If all points of $\mathcal{Q}_m^I$ that are tight at inequality (\ref{eq:tight-ramp-down-0}) satisfy
	\begin{equation}
		\sum^{m+M-1}_{j=m}\omega_j\widetilde{P}_j+\sum\nolimits_{[h,k]\in A_m}\varphi_{h,k}\tau^m_{h,k}=\lambda_0\label{pr:4}
	\end{equation}
	then
	\begin{enumerate}
		\item $\lambda_0=0$.\\
		Point (\ref{p:1}) satisfies inequality (\ref{eq:tight-ramp-down-0}) at equality. If Point (\ref{p:1}) satisfies equality (\ref{pr:4}), then $\lambda_0=0$.
		\item $\omega_j=0,j\in[m,m+M-1]\backslash\{t-a,t\}$.\\
		Consider the following two cases:
		\begin{enumerate}
			\item $j\in[t-a+1,t-1]\cup[t+1,m+M-1]$.\\
			Consider point (\ref{p:3}) and point (\ref{p:4}) with $r_1=t-a+1$, $r_2=m+M-1$, $r_0=j$. Both points are valid and satisfy inequality (\ref{eq:tight-ramp-down-0}) at equality. Because both points satisfy equality (\ref{pr:4}), we get $\omega_j\times 0=\omega_j\varepsilon$. Hence $\omega_j=0$.
			\item $j\in[m,t-a-1]$.\\
			Consider point (\ref{p:3}) and point (\ref{p:4}) with $r_1=m$, $r_2=t-a-1$, $r_0=j$. Both points are valid and satisfy inequality (\ref{eq:tight-ramp-down-0}) at equality. Because both points satisfy equality (\ref{pr:4}), we get $\omega_j\times 0=\omega_j\varepsilon$. Hence $\omega_j=0$.
		\end{enumerate}
		\item $\omega_{t-a}=-\omega_t$.\\
		If $a<\frac{1}{\widetilde{P}_{down}}$, we get $a\widetilde{P}_{down}<1$. We consider point (\ref{p:26}) and point (\ref{p:27}) with $r_3=t-a$. Both points are valid and satisfy inequality (\ref{eq:tight-ramp-down-0}) at equality. We have shown that $\omega_j=0$, $j\in[m,m+M-1]\backslash\{t-a,t\}$ in part 2. Because both points satisfy equality (\ref{pr:4}), we get $\omega_{t-a} +\omega_t(1-a\widetilde{P}_{down})=\omega_{t-a}(1-\varepsilon)+\omega_t (1-a\widetilde{P}_{down}-\varepsilon)$. Hence $\omega_{t-a}=-\omega_t$.
		\item $\varphi_{h,k}=0,t-a\notin[h,k]$.\\
		If $[h,k]\in A_m$ exists such that $t-a\notin[h,k]$, we consider point (\ref{p:3}) with $r_1=h$, $r_2=k$. Clearly this point is valid and satisfies inequality (\ref{eq:tight-ramp-down-0}) at equality. If Point (\ref{p:3}) satisfies equality (\ref{pr:4}), then $\varphi_{h,k}=\lambda_0$. We have shown that $\lambda_0=0$ in part 1. Hence $\varphi_{h,k}=0$.
		\item $\varphi_{h,k}=\omega_t\times\min[1,\widetilde{P}_{start}+(t-a-h)\widetilde{P}_{up},a\widetilde{P}_{down}],m<h\le t-a<t\le k$.\\
		If $[h,k]\in A_m$ exists such that $m<h\le t-a<t\le k$, we consider point (\ref{p:30}) with $r_1=h$, $r_2=k$, $r_3=t-a$, $r_4=t$, $\widetilde{P}_{ramp}=\min[1,\widetilde{P}_{start}+(t-a-h)\widetilde{P}_{up},a\widetilde{P}_{down}]$. Clearly this point is valid and satisfies inequality (\ref{eq:tight-ramp-down-0}) at equality. We have shown that $\omega_j=0,j\in[m,m+M-1]\backslash\{t-a,t\}$ and $\omega_{t-a}=-\omega_t$ above. If Point (\ref{p:30}) satisfies equality (\ref{pr:4}), then we get $\omega_t\{\min[1,\widetilde{P}_{start}+(t-a-h)\widetilde{P}_{up},\widetilde{P}_{shut}+(k-t+a)\widetilde{P}_{down}]-\min[1,\widetilde{P}_{start}+(t-a-h)\widetilde{P}_{up},a\widetilde{P}_{down}]\}+\omega_{t-a}\times\min[1,\widetilde{P}_{start}+(t-a-h)\widetilde{P}_{up},\widetilde{P}_{shut}+(k-t+a)\widetilde{P}_{down}]+\varphi_{h,k}=\lambda_0$. Hence, $\varphi_{h,k}=\omega_t\times\min[1,\widetilde{P}_{start}+(t-a-h)\widetilde{P}_{up},a\widetilde{P}_{down}]$.
		\item $\varphi_{m,k}=\omega_t\times\min(1,a\widetilde{P}_{down}),t\le k$.\\
		If $[m,k]\in A_m$ exists such that $t\le k$, we consider point (\ref{p:40}) with $r_1=h$, $r_3=t-a$, $r_4=t$, $\widetilde{P}_{ramp}=\min(1,a\widetilde{P}_{down})$. Clearly this point is valid and satisfies inequality (\ref{eq:tight-ramp-down-0}) at equality. We have shown that $\omega_j=0,j\in[m,m+M-1]\backslash\{t-a,t\}$ and $\omega_{t-a}=-\omega_t$ above. If Point (\ref{p:40}) satisfies equality (\ref{pr:4}), then we get $\omega_t\{\min[1,\widetilde{P}_{shut}+(k-t+a)\widetilde{P}_{down}]-\min(1,a\widetilde{P}_{down})\}+\omega_{t-a}\times\min[1,\widetilde{P}_{shut}+(k-t+a)\widetilde{P}_{down}]+\varphi_{h,k}=\lambda_0$. Hence, $\varphi_{h,k}=\omega_t\times\min(1,a\widetilde{P}_{down})$.
		\item $\varphi_{h,k}=\omega_t\times\min[1,\widetilde{P}_{start}+(t-a-h)\widetilde{P}_{up},\widetilde{P}_{shut}+(k-t+a)\widetilde{P}_{down}],m<h\le t-a\le k<t$.\\
		If $[h,k]\in A_m$ exists such that $m<h\le t-a\le k<t$, we consider point (\ref{p:12}) with $r_1=h$, $r_2=k$, $r_3=t-a$. Clearly this point is valid and satisfies inequality (\ref{eq:tight-ramp-down-0}) at equality. We have shown that $\omega_j=0,j\in[m,m+M-1]\backslash\{t-a,t\}$ and $\omega_{t-a}=-\omega_t$ above. If Point (\ref{p:12}) satisfies equality (\ref{pr:4}), then we get $\omega_{t-a}\times\min[1,\widetilde{P}_{start}+(t-a-h)\widetilde{P}_{up},\widetilde{P}_{shut}+(k-t+a)\widetilde{P}_{down}]+\varphi_{h,k}=\lambda_0$. Hence, $\varphi_{h,k}=\omega_t\times\min[1,\widetilde{P}_{start}+(t-a-h)\widetilde{P}_{up},\widetilde{P}_{shut}+(k-t+a)\widetilde{P}_{down}]$.
		\item $\varphi_{m,k}=\omega_t\times\min[1,\widetilde{P}_{shut}+(k-t+a)\widetilde{P}_{down}],t-a\le k<t$.\\
		If $[m,k]\in A_m$ exists such that $t-a\le k<t$, we consider point (\ref{p:7}) with $r_1=h$, $r_3=t-a$. Clearly this point is valid and satisfies inequality (\ref{eq:tight-ramp-down-0}) at equality. We have shown that $\omega_j=0,j\in[m,m+M-1]\backslash\{t-a,t\}$ and $\omega_{t-a}=-\omega_t$ above. If Point (\ref{p:7}) satisfies equality (\ref{pr:4}), then we get $\omega_{t-a}\times\min[1,\widetilde{P}_{shut}+(k-t+a)\widetilde{P}_{down}]+\varphi_{h,k}=\lambda_0$. Hence, $\varphi_{h,k}=\omega_t\times\min[1,\widetilde{P}_{shut}+(k-t+a)\widetilde{P}_{down}]$.
	\end{enumerate}
	According to theorem 3.6 of §$I.4.3$ in \cite{wolsey1988integer}, we get Proposition \ref{prp4}.
\end{proof}

\begin{proof}[\textbf{Proof of Proposition \upshape\ref{prp5}}]
	If all points of $\widetilde{\mathcal{P}}_m^I$ that are tight at inequality (\ref{eq:tight-min-output-1}) satisfy
	\begin{equation}
		\sum^{m+M-1}_{j=\max(1,m)}\omega_j\widetilde{P}_j+\sum\nolimits_{[h,k]\in A_m}\varphi_{h,k}\tau^m_{h,k}=\lambda_0\label{pr:5}
	\end{equation}
	then
	\begin{enumerate}
		\item $\lambda_0=0$ when $m>U$.\\
		If $m>U$, point (\ref{p:1}) is valid and satisfies inequality (\ref{eq:tight-min-output-1}) at equality. If Point (\ref{p:1}) satisfies equality (\ref{pr:5}), then $\lambda_0=0$.
		\item $\omega_j=0,j\in[\max(1,m),m+M-1]\backslash\{t\}$.\\
		Consider point (\ref{p:54}) and point (\ref{p:55}) with $r_3=t$, $r_0=j$. Both points are valid and satisfy inequality (\ref{eq:tight-min-output-1}) at equality. Because both points satisfy equality (\ref{pr:5}), we get $\omega_j[\widetilde{P}_t+\max(0,t-j)\times\frac{\widetilde{P}_0-\widetilde{P}_t}{t}]=\omega_j\lvert\widetilde{P}_t+\max(0,t-j)\times\frac{\widetilde{P}_0-\widetilde{P}_t}{t}-\varepsilon\rvert$. Hence $\omega_j=0$.
		\item $\varphi_{m,k}=\lambda_0,t\notin[m,k]$.\\
		If $[m,k]\in A_m$ exists such that $t\notin[m,k]$, we consider point (\ref{p:10}) with $r_1=r_2=k$. Clearly this point is valid and satisfies inequality (\ref{eq:tight-min-output-1}) at equality. We have shown that $\omega_j=0$, $j\in[\max(1,m),m+M-1]\backslash\{t\}$ in part 2. If Point (\ref{p:10}) satisfies equality (\ref{pr:5}), then $\varphi_{m,k}=\lambda_0$.
		\item $\varphi_{m,k}=\lambda_0-\omega_t\times\max(0,\widetilde{P}_0-t\widetilde{P}_{down}),t\in[m,k]$.\\
		If $[m,k]\in A_m$ exists such that $t\in[m,k]$, we consider point (\ref{p:10}) with $r_1=r_2=k$. Clearly this point is valid and satisfies inequality (\ref{eq:tight-min-output-1}) at equality. We have shown that $\omega_j=0$, $j\in[\max(1,m),m+M-1]\backslash\{t\}$ in part 2. If Point (\ref{p:10}) satisfies equality (\ref{pr:5}), then $\varphi_{m,k}=\lambda_0-\omega_t\times\max(0,\widetilde{P}_0-t\widetilde{P}_{down})$.
		\item $\varphi_{h,k}=0,h\ne m$.\\
		If $[h,k]\in A_m,h\ne m$ exists, we consider the following two cases:
		\begin{enumerate}
			\item $t\le U$.\\
			Consider point (\ref{p:5}) with $r_1=h$, $r_2=k$. Clearly this point satisfies inequality (\ref{eq:tight-min-output-1}) at equality. We have shown that $\omega_j=0$, $j\in[\max(1,m),m+M-1]\backslash\{t\}$ and $\varphi_{m,k}=\lambda_0-\omega_t\times\max(0,\widetilde{P}_0-t\widetilde{P}_{down})$, $t\in[m,k]$ above. If Point (\ref{p:5}) satisfies equality (\ref{pr:5}), then we get $\omega_t\times\max(0,\widetilde{P}_0-t\widetilde{P}_{down})+\varphi_{m,U}+\varphi_{h,k}=\lambda_0$. Hence, $\varphi_{h,k}=0$.
			\item $t>U$.\\
			If $m\le U$, we consider point (\ref{p:5}) with $r_1=h$, $r_2=k$. Clearly this point satisfies inequality (\ref{eq:tight-min-output-1}) at equality. We have shown that $\omega_j=0$, $j\in[\max(1,m),m+M-1]\backslash\{t\}$ and $\varphi_{m,k}=\lambda_0$, $t\notin[m,k]$ above. If Point (\ref{p:5}) satisfies equality (\ref{pr:5}), then we get $\varphi_{m,U}+\varphi_{h,k}=\lambda_0$. Hence, $\varphi_{h,k}=0$. If $m>U$, we consider point (\ref{p:3}) with $r_1=h$, $r_2=k$. Clearly this point satisfies inequality (\ref{eq:tight-min-output-1}) at equality. We have shown that $\lambda_0=0$ when $m>U$ in part 1. If Point (\ref{p:3}) satisfies equality (\ref{pr:5}), then we get $\varphi_{h,k}=\lambda_0$. Hence, $\varphi_{h,k}=0$.
		\end{enumerate}
	\end{enumerate}
	According to theorem 3.6 of §$I.4.3$ in \cite{wolsey1988integer}, we get Proposition \ref{prp5}.
\end{proof}

\begin{proof}[\textbf{Proof of Proposition \upshape\ref{prp6}}]
	If all points of $\widetilde{\mathcal{P}}_m^I$ that are tight at inequality (\ref{eq:tight-max-output-1}) satisfy
	\begin{equation}
		\sum^{m+M-1}_{j=\max(1,m)}\omega_j\widetilde{P}_j+\sum\nolimits_{[h,k]\in A_m}\varphi_{h,k}\tau^m_{h,k}=\lambda_0\label{pr:6}
	\end{equation}
	then
	\begin{enumerate}
		\item $\lambda_0=0$ when $m>U$.\\
		If $m>U$, point (\ref{p:1}) is valid and satisfies inequality (\ref{eq:tight-max-output-1}) at equality. If Point (\ref{p:1}) satisfies equality (\ref{pr:6}), then $\lambda_0=0$.
		\item $\omega_j=0,j\in[\max(1,m),m+M-1]\backslash\{t\}$.\\
		Consider point (\ref{p:22}) and point (\ref{p:23}) with $r_3=t$, $r_0=j$. Both points are valid and satisfy inequality (\ref{eq:tight-max-output-1}) at equality. Because both points satisfy equality (\ref{pr:6}), we get $\omega_j[\widetilde{P}_t-\max(0,t-j)\times\frac{\widetilde{P}_t-\widetilde{P}_0}{t}]=\omega_j\lvert\widetilde{P}_t-\max(0,t-j)\times\frac{\widetilde{P}_t-\widetilde{P}_0}{t}-\varepsilon\rvert$. Hence $\omega_j=0$.
		\item $\varphi_{m,k}=\lambda_0,t\notin[m,k]$.\\
		If $[m,k]\in A_m$ exists such that $t\notin[m,k]$, we consider point (\ref{p:10}) with $r_1=m$, $r_2=k$. Clearly this point is valid and satisfies inequality (\ref{eq:tight-max-output-1}) at equality. We have shown that $\omega_j=0$, $j\in[\max(1,m),m+M-1]\backslash\{t\}$ in part 2. If Point (\ref{p:10}) satisfies equality (\ref{pr:6}), then $\varphi_{m,k}=\lambda_0$.
		\item $\varphi_{m,k}=\lambda_0-\omega_t\times\min[1,\widetilde{P}_0+t\widetilde{P}_{up},\widetilde{P}_{shut}+(k-t)\widetilde{P}_{down}],m\le t\le k<m+M-1$.\\
		If $[m,k]\in A_m$ exists such that $m\le t\le k<m+M-1$, we consider point (\ref{p:8}) with $r_2=k$, $r_3=t$. Clearly this point is valid and satisfies inequality (\ref{eq:tight-max-output-1}) at equality. We have shown that $\omega_j=0,j\in[\max(1,m),m+M-1]\backslash\{t\}$ in part 2. If Point (\ref{p:8}) satisfies equality (\ref{pr:6}), then we get $\omega_t\times\min[1,\widetilde{P}_0+t\widetilde{P}_{up},\widetilde{P}_{shut}+(k-t)\widetilde{P}_{down}]+\varphi_{m,k}=\lambda_0$. Hence, $\varphi_{m,k}=\lambda_0-\omega_t\times\min[1,\widetilde{P}_0+t\widetilde{P}_{up},\widetilde{P}_{shut}+(k-t)\widetilde{P}_{down}]$.
		\item $\varphi_{m,m+M-1}=\lambda_0-\omega_t\times\min(1,\widetilde{P}_0+t\widetilde{P}_{up})$.\\
		We consider point (\ref{p:22}) with $r_3=t$. Clearly this point is valid and satisfies inequality (\ref{eq:tight-max-output-1}) at equality. We have shown that $\omega_j=0,j\in[\max(1,m),m+M-1]\backslash\{t\}$ in part 2. If Point (\ref{p:22}) satisfies equality (\ref{pr:6}), then we get $\omega_t\times\min(1,\widetilde{P}_0+t\widetilde{P}_{up})+\varphi_{m,m+M-1}=\lambda_0$. Hence, $\varphi_{m,m+M-1}=\lambda_0-\omega_t\times\min(1,\widetilde{P}_0+t\widetilde{P}_{up})$.
		\item $\varphi_{h,k}=0,h\ne m,t\notin[h,k]$.\\
		If $[h,k]\in A_m,h\ne m$ exists such that $t\notin[h,k]$, we consider the following two cases:
		\begin{enumerate}
			\item $t\le U$.\\
			Consider point (\ref{p:16}) with $r_1=h$, $r_2=r_4=k$, $r_3=t$. Clearly this point satisfies inequality (\ref{eq:tight-max-output-1}) at equality. We have shown that $\omega_j=0$, $j\in[\max(1,m),m+M-1]\backslash\{t\}$ and $\varphi_{m,k}=\lambda_0-\omega_t\times\min[1,\widetilde{P}_0+t\widetilde{P}_{up},\widetilde{P}_{shut}+(k-t)\widetilde{P}_{down}]$, $m\le t\le k<m+M-1$ above. If Point (\ref{p:16}) satisfies equality (\ref{pr:6}), then we get $\omega_t\times\min[1,\widetilde{P}_0+t\widetilde{P}_{up},\widetilde{P}_{shut}+(k-t)\widetilde{P}_{down} ]+\varphi_{m,U}+\varphi_{h,k}=\lambda_0$. Hence, $\varphi_{h,k}=0$.
			\item $t>U$.\\
			If $m\le U$, we consider point (\ref{p:16}) with $r_1=h$, $r_2=r_4=k$, $r_3=U$. Clearly this point satisfies inequality (\ref{eq:tight-max-output-1}) at equality. We have shown that $\omega_j=0$, $j\in[\max(1,m),m+M-1]\backslash\{t\}$ and $\varphi_{m,k}=\lambda_0$, $t\notin[m,k]$ above. If Point (\ref{p:16}) satisfies equality (\ref{pr:6}), then we get $\varphi_{m,U}+\varphi_{h,k}=\lambda_0$. Hence, $\varphi_{h,k}=0$. If $m>U$, we consider point (\ref{p:3}) with $r_1=h$, $r_2=k$. Clearly this point satisfies inequality (\ref{eq:tight-max-output-1}) at equality. We have shown that $\lambda_0=0$ when $m>U$ in part 1. If Point (\ref{p:3}) satisfies equality (\ref{pr:6}), then we get $\varphi_{h,k}=\lambda_0$. Hence, $\varphi_{h,k}=0$.
		\end{enumerate}
		\item $\varphi_{h,k}=-\omega_t\times\min[1,\widetilde{P}_{start}+(t-h)\widetilde{P}_{up},\widetilde{P}_{shut}+(k-t)\widetilde{P}_{down}],m<h\le t\le k<m+M-1$.\\
		If $[h,k]\in A_m$ exists such that $m<h\le t\le k<m+M-1$, we consider the following two cases:
		\begin{enumerate}
			\item $m\le U$.\\
			Consider point (\ref{p:16}) with $r_1=h$, $r_2=k$, $r_3=U$, $r_4=t$. Clearly this point is valid and satisfies inequality (\ref{eq:tight-max-output-1}) at equality. We have shown that $\omega_j=0,j\in[\max(1,m),m+M-1]\backslash\{t\}$ and $\varphi_{m,k}=\lambda_0$, $t\notin[m,k]$ above. If Point (\ref{p:16}) satisfies equality (\ref{pr:6}), then we get $\omega_t\times\min[1,\widetilde{P}_{start}+(t-h)\widetilde{P}_{up},\widetilde{P}_{shut}+(k-t)\widetilde{P}_{down}]+\varphi_{m,U}+\varphi_{h,k}=\lambda_0$. Hence, $\varphi_{h,k}=-\omega_t\times\min[1,\widetilde{P}_{start}+(t-h)\widetilde{P}_{up},\widetilde{P}_{shut}+(k-t)\widetilde{P}_{down}]$.
			\item $m>U$.\\
			Consider point (\ref{p:12}) with $r_1=h$, $r_2=k$, $r_3=t$. Clearly this point is valid and satisfies inequality (\ref{eq:tight-max-output-1}) at equality. We have shown that $\omega_j=0,j\in[\max(1,m),m+M-1]\backslash\{t\}$ and $\lambda_0=0$ when $m>U$ above. If Point (\ref{p:12}) satisfies equality (\ref{pr:6}), then we get $\omega_t\times\min[1,\widetilde{P}_{start}+(t-h)\widetilde{P}_{up},\widetilde{P}_{shut}+(k-t)\widetilde{P}_{down}]+\varphi_{h,k}=\lambda_0$. Hence, $\varphi_{h,k}=-\omega_t\times\min[1,\widetilde{P}_{start}+(t-h)\widetilde{P}_{up},\widetilde{P}_{shut}+(k-t)\widetilde{P}_{down}]$.
		\end{enumerate}
		\item $\varphi_{h,m+M-1}=-\omega_t\times\min[1,\widetilde{P}_{start}+(t-h)\widetilde{P}_{up}],m<h\le t$.\\
		If $[h,m+M-1]\in A_m$ exists such that $m<h\le t$, we consider the following two cases:
		\begin{enumerate}
			\item $m\le U$.\\
			Consider point (\ref{p:20}) with $r_1=h$, $r_3=t$. Clearly this point is valid and satisfies inequality (\ref{eq:tight-max-output-1}) at equality. We have shown that $\omega_j=0,j\in[\max(1,m),m+M-1]\backslash\{t\}$ and $\varphi_{m,k}=\lambda_0$, $t\notin[m,k]$ above. If Point (\ref{p:20}) satisfies equality (\ref{pr:6}), then we get $\omega_t\times\min[1,\widetilde{P}_{start}+(t-h)\widetilde{P}_{up}]+\varphi_{m,U}+\varphi_{h,k}=\lambda_0$. Hence, $\varphi_{h,k}=-\omega_t\times\min[1,\widetilde{P}_{start}+(t-h)\widetilde{P}_{up}]$.
			\item $m>U$.\\
			Consider point (\ref{p:18}) with $r_1=h$, $r_3=t$. Clearly this point is valid and satisfies inequality (\ref{eq:tight-max-output-1}) at equality. We have shown that $\omega_j=0,j\in[\max(1,m),m+M-1]\backslash\{t\}$ and $\lambda_0=0$ when $m>U$ above. If Point (\ref{p:18}) satisfies equality (\ref{pr:6}), then we get $\omega_t\times\min[1,\widetilde{P}_{start}+(t-h)\widetilde{P}_{up}]+\varphi_{h,k}=\lambda_0$. Hence, $\varphi_{h,k}=-\omega_t\times\min[1,\widetilde{P}_{start}+(t-h)\widetilde{P}_{up}]$.
		\end{enumerate}
	\end{enumerate}
	According to theorem 3.6 of §$I.4.3$ in \cite{wolsey1988integer}, we get Proposition \ref{prp6}.
\end{proof}

\begin{proof}[\textbf{Proof of Proposition \upshape\ref{prp7}}]
	Necessity: For contradiction we assume that all of the conditions are not satisfied. Then we have:
	\begin{enumerate}
		\item $t>\max(1,m)$.
		\item $t\le K$.
		\item $\widetilde{P}_0-t\widetilde{P}_{down}\ge 0$.
	\end{enumerate}
	We have $\widetilde{P}_0-(t-1)\widetilde{P}_{down}\ge\widetilde{P}_{shut}$ or $t\le W$. If $\widetilde{P}_0-(t-1)\widetilde{P}_{down}=\widetilde{P}_{shut}$, the unit must remain operational at least until time period $t-1$ (i.e., $t-1\le U$). If $\widetilde{P}_0-(t-1)\widetilde{P}_{down}>\widetilde{P}_{shut}$ or $t\le W$, the unit must remain operational at least until time period $t$ (i.e., $t\le U$). According to the definition of $A_m$, inequalities (\ref{eq:tight-min-output-1})(\ref{eq:tight-ramp-down-1}) can be written as $\widetilde{P}_t\ge\sum\nolimits_{\{[m,k] \in A_m,t\le k\}}\tau^m_{m,k}(\widetilde{P}_0-t\widetilde{P}_{down})$ and $\widetilde{P}_{t-a}-\widetilde{P}_t\le\sum\nolimits_{\{[m,k] \in A_m,t-a<t\le k\}}\tau^m_{m,k}a\widetilde{P}_{down}+\sum\nolimits_{\{[m,t-1] \in A_m\}}\tau^m_{m,t-1}[\widetilde{P}_{shut}+(a-1)\widetilde{P}_{down}]$ respectively. Inequality (\ref{eq:tight-min-output-1}) with $t>m$ is dominated by inequalities (\ref{eq:tight-min-output-1}) with $t=m$ and (\ref{eq:tight-ramp-down-1}) with $t-a=m$. It should be noted that $K$ is equal to $U$ or $U+1$.\\
	Sufficiency: If all points of $\widetilde{\mathcal{P}}_m^I$ that are tight at inequality (\ref{eq:tight-min-output-1}) satisfy
	\begin{equation}
		\sum^{m+M-1}_{j=\max(1,m)}\omega_j\widetilde{P}_j+\sum\nolimits_{[h,k]\in A_m}\varphi_{h,k}\tau^m_{h,k}=\lambda_0\label{pr:7}
	\end{equation}
	then
	\begin{enumerate}
		\item $\lambda_0=0$ when $m>U$.\\
		If $m>U$, point (\ref{p:1}) is valid and satisfies inequality (\ref{eq:tight-min-output-1}) at equality. If Point (\ref{p:1}) satisfies equality (\ref{pr:7}), then $\lambda_0=0$.
		\item $\omega_j=0,j\in[\max(1,m),m+M-1]\backslash\{t\}$.\\
		\begin{enumerate}
			\item $t<j\le m+M-1$.\\
			Consider point (\ref{p:54}) and point (\ref{p:55}) with $r_3=t$, $r_0=j$. Both points are valid and satisfy inequality (\ref{eq:tight-min-output-1}) at equality. Because both points satisfy equality (\ref{pr:7}), we get $\omega_j\widetilde{P}_t=\omega_j\lvert\widetilde{P}_t-\varepsilon\rvert$. Hence $\omega_j=0$.
			\item $\max(1,m)\le j<t$.\\
			If $t=m$, we don’t need to consider this case. Otherwise, consider the following two cases:
			\begin{enumerate}
				\item $K<t$.\\
				We get $\frac{\widetilde{P}_0-\widetilde{P}_{shut}}{t-1}<\widetilde{P}_{down}$, $t-1\ge U$. Consider point (\ref{p:56}) and point (\ref{p:57}) with $r_2=t-1$, $r_0=j$. Both points are valid and satisfy inequality (\ref{eq:tight-min-output-1}) at equality. Because both points satisfy equality (\ref{pr:7}), we get $\omega_j[\widetilde{P}_0-j\times\frac{\max(0,\widetilde{P}_0-\widetilde{P}_{shut})}{t-1}]=\omega_j\lvert\widetilde{P}_0-j\times\frac{\max(0,\widetilde{P}_0-\widetilde{P}_{shut})}{t-1}-\varepsilon\rvert$. Hence $\omega_j=0$.
				\item $\frac{\widetilde{P}_0}{\widetilde{P}_{down}}<t$.\\
				We get $\widetilde{P}_0-t\widetilde{P}_{down}<0$. Consider point (\ref{p:54}) and point (\ref{p:55}) with $r_3=t$, $r_0=j$. Both points are valid and satisfy inequality (\ref{eq:tight-min-output-1}) at equality. Because both points satisfy equality (\ref{pr:7}), we get $\omega_j[(t-j)\times\frac{\widetilde{P}_0}{t}]=\omega_j\lvert(t-j)\times\frac{\widetilde{P}_0}{t}-\varepsilon\rvert$. Hence $\omega_j=0$.
			\end{enumerate}
		\end{enumerate}
		\item $\varphi_{m,k}=\lambda_0,t\notin[m,k]$.\\
		If $[m,k]\in A_m$ exists such that $t\notin[m,k]$, we consider point (\ref{p:10}) with $r_1=r_2=k$. Clearly this point is valid and satisfies inequality (\ref{eq:tight-min-output-1}) at equality. We have shown that $\omega_j=0$, $j\in[\max(1,m),m+M-1]\backslash\{t\}$ in part 2. If Point (\ref{p:10}) satisfies equality (\ref{pr:7}), then $\varphi_{m,k}=\lambda_0$.
		\item $\varphi_{m,k}=\lambda_0-\omega_t\times\max(0,\widetilde{P}_0-t\widetilde{P}_{down}),t\in[m,k]$.\\
		If $[m,k]\in A_m$ exists such that $t\in[m,k]$, we consider point (\ref{p:10}) with $r_1=r_2=k$. Clearly this point is valid and satisfies inequality (\ref{eq:tight-min-output-1}) at equality. We have shown that $\omega_j=0$, $j\in[\max(1,m),m+M-1]\backslash\{t\}$ in part 2. If Point (\ref{p:10}) satisfies equality (\ref{pr:7}), then $\varphi_{m,k}=\lambda_0-\omega_t\times\max(0,\widetilde{P}_0-t\widetilde{P}_{down})$.
		\item $\varphi_{h,k}=0,h\ne m$.\\
		If $[h,k]\in A_m,h\ne m$ exists, we consider the following two cases:
		\begin{enumerate}
			\item $t\le U$.\\
			Consider point (\ref{p:5}) with $r_1=h$, $r_2=k$. Clearly this point satisfies inequality (\ref{eq:tight-min-output-1}) at equality. We have shown that $\omega_j=0$, $j\in[\max(1,m),m+M-1]\backslash\{t\}$ and $\varphi_{m,k}=\lambda_0-\omega_t\times\max(0,\widetilde{P}_0-t\widetilde{P}_{down})$, $t\in[m,k]$ above. If Point (\ref{p:5}) satisfies equality (\ref{pr:7}), then we get $\omega_t\times\max(0,\widetilde{P}_0-t\widetilde{P}_{down})+\varphi_{m,U}+\varphi_{h,k}=\lambda_0$. Hence, $\varphi_{h,k}=0$.
			\item $t>U$.\\
			If $m\le U$, we consider point (\ref{p:5}) with $r_1=h$, $r_2=k$. Clearly this point satisfies inequality (\ref{eq:tight-min-output-1}) at equality. We have shown that $\omega_j=0$, $j\in[\max(1,m),m+M-1]\backslash\{t\}$ and $\varphi_{m,k}=\lambda_0$, $t\notin[m,k]$ above. If Point (\ref{p:5}) satisfies equality (\ref{pr:7}), then we get $\varphi_{m,U}+\varphi_{h,k}=\lambda_0$. Hence, $\varphi_{h,k}=0$. If $m>U$, we consider point (\ref{p:3}) with $r_1=h$, $r_2=k$. Clearly this point satisfies inequality (\ref{eq:tight-min-output-1}) at equality. We have shown that $\lambda_0=0$ when $m>U$ in part 1. If Point (\ref{p:3}) satisfies equality (\ref{pr:7}), then we get $\varphi_{h,k}=\lambda_0$. Hence, $\varphi_{h,k}=0$.
		\end{enumerate}
	\end{enumerate}
	According to theorem 3.6 of §$I.4.3$ in \cite{wolsey1988integer}, we get Proposition \ref{prp7}.
\end{proof}

\begin{proof}[\textbf{Proof of Proposition \upshape\ref{prp8}}]
	Necessity: For contradiction we assume that all of the conditions are not satisfied. Then we have:
	\begin{enumerate}
		\item $t>\max(1,m)$.
		\item $t\le K$.
        \item $\widetilde{P}_0+t\widetilde{P}_{up}\le 1$.
        \item $K\ge m+M-1$ or $\widetilde{P}_{shut}+(K-t)\widetilde{P}_{down}\ge\widetilde{P}_0+t\widetilde{P}_{up}$.
	\end{enumerate}
    We have $\widetilde{P}_0-(t-1)\widetilde{P}_{down}\ge\widetilde{P}_{shut}$ or $t\le W$. If $\widetilde{P}_0-(t-1)\widetilde{P}_{down}=\widetilde{P}_{shut}$, the unit must remain operational at least until time period $t-1$ (i.e., $t-1\le U$). If $\widetilde{P}_0-(t-1)\widetilde{P}_{down}>\widetilde{P}_{shut}$ or $t\le W$, the unit must remain operational at least until time period $t$ (i.e., $t\le U$). It should be noted that $K$ is equal to $U$ or $U+1$. We consider the following three cases:
	\begin{enumerate}
		\item $K\ge m+M-1,U\ge m+M-1$.\\
		 According to the definition of $A_m$ and (\ref{eq:new-initial-status-1}), inequality (\ref{eq:tight-ramp-up-1}) can be written as $\widetilde{P}_t-\widetilde{P}_{t-a}\le a\widetilde{P}_{up}$, and (\ref{eq:tight-max-output-1}) can be written as $\widetilde{P}_t\le\widetilde{P}_0+t\widetilde{P}_{up}$. Inequality (\ref{eq:tight-max-output-1}) with $t>m$ is dominated by inequalities (\ref{eq:tight-max-output-1}) with $t=m$ and (\ref{eq:tight-ramp-up-1}) with $t-a=m$.
		\item $K\ge m+M-1,U=m+M-2$.\\
		We have $\widetilde{P}_{shut}+U\widetilde{P}_{down}=\widetilde{P}_0$. According to the definition of $A_m$, inequalities (\ref{eq:tight-max-output-1})(\ref{eq:tight-ramp-up-1}) can be written as $\widetilde{P}_t\le\tau^m_{m,U}[\widetilde{P}_{shut}+(U-t)\widetilde{P}_{down}]+\tau^m_{m,m+M-1}(\widetilde{P}_0+t\widetilde{P}_{up})$ and $\widetilde{P}_t-\widetilde{P}_{t-a}\le-\tau^m_{m,U}a\widetilde{P}_{down}+\tau^m_{m,m+M-1}a\widetilde{P}_{up}$ respectively when $t\le U$. Inequalities (\ref{eq:tight-max-output-1})(\ref{eq:tight-ramp-up-1}) can be written as $\widetilde{P}_t\le\tau^m_{m,m+M-1}(\widetilde{P}_0+t\widetilde{P}_{up})$ and $\widetilde{P}_t-\widetilde{P}_{t-a}\le\tau^m_{m,m+M-1}a\widetilde{P}_{up}-\tau^m_{m,U}[\widetilde{P}_0-(t-a)\widetilde{P}_{down}]$ respectively when $t>U$. Inequality (\ref{eq:tight-max-output-1}) with $t>m$ is dominated by inequalities (\ref{eq:tight-max-output-1}) with $t=m$ and (\ref{eq:tight-ramp-up-1}) with $t-a=m$.
		\item $\widetilde{P}_{shut}+(K-t)\widetilde{P}_{down}\ge\widetilde{P}_0+t\widetilde{P}_{up}$.\\
		We have $(K-1)\widetilde{P}_{down}>\widetilde{P}_0-\widetilde{P}_{shut}$. Thus, $t\le U=K=W$. According to the definition of $A_m$ and (\ref{eq:new-initial-status-1}), inequality (\ref{eq:tight-ramp-up-1}) can be written as $\widetilde{P}_t-\widetilde{P}_{t-a}\le a\widetilde{P}_{up}$, and (\ref{eq:tight-max-output-1}) can be written as $\widetilde{P}_t\le(\widetilde{P}_0+t\widetilde{P}_{up})$. Inequality (\ref{eq:tight-max-output-1}) with $t>m$ is dominated by inequalities (\ref{eq:tight-max-output-1}) with $t=m$ and (\ref{eq:tight-ramp-up-1}) with $t-a=m$.
	\end{enumerate}
	Sufficiency: If all points of $\widetilde{\mathcal{Q}}_m^I$ that are tight at inequality (\ref{eq:tight-max-output-1}) satisfy
	\begin{equation}
		\sum^{m+M-1}_{j=\max(1,m)}\omega_j\widetilde{P}_j+\sum\nolimits_{[h,k]\in A_m}\varphi_{h,k}\tau^m_{h,k}=\lambda_0\label{pr:8}
	\end{equation}
	then
	\begin{enumerate}
		\item $\lambda_0=0$ when $m>U$.\\
		If $m>U$, point (\ref{p:1}) is valid and satisfies inequality (\ref{eq:tight-max-output-1}) at equality. If Point (\ref{p:1}) satisfies equality (\ref{pr:8}), then $\lambda_0=0$.
		\item $\omega_j=0,j\in[\max(1,m),m+M-1]\backslash\{t\}$.\\
		Consider the following two cases:
		\begin{enumerate}
			\item $t<j\le m+M-1$.\\
			Consider point (\ref{p:22}) and point (\ref{p:23}) with $r_3=t$, $r_0=j$. Both points are valid and satisfy inequality (\ref{eq:tight-max-output-1}) at equality. Because both points satisfy equality (\ref{pr:8}), we get $\omega_j\times\min(1,\widetilde{P}_0+t\widetilde{P}_{up})=\omega_j\lvert\min(1,\widetilde{P}_0+t\widetilde{P}_{up})-\varepsilon\rvert$. Hence $\omega_j=0$.
			\item $\max(1,m)\le j<t$.\\
			If $t=m$, we don’t need to consider this case. Otherwise, consider the following two cases:
			\begin{enumerate}
				\item $\min(K,\frac{1-\widetilde{P}_0}{\widetilde{P}_{up}})<t$.\\
				If $t>K$, we get $\frac{\widetilde{P}_0-\widetilde{P}_{shut}}{t-1}<\widetilde{P}_{down}$. Consider point (\ref{p:10}) and point (\ref{p:11}) with $r_1=m$, $r_2=t-1$, $r_0=j$. Both points are valid and satisfy inequality (\ref{eq:tight-max-output-1}) at equality. Because both points satisfy equality (\ref{pr:8}), we get $\omega_j\{\max(0,\widetilde{P}_0-r_1\widetilde{P}_{down})-(j-r_1)\times\frac{\max[0,\max(0,\widetilde{P}_0-r_1\widetilde{P}_{down})-\widetilde{P}_{shut}]}{t-m-1}\}=\omega_j\lvert\max(0,\widetilde{P}_0-r_1\widetilde{P}_{down})-(j-r_1)\times\frac{\max[0,\max(0,\widetilde{P}_0-r_1\widetilde{P}_{down})-\widetilde{P}_{shut}]}{t-m-1}-\varepsilon\rvert$. Hence $\omega_j=0$. If $\frac{1-\widetilde{P}_0}{\widetilde{P}_{up}}<t$, we get $1<\widetilde{P}_0+t\widetilde{P}_{up}$. Consider point (\ref{p:22}) and point (\ref{p:23}) with $r_3=t$, $r_0=j$. Both points are valid and satisfy inequality (\ref{eq:tight-max-output-1}) at equality. Because both points satisfy equality (\ref{pr:8}), we get $\omega_j[1-(t-j)\times\frac{1-\widetilde{P}_0}{t}]=\omega_j \lvert1-(t-j)\times\frac{1-\widetilde{P}_0}{t}-\varepsilon\rvert$. Hence $\omega_j=0$.
				\item $K<m+M-1,\max(\frac{\widetilde{P}_{shut}-\widetilde{P}_0}{\widetilde{P}_{up}},\frac{K\widetilde{P}_{down}+\widetilde{P}_{shut}-\widetilde{P}_0}{\widetilde{P}_{up}+\widetilde{P}_{down}})<t<m+M-1$.\\
				We get $\max(K,t)<m+M-1$, $\widetilde{P}_{shut}+[\max(K,t)-t]\widetilde{P}_{down}<\widetilde{P}_0+t\widetilde{P}_{up}$. Consider point (\ref{p:8}) and point (\ref{p:9}) with $r_2=\max(K,t)$, $r_0=j$, $r_3=t$. Both points are valid and satisfy inequality (\ref{eq:tight-max-output-1}) at equality. Because both points satisfy equality (\ref{pr:8}), we get $\omega_j[\widetilde{P}_t-(t-j)\times\frac{\widetilde{P}_t-\widetilde{P}_0}{t}]=\omega_j [\widetilde{P}_t-(t-j)\times\frac{\widetilde{P}_t-\widetilde{P}_0}{t}-\varepsilon]$. Hence $\omega_j=0$.
			\end{enumerate}
		\end{enumerate}
		\item $\varphi_{m,k}=\lambda_0,t\notin[m,k]$.\\
		If $[m,k]\in A_m$ exists such that $t\notin[m,k]$, we consider point (\ref{p:10}) with $r_1=m$, $r_2=k$. Clearly this point is valid and satisfies inequality (\ref{eq:tight-max-output-1}) at equality. We have shown that $\omega_j=0$, $j\in[\max(1,m),m+M-1]\backslash\{t\}$ in part 2. If Point (\ref{p:10}) satisfies equality (\ref{pr:8}), then $\varphi_{m,k}=\lambda_0$.
		\item $\varphi_{m,k}=\lambda_0-\omega_t\times\min[1,\widetilde{P}_0+t\widetilde{P}_{up},\widetilde{P}_{shut}+(k-t)\widetilde{P}_{down}],m\le t\le k<m+M-1$.\\
		If $[m,k]\in A_m$ exists such that $m\le t\le k<m+M-1$, we consider point (\ref{p:8}) with $r_2=k$, $r_3=t$. Clearly this point is valid and satisfies inequality (\ref{eq:tight-max-output-1}) at equality. We have shown that $\omega_j=0,j\in[\max(1,m),m+M-1]\backslash\{t\}$ in part 2. If Point (\ref{p:8}) satisfies equality (\ref{pr:8}), then we get $\omega_t\times\min[1,\widetilde{P}_0+t\widetilde{P}_{up},\widetilde{P}_{shut}+(k-t)\widetilde{P}_{down}]+\varphi_{m,k}=\lambda_0$. Hence, $\varphi_{m,k}=\lambda_0-\omega_t\times\min[1,\widetilde{P}_0+t\widetilde{P}_{up},\widetilde{P}_{shut}+(k-t)\widetilde{P}_{down}]$.
		\item $\varphi_{m,m+M-1}=\lambda_0-\omega_t\times\min(1,\widetilde{P}_0+t\widetilde{P}_{up})$.\\
		We consider point (\ref{p:22}) with $r_3=t$. Clearly this point is valid and satisfies inequality (\ref{eq:tight-max-output-1}) at equality. We have shown that $\omega_j=0,j\in[\max(1,m),m+M-1]\backslash\{t\}$ in part 2. If Point (\ref{p:22}) satisfies equality (\ref{pr:8}), then we get $\omega_t\times\min(1,\widetilde{P}_0+t\widetilde{P}_{up})+\varphi_{m,m+M-1}=\lambda_0$. Hence, $\varphi_{m,m+M-1}=\lambda_0-\omega_t\times\min(1,\widetilde{P}_0+t\widetilde{P}_{up})$.
		\item $\varphi_{h,k}=0,h\ne m,t\notin[h,k]$.\\
		If $[h,k]\in A_m,h\ne m$ exists such that $t\notin[h,k]$, we consider the following two cases:
		\begin{enumerate}
			\item $t\le U$.\\
			Consider point (\ref{p:16}) with $r_1=h$, $r_2=r_4=k$, $r_3=t$. Clearly this point satisfies inequality (\ref{eq:tight-max-output-1}) at equality. We have shown that $\omega_j=0$, $j\in[\max(1,m),m+M-1]\backslash\{t\}$ and $\varphi_{m,k}=\lambda_0-\omega_t\times\min[1,\widetilde{P}_0+t\widetilde{P}_{up},\widetilde{P}_{shut}+(k-t)\widetilde{P}_{down}]$, $m\le t\le k<m+M-1$ above. If Point (\ref{p:16}) satisfies equality (\ref{pr:8}), then we get $\omega_t\times\min[1,\widetilde{P}_0+t\widetilde{P}_{up},\widetilde{P}_{shut}+(U-t)\widetilde{P}_{down} ]+\varphi_{m,U}+\varphi_{h,k}=\lambda_0$. Hence, $\varphi_{h,k}=0$.
			\item $t>U$.\\
			If $m\le U$, we consider point (\ref{p:16}) with $r_1=h$, $r_2=r_4=k$, $r_3=U$. Clearly this point satisfies inequality (\ref{eq:tight-max-output-1}) at equality. We have shown that $\omega_j=0$, $j\in[\max(1,m),m+M-1]\backslash\{t\}$ and $\varphi_{m,k}=\lambda_0$, $t\notin[m,k]$ above. If Point (\ref{p:16}) satisfies equality (\ref{pr:8}), then we get $\varphi_{m,U}+\varphi_{h,k}=\lambda_0$. Hence, $\varphi_{h,k}=0$. If $m>U$, we consider point (\ref{p:3}) with $r_1=h$, $r_2=k$. Clearly this point satisfies inequality (\ref{eq:tight-max-output-1}) at equality. We have shown that $\lambda_0=0$ when $m>U$ in part 1. If Point (\ref{p:3}) satisfies equality (\ref{pr:8}), then we get $\varphi_{h,k}=\lambda_0$. Hence, $\varphi_{h,k}=0$.
		\end{enumerate}
		\item $\varphi_{h,k}=-\omega_t\times\min[1,\widetilde{P}_{start}+(t-h)\widetilde{P}_{up},\widetilde{P}_{shut}+(k-t)\widetilde{P}_{down}],m<h\le t\le k<m+M-1$.\\
		If $[h,k]\in A_m$ exists such that $m<h\le t\le k<m+M-1$, we consider the following two cases:
		\begin{enumerate}
			\item $m\le U$.\\
			Consider point (\ref{p:16}) with $r_1=h$, $r_2=k$, $r_3=U$, $r_4=t$. Clearly this point is valid and satisfies inequality (\ref{eq:tight-max-output-1}) at equality. We have shown that $\omega_j=0,j\in[\max(1,m),m+M-1]\backslash\{t\}$ and $\varphi_{m,k}=\lambda_0$, $t\notin[m,k]$ above. If Point (\ref{p:16}) satisfies equality (\ref{pr:8}), then we get $\omega_t\times\min[1,\widetilde{P}_{start}+(t-h)\widetilde{P}_{up},\widetilde{P}_{shut}+(k-t)\widetilde{P}_{down}]+\varphi_{m,U}+\varphi_{h,k}=\lambda_0$. Hence, $\varphi_{h,k}=-\omega_t\times\min[1,\widetilde{P}_{start}+(t-h)\widetilde{P}_{up},\widetilde{P}_{shut}+(k-t)\widetilde{P}_{down}]$.
			\item $m>U$.\\
			Consider point (\ref{p:12}) with $r_1=h$, $r_2=k$, $r_3=t$. Clearly this point is valid and satisfies inequality (\ref{eq:tight-max-output-1}) at equality. We have shown that $\omega_j=0,j\in[\max(1,m),m+M-1]\backslash\{t\}$ and $\lambda_0=0$ when $m>U$ above. If Point (\ref{p:12}) satisfies equality (\ref{pr:8}), then we get $\omega_t\times\min[1,\widetilde{P}_{start}+(t-h)\widetilde{P}_{up},\widetilde{P}_{shut}+(k-t)\widetilde{P}_{down}]+\varphi_{h,k}=\lambda_0$. Hence, $\varphi_{h,k}=-\omega_t\times\min[1,\widetilde{P}_{start}+(t-h)\widetilde{P}_{up},\widetilde{P}_{shut}+(k-t)\widetilde{P}_{down}]$.
		\end{enumerate}
		\item $\varphi_{h,m+M-1}=-\omega_t\times\min[1,\widetilde{P}_{start}+(t-h)\widetilde{P}_{up}],m<h\le t$.\\
		If $[h,m+M-1]\in A_m$ exists such that $m<h\le t$, we consider the following two cases:
		\begin{enumerate}
			\item $m\le U$.\\
			Consider point (\ref{p:20}) with $r_1=h$, $r_3=t$. Clearly this point is valid and satisfies inequality (\ref{eq:tight-max-output-1}) at equality. We have shown that $\omega_j=0,j\in[\max(1,m),m+M-1]\backslash\{t\}$ and $\varphi_{m,k}=\lambda_0$, $t\notin[m,k]$ above. If Point (\ref{p:20}) satisfies equality (\ref{pr:8}), then we get $\omega_t\times\min[1,\widetilde{P}_{start}+(t-h)\widetilde{P}_{up}]+\varphi_{m,U}+\varphi_{h,k}=\lambda_0$. Hence, $\varphi_{h,k}=-\omega_t\times\min[1,\widetilde{P}_{start}+(t-h)\widetilde{P}_{up}]$.
			\item $m>U$.\\
			Consider point (\ref{p:18}) with $r_1=h$, $r_3=t$. Clearly this point is valid and satisfies inequality (\ref{eq:tight-max-output-1}) at equality. We have shown that $\omega_j=0,j\in[\max(1,m),m+M-1]\backslash\{t\}$ and $\lambda_0=0$ when $m>U$ above. If Point (\ref{p:18}) satisfies equality (\ref{pr:8}), then we get $\omega_t\times\min[1,\widetilde{P}_{start}+(t-h)\widetilde{P}_{up}]+\varphi_{h,k}=\lambda_0$. Hence, $\varphi_{h,k}=-\omega_t\times\min[1,\widetilde{P}_{start}+(t-h)\widetilde{P}_{up}]$.
		\end{enumerate}
	\end{enumerate}
	According to theorem 3.6 of §$I.4.3$ in \cite{wolsey1988integer}, we get Proposition \ref{prp8}.
\end{proof}

\begin{proof}[\textbf{Proof of Proposition \upshape\ref{prp9}}]
	Necessity: For contradiction we assume that one of condition $\mathcal{M}_1$ and condition $\mathcal{M}_2$ is not satisfied.\\
	If condition $\mathcal{M}_1$ is not satisfied. We have
	\begin{enumerate}
		\item $a>1$.
		\item $t\le K$.
		\item $a\widetilde{P}_{up}\le 1-\max[0,\widetilde{P}_0-(t-a)\widetilde{P}_{down}]$.
		\item $K\ge m+M-1$ or $a\widetilde{P}_{up}\le\widetilde{P}_{shut}+(K-t)\widetilde{P}_{down}-\max[0,\widetilde{P}_0-(t-a)\widetilde{P}_{down}]$.
		\item $a\le\underline{T}_{on}$ or $t-a<U+\underline{T}_{off}$.
        \item $\widetilde{P}_{start}+(a-1)\widetilde{P}_{up}\le1$ or $t-a<U+\underline{T}_{off}$.
		\item $t\ge m+M-1$ or $t-a<U+\underline{T}_{off}$ or $\widetilde{P}_{start}+(a-1)\widetilde{P}_{up}\le\widetilde{P}_{shut}+[\max(t,t-a+\underline{T}_{on})-t]\widetilde{P}_{down}$.
	\end{enumerate}
    We have $\widetilde{P}_0-(t-1)\widetilde{P}_{down}\ge\widetilde{P}_{shut}$ or $t\le W$. If $\widetilde{P}_0-(t-1)\widetilde{P}_{down}=\widetilde{P}_{shut}$, the unit must remain operational at least until time period $t-1$ (i.e., $t-1\le U$). If $\widetilde{P}_0-(t-1)\widetilde{P}_{down}>\widetilde{P}_{shut}$ or $t\le W$, the unit must remain operational at least until time period $t$ (i.e., $t\le U$). It should be noted that $K$ is equal to $U$ or $U+1$. We have $t-a<t\le K\le U+1\le U+\underline{T}_{off}$. We consider the following three cases:
	\begin{enumerate}
		\item $K\ge m+M-1,U\ge m+M-1$.\\
		According to the definition of $A_m$ and (\ref{eq:new-initial-status-1}), inequality (\ref{eq:tight-ramp-up-1}) can be written as $\widetilde{P}_t-\widetilde{P}_{t-a}\le a\widetilde{P}_{up}$. Inequality (\ref{eq:tight-ramp-up-1}) with $a>1$ is dominated by inequalities (\ref{eq:tight-ramp-up-1}) with $a=1$.
		\item $K\ge m+M-1,U=m+M-2$.\\
		We have $\widetilde{P}_{shut}+U\widetilde{P}_{down}=\widetilde{P}_0$. According to the definition of $A_m$, inequality (\ref{eq:tight-ramp-up-1}) can be written as $\widetilde{P}_t-\widetilde{P}_{t-a}\le-\tau^m_{m,U}a\widetilde{P}_{down}+\tau^m_{m,m+M-1}a\widetilde{P}_{up}$ when $t\le U$. (\ref{eq:tight-ramp-up-1}) can be written as $\widetilde{P}_t-\widetilde{P}_{t-a}\le\tau^m_{m,m+M-1}a\widetilde{P}_{up}-\tau^m_{m,U}[\widetilde{P}_0-(t-a)\widetilde{P}_{down}]$ when $t>U$. Inequality (\ref{eq:tight-ramp-up-1}) with $a>1$ is dominated by inequalities (\ref{eq:tight-ramp-up-1}) with $a=1$.
		\item $a\widetilde{P}_{up}\le\widetilde{P}_{shut}+(K-t)\widetilde{P}_{down}-\max[0,\widetilde{P}_0-(t-a)\widetilde{P}_{down}]$.\\
		We have $(K-1)\widetilde{P}_{down}>\widetilde{P}_0-\widetilde{P}_{shut}$. Thus, $t\le U=K=W$. According to the definition of $A_m$ and (\ref{eq:new-initial-status-1}), inequality (\ref{eq:tight-ramp-up-1}) can be written as $\widetilde{P}_t-\widetilde{P}_{t-a}\le a\widetilde{P}_{up}$. Inequality (\ref{eq:tight-ramp-up-1}) with $a>1$ is dominated by inequalities (\ref{eq:tight-ramp-up-1}) with $a=1$.
	\end{enumerate}
    If condition $\mathcal{M}_2$ is not satisfied. We have
    \begin{enumerate}
    	\item $t-a>0$.
    	\item $a\widetilde{P}_{up}\ge 1-\max[0,\widetilde{P}_0-(t-a)\widetilde{P}_{down}]$.
    	\item $a\widetilde{P}_{up}\ge 1$ or $t-a\le U+\underline{T}_{off}$.
    \end{enumerate}
    If $a\widetilde{P}_{up}\ge 1$, inequality (\ref{eq:tight-ramp-up-1}) can be written as $\widetilde{P}_t-\widetilde{P}_{t-a}\le\sum\nolimits_{\{[m,k] \in A_m,t\le k<m+M-1\}}\tau^m_{m,k}\{\min[1,\widetilde{P}_{shut}+(k-t)\widetilde{P}_{down}]-\max[0,\widetilde{P}_0-(t-a)\widetilde{P}_{down}]\}+\sum\nolimits_{\{[m,m+M-1] \in A_m\}}\tau^m_{m,m+M-1}\{1-\max[0,\widetilde{P}_0-(t-a)\widetilde{P}_{down}]\}-\sum\nolimits_{\{[m,k] \in A_m,t-a\le k<t\}}\tau^m_{m,k}\max[0,\widetilde{P}_0-(t-a)\widetilde{P}_{down}]+\sum\nolimits_{\{[h,k] \in A_m,t-a<h\le t\le k<m+M-1\}}\tau^m_{h,k}\min[1,\widetilde{P}_{start}+(t-h)\widetilde{P}_{up},\widetilde{P}_{shut}+(k-t)\widetilde{P}_{down}]+\sum\nolimits_{\{[h,m+M-1]\in A_m,t-a<h\le t\}}\tau^m_{h,m+M-1}\min[1,\widetilde{P}_{start}+(t-h)\widetilde{P}_{up}]+\sum\nolimits_{\{[h,k] \in A_m,U+\underline{T}_{off}<h\le t-a<t\le k<m+M-1\}}\tau^m_{h,k}\min\{1,\widetilde{P}_{shut}+(k-t)\widetilde{P}_{down}\}+\sum\nolimits_{\{[h,m+M-1] \in A_m,U+\underline{T}_{off}<h\le t-a\}}\tau^m_{h,m+M-1}$. If $t-a\le U+\underline{T}_{off}$, inequality (\ref{eq:tight-ramp-up-1}) can be written as $\widetilde{P}_t-\widetilde{P}_{t-a}\le\sum\nolimits_{\{[m,k] \in A_m,t\le k<m+M-1\}}\tau^m_{m,k}\{\min[1,\widetilde{P}_{shut}+(k-t)\widetilde{P}_{down}]-\max[0,\widetilde{P}_0-(t-a)\widetilde{P}_{down}]\}+\sum\nolimits_{\{[m,m+M-1] \in A_m\}}\tau^m_{m,m+M-1}\{1-\max[0,\widetilde{P}_0-(t-a)\widetilde{P}_{down}]\}-\sum\nolimits_{\{[m,k] \in A_m,t-a\le k<t\}}\tau^m_{m,k}\max[0,\widetilde{P}_0-(t-a)\widetilde{P}_{down}]+\sum\nolimits_{\{[h,k] \in A_m,t-a<h\le t\le k<m+M-1\}}\tau^m_{h,k}\min[1,\widetilde{P}_{start}+(t-h)\widetilde{P}_{up},\widetilde{P}_{shut}+(k-t)\widetilde{P}_{down}]+\sum\nolimits_{\{[h,m+M-1]\in A_m,t-a<h\le t\}}\tau^m_{h,m+M-1}\min[1,\widetilde{P}_{start}+(t-h)\widetilde{P}_{up}]$. Inequality (\ref{eq:tight-ramp-up-1}) is dominated by inequalities (\ref{eq:tight-max-output-1}) and (\ref{eq:tight-min-output-1}).\\
	Sufficiency: If all points of $\widetilde{\mathcal{Q}}_m^I$ that are tight at inequality (\ref{eq:tight-ramp-up-1}) satisfy
	\begin{equation}
		\sum^{m+M-1}_{j=\max(1,m)}\omega_j\widetilde{P}_j+\sum\nolimits_{[h,k]\in A_m}\varphi_{h,k}\tau^m_{h,k}=\lambda_0\label{pr:9}
	\end{equation}
	then
	\begin{enumerate}
		\item $\lambda_0=0$ when $m>U$.\\
		If $m>U$, point (\ref{p:1}) satisfies inequality (\ref{eq:tight-ramp-up-1}) at equality. We get $\widetilde{P}_t=0$. If Point (\ref{p:1}) satisfies equality (\ref{pr:9}), then $\lambda_0=0$.
		\item $\omega_j=0,j\in[m,m+M-1]\backslash\{t-a,t\}$.\\
		There are three cases to be considered.
		\begin{enumerate}
			\item $t<j\le m+M-1$.\\
			Consider point (\ref{p:44}) and point (\ref{p:45}) with $r_3=t$, $r_4=t-a$, $r_0=j$, $\widetilde{P}_{ramp}=\min\{1-\max[0,\widetilde{P}_0-(t-a)\widetilde{P}_{down}],a\widetilde{P}_{up}\}$. Both points are valid and satisfy inequality (\ref{eq:tight-ramp-up-1}) at equality. Because both points satisfy equality (\ref{pr:9}), we get $\omega_j\times\min(1,\widetilde{P}_0+t\widetilde{P}_{up})=\omega_j\lvert\min(1,\widetilde{P}_0+t\widetilde{P}_{up})-\varepsilon\rvert$. Hence $\omega_j=0$.
			\item $t-a<j<t$.\\
			If $a=1$, we don't need to consider this case. Otherwise, consider the following six cases:
			\begin{enumerate}
				\item 	$t>K$.\\
				We get $\frac{\widetilde{P}_0-\widetilde{P}_{shut}}{t-1}<\widetilde{P}_{down}$. Consider point (\ref{p:10}) and point (\ref{p:11}) with $r_1=t-a$, $r_2=t-1$, $r_0=j$. Both points are valid and satisfy inequality (\ref{eq:tight-ramp-up-1}) at equality. Because both points satisfy equality (\ref{pr:9}), we get $\omega_j[\widetilde{P}_{t-a}-(j-t+a)\times\frac{\max(0,\widetilde{P}_{t-a}-\widetilde{P}_{shut})}{a-1}]=\omega_j\lvert\widetilde{P}_{t-a}-(j-t+a)\times\frac{\max(0,\widetilde{P}_{t-a}-\widetilde{P}_{shut})}{a-1}-\varepsilon\rvert$. Hence $\omega_j=0$.
				\item $\min(\frac{1}{\widetilde{P}_{up}},\frac{1-\widetilde{P}_0+t\widetilde{P}_{down}}{\widetilde{P}_{up}+\widetilde{P}_{down}})<a$.\\
				We get $1-\max[0,\widetilde{P}_0-(t-a)\widetilde{P}_{down}]<a\widetilde{P}_{up}$. Consider point (\ref{p:44}) and point (\ref{p:45}) with $r_3=t$, $r_4=t-a$, $r_0=j$, $\widetilde{P}_{ramp}=1-\max[0,\widetilde{P}_0-(t-a)\widetilde{P}_{down}]$. Both points are valid and satisfy inequality (\ref{eq:tight-ramp-up-1}) at equality. Because both points satisfy equality (\ref{pr:9}), we get $\omega_j\{1-(t-j)\times\frac{1-\max[0,\widetilde{P}_0-(t-a)\widetilde{P}_{down}]}{a}\}=\omega_j\lvert1-(t-j)\times\frac{1-\max[0,\widetilde{P}_0-(t-a)\widetilde{P}_{down}]}{a}-\varepsilon\rvert$. Hence $\omega_j=0$.
				\item $\underline{T}_{on}<a\le t-U-\underline{T}_{off}$.\\
				If  $m\le U$, we consider point (\ref{p:5}) and point (\ref{p:6}), if  $m>U$, we consider point (\ref{p:3}) and point (\ref{p:4}). We let $r_1=t-a+1$, $r_2=t-1$, $r_0=j$. Both points are valid and satisfy inequality (\ref{eq:tight-ramp-up-1}) at equality. Because both points satisfy equality (\ref{pr:9}), we get $\omega_j\times 0=\omega_j\varepsilon$. Hence $\omega_j=0$.
				\item $\frac{1-\widetilde{P}_{start}}{\widetilde{P}_{up}}+1<a\le t-U-\underline{T}_{off}$.\\
				We get $1<\widetilde{P}_{start}+(a-1)\widetilde{P}_{up}$, $U+\underline{T}_{{off}}\le t-a$. If $m\le U$, we consider point (\ref{p:20}) and point (\ref{p:21}), if  $m>U$, we consider point (\ref{p:18}) and point (\ref{p:19}). We let $r_1=t-a+1$, $r_3=t$, $r_0=j$. Both points are valid and satisfy inequality (\ref{eq:tight-ramp-up-1}) at equality. Because both points satisfy equality (\ref{pr:9}), we get $\omega_j\times[1-(t-j)\times\frac{1-\widetilde{P}_{start}}{a-1}]=\omega_j\times\lvert1-(t-j)\times\frac{1-\widetilde{P}_{start}}{a-1}-\varepsilon\rvert$. Hence $\omega_j=0$.
				\item $\max(t,K)<m+M-1$,\\
				$\min\{\frac{\widetilde{P}_{shut}+[K-t]^+ \widetilde{P}_{down}}{\widetilde{P}_{up}},\frac{\widetilde{P}_{shut}+\max(t,K)\times\widetilde{P}_{down}-\widetilde{P}_0}{\widetilde{P}_{up}+\widetilde{P}_{down}}\}<a$.\\
				We get $\widetilde{P}_{shut}+[\max(t,K)-t]\widetilde{P}_{down}-\max[0,\widetilde{P}_0-(t-a)\widetilde{P}_{down}]<a\widetilde{P}_{up}$. Consider point (\ref{p:41}) and point (\ref{p:42}) with $r_2=\max(t,K)$, $r_3=t$, $r_4=t-a$, $r_0=j$, $\widetilde{P}_{ramp}=\min\{1,\widetilde{P}_{shut}+[\max(t,K)-t]\widetilde{P}_{down}\}-\max[0,\widetilde{P}_0-(t-a)\widetilde{P}_{down}]>-a\widetilde{P}_{down}$. Both points are valid and satisfy inequality (\ref{eq:tight-ramp-up-1}) at equality. Because both points satisfy equality (\ref{pr:9}), we get $\omega_j[\widetilde{P}_t-(t-j)\times\frac{\widetilde{P}_{ramp}}{a}]=\omega_j\lvert\widetilde{P}_t-(t-j)\times\frac{\widetilde{P}_{ramp}}{a}-\varepsilon\rvert$. Hence $\omega_j=0$.
				\item $t<m+M-1,\max[\frac{\widetilde{P}_{shut}-\widetilde{P}_{start}}{\widetilde{P}_{up}}+1,\frac{(\underline{T}_{on}-1)\widetilde{P}_{down}+\widetilde{P}_{shut}-\widetilde{P}_{start}}{\widetilde{P}_{up}+\widetilde{P}_{down}}+1,t+\underline{T}_{on}-m-M+1]<a\le t-U-\underline{T}_{off}$.\\
				We get $\max(t,t-a+\underline{T}_{on})<m+M-1$, $t-a\ge U+\underline{T}_{off}$, $\widetilde{P}_{shut}+[\max(t,t-a+\underline{T}_{on})-t]\widetilde{P}_{down}<\widetilde{P}_{start}+(a-1)\widetilde{P}_{up}$. If $m\le U$, we consider point (\ref{p:14}) and point (\ref{p:15}), if $m>U$, we consider point (\ref{p:12}) and point (\ref{p:13}). We let $r_1=t-a+1$, $r_2=\max(t,t-a+\underline{T}_{on})$, $r_3=U$, $r_4=t$, $r_0=j$. Both points are valid and satisfy inequality (\ref{eq:tight-ramp-up-1}) at equality. Because both points satisfy equality (\ref{pr:9}), we get $\omega_j\times[\widetilde{P}_t-(t-j)\times\frac{\max(0,\widetilde{P}_t-\widetilde{P}_{start})}{a-1}]=\omega_j\times\lvert\widetilde{P}_t-(t-j)\times\frac{\max(0,\widetilde{P}_t-\widetilde{P}_{start})}{a-1}-\varepsilon\rvert$. Hence $\omega_j=0$.
			\end{enumerate}
		    \item $\max(1,m)\le j<t-a$.\\
		    If  $t-a=0$ or $t-a=\max(1,m)$, we don’t need to consider this case. Otherwise, consider the following two cases:
		    \begin{enumerate}
		    	\item $a<\min(\frac{1}{\widetilde{P}_{up}},\frac{1-\widetilde{P}_0+t\widetilde{P}_{down}}{\widetilde{P}_{up}+\widetilde{P}_{down}})$.\\
		    	We get $a\widetilde{P}_{up}<1-\max[0,\widetilde{P}_0-(t-a)\widetilde{P}_{down}]$. Consider point (\ref{p:46}) and point (\ref{p:47}) with $r_3=t$, $r_4=t-a$, $r_0=j$, $\widetilde{P}_{ramp}=a\widetilde{P}_{up}$. Both points are valid and satisfy inequality (\ref{eq:tight-ramp-up-1}) at equality. Because both points satisfy equality (\ref{pr:9}), we get $\omega_j (\widetilde{P}_0-j\times\frac{\widetilde{P}_0-\widetilde{P}_{t-a}}{t-a})=\omega_j (\widetilde{P}_0-j\times\frac{\widetilde{P}_0-\widetilde{P}_{t-a}}{t-a}-\varepsilon)$. Hence $\omega_j=0$.
		    	\item $a<\min(\frac{1}{\widetilde{P}_{up}},t-U-\underline{T}_{off})$.\\
		    	We get $t-a>U+\underline{T}_{off}\ge U+1\ge K$. Then we have $\frac{\widetilde{P}_0-\widetilde{P}_{shut}}{t-a-1}<\widetilde{P}_{down}$. Consider point (\ref{p:10}) and point (\ref{p:11}) with $r_1=m$, $r_2=t-a-1$, $r_0=j$. Both points are valid and satisfy inequality (\ref{eq:tight-ramp-up-1}) at equality. Because both points satisfy equality (\ref{pr:9}), we get $\omega_j\{\max(0,\widetilde{P}_0-r_1\widetilde{P}_{down})-(j-r_1)\times\frac{\max[0,\max(0,\widetilde{P}_0-r_1\widetilde{P}_{down})-\widetilde{P}_{shut}]}{t-a-m-1}\}=\omega_j\lvert\max(0,\widetilde{P}_0-r_1\widetilde{P}_{down})-(j-r_1)\times\frac{\max[0,\max(0,\widetilde{P}_0-r_1\widetilde{P}_{down})-\widetilde{P}_{shut}]}{t-a-m-1}-\varepsilon\rvert$. Hence $\omega_j=0$.
		    \end{enumerate}
		\end{enumerate}
	    \item $\omega_{t-a}=-\omega_t$  when $t-a\ne 0$.\\
	    Consider the following two cases:
	    \begin{enumerate}
	    	\item $a<\min(\frac{1}{\widetilde{P}_{up}},\frac{1-\widetilde{P}_0+t\widetilde{P}_{down}}{\widetilde{P}_{up}+\widetilde{P}_{down}})$.\\
	    	We get $a\widetilde{P}_{up}<1-\max[0,\widetilde{P}_0-(t-a)\widetilde{P}_{down}]$. Consider point (\ref{p:44}) and point (\ref{p:46}) with $r_3=t$, $r_4=t-a$, $r_0=j$, $\widetilde{P}_{ramp}=a\widetilde{P}_{up}$. Both points are valid and satisfy inequality (\ref{eq:tight-ramp-up-1}) at equality. We have shown that $\omega_j=0$, $j\in[\max(1,m),m+M-1]\{t-a,t\}$ in part 2. Because both points satisfy equality (\ref{pr:9}), we get $\omega_{t-a}[\min(1,\widetilde{P}_0+t\widetilde{P}_{up})-a\widetilde{P}_{up}]+\omega_t\times\min(1,\widetilde{P}_0+t\widetilde{P}_{up})=\omega_{t-a}[\min(1,\widetilde{P}_0+t\widetilde{P}_{up})-\varepsilon-a\widetilde{P}_{up}]+\omega_t[\min(1,\widetilde{P}_0+t\widetilde{P}_{up})-\varepsilon]$. Hence $\omega_{t-a}=-\omega_t$.
	    	\item $a<\min(\frac{1}{\widetilde{P}_{up}},t-U-\underline{T}_{off})$.\\
	    	We get $a\widetilde{P}_{up}<1$, $t-a\ge U+\underline{T}_{off}+1.$ If $m\le U$, we consider point (\ref{p:34}) and point (\ref{p:35}), if  $m>U$, we consider point (\ref{p:32}) and point (\ref{p:33}). We let $r_1=U+\underline{T}_{off}+1$, $r_3=t$, $r_4=t-a$, $r_0=j$, $\widetilde{P}_{ramp}=a\widetilde{P}_{up}$. Both points are valid and satisfy inequality (\ref{eq:tight-ramp-up-1}) at equality. We have shown that $\omega_j=0$, $j\in[\max(1,m),m+M-1]\{t-a,t\}$ in part 2. Because both points satisfy equality (\ref{pr:9}), we get $\omega_{t-a}\{\min[1,\widetilde{P}_{start}+(t-U-\underline{T}_{off}-1)\widetilde{P}_{up}]-a\widetilde{P}_{up}\}+\omega_t\times\min[1,\widetilde{P}_{start}+(t-U-\underline{T}_{off}-1)\widetilde{P}_{up}]=\omega_{t-a}\{\min[1,\widetilde{P}_{start}+(t-U-\underline{T}_{off}-1)\widetilde{P}_{up}]-\varepsilon-a\widetilde{P}_{up}\}+\omega_t\{\min[1,\widetilde{P}_{start}+(t-U-\underline{T}_{off}-1)\widetilde{P}_{up}]-\varepsilon\}$. Hence $\omega_{t-a}=-\omega_t$.
	    \end{enumerate}
	    \item 	$\varphi_{m,k}=\lambda_0,k<t-a$.\\
	    If $[m,k]\in A_m$ exists such that $k<t-a$, consider point (\ref{p:10}) with $r_1=r_2=k$. Clearly this point is valid and satisfies inequality (\ref{eq:tight-ramp-up-1}) at equality. We have shown that $\omega_j=0$, $j\in[\max(1,m),m+M-1]\{t-a,t\}$ above. Because point (\ref{p:10}) satisfies equality (\ref{pr:9}), we get $\varphi_{m,k}=\lambda_0$.
        \item 	$\varphi_{m,k}=\lambda_0+\omega_t\times\max[0,\widetilde{P}_0-(t-a)\widetilde{P}_{down}],t-a\le k<t$.\\
        If $[m,k]\in A_m$ exists such that $t-a\le k<t$, consider point (\ref{p:10}) with $r_1=r_2=k$. Clearly this point is valid and satisfies inequality (\ref{eq:tight-ramp-up-1}) at equality. We have shown that $\omega_j=0$, $j\in[\max(1,m),m+M-1]\{t-a,t\},\omega_{t-a}=-\omega_t$ above. Because point (\ref{p:10}) satisfies equality (\ref{pr:9}), we get $\omega_{t-a}\times\max[0,\widetilde{P}_0-(t-a)\widetilde{P}_{down}]+\varphi_{m,k}=\lambda_0$. Hence, $\varphi_{m,k}=\lambda_0+\omega_t\times\max[0,\widetilde{P}_0-(t-a)\widetilde{P}_{down}]$.
        \item $\varphi_{m,k}=\lambda_0-\omega_t\times\min\{1-\max[0,\widetilde{P}_0-(t-a)\widetilde{P}_{down}],a\widetilde{P}_{up},\widetilde{P}_{shut}+(k-t)\widetilde{P}_{down}-\max[0,\widetilde{P}_0-(t-a)\widetilde{P}_{down}]\},t\le k<m+M-1$.\\
        If $[m,k]\in A_m $ exists such that $t\le k<m+M-1$, we consider point (\ref{p:41}) with $r_2=k$, $r_3=t$, $r_4=t-a$, $\widetilde{P}_{ramp}=\min\{1-\max[0,\widetilde{P}_0-(t-a)\widetilde{P}_{down}],a\widetilde{P}_{up},\widetilde{P}_{shut}+(k-t)\widetilde{P}_{down}-\max[0,\widetilde{P}_0-(t-a)\widetilde{P}_{down}]\}$. Clearly this point satisfies inequality (\ref{eq:tight-ramp-up-1}) at equality. We have shown that $\omega_j=0$, $j\in[\max(1,m),m+M-1]\{t-a,t\}$ and $\omega_{t-a}=-\omega_t$ above. If Point (\ref{p:41}) satisfies equality (\ref{pr:9}), then we get $\omega_{t-a}\times\{\min[1,\widetilde{P}_0+t\widetilde{P}_{up},\widetilde{P}_{shut}+(k-t)\widetilde{P}_{down}]-\min\{1-\max[0,\widetilde{P}_0-(t-a)\widetilde{P}_{down}],a\widetilde{P}_{up},\widetilde{P}_{shut}+(k-t)\widetilde{P}_{down}-\max[0,\widetilde{P}_0-(t-a)\widetilde{P}_{down}]\}\}+\omega_t\times\min[1,\widetilde{P}_0+t\widetilde{P}_{up},\widetilde{P}_{shut}+(k-t)\widetilde{P}_{down}]+\varphi_{m,k}=\lambda_0$. Hence, $\varphi_{m,k}=\lambda_0-\omega_t\times\min\{1-\max[0,\widetilde{P}_0-(t-a)\widetilde{P}_{down}],a\widetilde{P}_{up},\widetilde{P}_{shut}+(k-t)\widetilde{P}_{down}-\max[0,\widetilde{P}_0-(t-a)\widetilde{P}_{down}]\}$.
         \item $\varphi_{m,m+M-1}=\lambda_0-\omega_t\times\min\{1-\max[0,\widetilde{P}_0-(t-a)\widetilde{P}_{down}],a\widetilde{P}_{up}\}$.\\
         If $[m,m+M-1]\in A_m$ exists, we consider point (\ref{p:44}) with $r_3=t$, $r_4=t-a$, $\widetilde{P}_{ramp}=\min\{1-\max[0,\widetilde{P}_0-(t-a)\widetilde{P}_{down}],a\widetilde{P}_{up}\}$. Clearly this point satisfies inequality (\ref{eq:tight-ramp-up-1}) at equality. We have shown that $\omega_j=0$, $j\in[\max(1,m),m+M-1]\{t-a,t\}$ and $\omega_{t-a}=-\omega_t$ above. If Point (\ref{p:44}) satisfies equality (\ref{pr:9}), then we get $\omega_{t-a}\times\{\min(1,\widetilde{P}_0+t\widetilde{P}_{up})-\min\{1-\max[0,\widetilde{P}_0-(t-a)\widetilde{P}_{down}],a\widetilde{P}_{up}\}\}+\omega_t\times\min(1,\widetilde{P}_0+t\widetilde{P}_{up})+\varphi_{m,m+M-1}=\lambda_0$. Hence, $\varphi_{m,m+M-1}=\lambda_0-\omega_t\times\min\{1-\max[0,\widetilde{P}_0-(t-a)\widetilde{P}_{down}],a\widetilde{P}_{up}\}$.
         \item 	$\varphi_{h,k}=0$, $h\ne m$, $t\notin[h,k]$.\\
         If $[h,k]\in A_m$, $h\ne m$ exists such that $t\notin[h,k]$, we consider the following three cases:
         \begin{enumerate}
         	\item $U<t-a$.\\
         	If $m\le U$, consider point (\ref{p:16}) with $r_1=h$, $r_2=r_4=k$, $r_3=U$. Clearly this point is valid and satisfies inequality (\ref{eq:tight-ramp-up-1}) at equality. We have shown that $\omega_j=0$, $j\in[\max(1,m),m+M-1]\{t-a,t\}$, $\varphi_{m,k}=\lambda_0$, $k<t-a$ above. Because point (\ref{p:16}) satisfies equality (\ref{pr:9}), we get $\varphi_{m,U}+\varphi_{h,k}=\lambda_0$. Hence, $\varphi_{h,k}=0$. If $m>U$, consider point (\ref{p:3}) with $r_1=h$, $r_2=k$. Clearly this point is valid and satisfies inequality (\ref{eq:tight-ramp-up-1}) at equality. We have shown that $\lambda_0=0$ when $m>U$ above. Because point (\ref{p:3}) satisfies equality (\ref{pr:9}), we get $\varphi_{h,k}=\lambda_0$. Hence, $\varphi_{h,k}=0$.
            \item $t-a\le U<t$.\\
             Consider point (\ref{p:14}) with $r_1=h$, $r_2=r_3=k$. Clearly this point is valid and satisfies inequality (\ref{eq:tight-ramp-up-1}) at equality. We have shown that $\omega_j=0$, $j\in[\max(1,m),m+M-1]\{t-a,t\}$, $\omega_{t-a}=-\omega_t$, $\varphi_{m,k}=\lambda_0+\omega_t\times\max[0,\widetilde{P}_0-(t-a)\widetilde{P}_{down}]$, $t-a\le k<t$ above. Because point (\ref{p:14}) satisfies equality (\ref{pr:9}), we get $\omega_{t-a}\times\max[0,\widetilde{P}_0-(t-a)\widetilde{P}_{down}]+\varphi_{m,U}+\varphi_{h,k}=\lambda_0$. Hence, $\varphi_{h,k}=0$.
             \item $t-a<t\le U$.\\
             Consider point (\ref{p:48}) with $r_1=h$, $r_2=k$, $r_3=t$, $r_4=t-a$, $\widetilde{P}_{ramp}=\min\{1-\max[0,\widetilde{P}_0-(t-a)\widetilde{P}_{down}],a\widetilde{P}_{up},\widetilde{P}_{shut}+(U-t)\widetilde{P}_{down}-\max[0,\widetilde{P}_0-(t-a)\widetilde{P}_{down}]\}$. Clearly this point is valid and satisfies inequality (\ref{eq:tight-ramp-up-1}) at equality. We have shown that $\omega_j=0$, $j\in[\max(1,m),m+M-1]\{t-a,t\}$, $\omega_{t-a}=-\omega_t$, $\varphi_{m,k}=\lambda_0-\omega_t\times\min\{1-\max[0,\widetilde{P}_0-(t-a)\widetilde{P}_{down}],a\widetilde{P}_{up},\widetilde{P}_{shut}+(k-t)\widetilde{P}_{down}-\max[0,\widetilde{P}_0-(t-a)\widetilde{P}_{down}]\}$, $m\le t-a<t\le k<m+M-1$ above. Because point (\ref{p:48}) satisfies equality (\ref{pr:9}), we get $\omega_{t-a}\times\{\min[1,\widetilde{P}_0+t\widetilde{P}_{up},\widetilde{P}_{shut}+(U-t)\widetilde{P}_{down}]-\min\{1-\max[0,\widetilde{P}_0-(t-a)\widetilde{P}_{down}],a\widetilde{P}_{up},\widetilde{P}_{shut}+(U-t)\widetilde{P}_{down}-\max[0,\widetilde{P}_0-(t-a)\widetilde{P}_{down}]\}\}+\omega_t\times\min[1,\widetilde{P}_0+t\widetilde{P}_{up},\widetilde{P}_{shut}+(U-t)\widetilde{P}_{down}]+\varphi_{m,U}+\varphi_{h,k}=\lambda_0$. Hence, $\varphi_{h,k}=0$.
         \end{enumerate}
         \item 	$\varphi_{h,k}=-\omega_t\times\min[1,\widetilde{P}_{start}+(t-h)\widetilde{P}_{up},\widetilde{P}_{shut}+(k-t)\widetilde{P}_{down}],t-a<h\le t\le k<m+M-1$.\\
         If $[h,k]\in A_m$ exists such that  $t-a<h\le t\le k<m+M-1$, we consider the following two cases:
         \begin{enumerate}
         	\item $U<t-a$.\\
         	If $m\le U$, consider point (\ref{p:14}) with $r_1=h$, $r_2=k$, $r_3=t$. We get $\widetilde{P}_t=\min[1,\widetilde{P}_{start}+(t-h)\widetilde{P}_{up},\widetilde{P}_{shut}+(k-t)\widetilde{P}_{down}]$. Clearly this point is valid and satisfies inequality (\ref{eq:tight-ramp-up-1}) at equality. We have shown that $\omega_j=0$, $j\in[\max(1,m),m+M-1]\{t-a,t\}$, $\varphi_{m,k}=\lambda_0$, $k<t-a$ above. Because point (\ref{p:14}) satisfies equality (\ref{pr:9}), we get $\omega_t\times\min[1,\widetilde{P}_{start}+(t-h)\widetilde{P}_{up},\widetilde{P}_{shut}+(k-t)\widetilde{P}_{down}]+\varphi_{m,U}+\varphi_{h,k}=\lambda_0$. Hence, $\varphi_{h,k}=-\omega_t\times\min[1,\widetilde{P}_{start}+(t-h)\widetilde{P}_{up},\widetilde{P}_{shut}+(k-t)\widetilde{P}_{down}]$. If $m>U$, consider point (\ref{p:12}) with $r_1=h$, $r_2=k$, $r_3=t$. Clearly this point is valid and satisfies inequality (\ref{eq:tight-ramp-up-1}) at equality. We have shown that $\omega_j=0$, $j\in[\max(1,m),m+M-1]\{t-a,t\}$, $\lambda_0=0$ when $m>U$ above. Because point (\ref{p:12}) satisfies equality (\ref{pr:9}), we get $\omega_t\times\min[1,\widetilde{P}_{start}+(t-h)\widetilde{P}_{up},\widetilde{P}_{shut}+(k-t)\widetilde{P}_{down}]+\varphi_{h,k}=\lambda_0$. Hence, $\varphi_{h,k}=-\omega_t\times\min[1,\widetilde{P}_{start}+(t-h)\widetilde{P}_{up},\widetilde{P}_{shut}+(k-t)\widetilde{P}_{down}]$.
            \item $t-a\le U$.\\
            Consider point (\ref{p:14}) with $r_1=h$, $r_2=k$, $r_3=t$. We get $\widetilde{P}_t=\min[1,\widetilde{P}_{start}+(t-h)\widetilde{P}_{up},\widetilde{P}_{shut}+(k-t)\widetilde{P}_{down}]$. Clearly this point is valid and satisfies inequality (\ref{eq:tight-ramp-up-1}) at equality. We have shown that $\omega_j=0$, $j\in[\max(1,m),m+M-1]\{t-a,t\}$, $\omega_{t-a}=-\omega_t$, $\varphi_{m,k}=\lambda_0+\omega_t\times\max[0,\widetilde{P}_0-(t-a)\widetilde{P}_{down}]$, $t-a\le k<t$ above. Because point (\ref{p:14}) satisfies equality (\ref{pr:9}), we get $\omega_{t-a}\times\max[0,\widetilde{P}_0-(t-a)\widetilde{P}_{down}]+\omega_t\times\min[1,\widetilde{P}_{start}+(t-h)\widetilde{P}_{up},\widetilde{P}_{shut}+(k-t)\widetilde{P}_{down}]+\varphi_{m,U}+\varphi_{h,k}=\lambda_0$. Hence, $\varphi_{h,k}=-\omega_t\times\min[1,\widetilde{P}_{start}+(t-h)\widetilde{P}_{up},\widetilde{P}_{shut}+(k-t)\widetilde{P}_{down}]$.
         \end{enumerate}
         \item 	$\varphi_{h,m+M-1}=-\omega_t\times\min[1,\widetilde{P}_{start}+(t-h)\widetilde{P}_{up}],t-a<h\le t$.\\
         If $[h,m+M-1]\in A_m$ exists such that  $t-a<h\le t$, we consider the following two cases:
         \begin{enumerate}
         	\item $U<t-a$.\\
         	If $m\le U$, consider point (\ref{p:20}) with $r_1=h$, $r_3=t$. Clearly this point is valid and satisfies inequality (\ref{eq:tight-ramp-up-1}) at equality. We have shown that $\omega_j=0$, $j\in[\max(1,m),m+M-1]\{t-a,t\}$, $\varphi_{m,k}=\lambda_0$, $k<t-a$ above. Because point (\ref{p:20}) satisfies equality (\ref{pr:9}), we get $\omega_t\times\min[1,\widetilde{P}_{start}+(t-h)\widetilde{P}_{up}]+\varphi_{m,U}+\varphi_{h,m+M-1}=\lambda_0$. Hence, $\varphi_{h,m+M-1}=-\omega_t\times\min[1,\widetilde{P}_{start}+(t-h)\widetilde{P}_{up}]$. If $m>U$, consider point (\ref{p:18}) with $r_1=h$, $r_2=k$, $r_3=t$. Clearly this point is valid and satisfies inequality (\ref{eq:tight-ramp-up-1}) at equality. We have shown that $\omega_j=0$, $j\in[\max(1,m),m+M-1]\{t-a,t\}$, $\lambda_0=0$ when $m>U$ above. Because point (\ref{p:18}) satisfies equality (\ref{pr:9}), we get $\omega_t\times\min[1,\widetilde{P}_{start}+(t-h)\widetilde{P}_{up}]+\varphi_{h,m+M-1}=\lambda_0$. Hence, $\varphi_{h,m+M-1}=-\omega_t\times\min[1,\widetilde{P}_{start}+(t-h)\widetilde{P}_{up}]$.
         	\item $t-a\le U$.\\
         	Consider point (\ref{p:20}) with $r_1=h$, $r_3=t$. We get $\widetilde{P}_t=\min[1,\widetilde{P}_{start}+(t-h)\widetilde{P}_{up}]$. Clearly this point is valid and satisfies inequality (\ref{eq:tight-ramp-up-1}) at equality. We have shown that $\omega_j=0$, $j\in[\max(1,m),m+M-1]\{t-a,t\}$, $\omega_{t-a}=-\omega_t$, $\varphi_{m,k}=\lambda_0+\omega_t\times\max[0,\widetilde{P}_0-(t-a)\widetilde{P}_{down}]$, $t-a\le k<t$ above. Because point (\ref{p:20}) satisfies equality (\ref{pr:9}), we get $\omega_{t-a}\times\max[0,\widetilde{P}_0-(t-a)\widetilde{P}_{down}]+\omega_t\times\min[1,\widetilde{P}_{start}+(t-h)\widetilde{P}_{up}]+\varphi_{m,U}+\varphi_{h,m+M-1}=\lambda_0$. Hence, $\varphi_{h,m+M-1}=-\omega_t\times\min[1,\widetilde{P}_{start}+(t-h)\widetilde{P}_{up}]$.
         \end{enumerate}
         \item 	$\varphi_{h,k}=-\omega_t\times\min[1,a\widetilde{P}_{up},\widetilde{P}_{shut}+(k-t)\widetilde{P}_{down}]$, $m<h\le t-a<t\le k<m+M-1$.\\
         If $[h,k]\in A_m$ exists such that $m<h\le t-a<t\le k<m+M-1$, we consider the following two cases:
         \begin{enumerate}
         	\item $m\le U$.\\
         	We consider point (\ref{p:29}) with $r_1=h$, $r_2=k$, $r_3=t$, $r_4=t-a$, $\widetilde{P}_{ramp}=\min[1,a\widetilde{P}_{up},\widetilde{P}_{shut}+(k-t)\widetilde{P}_{down}]$. Clearly this point satisfies inequality (\ref{eq:tight-ramp-up-1}) at equality. We have shown that $\omega_j=0$, $j\in[\max(1,m),m+M-1]\{t-a,t\}$, $\omega_{t-a}=-\omega_t$, $\varphi_{m,k}=\lambda_0$, $k<t-a$ above. If Point (\ref{p:29}) satisfies equality (\ref{pr:9}), then we get $\omega_{t-a}\{\min[1,\widetilde{P}_{start}+(t-h)\widetilde{P}_{up},\widetilde{P}_{shut}+(k-t)\widetilde{P}_{down}]-\min[1,a\widetilde{P}_{up},\widetilde{P}_{shut}+(k-t)\widetilde{P}_{down}]\}+\omega_t\times\min[1,\widetilde{P}_{start}+(t-h)\widetilde{P}_{up},\widetilde{P}_{shut}+(k-t)\widetilde{P}_{down}]+\varphi_{m,U}+\varphi_{h,k}=\lambda_0$. Hence, $\varphi_{h,k}=-\omega_t\times\min[1,a\widetilde{P}_{up},\widetilde{P}_{shut}+(k-t)\widetilde{P}_{down}]$.
         	\item  $m>U$.\\
         	We consider point (\ref{p:28}) with $r_1=h$, $r_2=k$, $r_3=t$, $r_4=t-a$, $\widetilde{P}_{ramp}=\min[1,a\widetilde{P}_{up},\widetilde{P}_{shut}+(k-t)\widetilde{P}_{down}]$. Clearly this point satisfies inequality (\ref{eq:tight-ramp-up-1}) at equality. We have shown that $\omega_j=0$, $j\in[\max(1,m),m+M-1]\{t-a,t\}$, $\omega_{t-a}=-\omega_t$, $\lambda_0=0$ when $m>U$ above. If Point (\ref{p:28}) satisfies equality (\ref{pr:9}), then we get $\omega_{t-a}\{\min[1,\widetilde{P}_{start}+(t-h)\widetilde{P}_{up},\widetilde{P}_{shut}+(k-t)\widetilde{P}_{down}]-\min[1,a\widetilde{P}_{up},\widetilde{P}_{shut}+(k-t)\widetilde{P}_{down}]\}+\omega_t\times\min[1,\widetilde{P}_{start}+(t-h)\widetilde{P}_{up},\widetilde{P}_{shut}+(k-t)\widetilde{P}_{down}]+\varphi_{h,k}=\lambda_0$. Hence, $\varphi_{h,k}=-\omega_t\times\min[1,a\widetilde{P}_{up},\widetilde{P}_{shut}+(k-t)\widetilde{P}_{down}]$.
         \end{enumerate}
         \item 	$\varphi_{h,m+M-1}=-\omega_t\times\min(1,a\widetilde{P}_{up})$, $m<h\le t-a$.\\
         If $[h,m+M-1]\in A_m$ exists such that $m<h\le t-a$, we consider the following two cases:
         \begin{enumerate}
         	\item $m\le U$.\\
         	We consider point (\ref{p:34}) with $r_1=h$, $r_3=t$, $r_4=t-a$, $\widetilde{P}_{ramp}=\min(1,a\widetilde{P}_{up})$. Clearly this point satisfies inequality (\ref{eq:tight-ramp-up-1}) at equality. We have shown that $\omega_j=0$, $j\in[\max(1,m),m+M-1]\{t-a,t\}$, $\omega_{t-a}=-\omega_t$, $\varphi_{m,k}=\lambda_0$, $k<t-a$ above. If Point (\ref{p:34}) satisfies equality (\ref{pr:9}), then we get $\omega_{t-a}\{\min[1,\widetilde{P}_{start}+(t-h)\widetilde{P}_{up}]-\min(1,a\widetilde{P}_{up})\}+\omega_t\times\min[1,\widetilde{P}_{start}+(t-h)\widetilde{P}_{up}]+\varphi_{m,U}+\varphi_{h,m+M-1}=\lambda_0$. Hence, $\varphi_{h,m+M-1}=-\omega_t\times\min(1,a\widetilde{P}_{up})$.
         	\item  $m>U$.\\
         	We consider point (\ref{p:32}) with $r_1=h$, $r_3=t$, $r_4=t-a$, $\widetilde{P}_{ramp}=\min(1,a\widetilde{P}_{up})$. Clearly this point satisfies inequality (\ref{eq:tight-ramp-up-1}) at equality. We have shown that $\omega_j=0$, $j\in[\max(1,m),m+M-1]\{t-a,t\}$, $\omega_{t-a}=-\omega_t$, $\lambda_0=0$ when $m>U$ above. If Point (\ref{p:32}) satisfies equality (\ref{pr:9}), then we get $\omega_{t-a}\{\min[1,\widetilde{P}_{start}+(t-h)\widetilde{P}_{up}]-\min(1,a\widetilde{P}_{up})\}+\omega_t\times\min[1,\widetilde{P}_{start}+(t-h)\widetilde{P}_{up}]+\varphi_{h,m+M-1}=\lambda_0$. Hence, $\varphi_{h,m+M-1}=-\omega_t\times\min(1,a\widetilde{P}_{up})$.
         \end{enumerate}
	\end{enumerate}
	According to theorem 3.6 of §$I.4.3$ in \cite{wolsey1988integer}, we get Proposition \ref{prp9}.
\end{proof}

\begin{proof}[\textbf{Proof of Proposition \upshape\ref{prp10}}]
	Necessity: For contradiction we assume that one of condition $\mathcal{M}_1$ and condition $\mathcal{M}_2$ is not satisfied.\\
	If condition $\mathcal{M}_3$ is not satisfied. We have
	\begin{enumerate}
		\item $a>1$.
		\item $t-a<U+\underline{T}_{off}$.
		\item $a\widetilde{P}_{down}\le\min[1,\widetilde{P}_0+(t-a)\widetilde{P}_{up}]$.
		\item $t\le U$ or $\widetilde{P}_{shut}+(a-1)\widetilde{P}_{down}\le \min[1,\widetilde{P}_0+(t-a)\widetilde{P}_{up}]$.
	\end{enumerate}
	We consider the following two cases:
	\begin{enumerate}
		\item $t\le U$.\\
		According to the definition of $A_m$ and (\ref{eq:new-initial-status-1}), inequality (\ref{eq:tight-ramp-down-1}) can be written as $\widetilde{P}_{t-a}-\widetilde{P}_t\le a\widetilde{P}_{down}$. Inequality (\ref{eq:tight-ramp-down-1}) with $a>1$ is dominated by inequalities (\ref{eq:tight-ramp-down-1}) with $a=1$.
		\item $\widetilde{P}_{shut}+(a-1)\widetilde{P}_{down}\le \min[1,\widetilde{P}_0+(t-a)\widetilde{P}_{up}]$.\\
		According to the definition of $A_m$ and (\ref{eq:new-initial-status-1}), inequality (\ref{eq:tight-ramp-down-1}) can be written as $\widetilde{P}_{t-a}-\widetilde{P}_t\le\sum\nolimits_{\{[m,k]\in A_m,t\le k\}}\tau^m_{m,k}a\widetilde{P}_{down}+\sum\nolimits_{\{[m,k] \in A_m,t-a\le k<t\}}\tau^m_{m,k}[\widetilde{P}_{shut}+(k-t+a)\widetilde{P}_{down}]$. Inequality (\ref{eq:tight-ramp-down-1}) with $a>1$ is dominated by inequalities (\ref{eq:tight-ramp-down-1}) with $a=1$.
	\end{enumerate}
	If condition $\mathcal{M}_4$ is not satisfied. We have
	\begin{enumerate}
		\item $t-a>0$.
		\item $a\widetilde{P}_{down}\ge \min[1,\widetilde{P}_0+(t-a)\widetilde{P}_{up}]$.
		\item $a\widetilde{P}_{down}\ge\min[1,\widetilde{P}_{start}+(t-a-U-\underline{T}_{off}-1)\widetilde{P}_{up}]$ or $t-a\le U+\underline{T}_{off}$.
	\end{enumerate}
	If $a\widetilde{P}_{down}\ge\min[1,\widetilde{P}_{start}+(t-a-U-\underline{T}_{off}-1)\widetilde{P}_{up}]$, inequality (\ref{eq:tight-ramp-down-1}) can be written as $\widetilde{P}_{t-a}-\widetilde{P}_t\le\sum\nolimits_{\{[h,k] \in A_m,U+\underline{T}_{off}<h\le t-a<t\le k\}}\tau^m_{h,k}\min[1,\widetilde{P}_{start}+(t-a-h)\widetilde{P}_{up}]+\sum\nolimits_{\{[h,k] \in A_m,U+\underline{T}_{off}<h\le t-a\le k<t\}}\tau^m_{h,k}\min[1,\widetilde{P}_{start}+(t-a-h)\widetilde{P}_{up},\widetilde{P}_{shut}+(k-t+a)\widetilde{P}_{down}]+\sum\nolimits_{\{[m,k] \in A_m,t\le k\}}\tau^m_{m,k}\min[1,\widetilde{P}_0+(t-a)\widetilde{P}_{up}]+\sum\nolimits_{\{[m,k] \in A_m,t-a\le k<t\}}\tau^m_{m,k}\min[1,\widetilde{P}_0+(t-a)\widetilde{P}_{up},\widetilde{P}_{shut}+(k-t+a)\widetilde{P}_{down}]$. If $t-a\le U+\underline{T}_{off}$, inequality (\ref{eq:tight-ramp-down-1}) can be written as $\widetilde{P}_{t-a}-\widetilde{P}_t\le\sum\nolimits_{\{[m,k] \in A_m,t\le k\}}\tau^m_{m,k}\min[1,\widetilde{P}_0+(t-a)\widetilde{P}_{up}]+\sum\nolimits_{\{[m,k] \in A_m,t-a\le k<t\}}\tau^m_{m,k}\min[1,\widetilde{P}_0+(t-a)\widetilde{P}_{up},\widetilde{P}_{shut}+(k-t+a)\widetilde{P}_{down}]$. Inequality (\ref{eq:tight-ramp-down-1}) is dominated by inequalities (\ref{eq:tight-max-output-1}) and (\ref{eq:tight-min-output-1}).\\
	Sufficiency: If all points of $\widetilde{\mathcal{Q}}_m^I$ that are tight at inequality (\ref{eq:tight-ramp-down-1}) satisfy
	\begin{equation}
		\sum^{m+M-1}_{j=\max(1,m)}\omega_j\widetilde{P}_j+\sum\nolimits_{[h,k]\in A_m}\varphi_{h,k}\tau^m_{h,k}=\lambda_0\label{pr:10}
	\end{equation}
	then
	\begin{enumerate}
		\item $\lambda_0=0$ when $m>U$.\\
		If $m>U$, point (\ref{p:1}) satisfies inequality (\ref{eq:tight-ramp-down-1}) at equality. If Point (\ref{p:1}) satisfies equality (\ref{pr:10}), then $\lambda_0=0$.
		\item $\omega_j=0,j\in[m,m+M-1]\backslash\{t-a,t\}$.\\
		There are three cases to be considered.
		\begin{enumerate}
			\item $t<j\le m+M-1$.\\
			Consider point (\ref{p:50}) and point (\ref{p:51}) with $r_3=t-a$, $r_4=t$, $r_0=j$, $\widetilde{P}_{ramp}=\min[1,\widetilde{P}_0+(t-a)\widetilde{P}_{up},a\widetilde{P}_{down}]$. Both points are valid and satisfy inequality (\ref{eq:tight-ramp-down-1}) at equality. Because both points satisfy equality (\ref{pr:10}), we get $\omega_j\widetilde{P}_t=\omega_j(\widetilde{P}_t+\varepsilon)$. Hence $\omega_j=0$.
			\item $t-a<j<t$.\\
			If $a=1$, we don't need to consider this case. Otherwise, consider the following six cases:
			\begin{enumerate}
				\item $a\le t-U-\underline{T}_{off}$.\\
				If  $m\le U$, we consider point (\ref{p:5}) and point (\ref{p:6}), if $m>U$, we consider point (\ref{p:3}) and point (\ref{p:4}). We let $r_1=t-a+1$, $r_2=m+M-1$, $r_0=j$. Both points are valid and satisfy inequality (\ref{eq:tight-ramp-down-1}) at equality. Because both points satisfy equality (\ref{pr:10}), we get $\omega_j\times 0=\omega_j\varepsilon$. Hence $\omega_j=0$.
				\item $\min(\frac{1}{\widetilde{P}_{down}},\frac{\widetilde{P}_0+t\widetilde{P}_{up}}{\widetilde{P}_{up}+\widetilde{P}_{down}})<a$.\\
				We get $\min[1,\widetilde{P}_0+(t-a)\widetilde{P}_{up}]<a\widetilde{P}_{down}$. Consider point (\ref{p:50}) and point (\ref{p:51}) with $r_3=t-a$, $r_4=t$, $r_0=j$, $\widetilde{P}_{ramp}=\min[1,\widetilde{P}_0+(t-a)\widetilde{P}_{up}]$. Both points are valid and satisfy inequality (\ref{eq:tight-ramp-down-1}) at equality. Because both points satisfy equality (\ref{pr:10}), we get $\omega_j\times\max[0,\widetilde{P}_{t-a}-(j-t+a)\times\frac{\widetilde{P}_{ramp}}{a}]=\omega_j\{\max[0,\widetilde{P}_{t-a}-(j-t+a)\times\frac{\widetilde{P}_{ramp}}{a}]+\varepsilon\}$. Hence $\omega_j=0$.
				\item $U<t,\min\{\frac{1-\widetilde{P}_{shut}}{\widetilde{P}_{down}}+1,\frac{\widetilde{P}_0+t\widetilde{P}_{up}+\widetilde{P}_{down}-\widetilde{P}_{shut}}{\widetilde{P}_{up}+\widetilde{P}_{down}}\}<a$.\\
				We get $\min[1,\widetilde{P}_0+(t-a)\widetilde{P}_{up}]<\widetilde{P}_{shut}+(a-1)\widetilde{P}_{down}$. Consider point (\ref{p:8}) and point (\ref{p:9}) with $r_2=t-1$, $r_3=t-a$, $r_0=j$. Both points are valid and satisfy inequality (\ref{eq:tight-ramp-down-1}) at equality. Because both points satisfy equality (\ref{pr:10}), we get $\omega_j[\widetilde{P}_{t-a}-(j-t+a)\times\frac{\max(0,\widetilde{P}_{t-a}-\widetilde{P}_{shut})}{a-1}]=\omega_j\lvert\widetilde{P}_{t-a}-(j-t+a)\times\frac{\max(0,\widetilde{P}_{t-a}-\widetilde{P}_{shut})}{a-1}-\varepsilon\rvert$. Hence $\omega_j=0$.
			\end{enumerate}
			\item $\max(1,m)\le j<t-a$.\\
			If  $t-a=0$ or $t-a=\max(1,m)$, we don’t need to consider this case. Otherwise, consider the following two cases:
			\begin{enumerate}
				\item $a<\min(\frac{1}{\widetilde{P}_{down}},\frac{\widetilde{P}_0+t\widetilde{P}_{up}}{\widetilde{P}_{up}+\widetilde{P}_{down}})$.\\
				We get $a\widetilde{P}_{down}<\min[1,\widetilde{P}_0+(t-a)\widetilde{P}_{up}]$. Consider point (\ref{p:52}) and point (\ref{p:53}) with $r_3=t-a$, $r_4=t$, $r_0=j$, $\widetilde{P}_{ramp}=a\widetilde{P}_{down}$. Both points are valid and satisfy inequality (\ref{eq:tight-ramp-down-1}) at equality. Because both points satisfy equality (\ref{pr:10}), we get $\omega_j (\widetilde{P}_0-j\times\frac{\widetilde{P}_0-\widetilde{P}_{t-a}}{t-a})=\omega_j (\widetilde{P}_0-j\times\frac{\widetilde{P}_0-\widetilde{P}_{t-a}}{t-a}-\varepsilon)$. Hence $\omega_j=0$.
				\item $a<\min(\frac{1}{\widetilde{P}_{down}},t-U-\underline{T}_{off},\frac{\widetilde{P}_{start}+(t-U-\underline{T}_{off}-1)\widetilde{P}_{up}}{\widetilde{P}_{up}+\widetilde{P}_{down}})$.\\
				We get $t-a>U+\underline{T}_{off}\ge U+1\ge K$. Then we have $\frac{\widetilde{P}_0-\widetilde{P}_{shut}}{t-a-1}<\widetilde{P}_{down}$. Consider point (\ref{p:10}) and point (\ref{p:11}) with $r_1=m$, $r_2=t-a-1$, $r_0=j$. Both points are valid and satisfy inequality (\ref{eq:tight-ramp-down-1}) at equality. Because both points satisfy equality (\ref{pr:10}), we get $\omega_j[\widetilde{P}_0-j\times\frac{\max(0,\widetilde{P}_0-\widetilde{P}_{shut})}{t-a-1}]=\omega_j\lvert\widetilde{P}_0-j\times\frac{\max(0,\widetilde{P}_0-\widetilde{P}_{shut})}{t-a-1}-\varepsilon\rvert$. Hence $\omega_j=0$.
			\end{enumerate}
		\end{enumerate}
		\item $\omega_{t-a}=-\omega_t$  when $t-a\ne 0$.\\
		Consider the following two cases:
		\begin{enumerate}
			\item $a<\min(\frac{1}{\widetilde{P}_{down}},\frac{\widetilde{P}_0+t\widetilde{P}_{up}}{\widetilde{P}_{up}+\widetilde{P}_{down}})$.\\
			We get $a\widetilde{P}_{down}<\min[1,\widetilde{P}_0+(t-a)\widetilde{P}_{up}]$. Consider point (\ref{p:50}) and point (\ref{p:52}) with $r_3=t-a$, $r_4=t$, $r_0=j$, $\widetilde{P}_{ramp}=a\widetilde{P}_{down}$. Both points are valid and satisfy inequality (\ref{eq:tight-ramp-down-1}) at equality. We have shown that $\omega_j=0$, $j\in[\max(1,m),m+M-1]\{t-a,t\}$ in part 2. Because both points satisfy equality (\ref{pr:10}), we get $\omega_{t-a}\times\min[1,\widetilde{P}_0+(t-a)\widetilde{P}_{up}]+\omega_t\{\min[1,\widetilde{P}_0+(t-a)\widetilde{P}_{up}]-a\widetilde{P}_{down}\}=\omega_{t-a}\{\min[1,\widetilde{P}_0+(t-a)\widetilde{P}_{up}]-\varepsilon\}+\omega_t\{\min[1,\widetilde{P}_0+(t-a)\widetilde{P}_{up}]-a\widetilde{P}_{down}-\varepsilon\}$. Hence $\omega_{t-a}=-\omega_t$.
			\item $a<\min(\frac{1}{\widetilde{P}_{down}},t-U-\underline{T}_{off},\frac{\widetilde{P}_{start}+(t-U-\underline{T}_{off}-1)\widetilde{P}_{up}}{\widetilde{P}_{up}+\widetilde{P}_{down}})$.\\
			We get $a\widetilde{P}_{down}<\min[1,\widetilde{P}_{start}+(t-a-U-\underline{T}_{off}-1)\widetilde{P}_{up}]$, $t-a\ge U+\underline{T}_{off}+1.$ If $m\le U$, we consider point (\ref{p:38}) and point (\ref{p:39}).  If  $m>U$, we consider point (\ref{p:36}) and point (\ref{p:37}). We let $r_1=U+\underline{T}_{off}+1$, $r_3=t-a$, $r_4=t$, $r_0=j$, $\widetilde{P}_{ramp}=a\widetilde{P}_{down}$. Both points are valid and satisfy inequality (\ref{eq:tight-ramp-down-1}) at equality. We have shown that $\omega_j=0$, $j\in[\max(1,m),m+M-1]\{t-a,t\}$ in part 2. Because both points satisfy equality (\ref{pr:10}), we get $\omega_{t-a}\times\min[1,\widetilde{P}_{start}+(t-a-U-\underline{T}_{off}-1)\widetilde{P}_{up}]+\omega_t\times\{\min[1,\widetilde{P}_{start}+(t-a-U-\underline{T}_{off}-1)\widetilde{P}_{up}]-a\widetilde{P}_{down}\}=\omega_{t-a}\{\min[1,\widetilde{P}_{start}+(t-a-U-\underline{T}_{off}-1)\widetilde{P}_{up}]-\varepsilon\}+\omega_t\times\{\min[1,\widetilde{P}_{start}+(t-a-U-\underline{T}_{off}-1)\widetilde{P}_{up}]-a\widetilde{P}_{down}-\varepsilon\}$. Hence $\omega_{t-a}=-\omega_t$.
		\end{enumerate}
		\item 	$\varphi_{m,k}=\lambda_0,k<t-a$.\\
		If $[m,k]\in A_m$ exists such that $k<t-a$, consider point (\ref{p:10}) with $r_1=r_2=k$. Clearly this point is valid and satisfies inequality (\ref{eq:tight-ramp-down-1}) at equality. We have shown that $\omega_j=0$, $j\in[\max(1,m),m+M-1]\{t-a,t\}$ above. Because point (\ref{p:10}) satisfies equality (\ref{pr:10}), we get $\varphi_{m,k}=\lambda_0$.
		\item 	$\varphi_{m,k}=\lambda_0+\omega_t\times\min[1,\widetilde{P}_0+(t-a)\widetilde{P}_{up},\widetilde{P}_{shut}+(k-t+a)\widetilde{P}_{down}],t-a\le k<t$.\\
		If $[m,k]\in A_m$ exists such that $t-a\le k<t$, consider point (\ref{p:8}) with $r_2=k,r_3=t-a$. Clearly this point is valid and satisfies inequality (\ref{eq:tight-ramp-down-1}) at equality. We have shown that $\omega_j=0$, $j\in[\max(1,m),m+M-1]\{t-a,t\},\omega_{t-a}=-\omega_t$ above. Because point (\ref{p:8}) satisfies equality (\ref{pr:10}), we get $\omega_{t-a}\times\min[1,\widetilde{P}_0+(t-a)\widetilde{P}_{up},\widetilde{P}_{shut}+(k-t+a)\widetilde{P}_{down}]+\varphi_{m,k}=\lambda_0$. Hence, $\varphi_{m,k}=\lambda_0+\omega_t\times\min[1,\widetilde{P}_0+(t-a)\widetilde{P}_{up},\widetilde{P}_{shut}+(k-t+a)\widetilde{P}_{down}]$.
		\item $\varphi_{m,k}=\lambda_0+\omega_t\times\min[1,\widetilde{P}_0+(t-a)\widetilde{P}_{up},a\widetilde{P}_{down}],m\le t-a<t\le k$.\\
		If $[m,k]\in A_m $ exists such that $m\le t-a<t\le k$, we consider point (\ref{p:43}) with $r_2=k$, $r_3=t-a$, $r_4=t$, $\widetilde{P}_{ramp}=\min[1,\widetilde{P}_0+(t-a)\widetilde{P}_{up},a\widetilde{P}_{down}]$. Clearly this point satisfies inequality (\ref{eq:tight-ramp-down-1}) at equality. We have shown that $\omega_j=0$, $j\in[\max(1,m),m+M-1]\{t-a,t\}$ and $\omega_{t-a}=-\omega_t$ above. If Point (\ref{p:43}) satisfies equality (\ref{pr:10}), then we get $\omega_{t-a}\times\min[1,\widetilde{P}_0+(t-a)\widetilde{P}_{up},\widetilde{P}_{shut}+(k-t+a)\widetilde{P}_{down}]+\omega_t\{\min[1,\widetilde{P}_0+(t-a)\widetilde{P}_{up},\widetilde{P}_{shut}+(k-t+a)\widetilde{P}_{down}]-\min[1,\widetilde{P}_0+(t-a)\widetilde{P}_{up},a\widetilde{P}_{down}]\}+\varphi_{m,k}=\lambda_0$. Hence, $\varphi_{m,k}=\lambda_0+\omega_t\times\min[1,\widetilde{P}_0+(t-a)\widetilde{P}_{up},a\widetilde{P}_{down}]$.
		\item 	$\varphi_{h,k}=0$, $h\ne m$, $t-a\notin[h,k]$.\\
		If $[h,k]\in A_m$, $h\ne m$ exists such that $t-a\notin[h,k]$, we consider the following three cases:
		\begin{enumerate}
			\item $U<t-a$.\\
			If $m\le U$, consider point (\ref{p:5}) with $r_1=h$, $r_2=k$. Clearly this point is valid and satisfies inequality (\ref{eq:tight-ramp-down-1}) at equality. We have shown that $\omega_j=0$, $j\in[\max(1,m),m+M-1]\{t-a,t\}$, $\varphi_{m,k}=\lambda_0$, $k<t-a$ above. Because point (\ref{p:5}) satisfies equality (\ref{pr:9}), we get $\varphi_{m,U}+\varphi_{h,k}=\lambda_0$. Hence, $\varphi_{h,k}=0$. If $m>U$, consider point (\ref{p:3}) with $r_1=h$, $r_2=k$. Clearly this point is valid and satisfies inequality (\ref{eq:tight-ramp-down-1}) at equality. We have shown that $\lambda_0=0$ when $m>U$ above. Because point (\ref{p:3}) satisfies equality (\ref{pr:10}), we get $\varphi_{h,k}=\lambda_0$. Hence, $\varphi_{h,k}=0$.
			\item $t-a\le U<t$.\\
			Consider point (\ref{p:17}) with $r_1=h$, $r_2=k$, $r_3=t-a$. Clearly this point is valid and satisfies inequality (\ref{eq:tight-ramp-down-1}) at equality. We have shown that $\omega_j=0$, $j\in[\max(1,m),m+M-1]\{t-a,t\}$, $\omega_{t-a}=-\omega_t$, $\varphi_{m,k}=\lambda_0+\omega_t\times\min[1,\widetilde{P}_0+(t-a)\widetilde{P}_{up},\widetilde{P}_{shut}+(k-t+a)\widetilde{P}_{down}]$, $t-a\le k<t$ above. Because point (\ref{p:17}) satisfies equality (\ref{pr:10}), we get $\omega_{t-a}\times\min[1,\widetilde{P}_0+(t-a)\widetilde{P}_{up},\widetilde{P}_{shut}+(U-t+a)\widetilde{P}_{down}]+\varphi_{m,U}+\varphi_{h,k}=\lambda_0$. Hence, $\varphi_{h,k}=0$.
			\item $t-a<t\le U$.\\
			Consider point (\ref{p:49}) with $r_1=h$, $r_2=k$, $r_3=t-a$, $r_4=t$, $\widetilde{P}_{ramp}=\min[1,\widetilde{P}_0+(t-a)\widetilde{P}_{up},a\widetilde{P}_{down}]$.  Clearly this point is valid and satisfies inequality (\ref{eq:tight-ramp-down-1}) at equality. We have shown that $\omega_j=0$, $j\in[\max(1,m),m+M-1]\{t-a,t\}$, $\omega_{t-a}=-\omega_t$, $\varphi_{m,k}=\lambda_0+\omega_t\times\min[1,\widetilde{P}_0+(t-a)\widetilde{P}_{up},a\widetilde{P}_{down}]$, $m\le t-a<t\le k\le m+M-1$ above. Because point (\ref{p:49}) satisfies equality (\ref{pr:10}), we get $\omega_{t-a}\times\min[1,\widetilde{P}_0+(t-a)\widetilde{P}_{up},\widetilde{P}_{shut}+(U-t+a)\widetilde{P}_{down}]+\omega_t\{\min[1,\widetilde{P}_0+(t-a)\widetilde{P}_{up},\widetilde{P}_{shut}+(U-t+a)\widetilde{P}_{down}]-\min[1,\widetilde{P}_0+(t-a)\widetilde{P}_{up},a\widetilde{P}_{down}]\}+\varphi_{m,U}+\varphi_{h,k}=\lambda_0$. Hence, $\varphi_{h,k}=0$.
		\end{enumerate}
		\item 	$\varphi_{h,k}=\omega_t\times\min[1,\widetilde{P}_{start}+(t-a-h)\widetilde{P}_{up},\widetilde{P}_{shut}+(k-t+a)\widetilde{P}_{down}],m<h\le t-a\le k<t$.\\
		If $[h,k]\in A_m$ exists such that  $m<h\le t-a\le k<t$, we consider the following two cases:
		\begin{enumerate}
			\item $m\le U$.\\
			Consider point (\ref{p:14}) with $r_1=h$, $r_2=k$, $r_3=t-a$. Clearly this point is valid and satisfies inequality (\ref{eq:tight-ramp-down-1}) at equality. We have shown that $\omega_j=0$, $j\in[\max(1,m),m+M-1]\{t-a,t\}$, $\varphi_{m,k}=\lambda_0$, $k<t-a$ above. Because point (\ref{p:14}) satisfies equality (\ref{pr:10}), we get $\omega_{t-a}\times\min[1,\widetilde{P}_{start}+(t-a-h)\widetilde{P}_{up},\widetilde{P}_{shut}+(k-t+a)\widetilde{P}_{down}]+\varphi_{m,U}+\varphi_{h,k}=\lambda_0$. Hence, $\varphi_{h,k}=\omega_t\times\min[1,\widetilde{P}_{start}+(t-a-h)\widetilde{P}_{up},\widetilde{P}_{shut}+(k-t+a)\widetilde{P}_{down}]$.
			\item $m>U$.\\
			 Consider point (\ref{p:12}) with $r_1=h$, $r_2=k$, $r_3=t-a$. Clearly this point is valid and satisfies inequality (\ref{eq:tight-ramp-down-1}) at equality. We have shown that $\omega_j=0$, $j\in[\max(1,m),m+M-1]\{t-a,t\}$, $\lambda_0=0$ when $m>U$ above. Because point (\ref{p:12}) satisfies equality (\ref{pr:10}), we get $\omega_{t-a}\times\min[1,\widetilde{P}_{start}+(t-a-h)\widetilde{P}_{up},\widetilde{P}_{shut}+(k-t+a)\widetilde{P}_{down}]+\varphi_{h,k}=\lambda_0$. Hence, $\varphi_{h,k}=\omega_t\times\min[1,\widetilde{P}_{start}+(t-a-h)\widetilde{P}_{up},\widetilde{P}_{shut}+(k-t+a)\widetilde{P}_{down}]$.
		\end{enumerate}
		\item 	$\varphi_{h,k}=\omega_t\times\min[1,\widetilde{P}_{start}+(t-a-h)\widetilde{P}_{up},a\widetilde{P}_{down}]$, $m<h\le t-a<t\le k\le m+M-1$.\\
		If $[h,k]\in A_m$ exists such that $m<h\le t-a<t\le k\le m+M-1$, we consider the following two cases:
		\begin{enumerate}
			\item $m\le U$.\\
			We consider point (\ref{p:31}) with $r_1=h$, $r_2=k$, $r_3=t-a$, $r_4=t$, $\widetilde{P}_{ramp}=\min[1,\widetilde{P}_{start}+(t-a-h)\widetilde{P}_{up},a\widetilde{P}_{down}]$. Clearly this point satisfies inequality (\ref{eq:tight-ramp-down-1}) at equality. We have shown that $\omega_j=0$, $j\in[\max(1,m),m+M-1]\{t-a,t\}$, $\omega_{t-a}=-\omega_t$, $\varphi_{m,k}=\lambda_0$, $k<t-a$ above. If Point (\ref{p:31}) satisfies equality (\ref{pr:10}), then we get $\omega_t\{\min[1,\widetilde{P}_{start}+(t-a-h)\widetilde{P}_{up},\widetilde{P}_{shut}+(k-t+a)\widetilde{P}_{down}]-\min[1,\widetilde{P}_{start}+(t-a-h)\widetilde{P}_{up},a\widetilde{P}_{down}]\}+\omega_{t-a}\times\min[1,\widetilde{P}_{start}+(t-a-h)\widetilde{P}_{up},\widetilde{P}_{shut}+(k-t+a)\widetilde{P}_{down}]+\varphi_{m,U}+\varphi_{h,k}=\lambda_0$. Hence, $\varphi_{h,k}=\omega_t\times\min[1,\widetilde{P}_{start}+(t-a-h)\widetilde{P}_{up},a\widetilde{P}_{down}]$.
			\item  $m>U$.\\
			We consider point (\ref{p:30}) with $r_1=h$, $r_2=k$, $r_3=t-a$, $r_4=t$, $\widetilde{P}_{ramp}=\min[1,\widetilde{P}_{start}+(t-a-h)\widetilde{P}_{up},a\widetilde{P}_{down}]$. Clearly this point satisfies inequality (\ref{eq:tight-ramp-down-1}) at equality. We have shown that $\omega_j=0$, $j\in[\max(1,m),m+M-1]\{t-a,t\}$, $\omega_{t-a}=-\omega_t$, $\lambda_0=0$ when $m>U$ above. If Point (\ref{p:30}) satisfies equality (\ref{pr:10}), then we get $\omega_t\{\min[1,\widetilde{P}_{start}+(t-a-h)\widetilde{P}_{up},\widetilde{P}_{shut}+(k-t+a)\widetilde{P}_{down}]-\min[1,\widetilde{P}_{start}+(t-a-h)\widetilde{P}_{up},a\widetilde{P}_{down}]\}+\omega_{t-a}\times\min[1,\widetilde{P}_{start}+(t-a-h)\widetilde{P}_{up},\widetilde{P}_{shut}+(k-t+a)\widetilde{P}_{down}]+\varphi_{h,k}=\lambda_0$. Hence, $\varphi_{h,k}=\omega_t\times\min[1,\widetilde{P}_{start}+(t-a-h)\widetilde{P}_{up},a\widetilde{P}_{down}]$.
		\end{enumerate}
	\end{enumerate}
	According to theorem 3.6 of §$I.4.3$ in \cite{wolsey1988integer}, we get Proposition \ref{prp10}.
\end{proof}

\bibliography{references}

\end{document}